\pdfoutput=1
\RequirePackage{ifpdf}
\ifpdf 
\documentclass[pdftex]{sigma}
\else
\documentclass{sigma}
\fi

\numberwithin{equation}{section}

\newtheorem{Theorem}{Theorem}[section]
\newtheorem{Corollary}[Theorem]{Corollary}
\newtheorem{Lemma}[Theorem]{Lemma}
\newtheorem{Proposition}[Theorem]{Proposition}
 { \theoremstyle{definition}
\newtheorem{Definition}[Theorem]{Definition}
\newtheorem{Example}[Theorem]{Example}
 }

\begin{document}
\allowdisplaybreaks

\newcommand{\arXivNumber}{1801.07041}

\renewcommand{\thefootnote}{}

\renewcommand{\PaperNumber}{077}

\FirstPageHeading

\ShortArticleName{Connection Formula for the Jackson Integral of Type $A_n$}

\ArticleName{Connection Formula for the Jackson Integral\\ of Type $\boldsymbol{A_n}$ and Elliptic Lagrange Interpolation\footnote{This paper is a~contribution to the Special Issue on Elliptic Hypergeometric Functions and Their Applications. The full collection is available at \href{https://www.emis.de/journals/SIGMA/EHF2017.html}{https://www.emis.de/journals/SIGMA/EHF2017.html}}}

\Author{Masahiko ITO~$^\dag$ and Masatoshi NOUMI~$^\ddag$}

\AuthorNameForHeading{M.~Ito and M.~Noumi}

\Address{$^\dag$~Department of Mathematical Sciences, University of the Ryukyus, Okinawa 903-0213, Japan}
\EmailD{\href{mailto:mito@sci.u-ryukyu.ac.jp}{mito@sci.u-ryukyu.ac.jp}}

\Address{$^\ddag$~Department of Mathematics, Kobe University, Rokko, Kobe 657-8501, Japan}
\EmailD{\href{noumi@math.kobe-u.ac.jp}{noumi@math.kobe-u.ac.jp}}

\ArticleDates{Received January 23, 2018, in final form July 07, 2018; Published online July 24, 2018}

\Abstract{We investigate the connection problem for the Jackson integral of type $A_n$. Our connection formula implies a Slater type expansion of a bilateral multiple basic hypergeometric series as a linear combination of several specific multiple series. Introducing certain elliptic Lagrange interpolation functions, we determine the explicit form of the connection coefficients. We also use basic properties of the interpolation functions to establish an explicit determinant formula for a fundamental solution matrix of the associated system of $q$-difference equations.}

\Keywords{Jackson integral of type $A_n$; $q$-difference equations; Selberg integral; Slater's trans\-for\-mation formulas; elliptic Lagrange interpolation}

\Classification{33D52; 39A13}

\renewcommand{\thefootnote}{\arabic{footnote}}
\setcounter{footnote}{0}

\section{Introduction}

Throughout this paper we fix the base $q\in\mathbb{C}$ with $0<|q|<1$, and use the notation of $q$-shifted factorials
\begin{gather*}
(u)_\infty=\prod^\infty_{l=0}\big(1-uq^l\big),\qquad (a_1,\dots,a_r)_\infty=(a_1)_\infty\cdots(a_r)_\infty,\\
(u)_\nu = (u)_\infty/\big(uq^\nu\big)_\infty,\qquad (a_1,\dots,a_r)_\nu=(a_1)_\nu\cdots(a_r)_\nu.
\end{gather*}
For generic complex parameters $a_1,\ldots,a_r$ and $b_1,\ldots,b_r$, the bilateral basic hypergeometric series $_r\psi_r$ \cite[equation~(5.1.1)]{GR2004} is defined by
\begin{gather*} 
 {}_r\psi_r \left[
\begin{matrix}
 a_1,\dots,a_r \\
 b_1,\dots,b_r
\end{matrix}; q,x
 \right]
= \sum_{\nu=-\infty}^\infty\frac{(a_1,\dots,a_r)_\nu}{(b_1,\dots,b_r)_\nu}x^\nu,\qquad \text{where}\quad |b_1\cdots b_r/a_1\cdots a_r|<|x|<1.
\end{gather*}
An important summation for $_1\psi_1$ series is Ramanujan's theorem \cite[equation~(5.2.1)]{GR2004},
\begin{gather}\label{eq:1psi1}
 {}_1\psi_1 \left[ \begin{matrix}
 a\\
 b
\end{matrix}; q,x
 \right]=\frac{(ax,q,b/a,q/ax)_\infty}{(x,b,q/a,b/ax)_\infty}.
\end{gather}
There is also a summation formula for a very-well-poised $_6\psi_6$ series due to Bailey \cite[equation~(5.3.1)]{GR2004},
\begin{gather}\label{eq:66}{}_6\psi_6
\left[ \begin{matrix}
 q\sqrt{a},-q\sqrt{a},\, b,\, c,\, d,\, e\, \\
 \sqrt{a}, \, -\sqrt{a},\frac{aq}{b}, \frac{aq}{c}, \frac{aq}{d}, \frac{aq}{e}\end{matrix} ; q, \frac{a^2q}{bcde} \right]=
\frac{\big(aq,\frac{aq}{bc},\frac{aq}{bd},\frac{aq}{be},\frac{aq}{cd},\frac{aq}{ce},\frac{aq}{de},q,\frac{q}{a}\big)_\infty}
{\big(\frac{aq}{b},\frac{aq}{c},\frac{aq}{d},\frac{aq}{e},\frac{q}{b},\frac{q}{c},\frac{q}{d},\frac{q}{e},\frac{a^2q}{bcde}\big)_\infty},
\end{gather}
where $|a^2q/bcde|<1$. Furthermore there are a lot of transformation formulas for the ${}_r\psi_r$ series. In the paper~\cite{Sl52} Slater proved the transformation formula
\begin{gather}
 {}_r\psi_r \left[\begin{matrix} a_1,\dots,a_r \\
 b_1,\dots,b_r \end{matrix}; q,z \right] =\frac
{\big(\frac{c_{1}}{a_{1}},\ldots,\frac{c_{1}}{a_{r}},
c_{2},\ldots,c_{r},
\frac{q}{c_{2}},\ldots,\frac{q}{c_{r}},
\frac{b_{1}q}{c_{1}},\ldots,\frac{b_{r}q}{c_{1}},
Ac_{1}z,\frac{q}{Ac_{1}z} \big)_{\infty }}
{\big(\frac{q}{a_{1}},\ldots,\frac{q}{a_{r}},
\frac{c_{1}}{c_{2}},\ldots,\frac{c_{1}}{c_{r}},
\frac{c_{2}q}{c_{1}},\ldots,\frac{c_{r}q}{c_{1}},
b_{1},\ldots,b_{r}, Azq,\frac{1}{Az}\big)_{\infty }}\nonumber\\
\hphantom{{}_r\psi_r \left[\begin{matrix} a_1,\dots,a_r \\
 b_1,\dots,b_r \end{matrix}; q,z \right] =}{}\times\, {}_{r}\psi _{r}\left[
\begin{matrix}	\frac{a_{1}q}{c_{1}},\ldots,\frac{a_{r}q}{c_{1}}\vspace{1mm}\\
		\frac{b_{1}q}{c_{1}},\ldots,\frac{b_{r}q}{c_{1}}\end{matrix};q,z\right] + {\rm idem} (c_{1};c_{2},\ldots,c_{r}),\label{eq:Slater}
\end{gather}
where $A=a_{1}\cdots a_{r}/c_{1}\cdots c_{r}$ and $|b_1\cdots b_r/a_1\cdots a_r|<|z|<1$, which is called the {\it Slater's general transformation for a~${}_{r}\psi _{r}$ series} \cite[equation~(5.4.3)]{GR2004}. Here the symbol ``idem$(c_{1};c_{2},\ldots,c_{r})$'' stands for the sum of the $r-1$ expressions obtained from the preceding one by interchan\-ging~$c_{1}$ with each $c_{k}$, $k=2,\ldots,r$. Slater also proved a~transformation formula for a very-well-poised~${}_{2r}\psi_{2r}$ series, which is expressed as
\begin{gather}
{}_{2r} \psi _{2r}\left[
\begin{matrix}
 q\sqrt{a}, -q\sqrt{a}, b_{3}, \ldots, b_{2r} \\
 \sqrt{a}, -\sqrt{a}, \frac{aq}{b_{3}}, \ldots, \frac{aq}{b_{2r}}\end{matrix}; q,\frac{a^{r-1}q^{r-2}}{b_{3}\ldots b_{2r}}\right]\nonumber\\
\quad{} =\frac{	\big(a_{4},\ldots,a_{r},\frac{q}{a_{4}},\ldots,\frac{q}{a_{r}},
	\frac{a_{4}}{a},\ldots,\frac{a_{r}}{a},\frac{aq}{a_{4}},\ldots,\frac{aq}{a_{r}},
	\frac{a_{3}q}{b_{3}},\ldots,\frac{a_{3}q}{b_{2r}},\frac{aq}{a_{3}b_{3}},\ldots,\frac{aq}{a_{3}b_{2r}}\big)_{\infty}	}
	{\big(\frac{q}{b_{3}},\ldots,\frac{q}{b_{2r}},\frac{aq}{b_{3}},\ldots,\frac{aq}{b_{2r}},
	\frac{a_{4}}{a_{3}},\ldots,\frac{a_{r}}{a_{3}},\frac{a_{3}q}{a_{4}},\ldots,\frac{a_{3}q}{a_{r}},
	\frac{a_{3}a_{4}}{a},\ldots,\frac{a_{3}a_{r}}{a},\frac{aq}{a_{3}a_{4}},\ldots,\frac{aq}{a_{3}a_{r}}\big)_{\infty}}\nonumber\\
\quad{} \times
\frac{\big(aq,\frac{q}{a}\big)_{\infty}}{\big(\frac{a_{3}^2q}{a},\frac{aq}{a_{3}^2}\big)_{\infty}} \,{}_{2r}\psi _{2r}\left[
\begin{matrix}
\frac{qa_{3}}{\sqrt{a}},-\frac{qa_{3}}{\sqrt{a}},\frac{a_{3}b_{3}}{a},\ldots,\frac{a_{3}b_{2r}}{a}\\
\frac{a_{3}}{\sqrt{a}},-\frac{a_{3}}{\sqrt{a}},\frac{a_{3}q}{b_{3}},\ldots,\frac{a_{3}q}{b_{2r}}
		\end{matrix};q,	\frac{a^{r-1}q^{r-2}}{b_{3}\ldots b_{2r}}\right] + {\rm idem} (a_{3};a_{4},\ldots,a_{r}),\!\!\!\!\label{eq:2r2r}
\end{gather}
where $|a^{r-1}q^{r-2}/b_3\cdots b_{2r}|<1$, which is called {\it Slater's transformation for a very-well-poised balanced ${}_{2r}\psi_{2r}$ series} \cite[equation~(5.5.2)]{GR2004}.

The aim of this paper is to give an explanation and a generalization of Ramanujan's $_1\psi_1$ summation \eqref{eq:1psi1} and Slater's $_r\psi_r$ transformation (\ref{eq:Slater}) in relation to the Selberg integral~\cite{Se44} (see also the recent sources \cite{Fo10, FW08}), i.e.,
\begin{gather}
\frac{1}{n!}\int_{0}^{1}\cdots\int_{0}^{1} \prod_{i=1}^n z_i^{\alpha-1}(1-z_i)^{\beta-1}\prod_{1\le j<k\le n}|z_j-z_k|^{2\tau}\,dz_1dz_2\cdots dz_n \nonumber\\
\qquad {} =\prod_{j=1}^n\frac{\Gamma(\alpha+(j-1)\tau)\Gamma(\beta+(j-1)\tau)\Gamma(j\tau)}{\Gamma(\alpha+\beta+(n+j-2)\tau)\Gamma(\tau)}, \label{eq:Selberg}
\end{gather}
where ${\rm Re}(\alpha)>0$, ${\rm Re}(\beta)>0$ and ${\rm Re}(\gamma)>-\min\{1/n, {\rm Re}(\alpha)/(n-1), {\rm Re}(\beta)/(n-1)\}$, which is a multi-dimensional generalization of the evaluation of the Euler beta integral in terms of products of gamma functions. (This is not the first such paper using Selberg integrals to obtain multiple analogues of some of Slater's transformation formulas. For example, the paper~\cite{IN16} about a~generalization for Bailey's very-well-poised~$_6\psi_6$ summation \eqref{eq:66} and Slater's very-well-poised $_{2r}\psi_{2r}$ transformation \eqref{eq:2r2r} with $BC_n$ symmetry, is very similar in spirit and methodology to the current paper.) For this purpose we define some terminology about a $q$-analog of the Selberg type integral. For $z=(z_1,\ldots,z_n)\in (\mathbb{C}^*)^n$, let $\Phi(z)$ be specified by
\begin{gather}
\Phi(z)= \Phi_s(z;\alpha,a,b;q,t)\nonumber\\
\hphantom{\Phi(z)}{} = (z_1z_2\cdots z_n)^\alpha \prod_{i=1}^n\prod_{m=1}^s\frac{\big(qa_m^{-1}z_i\big)_\infty}{(b_m z_i)_\infty}
\prod_{1\le j<k\le n} z_j^{2\tau-1}\frac{\big(qt^{-1}z_k/z_j\big)_\infty}{(t z_k/z_j)_\infty},\label{eq:Phi}
\end{gather}
with $\alpha\in \mathbb{C}$, $a=(a_1,\ldots,a_s)$, $b=(b_1,\ldots,b_s)\in(\mathbb{C}^*)^s$ and $t\in \mathbb{C}^*$, where $\tau\in \mathbb{C}$ is given by $t=q^{\tau}$. For $z=(z_1,\ldots,z_n)\in (\mathbb{C}^*)^n$, let $\Delta(z)$ be the Vandermonde product
\begin{gather}
\Delta(z)=\prod_{1\le j<k\le n}(z_j-z_k). \label{eq:Delta}
\end{gather}
For a function $\varphi=\varphi(z)$ of $z=(z_1,\ldots,z_n)\in(\mathbb{C}^*)^n$, we denote by
\begin{gather}
\langle\varphi, z\rangle= \int_0^{z\infty} \varphi(w)\Phi(w)\Delta(w) \frac{d_qw_1}{w_1}\wedge\cdots\wedge \frac{d_qw_n}{w_n} \nonumber\\
\hphantom{\langle\varphi, z\rangle}{} =(1-q)^n\sum_{\nu\in \mathbb{Z}^n}\varphi(z q^\nu)\Phi(z q^\nu)\Delta(z q^\nu),\label{eq:la-ra}
\end{gather}
the Jackson integral associated with the multiplicative lattice $z q^\nu=(z_1q^{\nu_1},\ldots,z_nq^{\nu_n})\in(\mathbb{C}^*)^n$, $\nu=(\nu_1,\ldots,\nu_n)\in \mathbb{Z}^n$. Note that the sum $\langle\varphi, z\rangle$ is invariant under the shifts $z\to zq^{\nu}$ for all $\nu\in \mathbb{Z}^n$ when it converges. This sum is called the {\em Jackson integral of $A$-type} in the context of~\cite{Ao98}.
For the general setting of Jackson integrals see~\cite{AK91}.

We remark that for $s=1$, $\varphi\equiv 1$ and $z=\big(a_1,a_1 t,\ldots,a_1 t^{n-1}\big)\in (\mathbb{C}^*)^n$, the Jackson integ\-ral~\eqref{eq:la-ra} is expressed as
\begin{gather}
\big\langle 1,\big(a_1,a_1 t,\ldots,a_1 t^{n-1}\big) \big\rangle\nonumber\\
\qquad {}= (1-q)^n\prod_{j=1}^n \big(a_1t^{j-1}\big)^{\alpha+2(n-j)\tau} \frac{(q)_\infty (t)_\infty \big(q^{\alpha}a_1b_1t^{n+j-2}\big)_\infty} {(t^j)_\infty\big(q^{\alpha}t^{j-1}\big)_\infty\big(a_1b_1t^{j-1}\big)_\infty},\label{eq:q-Selberg}
\end{gather}
whose limiting case where $q\to 1$ is equivalent to the Selberg integral \eqref{eq:Selberg}. In this sense the Jackson integral \eqref{eq:la-ra} includes the Selberg integral as a special case. The formula \eqref{eq:q-Selberg} was discovered by Askey \cite{As80} and proved by Habsieger~\cite{Ha88}, Kadell~\cite{Kad88-2}, Evans~\cite{Ev92} and Kaneko~\cite{Kan96} in the case where $\tau$ is a positive integer, while~(\ref{eq:q-Selberg}) for general complex $\tau$ was given by Aomoto~\cite{Ao98}. See~\cite{IF15} for further details. Other closely related and relevant works are \cite{Kad88-1,Kad01,W05}.

We denote by $\mathfrak{S}_n$ the symmetric group of degree $n$; this group acts on the field of meromorphic functions on~$({\mathbb C}^*)^n$ through the permutations of variables $z_1,\ldots,z_n$. Setting
\begin{gather}\label{eq:lla-rra}
\langle\!\langle\varphi, z\rangle\!\rangle=\frac{\langle\varphi, z\rangle}{\Theta(z)},\qquad \Theta(z)=\prod_{i=1}^n \frac{z_i^\alpha}{\prod\limits_{m=1}^{s}\theta(b_mz_i)} \prod_{1\le j<k\le n} \frac{z_j^{2\tau}\theta(z_j/z_k)}{\theta(tz_j/z_k)},
\end{gather}
where $\theta(u)=(u)_\infty \big(qu^{-1}\big)_\infty$, we have the following.

\begin{Lemma}\label{lem:sym-hol} Let $\varphi(z)$ be an $\frak{S}_n$-invariant holomorphic function on $(\mathbb{C}^*)^n$, and suppose that the Jackson integral $\langle\varphi, z\rangle$ of~\eqref{eq:la-ra} converges as a meromorphic function on $(\mathbb{C}^*)^n$. Then the function $f(z)=\langle\!\langle\varphi, z\rangle\!\rangle$ defined by \eqref{eq:lla-rra} is an $\frak{S}_n$-invariant holomorphic function on $(\mathbb{C}^*)^n$. Furthermore it satisfies the quasi-periodicity $T_{q,z_i}f(z)=f(z)/(-z_i)^s q^\alpha t^{n-1}b_1\cdots b_s$ for $i=1,\ldots,n$, where $T_{q,z_i}$ stands for the $q$-shift operator in $z_i$.
\end{Lemma}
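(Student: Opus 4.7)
My plan is to verify the three assertions of the lemma---quasi-periodicity, $\mathfrak{S}_n$-invariance, and holomorphicity---using standard theta-function identities ($\theta(qu)=-\theta(u)/u$ and $\theta(1/u)=-\theta(u)/u$) together with the shift-invariance of the bilateral Jackson integral.

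I would begin with the quasi-periodicity, which is the most direct. The bilateral sum $\langle\varphi,z\rangle$ is invariant under each $T_{q,z_i}$: the shift $z_i\mapsto qz_i$ is absorbed by reindexing $\nu_i\mapsto\nu_i-1$ in the summation. Hence $T_{q,z_i}f(z)=f(z)\,\Theta(z)/T_{q,z_i}\Theta(z)$, and the claim reduces to the explicit evaluation of $T_{q,z_i}\Theta(z)/\Theta(z)$. Working factor-by-factor from $\theta(qu)=-\theta(u)/u$: $z_i^\alpha$ contributes $q^\alpha$; each $\theta(b_mz_i)^{-1}$ contributes $(-z_i)b_m$, accumulating to $(-z_i)^sb_1\cdots b_s$; and the pair factors in the double product involving index $i$ (both with $j=i$ and with $k=i$) combine to $t^{n-1}$. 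This yields the stated formula.

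For the $\mathfrak{S}_n$-invariance---the most delicate part---I consider a transposition $\sigma=s_{j,k}$. Using the symmetry of $\varphi$ and the identity $\sigma(z)\cdot q^\nu=\sigma(z\cdot q^{\sigma^{-1}\nu})$ to reindex, I would rewrite
\begin{equation*}
\langle\varphi,\sigma z\rangle=(1-q)^n\sum_\mu\varphi(zq^\mu)\,[\Phi\Delta]\bigl(\sigma(zq^\mu)\bigr).
\end{equation*}
The plan is then to verify two facts about $G(w):=[\Phi\Delta](\sigma w)/[\Phi\Delta](w)$: $(a)$ that $G(w)$ is $q$-periodic in each $w_i$, so it can be pulled out of the summation, and $(b)$ that $G(z)$ equals $\Theta(\sigma z)/\Theta(z)$. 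Assertion $(a)$ reduces to a direct $\theta(qu)=-\theta(u)/u$ calculation in which the $q$-shift of the asymmetric Pochhammer factors in $\Phi$ exactly cancels the shift of $z_j^{2\tau-1}$ and of the sign of $\Delta$. Assertion $(b)$ uses $\theta(1/u)=-\theta(u)/u$ to match the resulting theta ratio with that coming from the asymmetric factors in $\Theta$. Granting both, $\langle\varphi,\sigma z\rangle=G(z)\langle\varphi,z\rangle$ and $f(\sigma z)=f(z)$.

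Finally, for holomorphicity, the apparent poles of $1/\Theta(z)$ on $(\mathbb{C}^*)^n$ lie along the divisors $z_j=q^\ell z_k$ coming from zeros of $\theta(z_j/z_k)$. By the quasi-periodicity and $\mathfrak{S}_n$-symmetry already established, all such divisors form one orbit under the combined $q$-shift and Weyl-group action, so holomorphicity along $z_j=z_k$ suffices. There, pairing summands $\nu\leftrightarrow s_{j,k}(\nu)$ and exploiting the sign change of $\Delta$ together with the symmetry of $\varphi$, the combined contribution vanishes to the order needed to cancel the simple pole of $1/\theta(z_j/z_k)$. The main obstacle throughout is assertion $(b)$ in the symmetry step---showing that the asymmetric factors in $\Phi$, $\Delta$, and $\Theta$ conspire so precisely; once the theta-function bookkeeping is in place, the remaining steps follow by routine manipulation.
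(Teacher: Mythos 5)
Your proposal is correct and follows essentially the same route as the paper: quasi-periodicity by computing $T_{q,z_i}\Theta(z)/\Theta(z)$ directly, symmetry by extracting the $q$-periodic cocycle $U_\sigma(z)=[\Phi\Delta](\sigma z)/[\Phi\Delta](z)$ from the bilateral sum and matching it with $\Theta(\sigma z)/\Theta(z)$, and holomorphicity by showing $\langle\varphi,z\rangle$ vanishes along $z_j=z_k$ (the paper's Lemma~\ref{lem:I-vanishing}, via $(1+U_\sigma)\langle\varphi,z\rangle=0$, which is the global form of your term-pairing) and propagating to $z_j=q^\ell z_k$ by $q$-shift invariance. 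The only point worth adding is that the vanishing step requires $\tau\notin\mathbb{Z}$, a genericity hypothesis the paper states explicitly in Lemmas~\ref{lem:I-vanishing} and~\ref{lem:lla-rra}.
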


For the proof of this lemma, see Lemma \ref{lem:lla-rra}.

We call $\langle\!\langle\varphi, z\rangle\!\rangle$ the {\it regularized Jackson integral of $A$-type}, which is the main object of this paper. We remark that, when $\varphi(z)$ is a symmetric polynomial such that $\deg_{z_i}\varphi(z) \le s-1$, $i=1,\ldots,n$, it satisfies the condition of Lemma~\ref{lem:sym-hol} under the assumption~\eqref{eq:condition:converge01} below.

In order to state our first main theorem we define some terminology. We set
\begin{gather}\label{eq:Zsn}
Z=Z_{s,n}=\big\{\mu=(\mu_1,\mu_2,\ldots,\mu_s)\in \mathbb{N}^s\,|\,\mu_1+\mu_2+\cdots+\mu_s=n\big\},
\end{gather}
where $\mathbb{N}=\{0,1,2,\ldots\}$, so that $|Z_{s,n}|={s+n-1\choose n}$. We use the symbol $\preceq$ for the lexicographic order on $Z_{s,n}$.
Namely, for $\mu,\nu\in Z_{s,n}$, we denote $\mu\prec\nu$ if there exists $k\in\{1,2,\ldots,n\}$ such that $\mu_1=\nu_1,\mu_2=\nu_2,\ldots,\mu_{k-1}=\nu_{k-1}$ and $\mu_k<\nu_k$. For an arbitrary $x=(x_1,x_2,\ldots,x_s)\in (\mathbb{C}^*)^s$, we consider the points $x_\mu$ $(\mu \in Z_{s,n})$ in $(\mathbb{C}^*)^n$ specified by
\begin{gather}\label{eq:x_mu}
x_\mu=\big(\underbrace{x_1,x_1t,\ldots,x_1t^{\mu_1-1}\vphantom{\Big|}}_{\mu_1},\underbrace{x_2,x_2t,\ldots,x_2t^{\mu_2-1}\vphantom{\Big|}}_{\mu_2},\ldots, \underbrace{x_s,x_st,\ldots,x_st^{\mu_s-1}\vphantom{\Big|}}_{\mu_s}\big)
\in (\mathbb{C}^*)^n.
\end{gather}

\begin{Theorem}[connection formula] \label{thm:connection} Suppose that $\varphi(z)$ is an $\frak{S}_n$-invariant holomorphic function satisfying the condition of Lemma~{\rm \ref{lem:sym-hol}}. Then, for generic $x\in (\mathbb{C}^*)^s$ we have
\begin{gather}\label{eq:connection}
\langle\!\langle \varphi, z\rangle\!\rangle=\sum_{\lambda\in Z_{s,n}}c_\lambda\,\langle\!\langle\varphi, x_\lambda\rangle\!\rangle,
\end{gather}
where the connection coefficients $c_\lambda$ are explicitly written as
\begin{gather}
c_\lambda=\sum_{\substack{K_1\sqcup\cdots\sqcup K_s \\ =\{1,2,\ldots,n\}}} \prod_{i=1}^s\prod_{k\in K_i}\Bigg[\frac{\theta\Big(q^\alpha b_1\cdots b_s t^{n-1}z_k\prod\limits_{1\le l\le s\atop l\ne i}x_lt^{\lambda_l^{(k-1)}}\Big)}{\theta\Big(q^\alpha b_1\cdots b_s t^{n-1}\prod\limits_{l=1}^s x_lt^{\lambda_l^{(k-1)}}\Big)}\nonumber\\
\hphantom{c_\lambda=\sum_{\substack{K_1\sqcup\cdots\sqcup K_s \\ =\{1,2,\ldots,n\}}} \prod_{i=1}^s\prod_{k\in K_i}}{}\times
\prod_{\substack{1\le j\le s\\ j\ne i}}\frac{\theta\big(z_{k}\,x_j^{-1}t^{-\lambda_j^{(k-1)}}\big)}{\theta\big(x_it^{\lambda_i^{(k-1)}}x_j^{-1}t^{-\lambda_j^{(k-1)}}\big)}\Bigg],\label{eq:E-explicit-0}
\end{gather}
where $\lambda_i^{(k)}=|K_i\cap\{1,2,\ldots,k\}|$, and the summation is taken over all partitions $K_1\sqcup\cdots\sqcup K_s=\{1,2,\ldots,n\}$ such that $|K_i|=\lambda_i$, $i=1,2,\ldots,s$.
\end{Theorem}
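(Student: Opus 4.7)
The plan is to reduce the connection formula to an elliptic Lagrange interpolation statement in a finite-dimensional space of symmetric theta-like functions. By Lemma~\ref{lem:sym-hol}, $f(z):=\langle\!\langle\varphi,z\rangle\!\rangle$ lies in the space $V$ of $\mathfrak{S}_n$-invariant holomorphic functions on $(\mathbb{C}^*)^n$ with the quasi-periodicity $T_{q,z_i}f(z)=f(z)/((-z_i)^s q^\alpha t^{n-1}b_1\cdots b_s)$. I would proceed in two stages: first, show that $\dim V=|Z_{s,n}|=\binom{s+n-1}{n}$ and that the evaluation map $V\to\mathbb{C}^{Z_{s,n}}$, $g\mapsto (g(x_\lambda))_\lambda$, is a linear isomorphism for generic $x$; second, identify the explicit expression $c_\lambda(z)$ of~\eqref{eq:E-explicit-0} with the dual basis $E_\lambda(z)$ characterized by $E_\lambda(x_\mu)=\delta_{\lambda\mu}$. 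The connection formula~\eqref{eq:connection} is then simply the expansion $f(z)=\sum_\lambda f(x_\lambda)E_\lambda(z)$ applied to $f\in V$.

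The dimension $\dim V=\binom{s+n-1}{n}$ I would establish by the standard argument for symmetric theta functions on the torus $(\mathbb{C}^*/q^{\mathbb{Z}})^n$: a basis of the non-symmetric $s^n$-dimensional space of quasi-periodic functions is given by tensor products of one-variable theta bases of dimension~$s$, and $\mathfrak{S}_n$-invariants correspond to symmetric tensors, producing the multiset count $\binom{s+n-1}{n}$. Nondegeneracy of evaluation at $\{x_\lambda\}_{\lambda\in Z_{s,n}}$ for generic $x$ then follows by the dimension match, or equivalently from the interpolation property $c_\lambda(x_\mu)=\delta_{\lambda\mu}$ proved below (which makes the $c_\lambda$ manifestly linearly independent).

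For the identification with $c_\lambda$, the key observation is that each summand in~\eqref{eq:E-explicit-0} has the structure of an iterated one-variable elliptic Lagrange interpolation. Fix an ordered partition $K_1\sqcup\cdots\sqcup K_s=\{1,\ldots,n\}$ with $|K_i|=\lambda_i$. For $k\in K_i$, viewed as a function of $z_k$ the bracketed factor is a one-variable elliptic Lagrange basis element at the $s$ nodes $\{x_jt^{\lambda_j^{(k-1)}}\}_{j=1}^s$: the product $\prod_{j\neq i}\theta(z_kx_j^{-1}t^{-\lambda_j^{(k-1)}})/\theta(x_it^{\lambda_i^{(k-1)}}x_j^{-1}t^{-\lambda_j^{(k-1)}})$ vanishes at all nodes except $z_k=x_it^{\lambda_i^{(k-1)}}$, where it equals~$1$, while the leading theta ratio involving $q^\alpha b_1\cdots b_s t^{n-1}$ supplies exactly the quasi-periodicity needed for the whole product over $k$ to lie in $V$. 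Multiplying over $k=1,\dots,n$ corresponds to assigning each $z_k$ successively to a slot $x_it^{\lambda_i^{(k-1)}}$; summing over ordered partitions with block sizes $\lambda_i$ produces a function symmetric in $z_1,\dots,z_n$.

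It remains to verify (a)~$c_\lambda\in V$, i.e., $\mathfrak{S}_n$-invariance (automatic from the sum over ordered partitions), holomorphicity (the only denominators $\theta(x_it^{\lambda_i^{(k-1)}}x_j^{-1}t^{-\lambda_j^{(k-1)}})$ are $z$-independent and generically nonzero), and the prescribed quasi-periodicity via $\theta(qu)=-u^{-1}\theta(u)$; and (b)~the interpolation property $c_\lambda(x_\mu)=\delta_{\lambda\mu}$. For~(b), specializing $z=x_\mu$ and processing $k=1,2,\dots,n$ in order, the vanishing of $\theta(z_k x_j^{-1}t^{-\lambda_j^{(k-1)}})$ for generic $x$ forces each $z_k$ to be paired with the unique bucket $i$ such that $z_k=x_it^{\lambda_i^{(k-1)}}$; this determines the ordered partition from $\mu$, and the constraint $|K_i|=\lambda_i$ then forces $\lambda=\mu$. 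At the surviving term, substituting $z_k=x_it^{\lambda_i^{(k-1)}}$ collapses both the leading twisting ratio and the remaining Lagrange factor to~$1$ for every $k$. The main obstacle is step~(b): one must carefully track the recursive indices $\lambda_j^{(k-1)}=|K_j\cap\{1,\dots,k-1\}|$ as $k$ increases to see that exactly one partition survives each specialization, and, together with the symmetrization in (a), this requires sustained combinatorial bookkeeping even though each individual cancellation is elementary.
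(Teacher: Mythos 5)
Your overall strategy coincides with the paper's: regularize, observe that $\langle\!\langle\varphi,z\rangle\!\rangle$ lies in the finite-dimensional space $\mathcal{H}_{s,n,\zeta}$ with $\zeta=q^\alpha t^{n-1}b_1\cdots b_s$, build the dual basis $\{E_\lambda(x;\cdot)\}$ with $E_\lambda(x;x_\mu)=\delta_{\lambda\mu}$, and read off \eqref{eq:connection} as the expansion $f=\sum_\lambda f(x_\lambda)E_\lambda$. The dimension bound and the delta property are handled essentially as in the paper (the paper only needs the upper bound $\dim\le\binom{s+n-1}{n}$ from the Laurent-coefficient argument, plus linear independence of the $E_\lambda$; your tensor-product count is a reasonable variant). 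Your step~(b) is also workable: the forced assignment $k\in K_m$ via the vanishing of $\theta(z_kx_j^{-1}t^{-\lambda_j^{(k-1)}})$ at $z=x_\mu$ is exactly the right bookkeeping.

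The genuine gap is in step~(a): you assert that $\frak{S}_n$-invariance of the explicit sum \eqref{eq:E-explicit-0} is ``automatic from the sum over ordered partitions.'' It is not. The indices $\lambda_j^{(k-1)}=|K_j\cap\{1,\dots,k-1\}|$ depend on the linear order of $\{1,\dots,n\}$, and a transposition of $z_k$ and $z_{k+1}$ does not permute the summands. Already for $n=s=2$, $\lambda=(1,1)$, the two terms are $E_{\epsilon_1}(x;z_1)E_{\epsilon_2}(xt^{\epsilon_1};z_2)+E_{\epsilon_2}(x;z_1)E_{\epsilon_1}(xt^{\epsilon_2};z_2)$, and swapping $z_1\leftrightarrow z_2$ in the first term does not reproduce the second (the shifted arguments $xt^{\epsilon_1}$ versus $xt^{\epsilon_2}$ differ); symmetry of the sum is equivalent to a nontrivial three-term theta addition formula. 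Since membership in $\mathcal{H}_{s,n,\zeta}$ (hence the whole dual-basis argument) requires symmetry, this step needs a real proof. This is precisely what the paper's dual Cauchy kernel is for: it shows $\Psi(z;w)=\sum_\mu E_\mu(x;z)F_\mu(x;w)$ with $\Psi$ manifestly symmetric in $z$, inverts the triangular matrix $\big(F_\mu(x;\eta_\nu(x))\big)$, and writes $E_\lambda(x;z)=\sum_\nu\Psi(z;\eta_\nu(x))G_{\nu\lambda}(x)$, from which symmetry, holomorphy, quasi-periodicity and $E_\lambda(x;x_\mu)=\delta_{\lambda\mu}$ all follow at once. To complete your direct route you would either have to prove the required elliptic partial-fraction identities by hand or import the kernel-function argument; as written, the symmetry claim is unsubstantiated.
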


We call the formula (\ref{eq:connection}) the {\it generalized Slater transformation}. In fact, the connection formula \eqref{eq:connection} for $n=1$ coincides with the transformation formula (\ref{eq:Slater}) if $\varphi(z)\equiv1$, as explained below.

\begin{Example}\label{ex:connection1}
The case $n=1$. Setting $\epsilon_i=(0,\ldots,0,\overset{\text{\tiny$i$}}{\stackrel{\text{\tiny$\smile$}}{1}},0,\ldots,0)$ for $i=1,2,\ldots,s$, we have $Z_{s,1}=\{\epsilon_1,\epsilon_2,\ldots,\epsilon_s\}$ and $x_{\epsilon_i}=x_i\in \mathbb{C}^*$, i.e., we obtain
\begin{gather}\label{eq:connection n=1}
\langle\!\langle \varphi,z\rangle\!\rangle=\sum_{i=1}^s c_{\epsilon_i}\langle\!\langle \varphi,x_i\rangle\!\rangle,\qquad \text{where}\quad
c_{\epsilon_i}=\frac{\theta(q^{\alpha}b_1\cdots b_s x_1\cdots x_s z/x_i)} {\theta(q^{\alpha}b_1\cdots b_s x_1\cdots x_s)}\prod_{\substack{1\le j\le s\\ j\ne i}}\frac{\theta(z/x_j)}{\theta(x_i/x_j)}.\!\!\!\!
\end{gather}
The formula \eqref{eq:connection n=1} for $\varphi\equiv 1$ (which was first stated by Mimachi \cite[theorem in Section~4]{Mim89}) coincides with Slater's transformation in~(\ref{eq:Slater}). For $z=a_i$, $i=1,\ldots, s$, and $x_j=b_j^{-1}$, $j=1,\ldots, s$, it was given by Aomoto and Kato \cite[equation~(2.12) in Corollary~2.1]{AK94-1}. For further details about the formula~(\ref{eq:connection n=1}), see also \cite[p.~255, Theorem~5.1 in the Appendix]{IS08}.
\end{Example}

\begin{Example} The case $s=1$. We have $Z_{1,n}=\{(n)\}$, which has only one element, and have
$x_{(n)}=(x,x t,\ldots, x t^{n-1})$ for $x\in \mathbb{C}^*$. Then
\begin{gather}\label{eq:connection s=1}
\langle\!\langle \varphi,z\rangle\!\rangle=c_{(n)}\langle\!\langle \varphi,x_{(n)}\rangle\!\rangle,
\qquad\mbox{where}\quad c_{(n)}=\prod_{i=1}^n \frac{\theta\big(q^{\alpha}b_1t^{n-1}z_i\big)}{\theta\big(q^{\alpha}b_1t^{n-1}xt^{i-1}\big)}.
\end{gather}
The formula (\ref{eq:connection s=1}) for $\varphi\equiv 1$ was given by Aomoto in~\cite{Ao98}. See also \cite[Lemma~3.6]{IF15}.
\end{Example}

As we will see below the connection coefficients $c_\mu$ in (\ref{eq:E-explicit-0}) are characterized, as functions of $z\in (\mathbb{C}^*)^n$,
by an interpolation property.

We next apply the connection formula (\ref{eq:connection}) in Theorem \ref{thm:connection} to establish a determinant formula associated with the Jackson integral~\eqref{eq:la-ra}. Let $B_{s,n}$ be the set of partitions defined by
\begin{gather}\label{eq:Bsn}
B=B_{s,n}=\big\{\lambda=(\lambda_1,\lambda_2,\ldots, \lambda_n)\in \mathbb{Z}^n\,|\,s-1\ge\lambda_1\ge\lambda_2\ge \cdots\ge \lambda_n\ge 0\big\},
\end{gather}
so that $|B_{s,n}|={s+n-1\choose n}$. We also use the symbol $\preceq$ for the lexicographic order of $B_{s,n}$. For each $\lambda\in B_{s,n}$, we denote by $s_\lambda(z)$ the {\em Schur function}
\begin{gather*}
s_\lambda(z)=\frac{\det\big(z_i^{\lambda_j+n-j}\big)_{1\le i,j\le n}} {\det\big(z_i^{n-j}\big)_{1\le i,j\le n}} =\frac{\det\big(z_i^{\lambda_j+n-j}\big)_{1\le i,j\le n}} {\Delta(z)},
\end{gather*}
which is a $\frak{S}_n$-invariant polynomial. Our second main theorem is the following.
\begin{Theorem}[determinant formula]\label{thm:Wronski} Suppose that $x\in (\mathbb{C}^*)^s$ is generic. Then we have
\begin{gather}
\det\big(\langle\!\langle s_\lambda, x_\mu \rangle\!\rangle\big)_{\!\lambda\in B\atop\!\mu\in Z}
 =C \prod_{k=1}^n\big[\theta\big(q^\alpha x_1\cdots x_s b_1\cdots b_s t^{n+k-2}\big)\big]^{s+k-2\choose k-1}\nonumber\\
\hphantom{\det\big(\langle\!\langle s_\lambda, x_\mu \rangle\!\rangle\big)_{\!\lambda\in B\atop\!\mu\in Z}=}{}
\times\prod_{k=1}^n\left[\prod_{r=0}^{n-k}\prod_{1\le i<j\le s}x_jt^r\theta\big(x_ix_j^{-1}t^{n-k-2r}\big)\right]^{s+k-3\choose k-1},\label{eq:Wronski}
\end{gather}
where the rows and the columns of the matrix are arranged by the lexicographic orders of $\lambda\in B_{s,n}$ and $\mu\in Z_{s,n}$, respectively. Here $C$ is a constant independent of $x$, which is explicitly written as
\begin{gather*}
C= \prod_{k=1}^n \left[ \frac{(1-q)^s(q)_\infty^s\big(qt^{-(n-k+1)}\big)_\infty^s \prod\limits_{i,j=1}^{s} \big(qa_i^{-1}b_j^{-1} t^{-(n-k)}\big)_\infty} {\big(qt^{-1}\big)_\infty^s \big(q^\alpha t^{n-k}\big)_\infty\Big(q^{1-\alpha}t^{-(n+k-2)}\prod\limits_{i=1}^{s} a_i^{-1}b_i^{-1}\Big)_\infty}\right]^{s+k-2\choose k-1}.
\end{gather*}
\end{Theorem}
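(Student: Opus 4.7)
The approach is an elliptic Liouville argument: view both sides of \eqref{eq:Wronski} as holomorphic functions of $x=(x_1,\ldots,x_s)\in(\mathbb{C}^*)^s$ with the remaining parameters held fixed, show that they share the same quasi-periodicity under each $T_{q,x_i}$ and the same zero divisor, conclude that the ratio is an $x$-independent constant $C$, and finally pin down $C$ by a degeneration argument.

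For the quasi-periodicity, Lemma~\ref{lem:sym-hol} computes the shift of $\langle\!\langle\varphi,\cdot\rangle\!\rangle$ under $T_{q,z_j}$ with an explicit multiplier. Since the block of coordinates $(x_i, x_i t, \ldots, x_i t^{\mu_i-1})$ sits inside $x_\mu$, the shift $x_i\mapsto qx_i$ amounts to applying Lemma~\ref{lem:sym-hol} exactly $\mu_i$ times to the $\mu$th column of the matrix $(\langle\!\langle s_\lambda, x_\mu\rangle\!\rangle)$. Multiplying these column factors gives an explicit $q$-shift rule for the LHS; using $\theta(qu)=-u^{-1}\theta(u)$, I match it with the transformation of the RHS by grouping columns according to the values of $\mu_i$, so the combinatorial weights $\binom{s+k-2}{k-1}$ and $\binom{s+k-3}{k-1}$ appear naturally from the counts of compositions with prescribed component.

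The zero-divisor analysis is the main obstacle. At the loci $x_i = x_j t^{-(n-k-2r)}$ (with $i<j$ and $0\le r\le n-k$), blocks inside $x_\mu$ collide for certain $\mu\in Z_{s,n}$; combined with the $\mathfrak{S}_n$-invariance of the Schur functions and the explicit form of the connection coefficients \eqref{eq:E-explicit-0}, this forces nontrivial linear relations among the columns. The plan is to count the number of independent such relations by reducing a collision at this locus to a determinant problem of type $(s-1, n-k)$, yielding the exponent $\binom{s+k-3}{k-1}$ on the RHS. A parallel argument at the hyperplanes $q^\alpha b_1\cdots b_s x_1\cdots x_s t^{n+k-2}\in q^{\mathbb{Z}}$ treats the case where the quasi-periodic shift factor itself degenerates: here the vanishing comes not from column collisions but from the quasi-period obstruction forcing each column to absorb a common factor, yielding the exponent $\binom{s+k-2}{k-1}$ of the first product. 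The delicate step is showing that these two families account for the \emph{entire} zero divisor, which will be verified by matching the total degree against the quasi-period count from the previous paragraph.

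Once the divisor and quasi-period match the RHS, the ratio of the two sides is a $q^{\mathbb{Z}}$-invariant holomorphic function of each $x_i$ on $\mathbb{C}^*$, hence a constant $C$. To extract $C$ I would induct on $n$: a degeneration $x_s\to 0$ together with an appropriate rescaling decouples the $\mu_s$-block from the rest, reducing the $(s,n)$-determinant to an $(s,n-1)$-analog times a rank-one factor that can be evaluated via Example~\ref{ex:connection1} and the $q$-Selberg integral \eqref{eq:q-Selberg}. The Pascal identity $\binom{s+k-2}{k-1}=\binom{s+k-3}{k-1}+\binom{s+k-3}{k-2}$ makes the recursion for $C$ close into exactly the stated product form, with the base case $n=1$ already contained in Example~\ref{ex:connection1}.
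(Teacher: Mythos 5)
Your proposal takes a genuinely different route from the paper, but as it stands it has gaps that are not merely technical. The paper does not argue by divisor matching in $x$ at all: it first uses the connection formula \eqref{eq:connection2} to factor $\det\big(\langle\!\langle s_\lambda,x_\nu\rangle\!\rangle\big)=\det\big(\langle\!\langle s_\lambda,a_\mu\rangle\!\rangle\big)\det\big(E_\mu(a;x_\nu)\big)$, so that the entire $x$-dependence is carried by the transition determinant of Theorem~\ref{thm:detE} (proved by the triangularity Lemma~\ref{lem:E-triangular}), and then evaluates the remaining $x$-independent factor (Lemma~\ref{lem:Wronski-a}) by deriving a first-order $q$-difference equation in $\alpha$ (Proposition~\ref{prop:q-difference}, Corollary~\ref{cor:q-difference}) and computing the $\alpha\to+\infty$ asymptotics of the truncated integrals at $z=a_\mu$ (Lemma~\ref{lem:<m, a>}, Section~\ref{Section6}).

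The first serious gap is the zero divisor. Your quasi-periodicity and total degree count do check out, and the column coincidences at $x_i=x_jt^{n-k-2r}$ do force vanishing; but multiplicity $\binom{s+k-3}{k-1}$ requires exhibiting that many independent column relations, which you only gesture at. Worse, for the loci $q^\alpha b_1\cdots b_sx_1\cdots x_st^{n+k-2}\in q^{\mathbb{Z}}$ the phrase ``quasi-period obstruction forcing each column to absorb a common factor'' is not an argument: the quasi-period multiplier only constrains the product of the zeros of a theta-like function, and since the first product in \eqref{eq:Wronski} places zeros at $n$ \emph{distinct} points per annulus with varying multiplicities, this constraint cannot locate them. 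The actual mechanism is that at these loci the evaluation map $\mathcal{H}_{s,n,\zeta}\to\mathbb{C}^{Z_{s,n}}$, $f\mapsto(f(x_\nu))_\nu$, drops rank (every row of your matrix lies in its image by Lemma~\ref{lem:sym-hol}), and proving that rank drop with the correct multiplicity is precisely the content of Theorem~\ref{thm:detE} — so you would be re-deriving the interpolation-determinant formula rather than avoiding it. The second gap is the constant: $T_{q,x_s}\langle\!\langle\varphi,z\rangle\!\rangle=\langle\!\langle\varphi,z\rangle\!\rangle/(-z_i)^sq^\alpha t^{n-1}b_1\cdots b_s$ means the regularized integral has no limit as $x_s\to 0$ (it oscillates multiplicatively along $q^{\mathbb{N}}x_s$), so the proposed degeneration is not well defined; and in any case no argument purely in the $x$-variables can produce the $\alpha$-dependent infinite products $(q^\alpha t^{n-k})_\infty$, $(qa_i^{-1}b_j^{-1}t^{-(n-k)})_\infty$, etc.\ appearing in $C$ — these arise in the paper only from the difference equation in $\alpha$ together with the boundary asymptotics at the truncation points $x=a$. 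You would need to replace your final step by an evaluation of that kind.
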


\begin{remark*}
When $s=1$, the determinant formula (\ref{eq:Wronski}), combined with the connection formula $\langle\!\langle 1,z\rangle\!\rangle=c_{(n)} \langle\!\langle 1,x_{(n)}\rangle\!\rangle$ of (\ref{eq:connection s=1}), implies the summation formula
\begin{gather*}
\langle\!\langle 1,z\rangle\!\rangle= (1-q)^n \prod_{j=1}^{n} \frac{(q)_\infty\big(qt^{-j}\big)_\infty\big(qa_1^{-1}b_1^{-1}t^{-(j-1)}\big)_\infty
}{\big(qt^{-1}\big)_\infty\big(q^{\alpha}t^{j-1}\big)_\infty\big(q^{1-\alpha}a_1^{-1}b_1^{-1}t^{-(n+j-2)}\big)_\infty} \theta\big(q^{\alpha}b_1t^{n-1}z_j\big),
\end{gather*}
for the Jackson integral $\langle\!\langle 1,z\rangle\!\rangle$ for an arbitrary $z\in (\mathbb{C}^*)^n$. This formula for $n=1$ coincides with Ramanujan's formula (\ref{eq:1psi1}). This is another multi-dimensional bilateral extension of Ramanujan's $_1\psi_1$ summation theorem, which is different from the Milne--Gustafson summation theo\-rem~\cite{Gu87,Mil86}. Another class of extension relates to the theory of Macdonald polynomials; see for example \cite{Kan96,Kan98,MS02,W05} as cited in~\cite{W13}.
\end{remark*}

In this paper, we prove Theorems \ref{thm:connection} and \ref{thm:Wronski} from the viewpoint of the elliptic Lagrange interpolation functions of type $A_n$. Let $\mathcal{O}(({\mathbb C}^*)^n)$ be the $\mathbb{C}$-vector space of holomorphic functions on $({\mathbb C}^*)^n$. In view of Lemma~\ref{lem:sym-hol}, fixing a constant $\zeta\in \mathbb{C}^*$ we consider the $\mathbb{C}$-linear subspace $\mathcal{H}_{s,n,\zeta}\subset \mathcal{O}(({\mathbb C}^*)^n$ consisting of all $\frak{S}_n$-invariant holomorphic functions $f(z)$ such that $T_{q,z_i}f(z)=f(z)/(-z_i)^s\zeta$ for $i=1,\ldots,n$, where $T_{q,z_i}$ stands for the $q$-shift operator in~$z_i$:
\begin{gather}\label{eq:def-Hsn}
\mathcal{H}_{s,n,\zeta}=\big\{f(z)\in \mathcal{O}((\mathbb{C}^*)^n)^{\frak{S}_n}\,|\, T_{q,z_i}f(z)=f(z)/(-z_i)^s \zeta \ \text{for} \ i=1,2,\ldots,n\big\}.
\end{gather}
The dimension of $\mathcal{H}_{s,n,\zeta}$ as a $\mathbb{C}$-vector space will be shown to be $n+s-1\choose n$ in Section \ref{Section2}. Moreover,

\begin{Theorem}\label{thm:MainThm-interpolation} For generic $x\in (\mathbb{C}^*)^s$ there exists a unique $\mathbb{C}$-basis $\{E_\mu(x;z)\,|\,\mu\in Z_{s,n}\}$ of the space $\mathcal{H}_{s,n,\zeta}$ such that
\begin{gather}\label{eq:E-delta}
E_\mu(x;x_\nu)=\delta_{\mu\nu} \qquad \text{for} \quad \mu,\nu\in Z_{s,n},
\end{gather}
where $x_\nu\in (\mathbb{C}^*)^n$, $\nu\in Z_{s,n}$, are the reference points specified by~\eqref{eq:x_mu} and $\delta_{\lambda\mu}$ is the Kronecker delta.
\end{Theorem}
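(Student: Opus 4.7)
The plan is to reduce Theorem~\ref{thm:MainThm-interpolation} to the non-vanishing of an evaluation determinant, and then to verify that the right-hand side of the explicit formula \eqref{eq:E-explicit-0} from Theorem~\ref{thm:connection} provides the required interpolation basis. Once one has the dimension count $\dim_{\mathbb{C}} \mathcal{H}_{s,n,\zeta} = \binom{n+s-1}{n} = |Z_{s,n}|$ promised for Section~\ref{Section2}, the existence and uniqueness assertion becomes equivalent to saying that for generic $x \in (\mathbb{C}^*)^s$ the evaluation map
\[
\mathrm{ev}_x \colon \mathcal{H}_{s,n,\zeta} \longrightarrow \mathbb{C}^{Z_{s,n}}, \qquad f \longmapsto (f(x_\nu))_{\nu \in Z_{s,n}},
\]
is a $\mathbb{C}$-linear isomorphism. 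Since source and target have the same finite dimension, this reduces to showing that $D(x) := \det(\phi_\lambda(x_\mu))_{\lambda,\mu}$ is not identically zero for some chosen basis $\{\phi_\lambda\}_{\lambda \in Z_{s,n}}$ of $\mathcal{H}_{s,n,\zeta}$; because $D(x)$ is holomorphic in $x$, it suffices either to exhibit a single $x$ where $D \ne 0$ or to produce the $E_\mu$ directly.

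I would take the latter route and \emph{define} $E_\mu(x; z)$ to be the expression obtained from the right-hand side of \eqref{eq:E-explicit-0} by reading $z = (z_1, \ldots, z_n)$ as the argument and $x = (x_1, \ldots, x_s)$ as parameters, with $\zeta = q^\alpha b_1 \cdots b_s t^{n-1}$. Three of the required properties are then visible from the formula: $\mathfrak{S}_n$-invariance in $z$ follows from the symmetrization over all set-partitions $K_1 \sqcup \cdots \sqcup K_s$ of $\{1,\ldots,n\}$; holomorphy in $z$ is automatic, because every denominator theta factor depends only on the parameters $x, t$, not on $z$; and the quasi-periodicity $T_{q, z_k} E_\mu = E_\mu / ((-z_k)^s \zeta)$ follows summand-by-summand from $\theta(q u) = -u^{-1} \theta(u)$ applied to the $s$ numerator theta factors that contain $z_k$ (no denominator factor involves $z_k$). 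In particular, each $E_\mu$ lies in $\mathcal{H}_{s,n,\zeta}$.

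The main obstacle is the interpolation property $E_\mu(x; x_\nu) = \delta_{\mu\nu}$, from which linear independence of the $\{E_\mu\}$ (hence basis, by the dimension count) and the isomorphism of $\mathrm{ev}_x$ (hence uniqueness of any such basis) follow formally. Evaluating at $z = x_\nu$, every coordinate $z_k$ equals $x_{j_k} t^{a_k}$ with $(j_k, a_k)$ determined by the block structure of $x_\nu$; in each summand indexed by $(K_1, \ldots, K_s)$, a factor $\theta(z_k x_j^{-1} t^{-\mu_j^{(k-1)}}) = \theta(t^{a_k - \mu_j^{(k-1)}})$ arises whenever $k \in K_i$ with $j \ne i$, and this vanishes when $a_k = \mu_j^{(k-1)}$ (for generic $t$). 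A recursive combinatorial argument on $k = 1, 2, \ldots, n$, tracking which set-partitions avoid producing such a vanishing off-diagonal factor, should show that when $\mu \ne \nu$ every summand contains such a vanishing factor, while for $\mu = \nu$ only the canonical partition $K_i = \{\nu_1 + \cdots + \nu_{i-1} + 1, \ldots, \nu_1 + \cdots + \nu_i\}$ survives, and a telescoping cancellation in its remaining numerator and denominator theta factors collapses it to $1$.
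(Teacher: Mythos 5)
Your overall reduction --- bound the dimension of $\mathcal{H}_{s,n,\zeta}$, exhibit functions $E_\mu$ in that space with $E_\mu(x;x_\nu)=\delta_{\mu\nu}$, and deduce the basis and uniqueness statements from the resulting isomorphism of the evaluation map --- is exactly the paper's logic (the upper bound $\dim_{\mathbb{C}}\mathcal{H}_{s,n,\zeta}\le\binom{n+s-1}{n}$ is Lemma~\ref{lem:dim=<}, proved by a Laurent-coefficient count; note that only the inequality is available a priori, equality being a consequence of the construction). Your quasi-periodicity and holomorphy checks on the explicit formula are correct. The genuine gap is the claim that $\frak{S}_n$-invariance in $z$ ``follows from the symmetrization over all set-partitions'': it does not. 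The factor attached to $k\in K_i$ involves the exponents $\lambda_l^{(k-1)}=|K_l\cap\{1,\dots,k-1\}|$, which depend on the linear order of $\{1,\dots,n\}$, so permuting the $z$-variables does not permute the summands among themselves. Already for $n=s=2$ and $\lambda=(1,1)$, the symmetry of $E_{\epsilon_1}(x;z_1)E_{\epsilon_2}\big(xt^{\epsilon_1};z_2\big)+E_{\epsilon_2}(x;z_1)E_{\epsilon_1}\big(xt^{\epsilon_2};z_2\big)$ under $z_1\leftrightarrow z_2$ is a nontrivial theta-function identity, not a formal consequence of summing over partitions. This is precisely why the paper introduces the dual Cauchy kernel $\Psi(z;w)=\prod_{i,j}e(z_i,w_j)$: inverting the triangular matrix $F=\big(F_\mu(x;\eta_\nu(x))\big)$ in the expansion $\Psi(z;w)=\sum_\mu E_\mu(x;z)F_\mu(x;w)$ writes each $E_\lambda(x;z)$ as a linear combination of the manifestly symmetric functions $\Psi(z;\eta_\nu(x))\in\mathcal{H}_{s,n,\zeta}$, settling symmetry and membership in $\mathcal{H}_{s,n,\zeta}$ simultaneously.

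The interpolation property is likewise only sketched: ``a recursive combinatorial argument \dots should show'' is the entire content of that step, and although the claim is true, organizing the induction so that off the diagonal every partition produces a vanishing factor while on the diagonal exactly one partition survives and telescopes to $1$ requires careful bookkeeping of which of the $s-1$ factors $\theta\big(z_kx_j^{-1}t^{-\mu_j^{(k-1)}}\big)$ can vanish given the block structure of $x_\nu$ and the history $\mu^{(k-1)}$. The paper gets this for free from the same kernel identity: since $\Psi(x_\mu;w)=F_\mu(x;w)$, evaluating the inverted expansion at $z=x_\mu$ gives $E_\lambda(x;x_\mu)=\sum_\nu F_\mu(x;\eta_\nu(x))G_{\nu\lambda}(x)=\delta_{\lambda\mu}$ with $G=F^{-1}$. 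If you wish to keep your direct route you must supply both the symmetry identity and the full combinatorial induction (the recursion $E^{(n)}_\lambda(x;z)=\sum_k E^{(n-1)}_{\lambda-\epsilon_k}(x;z_{\widehat{n}})E^{(1)}_{\epsilon_k}\big(xt^{\lambda-\epsilon_k};z_n\big)$ together with the $n=1$ case is the natural vehicle for the latter); adopting the kernel-function argument is the cleaner repair for both.
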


This theorem will be proved in the end of Section~\ref{Section2.1}. We call $E_\mu(x;z)$ the {\it elliptic Lagrange interpolation functions of type $A_n$} associated with the set of reference points $x_\nu$, $\nu\in Z_{s,n}$. Note that an arbitrary function $f(z)\in \mathcal{H}_{s,n,\zeta}$ can be written as a linear combination of $E_\mu(x;z)$, $\mu\in Z_{s,n}$, with coefficients~$f(x_\mu)$:
\begin{gather}\label{eq:E-expantion}
f(z)=\sum_{\mu\in Z_{s,n}}f(x_\mu)E_\mu(x;z).
\end{gather}

\begin{Theorem} \label{thm:expressionE} The functions $E_\lambda(x;z)$ are expressed as
\begin{gather}\label{eq:E-explicit-4}
E_\lambda(x;z)=\sum_{\substack{K_1\sqcup\cdots\sqcup K_s \\ =\{1,2,\ldots,n\}}}\prod_{i=1}^s\prod_{k\in K_i}\!\left[\frac{\theta\Big(z_k\zeta \prod\limits_{1\le l\le s\atop l\ne i}x_lt^{\lambda_l^{(k-1)}}\Big)}
{\theta\Big(\zeta\prod\limits_{l=1}^s x_lt^{\lambda_l^{(k-1)}}\Big)}\prod_{\substack{1\le j\le s\\ j\ne i}}\frac{\theta\big(z_{k} x_j^{-1}t^{-\lambda_j^{(k-1)}}\big)}
{\theta\big(x_it^{\lambda_i^{(k-1)}}x_j^{-1}t^{-\lambda_j^{(k-1)}}\big)}\right],\!\!\!
\end{gather}
where $\lambda_i^{(k)}=|K_i\cap\{1,2,\ldots,k\}|$, and the summation is taken over all partitions $K_1\sqcup\cdots\sqcup K_s=\{1,2,\ldots,n\}$ such that $|K_i|=\lambda_i$, $i=1,2,\ldots,s$.
\end{Theorem}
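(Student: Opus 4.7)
The plan is to denote by $F_\lambda(x;z)$ the right-hand side of \eqref{eq:E-explicit-4} and show that $F_\lambda\in\mathcal{H}_{s,n,\zeta}$ and $F_\lambda(x;x_\mu)=\delta_{\lambda\mu}$ for every $\mu\in Z_{s,n}$; by the uniqueness part of Theorem~\ref{thm:MainThm-interpolation} this forces $F_\lambda=E_\lambda$.

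First I would dispose of the two properties that can be read off summand-by-summand. Every denominator in \eqref{eq:E-explicit-4} is independent of $z$, so $F_\lambda$ is holomorphic on $(\mathbb{C}^*)^n$. For the quasi-periodicity, fix $k$: within a single summand, $k$ lies in exactly one block $K_i$, and $z_k$ appears in precisely $s$ numerator theta factors---once in $\theta(z_k\zeta\prod_{l\ne i}x_l t^{\lambda_l^{(k-1)}})$ and once in $\theta(z_k x_j^{-1}t^{-\lambda_j^{(k-1)}})$ for each $j\ne i$. Applying $\theta(qu)=-u^{-1}\theta(u)$ together with the cancellation $\prod_{l\ne i}x_l^{-1}t^{-\lambda_l^{(k-1)}}\cdot\prod_{j\ne i}x_j t^{\lambda_j^{(k-1)}}=1$, the $q$-shift $z_k\mapsto qz_k$ multiplies each summand by $1/((-z_k)^s\zeta)$, as demanded by $\mathcal{H}_{s,n,\zeta}$.

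Next I would establish the interpolation. At $z=x_\mu$ each coordinate $z_k$ takes the form $x_{\phi_\mu(k)}t^{r_\mu(k)}$, where $\phi_\mu$ and $r_\mu$ are determined by the canonical partition $K_i^{(\mu)}=\{\mu_1+\cdots+\mu_{i-1}+1,\dots,\mu_1+\cdots+\mu_i\}$. For generic $x$ the factor $\theta(z_k x_j^{-1}t^{-\lambda_j^{(k-1)}})$ vanishes precisely when $j=\phi_\mu(k)$ and $r_\mu(k)=\lambda_j^{(k-1)}$. Taking the smallest index $k_0$ at which the summand's partition $K$ differs from $K^{(\mu)}$ and choosing $j=\phi_\mu(k_0)$ (necessarily distinct from the $K$-block of $k_0$), the agreement of $K$ with $K^{(\mu)}$ below $k_0$ forces $r_\mu(k_0)=\lambda_j^{(k_0-1)}$ and hence $\theta(1)=0$. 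Only $K=K^{(\mu)}$ survives; it contributes to $F_\lambda$ only if $\lambda=\mu$, and a direct computation then shows every factor of this surviving summand simplifies to $1$.

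The main obstacle is $\frak{S}_n$-symmetry of $F_\lambda$, which cannot be seen summand-by-summand. I would argue by induction on $n$ via the recursion
\[
F_\lambda(x;z_1,\dots,z_n)=\sum_{i\,:\,\lambda_i\ge 1}\phi_{i,\lambda}(x;z_n)\,F_{\lambda-\epsilon_i}(x;z_1,\dots,z_{n-1}),
\]
obtained by isolating the $k=n$ contribution. Granted inductive $\frak{S}_{n-1}$-invariance of each $F_{\lambda-\epsilon_i}$, $\frak{S}_n$-symmetry of $F_\lambda$ reduces to invariance under the single swap $z_{n-1}\leftrightarrow z_n$. Expanding $F_{\lambda-\epsilon_i}$ once more recasts this as an identity among the $\phi_{i,\lambda}(x;z)$ and $\phi_{j,\lambda-\epsilon_i}(x;z)$: the diagonal pairs $i=j$ are symmetric in $(z_{n-1},z_n)$ by direct inspection (their $z$-dependent numerators have identical shape), whereas the off-diagonal pairs $i\ne j$ reduce to a four-term Riemann-type theta relation. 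Granting this identity, $F_\lambda\in\mathcal{H}_{s,n,\zeta}$, and combined with the interpolation property Theorem~\ref{thm:MainThm-interpolation} yields $F_\lambda=E_\lambda$.
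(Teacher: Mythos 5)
Your reductions of holomorphy, quasi-periodicity and the interpolation property $F_\lambda(x;x_\mu)=\delta_{\lambda\mu}$ are all sound: the denominators are $z$-independent, the product of the $s$ theta arguments attached to a fixed $z_k$ telescopes to $\zeta$, and the ``first index where $K$ differs from the canonical partition $K^{(\mu)}$'' argument correctly kills every stray summand. The problem is the $\frak{S}_n$-symmetry, which you correctly identify as the one property not visible summand-by-summand, but then do not actually establish. After reducing to the transposition $z_{n-1}\leftrightarrow z_n$ and expanding the recursion twice, the required statement is that for each pair $i\ne j$ the combination $\phi_{i,\lambda}(x;z_n)\phi_{j,\lambda-\epsilon_i}(x;z_{n-1})+\phi_{j,\lambda}(x;z_n)\phi_{i,\lambda-\epsilon_j}(x;z_{n-1})$ is symmetric in $(z_{n-1},z_n)$, where $\phi_{i,\lambda}(x;z)=E^{(1)}_{\epsilon_i}(xt^{\lambda-\epsilon_i};z)$. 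This is a nontrivial addition-type identity among products of $2s$ theta functions; it is true, but it is the entire analytic content of the theorem, and writing ``reduces to a four-term Riemann-type theta relation \dots\ Granting this identity'' leaves it unproved. It is not a named identity one can simply cite; one would have to verify it (e.g., by checking that the difference of the two sides is a holomorphic section with the same quasi-periodicity and enough zeros, or by reducing to the Weierstrass three-term relation), and that verification is exactly the hard step.

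It is worth noting that the paper's argument is organized precisely so as to avoid this computation. There, (\ref{eq:E-explicit-4}) is obtained by merely unravelling the recursive definition (\ref{eq:E-recurrence}) (Theorem~\ref{thm:expressionE2}), while symmetry is never checked on the explicit sum: instead the duality (\ref{eq:duality}) expresses the dual Cauchy kernel $\Psi(z;w)=\prod_{i,j}e(z_i,w_j)$ as $\sum_\mu E_\mu(x;z)F_\mu(x;w)$, and inverting the triangular matrix $\big(F_\mu(x;\eta_\nu(x))\big)$ of Lemma~\ref{lem:triangular-F} writes each $E_\lambda(x;z)$ as a linear combination of the kernels $\Psi(z;\eta_\nu(x))$, which are manifestly $\frak{S}_n$-invariant and lie in $\mathcal{H}_{s,n,\zeta}$. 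If you want to salvage your direct approach, the cleanest repair is to import exactly this step: prove your recursion is the paper's (\ref{eq:E-recurrence}), then deduce symmetry from the kernel expansion rather than from the off-diagonal theta identity.
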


This theorem will be proved in Section \ref{Section2} as Theorem \ref{thm:expressionE2}.

Once Theorems \ref{thm:MainThm-interpolation} and \ref{thm:expressionE} have been established, the connection formula (\ref{eq:connection}) in Theorem~\ref{thm:connection} is immediately obtained. In the setting of Theorem~\ref{thm:connection}, the regularization $\langle\!\langle\varphi,z\rangle\!\rangle=\langle \varphi,z\rangle/\Theta(z)$ belongs to $\mathcal{H}_{s,n,\zeta}$ with $\zeta=q^\alpha t^{n-1}b_1\cdots b_s$ by Lemma~\ref{lem:sym-hol}. Hence, by \eqref{eq:E-expantion} we obtain
\begin{gather}\label{eq:connection2}
\langle\!\langle\varphi, z\rangle\!\rangle=\sum_{\mu\in Z_{s,n}}\langle\!\langle\varphi, x_\mu\rangle\!\rangle E_\mu(x;z).
\end{gather}
This means that the coefficients in (\ref{eq:connection}) are given by $c_\mu=E_\mu(x;z)$. The explicit formula~(\ref{eq:E-explicit-0}) follows from Theorem~\ref{thm:expressionE}.

For the evaluation of the determinant of Theorem \ref{thm:Wronski}, we make use of the following determinant formula for the elliptic Lagrange interpolation functions $E_\mu(x;z)$ with $\zeta=q^\alpha t^{n-1}b_1\cdots b_s$.
\begin{Theorem}\label{thm:detE}Suppose that $x,y\in (\mathbb{C}^*)^s$ are generic. Then,
\begin{gather*}
\det \big(E_\mu (x;y_{\nu})\big)_{\!\mu,\nu\in Z_{s,n}}= \prod_{k=1}^n\left[\frac{\theta\big(q^\alpha y_1\cdots y_s b_1\cdots b_st^{n+k-2}\big)}
{\theta\big(q^\alpha x_1\cdots x_s b_1\cdots b_st^{n+k-2}\big)} \right]^{s+k-2\choose k-1}\nonumber\\
\hphantom{\det \big(E_\mu (x;y_{\nu})\big)_{\!\mu,\nu\in Z_{s,n}}=}{}
\times \prod_{k=1}^n\left[\prod_{r=0}^{n-k}\prod_{1\le i<j\le s}\frac{y_it^r\theta\big(t^{(n-k)-2r}y_jy_i^{-1}\big)} {x_it^r\theta\big(t^{(n-k)-2r}x_jx_i^{-1}\big)}\right]^{{s+k-3\choose k-1}},
\end{gather*}
where $y_{\nu}$ are specified as in \eqref{eq:x_mu}.
\end{Theorem}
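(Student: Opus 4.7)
The plan is to exploit the interpolation property of the basis $\{E_\mu(x;\cdot)\}$ to reduce the determinant to a single-variable problem, and then to pin down that function by its quasi-periodicity and zero divisor.

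First, I would establish a cocycle relation for $D(x,y) := \det\bigl(E_\mu(x;y_\nu)\bigr)_{\mu,\nu \in Z_{s,n}}$. By Theorem~\ref{thm:MainThm-interpolation} both $\{E_\mu(x;\cdot)\}$ and $\{E_\nu(y;\cdot)\}$ are $\mathbb{C}$-bases of $\mathcal{H}_{s,n,\zeta}$ with $\zeta = q^\alpha b_1\cdots b_s t^{n-1}$. Expanding $f(z) = E_\mu(x;z)$ in the $\{E_\nu(y;\cdot)\}$-basis via \eqref{eq:E-expantion} gives $E_\mu(x;w) = \sum_\nu E_\mu(x;y_\nu)\,E_\nu(y;w)$; evaluating at $w = z_\sigma$ and taking determinants yields the cocycle $D(x,z) = D(x,y)\,D(y,z)$, and specialising $y = x$ together with \eqref{eq:E-delta} gives $D(x,x) = 1$. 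Hence $D(x,y) = \Omega(y)/\Omega(x)$ for the single-variable function $\Omega(y) := D(x_0,y)$ with any fixed reference $x_0$.

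Second, I would determine $\Omega(y)$ up to a constant by its quasi-periodicity and zeros on $(\mathbb{C}^*)^s$. The point $y_\sigma$ contains $y_i$ in its $\sigma_i$ consecutive slots $y_i, y_i t, \ldots, y_i t^{\sigma_i - 1}$, so the quasi-periodicity $T_{q,z_k} E_\mu = E_\mu/((-z_k)^s\zeta)$ of $E_\mu(x_0;\cdot) \in \mathcal{H}_{s,n,\zeta}$ multiplies the $\sigma$-th column of $(E_\mu(x_0;y_\sigma))$ by $\prod_{r=0}^{\sigma_i-1}((-y_i t^r)^s\zeta)^{-1}$ under $T_{q,y_i}$. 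Taking the product over $\sigma \in Z_{s,n}$ yields the $T_{q,y_i}$-quasi-period of $\Omega(y)$, and a combinatorial count shows it matches that of the theta-function numerator in the claimed formula. For the zero locus, $\Omega(y)=0$ precisely when the interpolation problem at $\{y_\sigma : \sigma \in Z_{s,n}\}$ becomes degenerate, which happens on the hypersurfaces $y_j = y_i t^k$ (with $i<j$ and $k$ in the relevant range) where two reference tuples $y_\sigma, y_{\sigma'}$ obtained by transferring slots between the $i$- and $j$-blocks coincide as unordered $n$-tuples. Counting these coincidences layer-by-layer reproduces the multiplicity $\binom{s+k-3}{k-1}$ appearing in the second product.

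Third, once the quasi-periods and principal divisor of $\Omega(y)$ agree with those of the theta-function product in the numerator of the formula, the ratio is a holomorphic function on $(\mathbb{C}^*)^s$ invariant under every $T_{q,y_i}$, hence a constant; passing to $\Omega(y)/\Omega(x)$ cancels this constant and yields the displayed formula, while $D(x,x)=1$ confirms it. The main obstacle will be the zero-multiplicity bookkeeping in the second step: one has to show that the partial coincidences at $y_j = y_i t^k$ contribute exactly the multiplicity $\binom{s+k-3}{k-1}$ to the order of vanishing. This can be carried out by identifying the kernel of the evaluation map $\mathcal{H}_{s,n,\zeta} \to \mathbb{C}^{Z_{s,n}}$ along the coincidence locus and inducting on the pair $(s,n)$, reducing the case $(s,n)$ to the cases $(s,n-1)$ and $(s-1,n)$ whose binomial multiplicities add correctly via Pascal's rule.
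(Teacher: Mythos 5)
Your first step coincides exactly with the paper's: the cocycle relation ${\bf E}(x;y)={\bf E}(x;w){\bf E}(w;y)$ with ${\bf E}(x;x)=I$ is precisely what the paper sets up before Theorem~\ref{thm:detE2}. Where you diverge is in how the resulting one-point function is evaluated. The paper uses the cocycle to reduce to the case where only a single coordinate changes, $y=(x_1,\dots,x_{s-1},y_s)$; Lemma~\ref{lem:E-triangular} shows that the transition matrix is then \emph{lower triangular}, with diagonal entries given in closed form by the factorized expression of Corollary~\ref{cor:ne_i} for $E_{n\epsilon_i}$, so the determinant is just the product of the diagonal entries and the general formula follows by telescoping over $s$ such one-coordinate moves. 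You instead propose to characterize $\Omega(y)=\det(E_\mu(x_0;y_\nu))$ as a holomorphic function on $(\mathbb{C}^*)^s$ by its $q$-quasi-periods and its zero divisor, and to conclude by the standard Liouville-type argument. That route is viable in principle and would give a somewhat more conceptual explanation of why the answer is a product of theta functions, but it trades the paper's purely algebraic triangularity computation for a divisor computation.

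The genuine gap is in that divisor computation, which you correctly identify as the main obstacle but do not actually carry out. For the Liouville argument you need a \emph{lower bound}: along each hypersurface $y_j=y_it^{2r-(n-k)}q^m$ the determinant must vanish to order at least the total multiplicity $\sum\binom{s+k-3}{k-1}$ dictated by the formula (matching quasi-periods then force equality). Your proposed mechanism --- pairs of reference tuples $y_\sigma$, $y_{\sigma'}$ coinciding as unordered $n$-tuples, each disjoint coincident pair contributing one factor --- is the right kind of argument, but the count of such disjoint pairs for a given power of $t$ is exactly the combinatorial content you would have to verify, and the sketched induction on $(s,n)$ ``via Pascal's rule'' is not yet an argument (note also that a coincidence of two columns only gives order one per independent pair, so one must exhibit enough \emph{disjoint} pairs, and must also rule out, or rather not need, any further degeneration of the evaluation map beyond these coincidences). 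Your statement that $\Omega(y)=0$ ``precisely'' on the degeneracy locus is both unproved and unnecessary: one inclusion of divisors plus matching quasi-periods suffices. Unless you are prepared to do this bookkeeping, the cleaner completion of your own first step is the paper's: specialize to a one-coordinate change, where the vanishing $E_\alpha(x;y_\beta)=0$ for $\alpha_i<\beta_i$ ($i\le s-1$) makes the matrix triangular and the determinant immediate from \eqref{eq:E-delta} and Corollary~\ref{cor:ne_i}.
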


This theorem will be proved as Theorem \ref{thm:detE2} in Section~\ref{Section2}.

Applying the connection formula (\ref{eq:connection}) in Theorem \ref{thm:connection}, we see that Theorem~\ref{thm:Wronski} is reduced to the special case where $x=a=(a_1,\ldots,a_s)$. Since
\begin{gather*}
\langle\!\langle \varphi, z\rangle\!\rangle=\sum_{\mu\in Z_{s,n}}\langle\!\langle\varphi, a_\mu\rangle\!\rangle E_\mu(a;z)
\end{gather*}
by (\ref{eq:connection2}), setting $\varphi (z)= s_\lambda(z)$ ($\lambda\in B_{s,n}$) and $z=x_\nu$ ($\nu\in Z_{s,n}$), we have
\begin{gather*}
\langle\!\langle s_\lambda,x_\nu\rangle\!\rangle =\sum_{\mu\in Z_{s,n}} \langle\!\langle s_\lambda,a_\mu\rangle\!\rangle E_\mu(a;x_\nu) \qquad \text{for} \quad \lambda\in B_{s,n},\quad \nu\in Z_{s,n},
\end{gather*}
so that
\begin{gather*}
\det\big(\langle\!\langle s_\lambda,x_\nu\rangle\!\rangle\big)_{\lambda\in B\atop \nu\in Z}=
\det\big(\langle\!\langle s_\lambda,a_\mu\rangle\!\rangle\big)_{\lambda\in B\atop \mu\in Z} \det \big(E_\mu(a;x_\nu)\big)_{\mu\in Z\atop \nu\in Z}.
\end{gather*}
Since we already know the explicit form of $\det \big(E_\mu(a;x_\nu)\big)_{\mu\in Z,\nu\in Z}$ by Theorem~\ref{thm:detE}, the evaluation of
$\det \big(\langle\!\langle s_\lambda,x_\nu\rangle\!\rangle\big)_{\lambda\in B, \nu\in Z}$ reduces to that of $\det\big(\langle\!\langle s_\lambda,a_\mu\rangle\!\rangle\big)_{\lambda\in B,\mu\in Z}$.
\begin{Lemma}\label{lem:Wronski-a} We have
\begin{gather}
\det\big(\langle\!\langle s_\lambda, a_\mu \rangle\!\rangle\big)_{\!\lambda\in B\atop\!\mu\in Z}=
\prod_{k=1}^n\Bigg[(1-q)^s\frac{(q)_\infty^s\big(qt^{-(n-k+1)}\big)_\infty^s}{\big(qt^{-1}\big)_\infty^s}\nonumber\\
\hphantom{\det\big(\langle\!\langle s_\lambda, a_\mu \rangle\!\rangle\big)_{\!\lambda\in B\atop\!\mu\in Z}=}{} \times
\frac{\Big(q^{\alpha}t^{n+k-2}\prod\limits_{i=1}^{s} a_ib_i\Big)_\infty\prod\limits_{i=1}^{s}\prod\limits_{j=1}^{s}\big(qa_i^{-1}b_j^{-1} t^{-(n-k)}\big)_\infty}{\big(q^\alpha t^{n-k}\big)_\infty}\Bigg]^{s+k-2\choose k-1}\nonumber\\
\hphantom{\det\big(\langle\!\langle s_\lambda, a_\mu \rangle\!\rangle\big)_{\!\lambda\in B\atop\!\mu\in Z}=}{} \times
\prod_{k=1}^n\left[\prod_{r=0}^{n-k}\prod_{1\le i<j\le s}a_jt^r\theta\big(a_ia_j^{-1}t^{n-k-2r}\big)\right]^{s+k-3\choose k-1}.\label{eq:Wronski-a1}
\end{gather}
\end{Lemma}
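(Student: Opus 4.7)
The plan is to reduce the determinant to a product of diagonal entries by establishing a triangularity property with respect to a natural bijection between $B_{s,n}$ and $Z_{s,n}$, and then to evaluate those diagonal entries by exploiting the cluster factorization of the Jackson integral at $z=a_\mu$.

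To each partition $\lambda=(\lambda_1\ge\cdots\ge\lambda_n)\in B_{s,n}$, I would associate the composition $\mu(\lambda)=(\mu_1,\ldots,\mu_s)\in Z_{s,n}$ defined by $\mu_i=|\{j:\lambda_j=s-i\}|$; this is a bijection, and one checks directly that it is order-preserving with respect to the lexicographic orders on both sides. The first substantive step is to show that, under the pairing $\lambda\leftrightarrow\mu(\lambda)$, the matrix $\big(\langle\!\langle s_\lambda,a_\mu\rangle\!\rangle\big)_{\lambda\in B,\,\mu\in Z}$ is triangular with nonvanishing diagonal entries. This is obtained by a joint vanishing analysis at $z=a_\mu$: the prefactors $(qa_m^{-1}z_i)_\infty$ in $\Phi$ together with the $q$-shifted factors $(qt^{-1}z_k/z_j)_\infty$ eliminate large portions of the lattice $\mathbb{Z}^n$, restricting the Jackson sum \eqref{eq:la-ra} to configurations respecting the cluster structure of $a_\mu$; on the surviving support, the Schur function $s_\lambda$ vanishes unless $\mu$ is compatible with $\mu(\lambda)$ in the lexicographic order.

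Next, I would evaluate the diagonal entries $\langle\!\langle s_{\lambda(\mu)},a_\mu\rangle\!\rangle$. At $z=a_\mu$ the integrand $\Phi(z)\Delta(z)$ factorizes into $s$ single-cluster pieces (the $i$-th of size $\mu_i$) together with cross-cluster contributions. Each cluster piece, combined with the relevant factor of $s_{\lambda(\mu)}$, becomes a single-cluster Jackson integral of $A$-type and is evaluable in closed form using the $q$-Selberg formula \eqref{eq:q-Selberg}. The cross-cluster pieces contribute products of theta factors of the form $\theta(a_ia_j^{-1}t^{n-k-2r})$, matching the second bracket in \eqref{eq:Wronski-a1}. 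This yields an explicit product formula for each diagonal entry.

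The final step is to multiply these diagonal entries over $\mu\in Z_{s,n}$ and reorganize. The exponents ${s+k-2\choose k-1}$ and ${s+k-3\choose k-1}$ in \eqref{eq:Wronski-a1} emerge from a double count: each elementary factor of ``level $k$'' in the product contributes once for every $\mu\in Z_{s,n}$ having a cluster of the relevant size, and the resulting multiplicities collapse to the stated binomial coefficients. The main obstacle will be the triangularity step, which requires a delicate joint analysis of the vanishing of $\Phi\Delta$ on the shifted lattice together with the Schur function values at clustered points; once triangularity is established, the diagonal evaluation follows from the $q$-Selberg formula and the remaining computation is organizational.
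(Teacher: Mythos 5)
Your proposal hinges on a triangularity claim that is false. Consider the row of the matrix indexed by the zero partition $\lambda=(0,\ldots,0)\in B_{s,n}$, for which $s_\lambda\equiv 1$: by Lemma~\ref{lem:<m, a>}, $T_\alpha^N\langle 1,a_\mu\rangle\sim(1-q)^n\Delta(a_\mu)T_\alpha^N\Phi(a_\mu)\ne 0$ for every $\mu\in Z_{s,n}$, so every entry of this row is generically nonzero and the matrix cannot be triangular in any ordering. One sees the same thing already for $n=1$, $s=2$, where the matrix is $\bigl(\begin{smallmatrix}\langle\!\langle 1,a_1\rangle\!\rangle & \langle\!\langle 1,a_2\rangle\!\rangle\\ \langle\!\langle z,a_1\rangle\!\rangle & \langle\!\langle z,a_2\rangle\!\rangle\end{smallmatrix}\bigr)$ with all four entries nonzero; its leading behaviour as $\alpha\to+\infty$ is governed by $\det\bigl(\begin{smallmatrix}1&1\\a_1&a_2\end{smallmatrix}\bigr)=a_2-a_1$, a genuine $2\times 2$ determinant, not a diagonal product. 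In general the asymptotics produce the full compound determinant $\det\big(s_\lambda(a_\mu)\big)_{\lambda\in B,\,\mu\in Z}$ of \eqref{eq:det-sigma}, whose evaluation is itself a nontrivial identity (from \cite{AI2009,IIO2013}) and not a triangular reduction. Your second step is also unsound: at $z=a_\mu$ the Jackson sum over the cone $\Lambda_\mu$ does not decouple into $s$ independent single-cluster $q$-Selberg sums, because the interaction factors $z_j^{2\tau-1}\big(qt^{-1}z_k/z_j\big)_\infty/(tz_k/z_j)_\infty$ couple variables in different clusters through the summation indices $\nu$, so the cross-cluster contributions are not constant prefactors that can be pulled out.

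The paper's actual route is entirely different and you would need its two main ingredients. First, Section~\ref{Section5} derives a first-order $q$-difference system in $\alpha$ for the row vector of Jackson integrals (using the interpolation polynomials $\mathsf{E}^{\lambda_0}_{\lambda_1,\ldots,\lambda_s}$ and the key relation \eqref{eq:00A}), whose coefficient matrix has an explicit $LU$ factorization; taking determinants gives the two-term recurrence \eqref{eq:q-diff-det} for $J=\det\big(\langle s_\lambda,a_\mu\rangle\big)$. Second, one checks that the conjectured product $\mathcal{C}$ satisfies the same recurrence, so $\mathcal{J}/\mathcal{C}$ is $T_\alpha$-invariant, and then the boundary condition is supplied by the $\alpha\to+\infty$ asymptotics of Lemma~\ref{lem:<m, a>} together with the compound determinant identity \eqref{eq:det-sigma}. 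Without a substitute for both the difference equation and the Schur-determinant evaluation, the argument you sketch cannot be completed.
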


Lemma \ref{lem:Wronski-a} will be proved in Section \ref{Section6}.

\begin{remark*}Tarasov and Varchenko \cite{TV97} have already obtained a determinant formula for a multiple contour integrals of hypergeometric type, which is similar to~(\ref{eq:Wronski-a1}). See \cite[Theorem~5.9] {TV97} for the details.
\end{remark*}

In the succeeding sections, we give proofs for Theorems \ref{thm:MainThm-interpolation}, \ref{thm:expressionE}, \ref{thm:detE} and
Lemmas~\ref{lem:sym-hol},~\ref{lem:Wronski-a}, which we use for proving our main theorems.

This paper is organized as follows. In the first part of Section \ref{Section2} we prove Theorems~\ref{thm:MainThm-interpolation} and~\ref{thm:expressionE} on the basis of an explicit construction of the elliptic Lagrange interpolation functions by means of a kernel function as in~\cite{KNS09}. In the second part of Section~\ref{Section2} we investigate the transition coefficients between two sets of elliptic interpolation functions. In particular we provide a proof of Theorem~\ref{thm:detE} for the determinant of the transition matrix. A proof of Lemma~\ref{lem:sym-hol} for the regularized Jackson integrals will be given in Section~\ref{Section3}. In Section~\ref{Section4} we introduce certain interpolation polynomials which are a limiting case of the elliptic Lagrange interpolation functions. These polynomials are used in Section~\ref{Section5} for the construction of the $q$-difference system satisfied by the Jackson integrals. In particular we establish two-term difference equations with respect to $\alpha\to \alpha+1$ for the determinant of the Jackson integrals. Section~\ref{Section6} is devoted to analyzing the boundary condition for difference equations through asymptotic analysis of the Jackson integrals as $\alpha\to +\infty$, which completes the proof of Lemma~\ref{lem:Wronski-a}. In Appendix~\ref{SectionA}, we provide a detailed proof of Lemma~\ref{lem:Anabla} which is omitted in Section~\ref{Section5}. In Appendix~\ref{SectionB} we give proofs for some propositions in Section~\ref{Section4} by using the kernel function in the similar way as in Section \ref{Section2}.

\section[The elliptic Lagrange interpolation functions of type $A$]{The elliptic Lagrange interpolation functions of type $\boldsymbol{A}$}\label{Section2}

In this section we give proofs of Theorems \ref{thm:MainThm-interpolation}, \ref{thm:expressionE} and \ref{thm:detE}.

Let $P_+$ be the set of partitions of length at most $n$ specified by
\begin{gather*}
P_+=\big\{\lambda=(\lambda_1,\lambda_2,\ldots, \lambda_n)\in \mathbb{Z}^n\,|\,\lambda_1\ge\lambda_2\ge \cdots\ge \lambda_n\ge 0\big\}.
\end{gather*}
For $\mu=(\mu_1,\ldots,\mu_n)\in \mathbb{Z}^n$, we denote by $z^\mu$ the monomial $z_1^{\mu_1}\cdots z_n^{\mu_n}$. For the partitions $\lambda\in P_+$ let $m_{\lambda}(z)$ be the monomial symmetric functions~\cite{Mac95} defined by
\begin{gather*}
m_{\lambda}(z)=\sum_{\mu\in \frak{S}_n\lambda}z^\mu,
\end{gather*}
where $\frak{S}_n\lambda=\{w\lambda\,|\,w\in \frak{S}_n\}$ is the $\frak{S}_n$-orbit of $\lambda$. For a function $f(z)=f(z_1,z_2,\ldots,z_n)$ on~$({\mathbb{C}^*})^n$, we define the action of the symmetric group $\frak{S}_n$ on $f(z)$ by
\begin{gather*}
(\sigma f)(z)=f\big(\sigma^{-1}(z)\big)=f\big(z_{\sigma(1)},z_{\sigma(2)},\ldots,z_{\sigma(n)}\big)
\qquad\mbox{for}\quad \sigma\in \frak{S}_n.
\end{gather*}
We say that a function $f(z)$ on $({\mathbb{C}^*})^n$ is {\it symmetric} or {\it skew-symmetric} if $\sigma f(z)=f(z)$ or $\sigma f(z)=(\operatorname{sgn}\sigma) f(z)$ for all $\sigma \in \frak{S}_n$, respectively. We denote by ${\cal A} f(z)$ the alternating sum over~$\frak{S}_n$ defined by
\begin{gather}\label{eq:00Af}
({\cal A} f)(z)=\sum_{\sigma\in \frak{S}_n}(\operatorname{sgn} \sigma) \sigma f(z),
\end{gather}
which is skew-symmetric.

For an arbitrarily fixed $\zeta\in \mathbb{C}^*$, we consider the $\mathbb{C}$-linear subspace ${\cal H}_{s,n,\zeta}\subset {\cal O}(({\mathbb C}^*)^n)$ consisting of all $\frak{S}_n$-invariant holomorphic functions $f(z)$ such that $T_{q,z_i}f(z)=f(z)/(-z_i)^s \zeta$, $i=1,\ldots,n$, as in~\eqref{eq:def-Hsn}. In this section we use the symbol
\begin{gather*}
e(u,v)=u\theta(v/u),\qquad u,v\in \mathbb{C}^*.
\end{gather*}
Since $\theta(u)=\theta(q/u)$ and $\theta(qu)=-u^{-1}\theta(u)$, this symbol satisfies
\begin{gather*}
e(u,v)=-e(v,u),\qquad e(qu,v)=(-v/u)e(u,v).
\end{gather*}
In particular we have $e(u,v)\to u-v$ in the limit $q\to 0$.

\subsection{Construction of the interpolation functions}

This subsection is devoted to providing a proof of Theorem~\ref{thm:MainThm-interpolation}. In the first half, we define a~family of functions $E^{(n)}_\lambda(x;z)$ recursively with respect to the number $n$ of variables $z_1,\ldots,z_n$, and show that those $E^{(n)}_\lambda(x;z)$ are expressed explicitly as (\ref{eq:E-explicit-4}) in Theorem \ref{thm:expressionE}. In the second half, we prove that they are in fact the elliptic interpolation functions in the sense of Theorem~\ref{thm:MainThm-interpolation}; we show that $E^{(n)}_\lambda(x;z)\in \mathcal{H}_{s,n,\zeta}$ and $E^{(n)}_\lambda(x;x_\mu)=\delta_{\lambda\mu}$ using the dual Cauchy kernel as in~\cite{KNS09}.

First of all we show the following lemma.

\begin{Lemma}\label{lem:dim=<}
$\dim_{\mathbb{C}}\mathcal{H}_{s,n,\zeta}\le {n+s-1\choose n}$.
\end{Lemma}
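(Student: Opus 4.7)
The plan is to bound the dimension by expanding $f \in \mathcal{H}_{s,n,\zeta}$ in a Laurent series on $(\mathbb{C}^*)^n$ and then exploiting the quasi-periodicity together with the $\mathfrak{S}_n$-symmetry to cut the free coefficients down to a fundamental domain. Writing $f(z) = \sum_{\mu \in \mathbb{Z}^n} a_\mu z^\mu$ (the series converging since $f$ is holomorphic on $(\mathbb{C}^*)^n$), the defining relation $T_{q,z_i} f(z) = (-z_i)^{-s}\zeta^{-1} f(z)$ translates, by term-by-term comparison of Laurent coefficients, into the recursion
\[
 a_{\mu + s\epsilon_i} = (-1)^s\, \zeta\, q^{\mu_i}\, a_\mu \qquad (i = 1, \ldots, n,\ \mu \in \mathbb{Z}^n),
\]
where $\epsilon_i$ denotes the $i$-th standard basis vector. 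Iterating these $n$ recursions shows that every Laurent coefficient $a_\mu$ is uniquely determined by the restrictions of the family $(a_\nu)$ to the box $\{0,1,\ldots,s-1\}^n$; this already yields the a priori bound $\dim \mathcal{H}_{s,n,\zeta} \le s^n$ even before invoking symmetry.

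To sharpen this to $\binom{n+s-1}{n}$, I would bring in the $\mathfrak{S}_n$-invariance of $f$, which translates into $a_{\sigma\mu} = a_\mu$ for every $\sigma \in \mathfrak{S}_n$. One then restricts to a single orbit representative on the box $\{0,1,\ldots,s-1\}^n$: choosing sorted representatives, the independent data are indexed precisely by
\[
 \{\nu \in \mathbb{Z}^n : s-1 \ge \nu_1 \ge \nu_2 \ge \cdots \ge \nu_n \ge 0\} = B_{s,n},
\]
a set of cardinality $|B_{s,n}| = \binom{n+s-1}{n}$, thereby giving the claimed bound.

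The only step that requires a short verification is the internal consistency of the $n$ shift relations: one has to check that moving by $s\epsilon_i$ and then by $s\epsilon_j$ produces the same coefficient as the opposite order, which is immediate since both routes multiply $a_\mu$ by $(-1)^{2s}\zeta^2 q^{\mu_i + \mu_j}$ (and the single-direction case is tautological). There is no convergence issue to address for the direction $\le$, because I only use uniqueness of the Laurent expansion of a given holomorphic function; the question whether arbitrary initial data on $B_{s,n}$ produces a genuine element of $\mathcal{H}_{s,n,\zeta}$ — i.e., the reverse inequality — will be settled a posteriori by the explicit construction of the interpolation functions $E_\mu(x;z)$ in Theorem~\ref{thm:MainThm-interpolation}, which furnishes $\binom{n+s-1}{n}$ linearly independent elements of the space.
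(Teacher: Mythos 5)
Your proposal is correct and follows essentially the same route as the paper's proof: Laurent expansion on $(\mathbb{C}^*)^n$, the $q$-shift relation reducing the free coefficients to the box $\{0,\ldots,s-1\}^n$, and $\mathfrak{S}_n$-invariance cutting that down to sorted representatives indexed by $B_{s,n}$ (the paper merely applies the two reductions in the opposite order). The explicit recursion $a_{\mu+s\epsilon_i}=(-1)^s\zeta q^{\mu_i}a_\mu$ and the consistency check are fine additions but not a substantive departure.
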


\begin{proof}For an arbitrary $f(z)\in \mathcal{H}_{s,n,\zeta}$, since $f(z)$ is a holomorphic on $(\mathbb{C}^*)^n$, $f(z)$ may be expanded as Laurent series as $f(z)=\sum_{\lambda\in \mathbb{Z}^n}c_\lambda z^{\lambda}$. Since~$f(z)$ is symmetric, all coefficients of~$f(z)$ are determined from $c_\lambda$ corresponding to $\lambda\in P_+$. On the other hand, since $f(z)$ satisfies $T_{q,z_i}f(z)=f(z)/(-z_i)^s\zeta$ for $i=1,\ldots,n$, we have
\begin{gather*}
\sum_{\lambda\in \mathbb{Z}^n}q^{\lambda_i}c_\lambda z^{\lambda} =\sum_{\lambda\in \mathbb{Z}^n}c_\lambda z^{\lambda}/(-z_i)^s\zeta.
\end{gather*}
Equating coefficients of $z^\lambda$ on both sides, all coefficients of $f(z)$ are determined from $c_\lambda$ corresponding to~$\lambda$ satisfying $s-1 \ge \lambda_i \ge 0$, $i=1,\ldots,n$. Therefore, $f(z)$ is determined by the coefficients $c_\lambda$ corresponding to $\lambda\in B_{s,n}$ defined by~(\ref{eq:Bsn}). Since $|B_{s,n}|={n+s-1\choose n}$, we obtain $\dim_{\mathbb{C}}\mathcal{H}_{s,n,\zeta}\le {n+s-1\choose n}$.
\end{proof}

Before introducing $E^{(n)}_\lambda(x;z)$ we prove Theorem \ref{thm:MainThm-interpolation} for $n=1$ by independent means. From the definition~(\ref{eq:Zsn}), we have $Z_{s,1}=\{\epsilon_1,\epsilon_2,\ldots,\epsilon_s\}$, where $\epsilon_i$ is specified in Example~\ref{ex:connection1}, and have $x_{\epsilon_i}=x_i\in \mathbb{C}^*$.

\begin{Lemma}\label{lem:MainThm-interpolation(n=1)}
For $x=(x_1,\ldots,x_s)\in (\mathbb{C}^*)^s$ and $z\in \mathbb{C}^*$ the functions
\begin{gather}\label{eq:E-explicit(n=1)}
E_{\epsilon_i}(x;z) =\frac{e\Big(z\zeta \prod\limits_{k=1}^sx_k,x_i\Big)} {e\Big(x_i\zeta \prod\limits_{k=1}^sx_k,x_i\Big)}
\prod_{\substack{1\le j\le s\atop j\ne i}}\frac{e(z,x_j)}{e(x_i,x_j)}, \qquad i=1,\ldots,s,
\end{gather}
satisfy $E_{\epsilon_i}(x;x_{\epsilon_j})=\delta_{ij}$. The set $\{E_{\epsilon_i}(x;z)\,|\, i=1,\ldots,s\}$ is a basis of the $\mathbb{C}$-linear space $\mathcal{H}_{s,1,\zeta}$. In particular $\dim_\mathbb{C}\mathcal{H}_{s,1,\zeta}=s$.
\end{Lemma}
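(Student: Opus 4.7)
The plan is to verify the three assertions of the lemma in order: that each $E_{\epsilon_i}(x;z)$ lies in $\mathcal{H}_{s,1,\zeta}$, that the interpolation property $E_{\epsilon_i}(x;x_{\epsilon_j})=\delta_{ij}$ holds, and that the $s$ functions span $\mathcal{H}_{s,1,\zeta}$. The computation is elementary once one uses the identities $e(u,v)=-e(v,u)$ and $e(qu,v)=(-v/u)\,e(u,v)$ recorded immediately above the lemma statement.

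First I would check holomorphy and quasi-periodicity. The denominators $e(x_i\zeta\prod_k x_k,x_i)$ and $e(x_i,x_j)$ are constants in $z$ and nonzero for generic $x$, while each $z$-dependent factor in the numerator has the shape $e(\alpha z,\beta)=\alpha z\,\theta(\beta/(\alpha z))$ with $\alpha,\beta\in\mathbb{C}^*$, hence is holomorphic on $\mathbb{C}^*$. For the $q$-shift, applying $e(qu,v)=(-v/u)\,e(u,v)$ to the factor $e(z\zeta\prod_k x_k,x_i)$ contributes a scalar $-x_i/(z\zeta\prod_k x_k)$, and to each $e(z,x_j)$ with $j\ne i$ a scalar $-x_j/z$. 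Multiplying all $s$ contributions yields
\[
(-1)^s\,\frac{x_i\prod_{j\ne i}x_j}{z^s\,\zeta\prod_k x_k}=\frac{(-1)^s}{z^s\,\zeta}=\frac{1}{(-z)^s\,\zeta},
\]
so $T_{q,z}E_{\epsilon_i}(x;z)=E_{\epsilon_i}(x;z)/((-z)^s\zeta)$, establishing membership in $\mathcal{H}_{s,1,\zeta}$.

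Next I would verify the interpolation property. At $z=x_i$ every factor of $E_{\epsilon_i}(x;z)$ coincides with the matching denominator, giving $E_{\epsilon_i}(x;x_i)=1$. At $z=x_j$ with $j\ne i$, the factor $e(z,x_j)$ in the numerator evaluates to $e(x_j,x_j)=x_j\,\theta(1)=0$ because $\theta(1)=(1)_\infty(q)_\infty=0$, forcing $E_{\epsilon_i}(x;x_j)=0$. This $\delta$-property immediately yields linear independence: any relation $\sum_{i=1}^s c_i\,E_{\epsilon_i}(x;z)=0$, evaluated at $z=x_j$, gives $c_j=0$. Combined with the upper bound $\dim_{\mathbb{C}}\mathcal{H}_{s,1,\zeta}\le {s\choose 1}=s$ supplied by Lemma~\ref{lem:dim=<} applied with $n=1$, we conclude that the $E_{\epsilon_i}(x;z)$ form a basis and that $\dim_{\mathbb{C}}\mathcal{H}_{s,1,\zeta}=s$.

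There is no serious obstacle here: the whole argument reduces to two direct evaluations (at $z=x_i$ and at $z=x_j$) and a short bookkeeping calculation for the sign in the quasi-periodicity. The only point to watch is that the $s$ multiplicative factors of $-1$ combine to exactly $(-1)^s$, matching the prescribed $(-z)^s$ in the denominator of the transformation rule, and that the product $x_i\prod_{j\ne i}x_j$ cancels cleanly against $\prod_k x_k$ so that the $x$-dependence disappears from the multiplier, as it must in order for the transformation law to be the prescribed one.
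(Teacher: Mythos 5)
Your proposal is correct and follows essentially the same route as the paper: verify the Kronecker-delta property directly (the paper leaves the membership in $\mathcal{H}_{s,1,\zeta}$ as "obvious," which you spell out via the identity $e(qu,v)=(-v/u)e(u,v)$ and $\theta(1)=0$), deduce linear independence, and combine with the upper bound $\dim_{\mathbb{C}}\mathcal{H}_{s,1,\zeta}\le s$ from Lemma~\ref{lem:dim=<}. All the computations check out, including the cancellation $x_i\prod_{j\ne i}x_j=\prod_k x_k$ that makes the quasi-periodicity multiplier exactly $1/((-z)^s\zeta)$.
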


\begin{proof} It is directly confirmed that $E_{\epsilon_i}(x;x_{\epsilon_j})=\delta_{ij}$ from (\ref{eq:E-explicit(n=1)}). It is obvious that $E_{\epsilon_i}(x;z)\in \mathcal{H}_{s,1,\zeta}$ and the linearly independence of $\{E_{\epsilon_i}(x;z)\,|\, i=1,\ldots,s\}$ follows since $E_{\epsilon_i}(x;x_{\epsilon_j})=\delta_{ij}$. Since we have $\dim_\mathbb{C}\mathcal{H}_{s,1,\zeta}\le s$ by Lemma~\ref{lem:dim=<}, we see that
$\{E_{\epsilon_i}(x;z)\,|\, i=1,\ldots,s\}$ is a basis of~$\mathcal{H}_{s,1,\zeta}$, and we obtain
$\dim_\mathbb{C}\mathcal{H}_{s,1,\zeta}= s$.
\end{proof}

\begin{Definition} For $x=(x_1,\ldots,x_s)\in (\mathbb{C}^*)^s$ and $z=(z_1,\ldots,z_n)\in (\mathbb{C}^*)^n$, let $E_\lambda^{(n)}(x;z)$, $\lambda\in Z_{s,n}$, be functions defined inductively by $E^{(1)}_{\epsilon_k}(x;z_1)=E_{\epsilon_k}(x;z_1)$, $k=1,\ldots,s$, and for $n\ge 2$ by
\begin{gather}\label{eq:E-recurrence}
E_\lambda^{(n)}(x;z_1,\ldots,z_n)=\sum_{\substack{1\le k\le s\\[1pt] \lambda_k>0} }E_{\lambda-\epsilon_k}^{(n-1)}(x;z_1,\ldots,z_{n-1})
E_{\epsilon_k}^{(1)}\big(xt^{\lambda-\epsilon_k};z_{n}\big),
\end{gather}
where
\begin{gather}\label{eq:xtmu}
xt^{\mu}=\big(x_1t^{\mu_1},x_2t^{\mu_2},\ldots,x_st^{\mu_s}\big)\in (\mathbb{C}^*)^s.
\end{gather}
\end{Definition}

By the repeated use of (\ref{eq:E-recurrence}) we have
\begin{gather}
E_\lambda^{(n)}(x;z) = \!\!\sum_{\substack{{\substack{(i_1,\ldots,i_n)\\\hspace{3pt}\in \{1,\ldots,s\}^n,}}\\[2pt] \epsilon_{i_1}+\cdots+\epsilon_{i_n}=\lambda}}\!\! E_{\epsilon_{i_1}}(x;z_1)E_{\epsilon_{i_2}}\!\big(xt^{\epsilon_{i_1}};z_2\big)
E_{\epsilon_{i_3}}\!\big(xt^{\epsilon_{i_1}+\epsilon_{i_2}};z_3\big)
\cdots E_{\epsilon_{i_n}}\!\big(xt^{\epsilon_{i_1}+\cdots+\epsilon_{i_{n-1}}};z_n\big)\nonumber\\
\hphantom{E_\lambda^{(n)}(x;z)}{} =\sum_{\substack{{(i_1,\ldots,i_n)\in \{1,\ldots,s\}^n}\\[1pt] \epsilon_{i_1}+\cdots+\epsilon_{i_n}=\lambda}}
\prod_{k=1}^nE_{\epsilon_{i_k}}\big(xt^{\lambda^{(k-1)}};z_k\big),\label{eq:E-explicit-1}
\end{gather}
where $\lambda^{(k)}=\epsilon_{i_1}+\cdots+\epsilon_{i_k}$. Rewriting this formula as in \cite[p.~373, Theorem~3.4]{IN16} we obtain the following.

\begin{Theorem}\label{thm:expressionE2} The functions $E^{(n)}_\lambda(x;z)$ defined by \eqref{eq:E-recurrence} are expressed explicitly as
\begin{gather*}
E_\lambda^{(n)}(x;z)=\sum_{\substack{K_1\sqcup\cdots\sqcup K_s \\ =\{1,2,\ldots,n\}}} \prod_{i=1}^s\prod_{k\in K_i}E_{\epsilon_{i}}\big(xt^{\lambda^{(k-1)}};z_k\big),
\end{gather*}
where $\lambda^{(k)}=\big(\lambda_1^{(k)},\ldots,\lambda_s^{(k)}\big)\in Z_{s,k}$, $\lambda_i^{(k)}=|K_i\cap\{1,2,\ldots,k\}|$ and the summation is taken over all index sets $K_i$, $i=1,2,\ldots,s$ satisfying $|K_i|=\lambda_i$ and $K_1\sqcup\cdots\sqcup K_s=\{1,2,\ldots,n\}$.
\end{Theorem}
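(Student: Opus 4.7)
The plan is to derive the claimed formula directly from the iterated recurrence expression (\ref{eq:E-explicit-1}) by a bijective reindexing of the summation; no genuine identity is being proved, only a change of parameterization of the index set. First I would set up the bijection between sequences and ordered set partitions: to each $(i_1,\ldots,i_n)\in\{1,\ldots,s\}^n$ I associate $K_1\sqcup\cdots\sqcup K_s=\{1,\ldots,n\}$ with $K_i=\{k\colon i_k=i\}$. This map is clearly a bijection, and under it the constraint $\epsilon_{i_1}+\cdots+\epsilon_{i_n}=\lambda$ translates exactly into $|K_i|=\lambda_i$ for all $i=1,\ldots,s$.

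Next I would verify that the product of $n$ factors in (\ref{eq:E-explicit-1}) matches the double product in the statement. Fix a sequence and its corresponding partition. For each $k$, if $i_k=i$ (equivalently $k\in K_i$), the $k$-th factor in (\ref{eq:E-explicit-1}) is $E_{\epsilon_i}(xt^{\lambda^{(k-1)}};z_k)$, precisely one of the factors appearing in the inner product $\prod_{k\in K_i}$ of the theorem. I need to check that the evaluation point agrees: the $j$-th component of $\lambda^{(k-1)}=\epsilon_{i_1}+\cdots+\epsilon_{i_{k-1}}$ counts the number of indices $l\le k-1$ with $i_l=j$, which equals $|K_j\cap\{1,\ldots,k-1\}|$, i.e., $\lambda_j^{(k-1)}$ in the notation of the theorem. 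Hence $xt^{\lambda^{(k-1)}}$ is the same shifted parameter in both expressions, and regrouping the ordered product $\prod_{k=1}^{n}$ according to membership in the $K_i$ yields $\prod_{i=1}^{s}\prod_{k\in K_i}$, completing the identification.

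The argument is essentially bookkeeping, mirroring the rewriting carried out in \cite[p.~373, Theorem~3.4]{IN16} in the $BC_n$ setting. The only point that requires care, and the main potential pitfall, is that the shifted parameter $xt^{\lambda^{(k-1)}}$ depends on the integer $k$ (the position in the sequence) rather than on $K_i$ itself, so regrouping must not disturb the indexing; one must make sure that the ``partial sum'' $\lambda^{(k-1)}$ is computed from the partition via $\lambda_i^{(k)}=|K_i\cap\{1,\ldots,k\}|$ rather than from any ordering intrinsic to $K_i$. Once this is spelled out, the identity reduces to the bijection described above and the theorem follows.
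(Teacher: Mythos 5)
Your proposal is correct and matches the paper's own treatment: the paper derives the iterated product formula (\ref{eq:E-explicit-1}) by repeated use of the recurrence and then obtains the theorem by exactly the reindexing you describe (sequences $(i_1,\ldots,i_n)$ with $\epsilon_{i_1}+\cdots+\epsilon_{i_n}=\lambda$ $\leftrightarrow$ ordered partitions $K_i=\{k: i_k=i\}$ with $|K_i|=\lambda_i$), citing the analogous rewriting in the $BC_n$ case. Your check that $\lambda_j^{(k-1)}=|K_j\cap\{1,\ldots,k-1\}|$ is precisely the point that makes the regrouping legitimate, so nothing is missing.
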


We remark that these functions for $\lambda=n\epsilon_i\in Z_{s,n}$ have simple factorized forms; this fact will be used in the next subsection.

\begin{Corollary}\label{cor:ne_i} For $n\epsilon_i\in Z_{s,n}$, $i=1,\ldots, s$, one has
\begin{gather}\label{eq:ne_i}
E_{n\epsilon_i}^{(n)}(x;z) =
\frac{\prod\limits_{k=1}^n e\Big(z_k\zeta \prod\limits_{m=1}^sx_m,x_i\Big)}
{e\Big(x_i\zeta \prod\limits_{m=1}^sx_m,x_i\Big)_n} \prod_{\substack{1\le j\le s\\ j\ne i}}
\frac{\prod\limits_{k=1}^ne(z_k,x_j)}{e(x_i,x_j)_n},
\end{gather}
where
\begin{gather}\label{eq:e(u,v)_r}
e(u,v)_{r}=e(u,v)e(ut,v)\cdots e\big(ut^{r-1},v\big) \qquad \text{for} \quad r=0,1,2,\ldots.
\end{gather}
\end{Corollary}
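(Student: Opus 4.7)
The plan is to apply Theorem \ref{thm:expressionE2} to the weight $\lambda = n\epsilon_i$, where the combinatorial sum over partitions collapses to a single term. Since $\lambda_i = n$ and $\lambda_l = 0$ for $l \neq i$, the only admissible partition $K_1 \sqcup \cdots \sqcup K_s = \{1, \ldots, n\}$ with $|K_l| = \lambda_l$ is $K_i = \{1, \ldots, n\}$ and $K_l = \varnothing$ for $l \neq i$. Along this partition the partial sums are $\lambda^{(k-1)} = (k-1)\epsilon_i$, so the shifted parameter vectors take the form $xt^{\lambda^{(k-1)}} = (x_1, \ldots, x_{i-1}, x_i t^{k-1}, x_{i+1}, \ldots, x_s)$, and Theorem \ref{thm:expressionE2} reduces to
\[
E_{n\epsilon_i}^{(n)}(x; z) = \prod_{k=1}^n E_{\epsilon_i}\bigl(xt^{(k-1)\epsilon_i}; z_k\bigr).
\]

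Next I would substitute these shifted parameters into the one-variable formula \eqref{eq:E-explicit(n=1)}. Writing $A = \zeta \prod_m x_m$ and noting that $\prod_m \bigl(xt^{(k-1)\epsilon_i}\bigr)_m = A t^{k-1}$, the $i$-th factor of $E_{\epsilon_i}\bigl(xt^{(k-1)\epsilon_i}; z_k\bigr)$ becomes $e\bigl(z_k A t^{k-1}, x_i t^{k-1}\bigr)/e\bigl(x_i A t^{2(k-1)}, x_i t^{k-1}\bigr)$, while the remaining factors take the form $e(z_k, x_j)/e(x_i t^{k-1}, x_j)$ for $j \neq i$. The key identity is the scaling $e(ut^r, vt^r) = t^r e(u, v)$, immediate from $e(u, v) = u\theta(v/u)$. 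Applying it to both numerator and denominator of the first fraction pulls out a common factor $t^{k-1}$, which cancels, leaving $e(z_k A, x_i)/e(x_i A t^{k-1}, x_i)$.

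Finally, taking the product over $k = 1, \ldots, n$ and collecting the denominators via the notation $e(u,v)_n = \prod_{r=0}^{n-1} e(ut^r, v)$ from \eqref{eq:e(u,v)_r}, the $i$-th denominator assembles into $e(x_i A, x_i)_n$ and each $j \neq i$ denominator into $e(x_i, x_j)_n$, yielding precisely \eqref{eq:ne_i}. The only potential obstacle is careful bookkeeping of the $t^{k-1}$ factors in the rescaling step, but this is a routine check once the scaling identity is in hand; no new ideas beyond Theorem \ref{thm:expressionE2} and the single-variable formula \eqref{eq:E-explicit(n=1)} are required.
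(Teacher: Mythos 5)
Your proposal is correct and follows essentially the same route as the paper: the paper also reduces the sum in the explicit expansion (there via \eqref{eq:E-explicit-1}, which collapses to the single term $(i_1,\dots,i_n)=(i,\dots,i)$) to obtain $E_{n\epsilon_i}^{(n)}(x;z)=\prod_{k=1}^n E_{\epsilon_i}\bigl(xt^{(k-1)\epsilon_i};z_k\bigr)$ and then identifies this with \eqref{eq:ne_i}. The only difference is that you spell out the final identification explicitly via the scaling identity $e(ut^r,vt^r)=t^re(u,v)$, a verification the paper leaves to the reader; your bookkeeping of the $t^{k-1}$ factors is correct.
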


\begin{proof}If we put $\lambda=n\epsilon_i$ in the formula~(\ref{eq:E-explicit-1}) then the right-hand side reduces to a single term with $(i_1,i_2,\ldots,i_n)=(i,i,\ldots,i)$. Therefore, using~(\ref{eq:E-explicit(n=1)}) we obtain
\begin{gather*}
E_{n\epsilon_i}^{(n)}(x;z) =E_{\epsilon_{i}}(x;z_1)E_{\epsilon_{i}}\big(xt^{\epsilon_{i}};z_2\big)E_{\epsilon_{i}}\big(xt^{2\epsilon_{i}};z_3\big)\cdots
E_{\epsilon_{i}}\big(xt^{(n-1)\epsilon_{i}};z_n\big),
\end{gather*}
which coincides with (\ref{eq:ne_i}).
\end{proof}

We simply write $E_\lambda(x;z)=E_\lambda^{(n)}(x;z)$ when there is no fear of misunderstanding. In the remaining part of this subsection we show that $E_\lambda(x;z)\in \mathcal{H}_{s,n,\zeta}$ and $E_\lambda(x;x_\mu)=\delta_{\lambda\mu}$.

For $x=(x_1,\ldots,x_s)\in (\mathbb{C}^*)^s$ and $w=(w_1,\ldots,w_s)\in (\mathbb{C}^*)^s$, let $F_\mu(x;w)$, $\mu\in \mathbb{N}^s$, be functions specified by
\begin{gather}\label{eq:F(x;w)}
F_\mu(x;w)=\prod_{i=1}^s\prod_{j=1}^se(x_i,w_j)_{\mu_i},
\end{gather}
where $e(u,v)_{r}$ is given by~\eqref{eq:e(u,v)_r}. By definition the functions $F_\mu(x;w)$ satisfy
\begin{gather}\label{eq:FF=F}
F_\mu(x;w) F_\nu(xt^{\mu};w) =F_{\mu+\nu}(x;w),
\end{gather}
where $xt^{\mu}$ is given by \eqref{eq:xtmu}. For $z=(z_1,\ldots,z_n)\in (\mathbb{C}^*)^n$ and $w=(w_1,\ldots,w_s)\in (\mathbb{C}^*)^s$, let $\Psi(z;w)$ be function specified by
\begin{gather*}
\Psi(z;w)= \prod_{i=1}^n\prod_{j=1}^s e(z_i,w_j),
\end{gather*}
which we call the {\em dual Cauchy kernel}. If $\zeta\prod_{i=1}^s w_i=1$, then by definition $\Psi(z;w)$ as a holomorphic function of $z\in (\mathbb{C}^*)^n$ satisfies $\Psi(z;w)\in \mathcal{H}_{s,n,\zeta}$. Note that $F_\mu(x;w)$ of~(\ref{eq:F(x;w)}) is expressed as
\begin{gather*}
F_\mu(x;w)=\Psi(x_\mu;w).
\end{gather*}

\begin{Lemma}[duality] Under the condition $\zeta\prod_{i=1}^s w_i=1$, $\Psi(z;w)$ expands as
\begin{gather}\label{eq:duality}
\Psi(z;w) =\sum_{\mu\in Z_{s,n}} E_\mu(x;z)F_\mu(x;w).
\end{gather}
\end{Lemma}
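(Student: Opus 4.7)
The plan is to prove the duality \eqref{eq:duality} by induction on $n$, using the recurrence \eqref{eq:E-recurrence} that defines $E_\mu^{(n)}$ together with the semigroup identity \eqref{eq:FF=F} for the kernel values $F_\mu(x;w)$. This is a purely algebraic identity at this stage; the interpolation property and holomorphy of $E_\mu^{(n)}$ for $n\ge 2$ will be extracted from the duality \emph{after} it is established, so we must not invoke them during the proof.

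For the base case $n=1$, the asserted identity reads
\begin{gather*}
\prod_{j=1}^{s}e(z_1,w_j)=\sum_{i=1}^{s}E_{\epsilon_i}(x;z_1)\prod_{j=1}^{s}e(x_i,w_j),
\end{gather*}
valid whenever $\zeta\prod_j w_j=1$. The left-hand side, regarded as a function of $z_1$, lies in $\mathcal{H}_{s,1,\zeta}$ because each factor $e(z_1,w_j)$ has quasi-periodicity $e(qz_1,w_j)=(-w_j/z_1)e(z_1,w_j)$ and $\prod_j w_j=\zeta^{-1}$; the right-hand side lies in the same space by Lemma~\ref{lem:MainThm-interpolation(n=1)}. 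Since the interpolation identity $E_{\epsilon_i}(x;x_j)=\delta_{ij}$ is built into \eqref{eq:E-explicit(n=1)}, both sides take the common value $\prod_j e(x_j,w_j)$ \ldots actually $\prod_k e(x_j,w_k)$ at $z_1=x_j$ for each $j=1,\dots,s$. By Lemma~\ref{lem:MainThm-interpolation(n=1)} the space $\mathcal{H}_{s,1,\zeta}$ is $s$-dimensional with basis $\{E_{\epsilon_i}(x;z_1)\}_{i=1}^{s}$, so agreement at these $s$ points forces equality.

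For the inductive step, I would factor
\begin{gather*}
\Psi(z_1,\ldots,z_n;w)=\Psi(z_1,\ldots,z_{n-1};w)\prod_{j=1}^{s}e(z_n,w_j),
\end{gather*}
apply the induction hypothesis to the first factor with reference points $x$, and apply the base case identity to the second factor with the shifted reference points $xt^{\nu}$ (which is legitimate since the base case holds for any generic reference point in $(\mathbb{C}^*)^s$). This yields a double sum over $\nu\in Z_{s,n-1}$ and $i\in\{1,\ldots,s\}$, whose coefficient of $E_{\nu}^{(n-1)}(x;z_1,\ldots,z_{n-1})\,E_{\epsilon_i}(xt^\nu;z_n)$ is $F_\nu(x;w)F_{\epsilon_i}(xt^{\nu};w)=F_{\nu+\epsilon_i}(x;w)$ by \eqref{eq:FF=F}. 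Re-indexing by $\mu=\nu+\epsilon_i\in Z_{s,n}$ and collecting, the coefficient of $F_\mu(x;w)$ becomes
\begin{gather*}
\sum_{\substack{1\le i\le s\\ \mu_i>0}}E_{\mu-\epsilon_i}^{(n-1)}(x;z_1,\ldots,z_{n-1})\,E_{\epsilon_i}^{(1)}\big(xt^{\mu-\epsilon_i};z_n\big),
\end{gather*}
which is exactly $E_\mu^{(n)}(x;z)$ by the defining recurrence \eqref{eq:E-recurrence}. This closes the induction.

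The main subtlety is not the computation itself but the book-keeping: one must recognize that the role of the base case at step $n$ is played with the shifted reference points $xt^\nu$, and that the key combinatorial miracle is the exact alignment between the additive semigroup structure of the $F_\mu$ (given by \eqref{eq:FF=F}) and the recursive definition of the $E_\mu^{(n)}$. Once this alignment is identified, the proof reduces to a clean re-indexing $\mu=\nu+\epsilon_i$.
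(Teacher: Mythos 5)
Your proof is correct and follows essentially the same route as the paper: induction on $n$, with the base case $n=1$ handled via the basis property of $\{E_{\epsilon_i}(x;z)\}$ for $\mathcal{H}_{s,1,\zeta}$, and the inductive step via the factorization of $\Psi$, the base case applied with shifted reference points $xt^\nu$, the semigroup identity \eqref{eq:FF=F}, and the defining recurrence \eqref{eq:E-recurrence}. The only cosmetic difference is that you phrase the base case as ``agreement at the $s$ points $x_1,\ldots,x_s$ forces equality,'' whereas the paper directly writes the expansion $\Psi(z_1;w)=\sum_i\Psi(x_{\epsilon_i};w)E_{\epsilon_i}(x;z)$; these are the same argument.
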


\begin{proof} We proceed by induction on $n$, the cardinality of $z$. We consider the case $n=1$ as the base case. Under the condition $\zeta\prod_{i=1}^s w_i=1$, we have $\Psi(z_1;w)=\prod_{j=1}^s e(z_1,w_j)\in \mathcal{H}_{s,1}$. Then, from Lemma~\ref{lem:MainThm-interpolation(n=1)}, $\Psi(z_1;w)$ is expanded as $\Psi(z_1;w)=\sum_{i=1}^n\Psi(x_{\epsilon_i};w)E_{\epsilon_i}(x;z)$,
whose coefficient $\Psi(x_{\epsilon_i};w)$ is written as $\Psi(x_{\epsilon_i};w)=\Psi(x_i;w)=\prod_{j=1}^s e(x_i,w_j)=F_{\epsilon_i}(x;w)$. This indicates~(\ref{eq:duality}) of the case $n=1$.

Next we suppose $n\ge 2$. We assume (\ref{eq:duality}) holds for the number of variables for $z$ less than~$n$. Denoting $z_{\widehat{n}}=(z_1,\ldots,z_{n-1})\in (\mathbb{C}^*)^{n-1}$ for $z=(z_1,\ldots,z_n)\in (\mathbb{C}^*)^n$, (\ref{eq:E-recurrence}) is rewritten as
\begin{gather}\label{eq:E-recurrence2}
E_\lambda^{(n)}(x;z)=\sum_{i=1}^s E_{\lambda-\epsilon_{i}}^{(n-1)}(x;z_{\widehat{n}})E_{\epsilon_{i}}^{(1)}\big(xt^{\lambda-\epsilon_{i}};z_n\big),
\end{gather}
where we regard $E_{\lambda-\epsilon_{i}}^{(n-1)}(x;z_{\widehat{n}})=0$ if $\lambda-\epsilon_{i}\not\in Z_{s,n-1}$. Then, we obtain
\begin{align*}
\Psi(z;w) & =\prod_{i=1}^n\prod_{j=1}^s e(z_i,w_j)=
\prod_{i=1}^{n-1}\prod_{j=1}^s e(z_i,w_j) \times \prod_{j=1}^s e(z_n,w_j)\\
&=\sum_{\mu\in Z_{s,n-1}}E_\mu^{(n-1)}(x;z_{\widehat{n}})F_\mu(x;w)\bigg(
\sum_{\nu\in Z_{s,1}}E_\nu^{(1)}(xt^\mu;z_n)F_\nu(xt^\mu;w)\bigg)\\
& \hspace{170pt}\mbox{(by the assumption of induction)}\\
&=\sum_{\substack{\mu\in Z_{s,n-1}\\[1pt] \nu\in Z_{s,1}}}E_\mu^{(n-1)}(x;z_{\widehat{n}})E_\nu^{(1)}(xt^\mu;z_n)F_\mu(x;w)F_\nu(xt^\mu;w)\\
&=\sum_{\substack{\mu\in Z_{s,n-1}\\[1pt] \nu\in Z_{s,1}}}E_\mu^{(n-1)}(x;z_{\widehat{n}})E_\nu^{(1)}(xt^\mu;z_n)F_{\mu+\nu}(x;w)
\hspace{52.5pt}\mbox{(by (\ref{eq:FF=F}))}\\
&=\sum_{\lambda\in Z_{s,n}}\bigg(\sum_{\substack{\mu\in Z_{s,n-1},\,\nu\in Z_{s,1}\\[1pt] \mu+\nu=\lambda}}E_\mu^{(n-1)}(x;z_{\widehat{n}})E_\nu^{(1)}(xt^\mu;z_n)\bigg)F_\lambda(x;w)\\
&=\sum_{\lambda\in Z_{s,n}}E_\mu^{(n)}(x;z)F_\lambda(x;w),\hspace{145pt}\mbox{(by (\ref{eq:E-recurrence2}))}
\end{align*}
as desired.
\end{proof}

When we consider the family of functions $F_\mu(x;w)$, $\mu\in Z_{s,n}$, on the hypersurface $\big\{w\in (\mathbb{C}^*)^s\,|\,\zeta\prod_{i=1}^{s}w_i=1\big\}$, we regard $F_\mu(x;w)$ as a function of $s-1$ variables $(w_1,\ldots,w_{s-1})\in (\mathbb{C}^*)^{s-1}$ with the relation $w_s=\big(\zeta\prod_{i=1}^{s-1}w_i\big)^{-1}$. Here, for $x=(x_1,\ldots,x_s)\in (\mathbb{C}^*)^{s}$ and
$\nu=(\nu_1,\ldots,\nu_s)\in Z_{s,n}$, we define a special point $\eta_\nu(x)$ on the hypersurface as
\begin{gather*}
\eta_\nu(x)=\left(x_1t^{\nu_1},\ldots,x_{s-1}t^{\nu_{s-1}}, \left(\zeta{ \prod_{i=1}^{s-1}x_it^{\nu_i}}\right)^{-1}\right).
\end{gather*}

\begin{Lemma}[triangularity]\label{lem:triangular-F} For each $\mu,\nu\in Z_{s,n}$, $F_\mu(x;\eta_{\nu}(x))=0$ unless $\mu_i\le \nu_i$ for $i=1,\ldots,$ $s-1$. In particular, $F_\mu(x;\eta_\nu(x))=0$ for $\mu\succ \nu$ with respect to the lexicographic order of $Z_{s,n}$. Moreover, if $x\in (\mathbb{C}^*)^s$ is generic, then $F_\mu(x;\eta_\mu(x))\ne 0$ for all $\mu\in Z_{s,n}$.
\end{Lemma}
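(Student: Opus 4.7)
The plan is to directly unpack the definition $F_\mu(x;w)=\prod_{i,j=1}^s e(x_i,w_j)_{\mu_i}$ and locate the zeros induced by the substitution $w=\eta_\nu(x)$. The essential observation is that $e(u,v)=u\theta(v/u)$ vanishes precisely when $v/u\in q^{\mathbb{Z}}$, since $\theta(u)=(u)_\infty(q/u)_\infty$ has simple zeros on $q^{\mathbb{Z}}$; in particular $\theta(1)=0$.

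For the vanishing assertion, I would fix $j\in\{1,\ldots,s-1\}$, where $w_j=x_jt^{\nu_j}$, and isolate the diagonal entry $i=j$ in the double product:
\[
 e(x_j,x_jt^{\nu_j})_{\mu_j}=\prod_{k=0}^{\mu_j-1}x_jt^k\,\theta\bigl(t^{\nu_j-k}\bigr).
\]
If $\mu_j>\nu_j$, then the value $k=\nu_j$ lies in $\{0,1,\ldots,\mu_j-1\}$ and contributes a factor $\theta(1)=0$, forcing the entire $F_\mu(x;\eta_\nu(x))$ to vanish. This yields the coordinatewise triangularity: $F_\mu(x;\eta_\nu(x))=0$ unless $\mu_j\le\nu_j$ for every $j\le s-1$. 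To derive the lexicographic version, suppose $\mu\succ\nu$ and let $k$ be the first index with $\mu_k>\nu_k$; the case $k=s$ is ruled out, because $\mu_i=\nu_i$ for $i<s$ combined with $|\mu|=|\nu|=n$ would force $\mu_s=\nu_s$. Hence $k\le s-1$ and the previous step applies.

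For non-vanishing at $\mu=\nu$, the plan is to verify that none of the factors $\theta(w_j/(x_it^k))$ entering $F_\mu(x;\eta_\mu(x))$ lies on $q^{\mathbb{Z}}$ for generic $x$. Three cases arise: the diagonal factors with $i=j\le s-1$ have arguments $t^{\mu_j-k}$ with $1\le\mu_j-k\le\mu_j$, which are nonzero for generic $t=q^{\tau}$; the off-diagonal factors with $i\neq j$ and $j\le s-1$ involve $x_jt^{\mu_j-k}/x_i$, which is generic for generic $x$; and the column $j=s$ involves $w_s=\bigl(\zeta\prod_{i=1}^{s-1}x_it^{\mu_i}\bigr)^{-1}$, which is itself a generic element of $\mathbb{C}^*$, so every factor $\theta\bigl(w_s/(x_it^k)\bigr)$ is nonzero. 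I do not anticipate a serious obstacle; the only subtle point is a clean formulation of genericity guaranteeing that the excluded configurations of $x$ form a proper analytic subset, which I would state once at the outset.
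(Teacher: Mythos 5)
Your proposal is correct and follows essentially the same route as the paper's proof: isolate the diagonal factor $e(x_j,x_jt^{\nu_j})_{\mu_j}$, observe that $\theta(1)=0$ forces vanishing when $\mu_j>\nu_j$, and check genericity of the remaining factors when $\mu=\nu$. You are in fact slightly more explicit than the paper at two minor points — ruling out $s$ as the first differing index via $|\mu|=|\nu|=n$, and noting that the diagonal theta factors $\theta(t^m)$ require genericity of $t$ rather than of $x$ — both of which the paper leaves implicit.
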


This lemma implies that the matrix $F=\big(F_\mu(x;\eta_\nu(x))\big)_{\mu,\nu\in Z}$ is upper triangular, and also invertible if $x\in\mathbb{C}^{s}$ is generic.

\begin{proof} If there exists $j\in \{1,2,\ldots,s-1\}$ such that $\nu_j<\mu_j$, then $F_\mu(x;\eta_{\nu}(x))=0$. In fact, in the expression
\begin{gather*}
F_\mu(x;\eta_\nu(x)) =\prod_{i=1}^s \left[ e\left(x_i,\left(\zeta{ \prod_{i=1}^{s-1}x_it^{\nu_i}}\right)^{-1}\right)_{\mu_i}
\prod_{j=1}^{s-1} e\big(x_i,x_jt^{\nu_j}\big)_{\mu_i}
\right],
\end{gather*}
the function $e(x_j,x_jt^{\nu_j})_{\mu_j}$ has the factor $\theta(t^{\nu_j})\theta\big(t^{\nu_j-1}\big)\cdots \theta\big(t^{\nu_j-(\mu_j-1)}\big)=0$ if $\nu_j<\mu_j$. If $\nu\prec\mu$, then $\nu_i<\mu_i$ for some $i\in \{1,2,\ldots,s-1\}$ by definition, and hence we obtain $F_\mu(x;\eta_{\nu}(x))=0$ if $\nu\prec\mu$. For $\nu=\mu$,
\begin{gather*}
F_\mu(x;\eta_{\mu}(x)) = \prod_{i=1}^{s}\Bigg[ x_i^{\mu_i}t^{\mu_i\choose 2}
\prod_{j=0}^{\mu_i-1} \theta\left(x_i^{-1}t^{-j}\left(\zeta{ \prod_{k=1}^{s-1}x_kt^{\mu_k}}\right)^{-1}\right)\\
\hphantom{F_\mu(x;\eta_{\mu}(x)) =}{} \times \prod_{j=1}^{s-1} \theta\big(x_i^{-1}x_jt^{\mu_j}\big) \theta\big(x_i^{-1}x_jt^{\mu_j-1}\big)\cdots
\theta\big(x_i^{-1}x_jt^{\mu_j-\mu_i+1}\big) \Bigg]
\end{gather*}
does not vanish if we impose an appropriate genericity condition on $x\in (\mathbb{C}^*)^s$.
\end{proof}

\begin{proof}[Proof of Theorem \ref{thm:MainThm-interpolation}]
We fix a generic point $x$ in $(\mathbb{C}^\ast)^s$. Putting $w=\eta_\nu(x)$ in the formula~(\ref{eq:duality}), we have
\begin{gather*}
\Psi(z;\eta_\nu(x)) =\sum_{\mu\in Z_{s,n}}E_\mu(x;z)F_\mu(x;\eta_{\nu}(x)) \qquad \text{for} \quad \nu \in Z_{s,n}.
\end{gather*}
In what follows, we denote by $G=(G_{\mu\nu}(x))_{\mu,\nu\in Z}$ the inverse matrix of $F=(F_{\mu}(x;\eta_\nu(x)))_{\mu,\nu\in Z}$ from Lemma~\ref{lem:triangular-F}. Then we obtain
\begin{gather}\label{eq:E in H}
E_\lambda(x;z)= \sum_{\nu\in Z_{s,n}} \Psi(z;\eta_\nu(x))G_{\nu\lambda}(x) \qquad \text{for} \quad \lambda \in Z_{s,n},
\end{gather}
which implies
\begin{gather*}
E_\lambda(x;z)\in \mathcal{H}_{s,n,\zeta}
\end{gather*}
because of $\Psi(z;\eta_\nu(x))\in \mathcal{H}_{s,n,\zeta}$. Setting $z=x_\mu$ for each $\mu\in Z_{s,n}$ as in \eqref{eq:x_mu}, from (\ref{eq:E in H}), we have
\begin{gather*}
E_\lambda(x;x_\mu) =\sum_{\nu\in Z_{s,n}} \Psi(x_\mu,\eta_\nu(x))G_{\nu\lambda}(x) =\sum_{\nu\in Z_{s,n}}F_\mu(x;\eta_\nu(x))G_{\nu\lambda}(x)
=\delta_{\lambda\mu}
\end{gather*}
for $\lambda,\mu\in Z_{s,n}$. From this we have the set $\{E_\lambda(x;z)\,|\,\lambda\in Z_{s,n}\}$ is linearly independent. Thus we obtain
$\dim_{\mathbb{C}}\mathcal{H}_{s,n,\zeta} \ge |\{E_\lambda(x;z)\,|\,\lambda\in Z_{s,n}\}| ={n+s-1\choose n}$.
On the other hand, we have already shown that $\dim_{\mathbb{C}}\mathcal{H}_{s,n,\zeta}\le {n+s-1\choose n}$ in Lemma~\ref{lem:dim=<}. We therefore obtain $\dim_{\mathbb{C}}\mathcal{H}_{s,n,\zeta}={n+s-1\choose n}$ and see that $\{E_\lambda(x;z)\,|\,\lambda\in Z_{s,n}\}$ is a basis of $\mathcal{H}_{s,n,\zeta}$, satisfying $E_\lambda(x;x_\mu)=\delta_{\lambda\mu}$ $(\lambda,\mu\in Z_{s,n})$.
\end{proof}

\subsection{Transition coefficients for the interpolation functions}\label{Section2.1}

In this subsection we investigate the transition coefficients between two sets of interpolation functions with different parameters. Theorem~\ref{thm:detE} in the Introduction follows from Theorem \ref{thm:detE2} below when $\zeta=q^\alpha t^{n-1}b_1\cdots b_s$.

For generic $x,y\in (\mathbb{C}^*)^s$, the interpolation functions $E_\mu(x;z)\in \mathcal{H}_{s,n,\zeta}$ as functions of
\smash{$z\in (\mathbb{C}^*)^n\!$} are expanded in terms of $E_\nu(y;z)$ ($\nu\in Z_{s,n}$), i.e.,
\begin{gather*}
E_\mu(x;z)=\sum_{\nu\in Z_{s,n}} C_{\mu\nu}(x;y)E_\nu(y;z),
\end{gather*}
where the coefficients $C_{\mu\nu}(x;y)$ are independent of~$z$. From the property~(\ref{eq:E-delta}) of the interpolation functions, we immediately see that $C_{\mu\nu}(x;y)$ may be expressed in terms of special value of $E_\mu(x;z)$ as
\begin{gather*}
C_{\mu\nu}(x;y)=E_\mu (x;y_{\nu}) \qquad \text{for} \quad \mu,\nu\in Z_{s,n}.
\end{gather*}
For $x,y\in (\mathbb{C}^*)^s$, we denote the {\em transition matrix} from $(E_\lambda(x;z))_{\lambda\in Z_{s,n}}$ to $(E_\lambda(y;z))_{\lambda\in Z_{s,n}}$ by
\begin{gather*}
{\bf E}(x;y)=\big(E_\mu (x;y_{\nu})\big)_{\!\mu,\nu\in Z_{s,n}},
\end{gather*}
where the rows and the columns are arranged in the total order $\prec$ of $Z_{s,n}$. By definition, for generic $x,y,w\in (\mathbb{C}^*)^s$ we have
\begin{gather*}\label{eq:E=EE}
{\bf E}(x;y)={\bf E}(x;w){\bf E}(w;y),
\end{gather*}
in particular
\begin{gather*}
{\bf E}(x;x)=I \qquad \mbox{and} \qquad {\bf E}(y;x)= {\bf E}(x;y)^{-1}.
\end{gather*}

\begin{Theorem}\label{thm:detE2}
For generic $x,y\in (\mathbb{C}^*)^s$ the determinant of the transition matrix ${\bf E}(x;y)$ is given explicitly as
\begin{gather*}
\det {\bf E}(x;y)=\prod_{k=1}^n\left[\frac{\theta\Big(\zeta t^{k-1}\prod\limits_{i=1}^s y_i\Big)}{\theta\Big(\zeta t^{k-1}\prod\limits_{i=1}^s x_i\Big)}
\right]^{s+k-2\choose k-1} \!\!\!\times \prod_{k=1}^n\left[\prod_{r=0}^{n-k}\prod_{1\le i<j\le s}\frac{e\big(y_it^r,y_jt^{(n-k)-r}\big)}{e\big(x_it^r,x_jt^{(n-k)-r}\big)}\right]^{s+k-3\choose k-1}.\label{eq:detE-1}
\end{gather*}
\end{Theorem}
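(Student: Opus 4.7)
My plan is to reduce $\det\mathbf{E}(x;y)$ to a ratio of theta-type determinants via the dual Cauchy kernel already used in the proof of Theorem~\ref{thm:MainThm-interpolation}, and then to pin down the numerator by matching its quasi-periodicity and zero divisor with the claimed right-hand side.

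First I would combine the kernel expansion
\[
E_\mu(x;z)=\sum_{\rho\in Z_{s,n}}\Psi(z;\eta_\rho(x))\,G_{\rho\mu}(x)
\]
with $\Psi(y_\nu;w)=F_\nu(y;w)$ to obtain the matrix identity $\mathbf{E}(x;y)^{T}=F(y;x)\,G(x)$, where $F(y;x)_{\nu\rho}:=F_\nu(y;\eta_\rho(x))$ and $G(x)=F(x;x)^{-1}$ with $F(x;x)$ the upper-triangular matrix of Lemma~\ref{lem:triangular-F}. This yields
\[
\det\mathbf{E}(x;y)=\frac{\det\bigl(F_\nu(y;\eta_\rho(x))\bigr)_{\nu,\rho\in Z_{s,n}}}{\prod_{\mu\in Z_{s,n}}F_\mu(x;\eta_\mu(x))},
\]
with the denominator computed immediately from Lemma~\ref{lem:triangular-F}.

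Next I would identify the numerator $\Lambda(y):=\det\bigl(F_\nu(y;\eta_\rho(x))\bigr)_{\nu,\rho}$ as a holomorphic function of $y$. The key observation is that $\prod_j\eta_\rho(x)_j=1/\zeta$ is independent of $\rho$, which makes the transformation of $\Lambda$ under $y_{i_0}\to qy_{i_0}$ a clean row rescaling: using $e(qu,v)=-(v/u)e(u,v)$ together with this common product, row $\nu$ of $F(y;x)$ gets multiplied by $(-1)^{s\nu_{i_0}}\zeta^{-\nu_{i_0}}y_{i_0}^{-s\nu_{i_0}}t^{-s\binom{\nu_{i_0}}{2}}$. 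Taking the product over $\nu\in Z_{s,n}$, and evaluating the combinatorial sums $\sum_\nu\nu_{i_0}$ and $\sum_\nu\binom{\nu_{i_0}}{2}$, reproduces exactly the quasi-periodicity of the claimed right-hand side of the theorem.

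Then I would determine the zero divisor of $\Lambda(y)$ by column-degeneration arguments in the spirit of Lemma~\ref{lem:triangular-F}: whenever $y_it^r=y_jt^{(n-k)-r}$ for admissible $(k,r,i,j)$, a specific subset of columns of $F(y;x)$ becomes linearly dependent, forcing the factor $e(y_it^r,y_jt^{(n-k)-r})$ to divide $\Lambda(y)$ with multiplicity at least $\binom{s+k-3}{k-1}$; similarly the quasi-periodicity together with the genericity implies that $\theta\bigl(\zeta t^{k-1}\prod_l y_l\bigr)$ divides $\Lambda(y)$ with multiplicity $\binom{s+k-2}{k-1}$. A degree count in the theta algebra then shows these zeros exhaust $\Lambda(y)$ up to a factor depending only on $x$, and the cocycle identity $\det\mathbf{E}(x;y)\det\mathbf{E}(y;w)=\det\mathbf{E}(x;w)$ together with the normalization $\det\mathbf{E}(x;x)=1$ fixes the remaining ambiguity. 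The main obstacle in this plan is the exact multiplicity bookkeeping: for each $(k,r,i,j)$ with $1\le k\le n$, $0\le r\le n-k$, and $1\le i<j\le s$, one must pin down that the specialization $y_it^r=y_jt^{(n-k)-r}$ produces precisely $\binom{s+k-3}{k-1}$ independent column-relations in $F(y;x)$, consistent with the overall degree count. This combinatorial analysis is an elliptic multivariable refinement of the column-collision argument of Lemma~\ref{lem:triangular-F} and is what produces the binomial exponents in the final formula.
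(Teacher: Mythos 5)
Your reduction is correct and is genuinely different from the paper's argument: from the kernel expansion $E_\mu(x;z)=\sum_{\rho}\Psi(z;\eta_\rho(x))G_{\rho\mu}(x)$ in the proof of Theorem~\ref{thm:MainThm-interpolation} and the identity $\Psi(y_\nu;w)=F_\nu(y;w)$ one does get $\det{\bf E}(x;y)=\Lambda(y)\big/\prod_{\mu}F_\mu(x;\eta_\mu(x))$ with $\Lambda(y):=\det\big(F_\nu(y;\eta_\rho(x))\big)_{\nu,\rho}$, and your quasi-periodicity computation is right: since $\prod_j\eta_\rho(x)_j=\zeta^{-1}$ is independent of $\rho$, the shift $y_{i_0}\to qy_{i_0}$ rescales each row uniformly, and the resulting exponent $s\sum_{\nu}\nu_{i_0}=s{n+s-1\choose n-1}$ matches the total zero count of the claimed right-hand side in $y_{i_0}$, because $(s-1)\sum_{k=1}^n(n-k+1){s+k-3\choose k-1}+\sum_{k=1}^n{s+k-2\choose k-1}=s{n+s-1\choose n-1}$. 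However, as written this is a plan rather than a proof: the entire content of the theorem sits in the two divisibility claims, namely that $e\big(y_it^r,y_jt^{(n-k)-r}\big)$ divides $\Lambda(y)$ with multiplicity at least ${s+k-3\choose k-1}$ and that $\theta\big(\zeta t^{k-1}\prod_l y_l\big)$ divides it with multiplicity ${s+k-2\choose k-1}$, and you assert these rather than establish them. In particular ``quasi-periodicity together with genericity'' locates no zeros by itself; for each specialization you must exhibit the stated number of independent column relations in $F(y;x)$ (equivalently, independent elements of $\mathcal{H}_{s,n,\zeta}$ vanishing at all points $y_\nu$) \emph{and} control the order of vanishing, not merely the corank. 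Until that combinatorial lemma is supplied, the argument has a genuine gap at its central step.

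It is worth knowing that the paper bypasses the divisor analysis entirely. By the cocycle relation ${\bf E}(x;y)={\bf E}(x;w){\bf E}(w;y)$ it suffices to change one coordinate of $x$ at a time; for $y=(x_1,\dots,x_{s-1},y_s)$, Lemma~\ref{lem:E-triangular} shows ${\bf E}(x;y)$ is triangular with diagonal entries given in closed form via the factorized functions $E_{n\epsilon_i}$ of Corollary~\ref{cor:ne_i}, so the determinant is an explicit telescoping product, and composing $s$ such moves gives the general formula. If you wish to keep your route, the same one-coordinate reduction is also the cleanest way to organize the missing multiplicity bookkeeping.
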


\begin{proof}
The proof of this theorem is reduced to a special case indicated as Lemma \ref{lem:E-triangular} below. The details are the same as in the case of the $BC_n$ interpolation functions of \cite[p.~377, proof of Theorem~4.1]{IN16}.
\end{proof}

\begin{Lemma}\label{lem:E-triangular}For $x,y\in (\mathbb{C}^*)^s$ suppose that $y_i=x_i$, $i=1,2,\ldots,s-1$, i.e., $y=(x_1,\ldots,x_{s-1},y_s)$. For $\alpha,\beta\in Z_{s,n}$ if there exists $i\in\{1,2,\ldots,s-1\}$ such that $\alpha_i<\beta_i$, then $E_\alpha(x;y_\beta)=0$. In particular, ${\bf E}(x,y)$ is a lower triangular matrix with the diagonal entries
\begin{gather*}
E_\alpha(x;y_\alpha) = \frac{e\Big(y_s\zeta t^{\alpha_1+\cdots+\alpha_{s-1}}\prod\limits_{m=1}^sx_m,x_s\Big)_{\alpha_s}}
{e\Big(x_s\zeta t^{\alpha_1+\cdots+\alpha_{s-1}}\prod\limits_{m=1}^sx_m,x_s\Big)_{\alpha_s}}
\prod_{i=1}^{s-1}\frac{e\big(y_s,x_it^{\alpha_i}\big)_{\alpha_s}}{e\big(x_s,x_it^{\alpha_i}\big)_{\alpha_s}}.
\end{gather*}
Moreover the determinant of ${\bf E}(x,y)$ for $y=(x_1,\ldots,x_{s-1},y_s)$ is expressed as
\begin{gather*}
\det {\bf E}(x,y) =\prod_{\alpha\in Z_{s,n}}\left[
\frac{e\Big(y_s\zeta t^{\alpha_1+\cdots+\alpha_{s-1}}\prod\limits_{m=1}^sx_m,x_s\Big)_{\alpha_s}}
{e\Big(x_s\zeta t^{\alpha_1+\cdots+\alpha_{s-1}}\prod\limits_{m=1}^sx_m,x_s\Big)_{\alpha_s}}
\prod_{i=1}^{s-1}
\frac{e\big(y_s,x_it^{\alpha_i}\big)_{\alpha_s}}{e\big(x_s,x_it^{\alpha_i}\big)_{\alpha_s}}\right]
\nonumber\\
\hphantom{\det {\bf E}(x,y)}{} =\prod_{k=1}^n
\left[\frac{\theta\Big(\zeta t^{k-1}y_s\prod\limits_{i=1}^{s-1} x_i\Big)}{\theta\Big(\zeta t^{k-1}\prod\limits_{i=1}^s x_i\Big)}
\right]^{s+k-2\choose k-1} \times
\prod_{k=1}^n\left[\prod_{r=0}^{n-k}\prod_{i=1}^{s-1}
\frac{e\big(x_it^r,y_st^{(n-k)-r}\big)}{e\big(x_it^r,x_st^{(n-k)-r}\big)}\right]^{s+k-3\choose k-1}.
\end{gather*}
\end{Lemma}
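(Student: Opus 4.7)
The plan is to establish the triangularity and the explicit form of the diagonal entries by induction on $n$, using a ``peel off the first variable'' recursion, and then to assemble the determinant by combinatorial accounting of exponents.

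The key tool is the following variant of \eqref{eq:E-recurrence}, obtained by splitting on the first summation index $i_1=k$ in the explicit formula~\eqref{eq:E-explicit-1}:
\begin{gather*}
E_\lambda^{(n)}(x;z_1,\dots,z_n)=\sum_{\substack{1\le k\le s\\ \lambda_k>0}}E_{\epsilon_k}(x;z_1)\,E_{\lambda-\epsilon_k}^{(n-1)}\big(xt^{\epsilon_k};z_2,\dots,z_n\big).
\end{gather*}
I would prove the vanishing $E_\alpha(x;y_\beta)=0$ by induction on $n$, the case $n=1$ being immediate from Lemma~\ref{lem:MainThm-interpolation(n=1)}. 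Suppose $\alpha_i<\beta_i$ for some $i\le s-1$ and put $j^{*}=\min\{j:\beta_j>0\}$. Since $\beta_i>0$, $j^{*}\le i\le s-1$, so $z_1=y_{j^*}=x_{j^*}$, and Lemma~\ref{lem:MainThm-interpolation(n=1)} yields $E_{\epsilon_k}(x;x_{j^*})=\delta_{k,j^*}$. If $\alpha_{j^*}=0$ the $k=j^{*}$ summand is absent, and every other term vanishes; if $\alpha_{j^*}>0$ only the $k=j^{*}$ summand survives, giving
\begin{gather*}
E_\alpha(x;y_\beta)=E_{\alpha-\epsilon_{j^*}}^{(n-1)}\big(x';\tilde y_{\beta-\epsilon_{j^*}}\big),
\end{gather*}
where $x'=xt^{\epsilon_{j^*}}$ and $\tilde y=(x'_1,\dots,x'_{s-1},y_s)$ differs from $x'$ only in the last coordinate. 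A direct check in the two cases $i\ne j^{*}$ and $i=j^{*}$ shows that $(\alpha-\epsilon_{j^*},\beta-\epsilon_{j^*})$ still satisfies the inductive hypothesis, so the right-hand side vanishes.

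For the diagonal entries, iterating the same peeling with $\beta=\alpha$ removes one block at a time, the surviving factor $E_{\epsilon_{j^*}}(\cdot;\cdot)$ being $1$ at every stage (by Lemma~\ref{lem:MainThm-interpolation(n=1)}). After $\alpha_1+\cdots+\alpha_{s-1}$ steps one reaches
\begin{gather*}
E_\alpha(x;y_\alpha)=E_{\alpha_s\epsilon_s}^{(\alpha_s)}\big(x^{*};\,y_s,y_st,\dots,y_st^{\alpha_s-1}\big),\qquad x^{*}=xt^{\alpha_1\epsilon_1+\cdots+\alpha_{s-1}\epsilon_{s-1}}.
\end{gather*}
Since $\prod_m x^{*}_m=t^{\alpha_1+\cdots+\alpha_{s-1}}\prod_m x_m$ and $x^{*}_s=x_s$, substituting into Corollary~\ref{cor:ne_i} and recollecting products using $\prod_{k=1}^{\alpha_s}e(ut^{k-1},v)=e(u,v)_{\alpha_s}$ reproduces the stated formula.

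The lower triangular structure gives $\det{\bf E}(x,y)=\prod_{\alpha\in Z_{s,n}}E_\alpha(x;y_\alpha)$, and the remaining step is a combinatorial repackaging. Expanding $e(\cdot,\cdot)_{\alpha_s}=\prod_{r=0}^{\alpha_s-1}e(\cdot\,t^r,\cdot)$ and counting multiplicities over $\alpha\in Z_{s,n}$, the factor $e(y_s\zeta t^mX,x_s)/e(x_s\zeta t^mX,x_s)$ (where $X=\prod_m x_m$) contributes whenever $\alpha_1+\cdots+\alpha_{s-1}\le m$, i.e.\ with multiplicity $\binom{s+m-1}{m}=\binom{s+k-2}{k-1}$ for $k=m+1$. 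For the second product, fixing $a=\alpha_i$ and $b=r$, the hockey-stick identity $\sum_{\ell=0}^{L}\binom{\ell+s-3}{s-3}=\binom{L+s-2}{s-2}$ gives multiplicity $\binom{n-a-b+s-3}{s-2}=\binom{s+k-3}{k-1}$ with $k=n-a-b$. Finally, using $\theta(1/u)=-\theta(u)/u$ to turn $e(y_sA,x_s)/e(x_sA,x_s)$ into $\theta(y_sA/x_s)/\theta(A)$, and the antisymmetry $e(u,v)=-e(v,u)$ (whose signs cancel between numerator and denominator of each factor), converts the product to the form stated. The main obstacle is not the induction but this accounting, namely matching the stars-and-bars counts with the specific binomials $\binom{s+k-2}{k-1}$ and $\binom{s+k-3}{k-1}$ and the nested index range $r\in\{0,\dots,n-k\}$.
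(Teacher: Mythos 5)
Your proof is correct and follows essentially the route the paper intends (the paper defers to the proof of \cite[Lemma~4.2]{IN16} with Corollary~\ref{cor:ne_i} substituted): triangularity from a peeling recursion, diagonal entries from the factorized $E_{n\epsilon_i}$, and the determinant by multiplying the diagonal and counting multiplicities. Your one substantive addition---deriving the first-variable peeling recursion from \eqref{eq:E-explicit-1}, which is needed because the defining recursion \eqref{eq:E-recurrence} peels the last variable of $y_\beta$ and would not produce Kronecker deltas there---is valid, and the multiplicity counts ${m+s-1\choose m}={s+k-2\choose k-1}$ and ${n-a-b+s-3\choose s-2}={s+k-3\choose k-1}$ both check out.
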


\begin{proof}See the proof of \cite[p.~376, Lemma~4.2]{IN16}, using (\ref{eq:ne_i}) in Corollary \ref{cor:ne_i} instead of \cite[p.~374, equation~(3.10) of Corollary~3.4]{IN16}.
\end{proof}

\section[Jackson integral of $A$-type]{Jackson integral of $\boldsymbol{A}$-type}\label{Section3}

\subsection {Regularization of the Jackson integral}

For $z=(z_1,z_2,\ldots,z_n)\in (\mathbb{C}^*)^n$, let $\Phi(z)$ and $\Delta(z)$ be functions specified by (\ref{eq:Phi}) and (\ref{eq:Delta}), respectively. In considering the sum
\begin{gather*}
\langle\varphi,\xi\rangle =(1-q)^n\sum_{\nu\in \mathbb{Z}^n}\varphi(\xi q^\nu)\Phi(\xi q^\nu)\Delta(\xi q^\nu),
\end{gather*}
we always assume the following conditions for the point $\xi=(\xi_1,\ldots,\xi_n)\in (\mathbb{C}^*)^n$:
\begin{gather}\label{eq:condition:generic}
b_m \xi_i,\,t \xi_k/\xi_j\not\in q^{\mathbb{Z}}=\big\{q^l\,|\,l\in \mathbb{Z}\big\} \qquad\mbox{for}\quad 1\le m\le s,\quad 1\le i\le n,\quad 1\le j<k\le n.\!\!
\end{gather}

\begin{Lemma}\label{lem:converge01}
Let $\xi\in (\mathbb{C}^*)^n$ be a point satisfying \eqref{eq:condition:generic} and $a_m/\xi_i\not\in q^{\mathbb{Z}}$, $i=1,\ldots,n$, $m=1,\ldots,s$. For $m_\lambda(z)$, $\lambda\in B_{s,n}$, the sum $\langle m_\lambda,\xi\rangle$ converges absolutely if $\alpha\in \mathbb{C}$, $a_m,b_m\in \mathbb{C}^*$, $m=1,\ldots,s$ and $t\in \mathbb{C}^*$ satisfy
\begin{gather}
\big|qa_1^{-1}\cdots a_s^{-1}b_1^{-1}\cdots b_s^{-1}\big|<\big|q^{\alpha}\big|<1,
\qquad 
\big|qa_1^{-1}\cdots a_s^{-1}b_1^{-1}\cdots b_s^{-1}\big|<\big|q^{\alpha}t^{2n-2}\big|<1.\label{eq:condition:converge01}
\end{gather}
\end{Lemma}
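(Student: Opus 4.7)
The plan is to prove $S_\mu:=\sum_{\nu\in\mathbb{Z}^n}|(\xi q^\nu)^\mu\Phi(\xi q^\nu)\Delta(\xi q^\nu)|<\infty$ for every monomial exponent $\mu\in\frak{S}_n\lambda$ (each satisfying $0\le\mu_i\le s-1$); this suffices since $m_\lambda(z)=\sum_\mu z^\mu$ is a finite sum. Writing $g_\mu(\nu)$ for the summand and applying $(uq)_\infty=(u)_\infty/(1-u)$ to each $q$-Pochhammer in $\Phi$, I would compute the one-step shift ratios $R_i(\nu):=|g_\mu(\nu+e_i)|/|g_\mu(\nu)|$ explicitly as rational functions of $z_k=\xi_kq^{\nu_k}$; conditions \eqref{eq:condition:generic} and $a_m/\xi_i\notin q^{\mathbb Z}$ guarantee regularity of these ratios on the entire lattice.

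The heart of the argument is the limits of $|R_i(\nu)|$ in the extremal directions of $\nu\to\infty$. A direct calculation using $q^{2\tau}=t^2$ shows that the combined contribution of the pair $(j,k)$ (with $i\in\{j,k\}$) under the unit shift in $z_i$ tends to $t^2$ when $|z_i|$ dominates the other coordinate in the pair and to $1$ in the opposite case; meanwhile $\prod_m(1-b_mz_i)/(1-qa_m^{-1}z_i)\to 1$ as $z_i\to 0$ and $\to q^{-s}a_1\cdots b_s$ as $z_i\to\infty$; and the factor $z_i^{\alpha+\mu_i}$ produces $q^{\alpha+\mu_i}$. Consequently the four extremal limits of $|R_i|$ are
\begin{gather*}
|q^{\alpha+\mu_i}|,\quad |q^{\alpha+\mu_i}t^{2n-2}|,\quad |q^{\alpha+\mu_i-s}a_1\cdots b_s|,\quad |q^{\alpha+\mu_i-s}a_1\cdots b_s\,t^{2n-2}|,
\end{gather*}
corresponding respectively to ($|z_i|$ smallest, $z_i\to 0$), ($|z_i|$ largest, $z_i\to 0$), ($|z_i|$ smallest, $z_i\to\infty$), and ($|z_i|$ largest, $z_i\to\infty$).

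Taking the worst case $\mu_i=0$ in the first two regimes (where convergence as $\nu_i\to+\infty$ requires $|R_i|<1$) and $\mu_i=s-1$ in the last two (where convergence as $\nu_i\to-\infty$ requires $|R_i|>1$), these four limits reduce precisely to the four strict inequalities $|q^\alpha|<1$, $|q^\alpha t^{2n-2}|<1$, $|qa_1^{-1}\cdots b_s^{-1}|<|q^\alpha|$, and $|qa_1^{-1}\cdots b_s^{-1}|<|q^\alpha t^{2n-2}|$ packaged in \eqref{eq:condition:converge01}. Intermediate configurations yield $|R_i|$ values sandwiched between these extrema, so the four conditions dominate $|R_i(\nu)|$ in every direction. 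I would conclude by partitioning $\mathbb{Z}^n$ into finitely many cones indexed by the signs and relative orderings of $|\xi_iq^{\nu_i}|$, and bounding $|g_\mu(\nu)|$ in each cone by $C\prod_i\rho_i^{|\nu_i|}$ with $\rho_i<1$; summing the resulting multidimensional geometric series gives $S_\mu<\infty$.

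The main obstacle is the combinatorial bookkeeping needed to verify that the four extremal limits indeed dominate $|R_i(\nu)|$ in every direction. The cross-factors $z_j^{2\tau-1}(qt^{-1}z_k/z_j)_\infty/(tz_k/z_j)_\infty$ combined with the Vandermonde differences $(z_j-z_k)$ couple the coordinates through the ratios $z_k/z_j$, but because each such ratio contributes only $1$ or $t^2$ in any limiting regime, the $2^{n-1}$ possible directional configurations for the remaining coordinates collapse into just the four extremal cases treated above.
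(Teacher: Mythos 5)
Your proposal is correct and takes essentially the same route as the paper: the decomposition of $\mathbb{Z}^n$ into cones indexed by the sign pattern of $\nu$ and the relative ordering of the $\nu_i$, followed by a bound of the summand by $C\prod_i\rho_i^{|\nu_i|}$, is exactly the paper's decomposition $\mathbb{Z}^n=\bigcup_{I,\sigma}(Q_I\cap\sigma D)$ with the estimates \eqref{eq:bounded01}--\eqref{eq:|Phi(xq)|}. Your four extremal ratio limits $|q^{\alpha}|$, $|q^{\alpha}t^{2n-2}|$, $|q^{1-\alpha}a_1^{-1}\cdots b_s^{-1}|^{-1}$, $|q^{1-\alpha}a_1^{-1}\cdots b_s^{-1}t^{2n-2}|^{-1}$ (after taking the worst $\mu_i$) are precisely the geometric ratios the paper extracts via the reflection identity for $\big(qa_m^{-1}z_i\big)_\infty/(b_mz_i)_\infty$ and the quasi-symmetry of $\Phi_2\Delta$, with the intermediate powers $|t|^{2k}$ interpolating geometrically between the endpoints.
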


\begin{proof} Since $\langle m_\lambda,\xi\rangle =\sum_{\mu\in \frak{S}_n\lambda}\langle z^\mu,\xi\rangle$, it suffices to show that $\langle z^\mu,\xi\rangle $ converges absolutely under the conditions~\eqref{eq:condition:converge01} and
\begin{gather}\label{eq:condition:mu}
0\le \mu_i\le s-1 \qquad \text{for} \quad i=1,2,\ldots,n .
\end{gather}
In the following we set $D=\{\nu=(\nu_1,\nu_2,\ldots,\nu_n)\in \mathbb{Z}^n\,|\, \nu_1\le \nu_2\le\cdots\le \nu_n\}$. Note that under the action of $\frak{S}_n$ defined by $\sigma\nu=(\nu_{\sigma^{-1}(1)},\nu_{\sigma^{-1}(2)},\ldots,\nu_{\sigma^{-1}(n)})$, we have
\begin{gather*}
\sigma D=\big\{\nu=(\nu_1,\nu_2,\ldots,\nu_n)\in \mathbb{Z}^n\,|\, \nu_{\sigma(1)}\le \nu_{\sigma(2)}\le\cdots\le \nu_{\sigma(n)}\big\} \qquad \text{for} \quad \sigma\in \frak{S}_n.
\end{gather*}
For each $ I\subset \{1,2,\ldots,n\}$ we set
\begin{gather*}
Q_I=\{\nu=(\nu_1,\nu_2,\ldots,\nu_n)\in \mathbb{Z}^n\,|\, \nu_i\ge 0\ \text{if} \ i\in I\ \text{and}\ \nu_i\le 0\ \text{if} \ i\not\in I\}.
\end{gather*}
We divide $\Phi(z)$ into two parts as
\begin{gather*}
\Phi(z)=\Phi_1(z)\Phi_2(z),
\end{gather*}
where
\begin{gather}\label{eq:Phi1,Phi2}
\Phi_1(z)=\prod_{i=1}^n z_i^\alpha \prod_{m=1}^s\frac{\big(qa_m^{-1}z_i\big)_\infty}{(b_mz_i)_\infty}
\qquad\mbox{and}\qquad
\Phi_2(z)=\prod_{1\le j<k\le n}z_j^{2\tau-1}\frac{\big(qt^{-1}z_k/z_j\big)_\infty}{(tz_k/z_j)_\infty}.
\end{gather}
Since the factor $\big(qa_m^{-1}z_i\big)_\infty/(b_mz_i)_\infty$ in $\Phi_1(z)$ is equal to
\begin{gather*}
\frac{\big(qa_m^{-1}z_i\big)_\infty}{(b_mz_i)_\infty}= \left(z_i^{1-\alpha_m-\beta_m}\frac{\theta\big(qa_m^{-1}z_i\big)}{\theta(b_mz_i)}\right)
\left(\big(z_i^{-1}\big)^{1-\alpha_m-\beta_m}\frac{\big(qb_m^{-1}z_i^{-1}\big)_\infty}{\big(a_mz_i^{-1}\big)_\infty}\right)
\end{gather*}
with $q$-periodic function $z_i^{1-\alpha_m-\beta_m}\theta\big(qa_m^{-1}z_i\big)/\theta(b_mz_i)$, the values $(\xi q^{\nu})^\mu\,\Phi_1(\xi q^\nu)$ for $\nu\in Q_I$ are explicitly written as
\begin{gather}
(\xi q^{\nu})^\mu\,\Phi_1(\xi q^\nu) = \prod_{i\in I}\left[(\xi_iq^{\nu_i})^{\alpha+\mu_i}\prod_{m=1}^s\frac{\big(q^{1+\nu_i}a_m^{-1}\xi_i\big)_\infty}{(q^{\nu_i}b_m\xi_i)_\infty}\right]
\times\prod_{i\not\in I}\left[\left(\prod_{m=1}^s\xi_i^{1-\alpha_m-\beta_m}\frac{\theta\big(qa_m^{-1}\xi_i\big)}{\theta(b_m\xi_i)}\right)\right.
\nonumber\\
\left.\hphantom{(\xi q^{\nu})^\mu\,\Phi_1(\xi q^\nu) = \prod_{i\in I}}{} \times\left(\big(\xi_i^{-1} q^{-\nu_i}\big)^{-\alpha-\mu_i}
\prod_{m=1}^s\big(\xi_i^{-1} q^{-\nu_i}\big)^{1-\alpha_m-\beta_m}\frac{\big(q^{1-\nu_i}b_m^{-1}\xi_i^{-1}\big)_\infty}{\big(q^{-\nu_i}a_m\xi_i^{-1}\big)_\infty}\right)\right].\!\!\!\label{eq:Phi1(xq)}
\end{gather}
For each $I$ there exists a constant $C_1>0$ such that
\begin{gather}
\Bigg|\prod_{i\in I} \xi_i^{\alpha+\mu_i}
\prod_{m=1}^s\frac{\big(q^{1+\nu_i}a_m^{-1}\xi_i\big)_\infty}{(q^{\nu_i}b_m\xi_i)_\infty}\nonumber\\
\qquad{}\times
\prod_{i\not\in I}
\left(\prod_{m=1}^s
\xi_i^{1-\alpha_m-\beta_m}\frac{\theta\big(qa_m^{-1}\xi_i\big)}{\theta(b_m\xi_i)}\right)
\left(\xi_i^{\alpha+\mu_i}\prod_{m=1}^s\frac{(q^{1-\nu_i}b_m^{-1}\xi_i^{-1})_\infty}{\big(q^{-\nu_i}a_m\xi_i^{-1}\big)_\infty}\right)\Bigg|\le C_1,\label{eq:bounded01}
\end{gather}
for all $\nu\in Q_I$. From \eqref{eq:Phi1(xq)} and \eqref{eq:bounded01}, for $\nu\in Q_I$ and $\mu$ satisfying \eqref{eq:condition:mu} we have
\begin{gather}
\big|(\xi q^{\nu})^\mu\,\Phi_1(\xi q^\nu)\big| \le C_1\left| \prod_{i\in I} (q^{\nu_i})^{\alpha+\mu_i}\prod_{i\not\in I} (q^{-\nu_i})^{-\alpha-\mu_i+s-\alpha_1-\cdots-\alpha_s-\beta_1-\cdots-\beta_s}
\right|\nonumber\\
\hphantom{\big|(\xi q^{\nu})^\mu\,\Phi_1(\xi q^\nu)\big|}{}
=C_1\prod_{i\in I} \big|q^{\alpha+\mu_i}\big|^{\nu_i}\prod_{i\not\in I} \big|q^{s-\mu_i-\alpha}a_1^{-1}\cdots a_s^{-1}b_1^{-1}\cdots b_s^{-1}\big|^{-\nu_i} \nonumber\\
\hphantom{\big|(\xi q^{\nu})^\mu\,\Phi_1(\xi q^\nu)\big|}{} \le C_1\prod_{i\in I} |q^{\alpha}|^{\nu_i}\prod_{i\not\in I} \big|q^{1-\alpha}a_1^{-1}\cdots a_s^{-1}b_1^{-1}\cdots b_s^{-1}\big|^{-\nu_i}.\label{eq:|Phi1(xq)|}
\end{gather}
On the other hand, from \eqref{eq:Phi1,Phi2} the function $\Phi_2(z)$ satisfies
\begin{gather*}
\sigma \Phi_2 (z)=U_{\sigma}(z)\,\Phi_2(z) \qquad\mbox{for}\quad \sigma \in \frak{S}_n
\end{gather*}
with a family of $q$-periodic functions
\begin{gather}\label{eq:Usigma}
U_\sigma(z)=\prod_{\substack {1\le i<j\le n\\ \sigma^{-1}(i)>\sigma^{-1}(j)}}
\Big(\frac{z_i}{z_j}\Big)^{\!1-2\tau}\frac{\theta\big(q^{1-\tau}z_i/z_j\big)}{\theta(q^\tau z_i/z_j)}.
\end{gather}
Since $\sigma \Delta (z)=(\operatorname{sgn} \sigma) \Delta(z)$ for $ \sigma \in \frak{S}_n$, we have
\begin{gather*}
\sigma \Phi_2 (z)\sigma \Delta (z)=(\operatorname{sgn} \sigma)U_{\sigma}(z) \Phi_2(z) \Delta (z)\qquad\mbox{for}\quad \sigma \in \frak{S}_n,
\end{gather*}
where
\begin{gather*}
\Phi_2(z)\Delta(z) =\prod_{i=1}^n z_i^{2(n-i)\tau}\prod_{1\le j<k\le n}\frac{\big(qt^{-1}z_k/z_j\big)_\infty}{(tz_k/z_j)_\infty}(1-z_k/z_j).
\end{gather*}
For $\nu\in \sigma D$ the values $\Phi_2(\xi q^\nu)\Delta(\xi q^\nu)$ are expressed as
\begin{gather}
 \Phi_2(\xi q^\nu)\Delta(\xi q^\nu) =(\operatorname{sgn} \sigma)U_\sigma(\xi q^\nu)^{-1}\sigma\Phi_2(\xi q^\nu)\sigma\Delta(\xi q^\nu)\nonumber\\
\hphantom{\Phi_2(\xi q^\nu)\Delta(\xi q^\nu)}{} =(\operatorname{sgn} \sigma)U_\sigma(\xi)^{-1}\prod_{i=1}^n (\xi_i q^{\nu_i})^{2(n-\sigma^{-1}(i))\tau}\label{eq:Phi2(xq)}\\
\hphantom{\Phi_2(\xi q^\nu)\Delta(\xi q^\nu)=}{}\times \prod_{1\le j<k\le n}\frac{\big(q^{1+\nu_{\sigma(k)}-\nu_{\sigma(j)}}t^{-1}\xi_{\sigma(k)}/\xi_{\sigma(j)}\big)_\infty}
{\big(q^{\nu_{\sigma(k)}-\nu_{\sigma(j)}}t\xi_{\sigma(k)}/\xi_{\sigma(j)}\big)_\infty}\big(1-q^{\nu_{\sigma(k)}-\nu_{\sigma(j)}}\xi_{\sigma(k)}/\xi_{\sigma(j)}\big).\nonumber
\end{gather}
For each $\sigma \in \frak{S}_n$ there exists a constant $C_2>0$ such that
\begin{gather}
\bigg|\frac{\operatorname{sgn} \sigma}{U_\sigma(\xi)}\prod_{i=1}^n \xi_i^{2(n-\sigma^{-1}(i))\tau}\nonumber\\
\qquad{}\times\prod_{1\le j<k\le n} \frac{\big(q^{1+\nu_{\sigma(k)}-\nu_{\sigma(j)}}t^{-1}\xi_{\sigma(k)}/\xi_{\sigma(j)}\big)_\infty}
{\big(q^{\nu_{\sigma(k)}-\nu_{\sigma(j)}}t\xi_{\sigma(k)}/\xi_{\sigma(j)}\big)_\infty}\big(1-q^{\nu_{\sigma(k)}-\nu_{\sigma(j)}}\xi_{\sigma(k)}/\xi_{\sigma(j)}\big)\bigg|\le C_2,\label{eq:bounded02}
\end{gather}
for all $\nu\in \sigma D$. From \eqref{eq:Phi2(xq)} and \eqref{eq:bounded02}, for $\nu\in \sigma D$ we have
\begin{gather}
\big|\Phi_2(\xi q^\nu)\Delta(\xi q^\nu)\big|\le C_2\left|\prod_{i=1}^n (q^{\nu_i})^{2(n-\sigma^{-1}(i))\tau}\right|
=C_2\prod_{i=1}^n \big|t^{2(n-\sigma^{-1}(i))}\big|^{\nu_i}.\label{eq:|Phi2(xq)|}
\end{gather}
We now estimate the sum $\langle z^\mu,\xi\rangle$. Since $\mathbb{Z}^n=\cup_{I\subset \{1,\ldots,n\}}\cup_{\sigma \in \frak{S}_n}(Q_I\cap \sigma D)$, by definition we have
\begin{gather}
\sum_{\nu\in \mathbb{Z}^n}\big|(\xi q^{\nu})^\mu\,\Phi(\xi q^\nu)\Delta(\xi q^\nu)\big| \le \sum_{\substack{I\subset\{1,\ldots,n\}\\
\sigma\in \frak{S}_n}} \sum_{\nu\in Q_I\cap\sigma D}\big|(\xi q^{\nu})^\mu\,\Phi(\xi q^\nu)\Delta(\xi q^\nu)\big|.\label{eq:<zmu,xi>}
\end{gather}
Moreover, from \eqref{eq:|Phi1(xq)|} and \eqref{eq:|Phi2(xq)|}, we have
\begin{gather}
\sum_{\nu\in Q_I\cap\sigma D}
\big|(\xi q^{\nu})^\mu\,\Phi(\xi q^\nu)\Delta(\xi q^\nu)\big|
=\sum_{\nu\in Q_I\cap\sigma D}
\big|(\xi q^{\nu})^\mu\,\Phi_1(\xi q^\nu)\big|\big|\Phi_2(\xi q^\nu)\Delta(\xi q^\nu)\big|\nonumber\\
\le \sum_{\nu\in Q_I\cap\sigma D} C_1C_2 \prod_{i\in I} \big|q^{\alpha}t^{2(n-\sigma^{-1}(i))}\big|^{\nu_i}
\prod_{i\not\in I} \big|q^{1-\alpha}a_1^{-1}\cdots a_s^{-1}b_1^{-1}\cdots b_s^{-1} t^{-2(n-\sigma^{-1}(i))}\big|^{-\nu_i}. \!\!\!\!\! \label{eq:|Phi(xq)|}
\end{gather}
Under the conditions \eqref{eq:condition:converge01} we have
\begin{gather}
\big|q^{\alpha} t^{2i-2}\big|<1, \qquad 
\big|q^{1-\alpha}a_1^{-1}\cdots a_s^{-1}b_1^{-1}\cdots b_s^{-1}t^{-2i+2}\big|<1
\qquad\mbox{for}\quad i=1,2,\ldots,n. \label{eq:condition:converge01a}
\end{gather}
From \eqref{eq:|Phi(xq)|} and \eqref{eq:condition:converge01a} the sum $\sum_{\nu\in Q_I\cap\sigma D}|(\xi q^{\nu})^\mu\,\Phi(\xi q^\nu)\Delta(\xi q^\nu)|$ converges. From~\eqref{eq:<zmu,xi>} the infinite sum $\langle z^\mu,\xi\rangle$ is absolutely convergent for each $\mu\in\frak{S}_n\lambda$.
\end{proof}

By definition the function $\Phi(z)$ satisfies the {\em quasi-symmetric property} that
\begin{gather}\label{eq:Usigma-01}
\sigma \Phi (z)=U_{\sigma}(z)\,\Phi(z) \qquad\mbox{for}\quad \sigma \in \frak{S}_n,
\end{gather}
where $U_\sigma(z)$, as given by (\ref{eq:Usigma}), is invariant under the $q$-shift $z_i\to q z_i$. If $\varphi(z)$ is a symmetric holomorphic function on $(\mathbb{C}^*)^n$, then we have
\begin{gather}\label{eq:Usigma-02}
\sigma\big(\varphi(z)\Phi (z)\Delta(z)\big)=(\operatorname{sgn} \sigma)U_{\sigma}(z)\,\varphi(z)\Phi(z) \Delta(z)\qquad\mbox{for}\quad \sigma \in \frak{S}_n.
\end{gather}
This implies that
\begin{gather}\label{eq:Usigma-I}
\sigma \langle\varphi, z\rangle=(\operatorname{sgn} \sigma)U_\sigma(z) \langle\varphi, z\rangle \qquad\mbox{for}\quad \sigma \in \frak{S}_n.
\end{gather}
In fact
\begin{gather*} \sigma \langle\varphi, z\rangle =\big\langle\varphi, \sigma^{-1}z\big\rangle=
(1-q)^n\sum_{\nu\in \mathbb{Z}^n}\varphi\big(\sigma^{-1}z q^{\nu}\big)\Phi\big(\sigma^{-1}z q^{\nu}\big)\Delta\big(\sigma^{-1}z q^{\nu}\big)\\
\hphantom{\sigma \langle\varphi, z\rangle}{} =
(1-q)^n\sum_{\nu\in \mathbb{Z}^n}\varphi\big(\sigma^{-1}z q^{\sigma^{-1}\nu}\big)\Phi\big(\sigma^{-1}z q^{\sigma^{-1}\nu}\big)\Delta\big(\sigma^{-1}z q^{\sigma^{-1}\nu}\big)\\
\hphantom{\sigma \langle\varphi, z\rangle}{}=(1-q)^n\sum_{\nu\in \mathbb{Z}^n}\sigma\varphi(z q^{\nu})\sigma\Phi(z q^{\nu})\sigma\Delta(z q^{\nu})\\
\hphantom{\sigma \langle\varphi, z\rangle}{} = (1-q)^n\sum_{\nu\in \mathbb{Z}^n}(\operatorname{sgn} \sigma)U_{\sigma}(z q^{\nu})\varphi(z q^{\nu})\Phi(z q^{\nu})\Delta(z q^{\nu})\\
\hphantom{\sigma \langle\varphi, z\rangle}{} =(1-q)^n(\operatorname{sgn} \sigma)U_{\sigma}(z)\sum_{\nu\in \mathbb{Z}^n}\varphi(z q^{\nu})\Phi(z q^{\nu})\Delta(z q^{\nu})\\
\hphantom{\sigma \langle\varphi, z\rangle}{} = (\operatorname{sgn} \sigma)U_{\sigma}(z)\langle\varphi, z\rangle.
\end{gather*}
\begin{Lemma}\label{lem:I-vanishing}
Suppose $\varphi(z)$ is symmetric and holomorphic. Under the condition $\tau\not\in\mathbb{Z}$, if $z_i=z_j$ for some $i$ and $j$, $1\le i<j\le n$, then $\langle\varphi, z\rangle=0$.
\end{Lemma}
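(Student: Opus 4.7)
The plan is to apply the quasi-symmetry identity (\ref{eq:Usigma-I}) to the transposition $\sigma=(i,j)\in \frak{S}_n$. Under the specialization $z_i=z_j$, the point $\sigma^{-1}z$ coincides with $z$, so $\sigma\langle\varphi,z\rangle=\langle\varphi,\sigma^{-1}z\rangle=\langle\varphi,z\rangle$. On the other hand (\ref{eq:Usigma-I}) rewrites this same quantity as $(\operatorname{sgn}\sigma)U_{\sigma}(z)\langle\varphi,z\rangle=-U_{(i,j)}(z)\langle\varphi,z\rangle$. Combining the two expressions yields
\begin{gather*}
\big(1+U_{(i,j)}(z)\big)\,\langle\varphi,z\rangle=0\qquad\text{at}\quad z_i=z_j,
\end{gather*}
so the proof reduces to the verification that $U_{(i,j)}(z)\big|_{z_i=z_j}=1$.

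To carry out this verification I would enumerate the index pairs $(i',j')$ with $i'<j'$ and $\sigma^{-1}(i')>\sigma^{-1}(j')$ that contribute to the product (\ref{eq:Usigma}) for $\sigma=(i,j)$: namely the pair $(i,j)$ itself, together with the two pairs $(i,m)$ and $(m,j)$ for each $m$ with $i<m<j$. The $(i,j)$-factor specializes to $1\cdot\theta(q^{1-\tau})/\theta(q^{\tau})$, which equals $1$ thanks to the basic identity $\theta(q/u)=\theta(u)$ (with $u=q^{\tau}$). For each intermediate index $m$, the matching $(i,m)$ and $(m,j)$ contributions pair up into
\begin{gather*}
\Big(\tfrac{z_i}{z_m}\Big)^{\!1-2\tau}\frac{\theta(q^{1-\tau}z_i/z_m)}{\theta(q^{\tau}z_i/z_m)}\cdot\Big(\tfrac{z_m}{z_j}\Big)^{\!1-2\tau}\frac{\theta(q^{1-\tau}z_m/z_j)}{\theta(q^{\tau}z_m/z_j)},
\end{gather*}
and at $z_i=z_j$ both the power factor and the theta ratio collapse to $1$ by the same identity $\theta(q/u)=\theta(u)$ applied to $u=q^{\tau}z_m/z_i$ and $u=q^{\tau}z_i/z_m$. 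Taking the product over all contributing pairs gives $U_{(i,j)}(z)\big|_{z_i=z_j}=1$, as required.

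The hypothesis $\tau\not\in\mathbb{Z}$ plays two roles. First, with $j'=i$ and $k=j$ it is exactly the condition from (\ref{eq:condition:generic}) ensuring $tz_j/z_i=t\not\in q^{\mathbb{Z}}$, which by Lemma~\ref{lem:converge01} keeps the Jackson sum absolutely convergent on the diagonal $z_i=z_j$. Second, it guarantees $\theta(q^{\tau})\ne 0$, so that $U_{(i,j)}(z)$ is well-defined and the simplifications above are literal rather than merely formal. The only part of the argument that demands any care is the pairing of the $(i,m)$ and $(m,j)$ factors for intermediate $m$; once the pairing is in place, the cancellation is an immediate consequence of the quasi-periodicity of $\theta$, and the conclusion $\langle\varphi,z\rangle=-\langle\varphi,z\rangle=0$ follows at once.
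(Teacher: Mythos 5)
Your argument is correct and follows the paper's proof exactly: apply the quasi-symmetry relation (\ref{eq:Usigma-I}) to the transposition $\sigma=(i\,j)$, obtain $\big(1+U_{\sigma}(z)\big)\langle\varphi,z\rangle=0$ on the diagonal $z_i=z_j$, and conclude. The only difference is that you explicitly verify $U_{(i\,j)}(z)\big|_{z_i=z_j}=1$ via the inversion set and $\theta(q/u)=\theta(u)$, where the paper merely asserts $1+U_{\sigma}(z)\ne 0$; your computation is a correct (and slightly more complete) justification of that assertion.
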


\begin{proof}Let $\sigma=(i\,j)$ be the transposition of $i$ and $j$. If we impose $z_i=z_j$, then $\sigma \langle\varphi, z\rangle=\langle\varphi, z\rangle$, so that we have $(1+U_\sigma(z))\langle\varphi, z\rangle=0$
from (\ref{eq:Usigma-I}). Since $(1+U_\sigma(z))\ne 0$, we obtain $\langle\varphi, z\rangle=0$.
\end{proof}

For the regularized Jackson integral $\langle\!\langle\varphi,z\rangle\!\rangle$ defined in (\ref{eq:lla-rra}), we have the following.

\begin{Lemma}\label{lem:lla-rra} Suppose that $\tau\not\in\mathbb{Z}$. If $\varphi(z)$ is a symmetric holomorphic function on $(\mathbb{C}^*)^n$,
then $\langle\!\langle\varphi,z\rangle\!\rangle$ is also a symmetric holomorphic function on $(\mathbb{C}^*)^n$.
\end{Lemma}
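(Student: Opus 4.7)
The plan is to establish the two asserted properties of $\langle\!\langle\varphi,z\rangle\!\rangle=\langle\varphi,z\rangle/\Theta(z)$---symmetry and holomorphicity---separately, with the quasi-symmetry identity \eqref{eq:Usigma-I} and Lemma~\ref{lem:I-vanishing} as the two key inputs established above in this section.

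For the symmetry I will first verify by direct computation that $\Theta(z)$ transforms under $\mathfrak{S}_n$ in exactly the same way as $\langle\varphi,z\rangle$, namely
\[
\sigma\Theta(z)=(\operatorname{sgn}\sigma)\,U_\sigma(z)\,\Theta(z)\qquad\text{for every }\sigma\in\mathfrak{S}_n,
\]
with $U_\sigma(z)$ given by \eqref{eq:Usigma}. It suffices to check this for the simple transpositions $\sigma=(k,k+1)$. For such a $\sigma$, the symmetric factor $\prod_{i=1}^n z_i^\alpha/\prod_m\theta(b_mz_i)$ of $\Theta$ is fixed; inside the product $\prod_{1\le j<k'\le n}z_j^{2\tau}\theta(z_j/z_{k'})/\theta(tz_j/z_{k'})$, the pairs $(p,k)\leftrightarrow(p,k+1)$ for $p<k$ and $(k,r)\leftrightarrow(k+1,r)$ for $r>k+1$ are simply interchanged and hence contribute trivially, and only the single pair $(k,k+1)$ survives with a non-trivial contribution, which matches the lone surviving factor of $U_{(k,k+1)}$ after the elementary identities $\theta(u^{-1})=-u^{-1}\theta(u)$ and $\theta(q/u)=\theta(u)$ are applied. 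Dividing \eqref{eq:Usigma-I} by the corresponding identity for $\Theta$ then gives $\sigma\langle\!\langle\varphi,z\rangle\!\rangle=\langle\!\langle\varphi,z\rangle\!\rangle$ for every $\sigma\in\mathfrak{S}_n$.

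For the holomorphicity I will show that every potential pole of the meromorphic function $\langle\varphi,z\rangle/\Theta(z)$ on $(\mathbb{C}^*)^n$ is cancelled. The poles of $\langle\varphi,z\rangle$ are inherited from the factors $1/(b_mz_i)_\infty$ and $1/(tz_k/z_j)_\infty$ in the denominator of $\Phi(z)$; by the $q$-periodicity of the Jackson sum they are supported on the $q$-orbits $\{b_mz_i\in q^{\mathbb{Z}}\}$ and $\{tz_k/z_j\in q^{\mathbb{Z}}\}$, which coincide with the pole divisors of $\Theta(z)$ coming from $1/\theta(b_mz_i)$ and $1/\theta(tz_j/z_k)$, so these poles of the numerator are cancelled by the matching zeros contributed by $1/\Theta(z)$. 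The remaining apparent poles of $1/\Theta(z)$ come from the numerator factor $\prod_{j<k}\theta(z_j/z_k)$ and are supported on the diagonal orbits $\{z_j\in q^{\mathbb{Z}}z_k\}$; here Lemma~\ref{lem:I-vanishing} is the decisive ingredient, since under the hypothesis $\tau\notin\mathbb{Z}$ it forces $\langle\varphi,z\rangle=0$ along $z_j=z_k$, and $q$-periodicity propagates the vanishing to the full $q$-orbit, killing each such potential pole. The main obstacle is the divisor bookkeeping: one must verify that the simple zeros supplied by Lemma~\ref{lem:I-vanishing} suffice to cancel the first-order poles of $1/\theta(z_j/z_k)$, and that no higher-order singular contributions arise at loci where several of these divisors meet.
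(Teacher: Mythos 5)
Your proposal is correct and takes essentially the same approach as the paper: symmetry follows from the identity $\sigma\Theta(z)=(\operatorname{sgn}\sigma)U_\sigma(z)\Theta(z)$ combined with \eqref{eq:Usigma-I}, and holomorphy from matching the pole divisors of $\langle\varphi,z\rangle$ with those of $\Theta(z)$ and then cancelling the zeros of the factor $\prod_{j<k}\theta(z_j/z_k)$ in $\Theta(z)$ via Lemma~\ref{lem:I-vanishing} together with $q$-shift invariance. The paper packages your concluding divisor bookkeeping as the single statement that $\langle\varphi,z\rangle$, written as a holomorphic function times the explicit pole factors of $\Phi(z)$, is divisible by $\prod_{1\le i<j\le n}z_i\theta(z_j/z_i)$ when $\tau\notin\mathbb{Z}$.
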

\begin{proof}
By the definition (\ref{eq:lla-rra}), $\Theta(z)$ satisfies that $\sigma \Theta(z)=(\operatorname{sgn} \sigma) U_\sigma(z) \Theta(z)$. Combining this and~(\ref{eq:Usigma-I}), we obtain that $\langle\!\langle\varphi,z\rangle\!\rangle=\langle\varphi,z\rangle/\Theta(z)$ is symmetric if $\varphi(z)$ is symmetric. We now check $\langle\!\langle\varphi,z\rangle\!\rangle$ is holomorphic. Taking account of the poles of~$\Phi(z)$, we have the expression
\begin{gather}\label{eq:Phi(z)poles}
\langle\varphi,z\rangle=f(z)\prod_{i=1}^n\frac{z_i^\alpha}{\prod\limits_{m=1}^{s}\theta(b_mz_i)}\prod_{1\le j<k\le n}\frac{z_j^{2\tau-1}}{\theta(t z_k/z_j)},
\end{gather}
where $f(z)$ is some holomorphic function on $(\mathbb{C}^*)^n$. Since $\langle\varphi,z\rangle$ is an invariant under the $q$-shift $z_i\to qz_i$, $1\le i\le n$, Lemma~\ref{lem:I-vanishing} implies that $\langle\varphi,z\rangle$ is divisible by $\prod_{1\le i<j\le n}z_i\theta(z_j/z_i)$ if $\tau\not\in \mathbb{Z}$. This indicates that $f(z)=g(z)\prod_{1\le i<j\le n} z_i\theta(z_j/z_i)$, where $g(z)$ is a holomorphic function on~$(\mathbb{C}^*)^n$. From (\ref{eq:lla-rra}) and (\ref{eq:Phi(z)poles}), we therefore obtain that $\langle\!\langle\varphi,z\rangle\!\rangle=g(z)$ is a holomorphic function on~$(\mathbb{C}^*)^n$.
\end{proof}

\subsection{Truncation of the Jackson integral}

For $a=(a_1,\ldots,a_s)\in (\mathbb{C}^*)^s$ we set
\begin{gather}\label{eq:a_mu}
a_\mu=\big(\underbrace{a_1,a_1t,\ldots,a_1t^{\mu_1-1}\vphantom{\Big|}}_{\mu_1},\underbrace{a_2,a_2t,\ldots,a_2t^{\mu_2-1}\vphantom{\Big|}}_{\mu_2},\ldots,\underbrace{a_s,a_st,\ldots,a_st^{\mu_s-1}
\vphantom{\Big|}}_{\mu_s}\big) \in (\mathbb{C}^*)^n
\end{gather}
for each $\mu=(\mu_1,\ldots,\mu_s)\in Z_{s,n}$ as in (\ref{eq:x_mu}). We denote by $\Lambda_\mu$ the subset of the lattice $\mathbb{Z}^n$ consisting of all $\lambda=(\lambda_1,\ldots,\lambda_n)\in \mathbb{Z}^n$ such that
$0\le \lambda_1\le \lambda_2\le \cdots\le \lambda_{\mu_1}$ and
\begin{gather*}
0\le \lambda_{\mu_1+\mu_2+\cdots+\mu_{k}+1} \le \lambda_{\mu_1+\mu_2+\cdots+\mu_{k}+2}\le \cdots\le\lambda_{\mu_1+\mu_2+\cdots+\mu_{k}+\mu_{k+1}}\qquad\mbox{for}\quad 1\le k\le s-1.
\end{gather*}
Then we have $\Phi\big(a_\mu q^\lambda\big)=0$ unless $\lambda\in \Lambda_\mu$. Hence the Jackson integral $\langle\varphi,a_\mu\rangle$ is a sum over the cone $\Lambda_\mu\subset \mathbb{Z}^n$. We call $\langle\varphi,a_\mu\rangle$ $(\mu\in Z_{s,n})$ the {\it truncated Jackson integral of A-type}.

For $N\in \mathbb{Z}$, let $T_\alpha^N$ be the shift operator $\alpha\to\alpha+N$, i.e., $T_\alpha^N f(\alpha)=f(\alpha+N)$.
\begin{Lemma}\label{lem:<m, a>}
For $m_\lambda(z)$ $(\lambda\in P_+)$, the truncated Jackson integral $\langle m_\lambda,a_\mu\rangle$ converges absolutely if
\begin{gather}\label{eq:condition:converge02}
|q^{\alpha}|<1 \qquad\mbox{and}\qquad \big|q^{\alpha}t^{n-1}\big|<1.
\end{gather}
Moreover, under the condition \eqref{eq:condition:converge02}, the asymptotic behavior of $T_\alpha^N\langle m_\lambda,a_\mu\rangle$ as $N\to \infty$ is given by
\begin{gather}\label{eq:asympt <m, a>}
T_\alpha^N\langle m_\lambda,a_\mu\rangle\sim (1-q)^nm_\lambda(a_\mu)\Delta(a_\mu)T_\alpha^N\Phi(a_\mu) \qquad \text{as} \quad N\to \infty.
\end{gather}
\end{Lemma}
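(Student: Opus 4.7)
The plan is to mimic the proof of Lemma~\ref{lem:converge01}, specializing to $\xi = a_\mu$ and restricting to the cone $\Lambda_\mu \subset \mathbb{Z}_{\ge 0}^n$. Since every $\nu \in \Lambda_\mu$ has $\nu_i \ge 0$, only the piece $I = \{1,\ldots,n\}$ of the estimate \eqref{eq:|Phi1(xq)|} contributes: with the factorization $\Phi = \Phi_1\Phi_2$ of \eqref{eq:Phi1,Phi2}, one obtains $|(a_\mu q^\nu)^\mu\Phi_1(a_\mu q^\nu)| \le C_1\prod_i|q^\alpha|^{\nu_i}$, and, with $\sigma \in \frak{S}_n$ the permutation sorting the coordinates of $a_\mu$ weakly, the bound \eqref{eq:|Phi2(xq)|} simplifies to $|\Phi_2(a_\mu q^\nu)\Delta(a_\mu q^\nu)| \le C_2\prod_i|t^{2(n-\sigma^{-1}(i))}|^{\nu_i}$. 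The polynomial factor $m_\lambda(a_\mu q^\nu)$ contributes only polynomial growth in the $q^{\nu_i}$ and is absorbed by the geometric decay. To handle the chain conditions $\nu_{M_k+1}\le\cdots\le\nu_{M_k+\mu_k}$ within each block (where $M_k = \mu_1+\cdots+\mu_{k-1}$), I would change variables to the non-negative increments $\rho_j^{(k)} = \nu_{M_k+j} - \nu_{M_k+j-1}$ (with $\rho_1^{(k)} = \nu_{M_k+1}$), so that $\Lambda_\mu$ is identified with an orthant in the $\rho$-variables and the sum factorizes into a product of geometric series.

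For the absolute convergence, I would verify that the ratio along each direction $\rho_j^{(k)}$ has absolute value strictly less than $1$ under the hypotheses $|q^\alpha|<1$ and $|q^\alpha t^{n-1}|<1$. These two conditions correspond to the two extremal directions: incrementing $\rho_{\mu_k}^{(k)}$ (which raises only the largest coordinate of the block) is controlled by $|q^\alpha|<1$, while incrementing $\rho_1^{(k)}$ (which shifts all coordinates of the block simultaneously) is controlled by $|q^\alpha t^{n-1}|<1$; intermediate directions produce intermediate powers of $t$ and are controlled by interpolation between the two. Summing the geometric series then yields absolute convergence of $\langle m_\lambda, a_\mu\rangle$.

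For the asymptotic \eqref{eq:asympt <m, a>}, observe that $T_\alpha^N$ affects $\Phi$ only through the prefactor $(z_1\cdots z_n)^\alpha$, hence
\[
T_\alpha^N\Phi(z) = (z_1\cdots z_n)^N\Phi(z),
\]
and applied at $z = a_\mu q^\nu$ it produces the additional factor $q^{N(\nu_1+\cdots+\nu_n)}$ relative to $T_\alpha^N\Phi(a_\mu)$. Therefore
\[
T_\alpha^N\langle m_\lambda, a_\mu\rangle = (1-q)^n\sum_{\nu\in\Lambda_\mu} q^{N(\nu_1+\cdots+\nu_n)}\, m_\lambda(a_\mu q^\nu)\, T_\alpha^N\Phi(a_\mu q^\nu)\,\Delta(a_\mu q^\nu),
\]
and the $\nu = 0$ summand yields precisely $(1-q)^n m_\lambda(a_\mu)\Delta(a_\mu)T_\alpha^N\Phi(a_\mu)$. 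For any $\nu\ne 0$ in $\Lambda_\mu$ one has $\sum_i\nu_i\ge 1$, so $|q^{N\sum\nu_i}|\to 0$ as $N\to\infty$; moreover $|q^{N\sum\nu_i}|\le 1$ for all $N\ge 0$, so the absolute bound of the first step supplies the summable dominating function needed to apply the dominated convergence theorem. The asymptotic \eqref{eq:asympt <m, a>} follows. The main obstacle is the bookkeeping in the convergence estimate: precisely identifying which extremal direction in $\rho$-space corresponds to which of the two conditions in \eqref{eq:condition:converge02}, and verifying that all intermediate directions are interpolated between these two extremes.
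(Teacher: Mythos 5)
Your overall strategy --- recycle the sector-wise bounds from the proof of Lemma~\ref{lem:converge01}, then isolate the $\nu=\mathbf{0}$ term and kill the tail via the extra factor $q^{N(\nu_1+\cdots+\nu_n)}$ --- is the same as the paper's, but the convergence step has a genuine gap. The bound \eqref{eq:|Phi2(xq)|} is not a single estimate valid on all of $\mathbb{N}^n$: it gives $|\Phi_2(\xi q^\nu)\Delta(\xi q^\nu)|\le C_2\prod_i|t^{2(n-\sigma^{-1}(i))}|^{\nu_i}$ only for $\nu$ in the sector where $\nu_{\sigma(1)}\le\cdots\le\nu_{\sigma(n)}$, so $\sigma$ is dictated by the global ordering of the components of $\nu$ and varies over $\Lambda_\mu$; there is no single ``permutation sorting the coordinates of $a_\mu$''. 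Hence the power of $t$ attached to $\nu_i$ depends on the rank of $\nu_i$ among \emph{all} $n$ coordinates, not on its position inside its block, and your change of variables to per-block increments $\rho^{(k)}_j$ does not turn the bound into a product of fixed geometric series. Concretely, for $n=s=2$, $\mu=(1,1)$ one has $\Lambda_\mu=\mathbb{N}^2$ and $\rho=(\nu_1,\nu_2)$; in the region $\nu_1\le\nu_2$ the step $\nu_1\mapsto\nu_1+1$ carries the ratio $|q^\alpha t^2|$, which can exceed $1$ (take $|t|=2$, $|q^\alpha|=0.4$) even though \eqref{eq:condition:converge02} holds. The repair is exactly what the paper does: bound the sum over $\Lambda_\mu$ by the sum over $\mathbb{N}^n=\bigcup_{\sigma}\sigma D_0$ with $D_0=\{0\le\nu_1\le\cdots\le\nu_n\}$, use the quasi-symmetry \eqref{eq:Usigma-02} to reduce each sector to $D_0$, and there pass to the increments of the \emph{globally sorted} sequence, for which the telescoping identity $\prod_i|q^{\alpha}t^{2(n-i)}|^{\nu_i}=\prod_i|(q^{\alpha}t^{n-i})^{n-i+1}|^{\nu_i-\nu_{i-1}}$ yields fixed ratios $(q^{\alpha}t^{n-i})^{n-i+1}$, all of modulus $<1$ under \eqref{eq:condition:converge02} because $\log|q^{\alpha}t^{n-i}|$ is affine in $i$ and so is maximized at $i=1$ or $i=n$. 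Your ``interpolation between the two extremal directions'' is precisely this last observation, but it only becomes available after the global sorting; per block it fails.

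The asymptotic step is essentially the paper's argument and is fine in spirit, but your displayed identity double-counts: it carries the factor $q^{N(\nu_1+\cdots+\nu_n)}$ \emph{and} writes $T_\alpha^N\Phi(a_\mu q^\nu)$ inside the sum, whereas $T_\alpha^N\Phi(a_\mu q^\nu)=q^{N(\nu_1+\cdots+\nu_n)}\,T_\alpha^N\Phi(a_\mu)\,\Phi(a_\mu q^\nu)/\Phi(a_\mu)$ already contains that factor. You should write the summand as $q^{N(\nu_1+\cdots+\nu_n)}m_\lambda(a_\mu q^\nu)\Phi(a_\mu q^\nu)\Delta(a_\mu q^\nu)$ times the $\nu$-independent prefactor $T_\alpha^N\Phi(a_\mu)/\Phi(a_\mu)$; only in that form is the dominating function $|m_\lambda\Phi\Delta(a_\mu q^\nu)|$ independent of $N$, so that dominated convergence (or the paper's explicit tail estimate) applies and gives \eqref{eq:asympt <m, a>}.
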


\begin{proof}Setting $D_0=\{\nu\in \mathbb{Z}^n\,|\, 0\le \nu_1\le\nu_2\le \cdots \le \nu_n\}$, we consider
\begin{gather*}
\sigma D_0=\big\{\nu\in \mathbb{Z}^n\,|\, 0\le \nu_{\sigma(1)}\le\nu_{\sigma(2)}\le \cdots \le \nu_{\sigma(n)}\big\} \qquad \text{for} \quad \sigma\in \frak{S}_n.
\end{gather*}
We analyze the convergence of the infinite sum
\begin{gather}\label{eq:<m, a>}
\langle m_\lambda,a_\mu\rangle=(1-q)^n\sum_{\nu\in \Lambda_\mu}m_\lambda(a_\mu q^{\nu})\Phi(a_\mu q^\nu)\Delta(a_\mu q^\nu).
\end{gather}
By \eqref{eq:Usigma-02} we have
\begin{gather}
\sum_{\nu\in \Lambda_\mu} \big|m_\lambda(a_\mu q^{\nu})\Phi(a_\mu q^\nu)\Delta(a_\mu q^\nu)\big| \le \sum_{\nu\in \mathbb{N}^n}\big|m_\lambda(a_\mu q^{\nu})\Phi(a_\mu q^\nu)\Delta(a_\mu q^\nu)\big|\nonumber\\
\qquad{} \le \sum_{\sigma\in \frak{S}_n} \sum_{\nu\in \sigma D_0}\big|m_\lambda(a_\mu q^{\nu})\Phi(a_\mu q^\nu)\Delta(a_\mu q^\nu)\big| \nonumber\\
\qquad{} = \sum_{\sigma\in \frak{S}_n} \sum_{\nu\in \sigma D_0}\big| U_\sigma(a_\mu q^{\nu})^{-1}\big| \big| \sigma m_\lambda(a_\mu q^{\nu})\sigma\Phi(a_\mu q^\nu)\sigma\Delta(a_\mu q^\nu)\big|\nonumber\\
\qquad{} = \sum_{\sigma\in \frak{S}_n}\big| U_\sigma(a_\mu)^{-1}\big| \sum_{\sigma^{-1}\nu\in D_0}\big| m_\lambda\big(\sigma^{-1}a_\mu q^{\sigma^{-1}\nu}\big)\Phi\big(\sigma^{-1}a_\mu q^{\sigma^{-1}\nu}\big)\Delta\big(\sigma^{-1}a_\mu q^{\sigma^{-1}\nu}\big)\big|\nonumber\\
\qquad{} = \sum_{\sigma\in \frak{S}_n}\big| U_\sigma(a_\mu)^{-1}\big| \sum_{\nu\in D_0}\big| m_\lambda(\sigma^{-1}a_\mu q^\nu)\Phi\big(\sigma^{-1}a_\mu q^\nu\big)\Delta\big(\sigma^{-1}a_\mu q^\nu\big)\big|.\label{eq:|<m, a>|}
\end{gather}
Hence it suffices to show that
\begin{gather*}
\sum_{\nu\in D_0}\big|m_\lambda(\xi q^{\nu})\Phi(\xi q^\nu)\Delta(\xi q^\nu)\big|, \qquad \text{where} \quad \xi=\sigma^{-1}a_\mu,
\end{gather*}
converges under the condition \eqref{eq:condition:converge02}. Since $m_\lambda(z)=\sum_{\kappa\in \frak{S}_n\lambda}z^\kappa$, we have
\begin{gather}\label{eq:Sigma|mPD|}
\sum_{\nu\in D_0}\big|m_\lambda(\xi q^{\nu})\Phi(\xi q^\nu)\Delta(\xi q^\nu)\big| \le \sum_{\kappa\in \frak{S}_n\lambda}\sum_{\nu\in D_0}\big|(\xi q^{\nu})^\kappa\Phi(\xi q^\nu)\Delta(\xi q^\nu)\big|.
\end{gather}
The explicit form of $(\xi q^{\nu})^\kappa\Phi(\xi q^\nu)\Delta(\xi q^\nu)$ is given by
\begin{gather*}
(\xi q^{\nu})^\kappa\Phi(\xi q^\nu)\Delta(\xi q^\nu) =\prod_{i=1}^n \left((\xi_i q^{\nu_i})^{\kappa_i+\alpha+2(n-i)\tau}
\prod_{m=1}^s\frac{\big(q^{1+\nu_i}a_m^{-1}\xi_i\big)_\infty}{(q^{\nu_i}b_m\xi_i)_\infty}\right)\\
\hphantom{(\xi q^{\nu})^\kappa\Phi(\xi q^\nu)\Delta(\xi q^\nu) =}{} \times \prod_{1\le j<k\le n}\frac{\big(q^{1+\nu_k-\nu_j}t^{-1}\xi_k/\xi_j\big)_\infty}{\big(q^{\nu_k-\nu_j}t\xi_k/\xi_j\big)_\infty}
\big(1-q^{\nu_k-\nu_j}\xi_k/\xi_j\big).
\end{gather*}
There exists a constant $C>0$ such that
\begin{gather*}
\Bigg|\prod_{i=1}^n
\left(\xi_i^{\kappa_i+\alpha+2(n-i)\tau}
\prod_{m=1}^s\frac{\big(q^{1+\nu_i}a_m^{-1}\xi_i\big)_\infty}{(q^{\nu_i}b_m\xi_i)_\infty}
\right) \\
\qquad{}\times \prod_{1\le j<k\le n}\frac{\big(q^{1+\nu_k-\nu_j}t^{-1}\xi_k/\xi_j\big)_\infty}{\big(q^{\nu_k-\nu_j}t\xi_k/\xi_j\big)_\infty}
\big(1-q^{\nu_k-\nu_j}\xi_k/\xi_j\big)\Bigg|\le C,
\end{gather*}
for all $\nu\in D_0$. Then we have
\begin{gather}
\big|(\xi q^{\nu})^\kappa\Phi(\xi q^\nu)\Delta(\xi q^\nu)\big| \le C \left|\prod_{i=1}^n (q^{\nu_i})^{\kappa_i+\alpha+2(n-i)\tau}\right|
=C\prod_{i=1}^n \big|q^{\kappa_i+\alpha}t^{2(n-i)}\big|^{\nu_i}
\nonumber\\
\hphantom{\big|(\xi q^{\nu})^\kappa\Phi(\xi q^\nu)\Delta(\xi q^\nu)\big|}{} \le C\prod_{i=1}^n \big|q^{\alpha}t^{2(n-i)}\big|^{\nu_i}
= C\prod_{i=1}^n \big|\big(q^{\alpha}t^{n-i}\big)^{n-i+1}\big|^{\nu_i-\nu_{i-1}},\label{eq:|zmuPD|}
\end{gather}
where $\nu_0=0$. Thus, from \eqref{eq:Sigma|mPD|} and \eqref{eq:|zmuPD|}, we obtain
\begin{gather}
\sum_{\nu\in D_0}\big|m_\lambda(\xi q^{\nu})\Phi(\xi q^\nu)\Delta(\xi q^\nu)\big| \le C|\frak{S}_n\lambda| \sum_{\nu\in D_0}\prod_{i=1}^n \big|\big(q^{\alpha}t^{n-i}\big)^{n-i+1}\big|^{\nu_i-\nu_{i-1}}
\nonumber\\
\hphantom{\sum_{\nu\in D_0}\big|m_\lambda(\xi q^{\nu})\Phi(\xi q^\nu)\Delta(\xi q^\nu)\big|}{} \le C|\frak{S}_n\lambda|\prod_{i=1}^n\frac{1}{1-\big|(q^{\alpha}t^{n-i})^{n-i+1}\big|} \label{eq:|mPD|}
\end{gather}
under the condition
\begin{gather*}
\big|q^{\alpha}t^{n-i}\big|<1 \qquad\mbox{for}\quad i=1,2,\ldots,n,
\end{gather*}
which is equivalent to \eqref{eq:condition:converge02}. Since the left-hand side of \eqref{eq:|mPD|} with $\xi=\sigma^{-1}a_\mu$ converges for each $\sigma\in \frak{S}_n$, by \eqref{eq:|<m, a>|}, the infinite sum $\langle m_\lambda,a_\mu\rangle$ of \eqref{eq:<m, a>} converges absolutely. Moreover, under the condition~\eqref{eq:condition:converge02}, from~\eqref{eq:|<m, a>|} and~\eqref{eq:|mPD|}, we have
\begin{gather*}
T_\alpha^N\left|\frac{\langle m_\lambda,a_\mu\rangle} {(1-q)^nm_\lambda(a_\mu)\Phi(a_\mu)\Delta(a_\mu)}-1\right| \le T_\alpha^N
\sum_{\nu\in \Lambda_\mu\setminus\{\bf 0\}}\left| \frac{m_\lambda(a_\mu q^{\nu})\Phi(a_\mu q^\nu)\Delta(a_\mu q^\nu)}{m_\lambda(a_\mu)\Phi(a_\mu)\Delta(a_\mu)}\right| \\
\qquad {} = \sum_{\nu\in \Lambda_\mu\setminus\{\bf 0\}}\prod_{i=1}^n|q|^{\nu_i N}\left| \frac{m_\lambda(a_\mu q^{\nu})\Phi(a_\mu q^\nu)\Delta(a_\mu q^\nu)}{m_\lambda(a_\mu)\Phi(a_\mu)\Delta(a_\mu)}\right|\\
\qquad{} \le \frac{C|\frak{S}_n\lambda|}{|m_\lambda(a_\mu)\Phi(a_\mu)\Delta(a_\mu)|}\sum_{\sigma\in \frak{S}_n}\big| U_\sigma(a_\mu)^{-1}\big|\sum_{\nu\in D_0\setminus\{\bf 0\}}\prod_{i=1}^n \big|\big(q^{\alpha+N}t^{n-i}\big)^{n-i+1}\big|^{\nu_i-\nu_{i-1}}\\
\qquad{} \le \frac{C |\frak{S}_n\lambda|}{|m_\lambda(a_\mu)\Phi(a_\mu)\Delta(a_\mu)|} \sum_{\sigma\in \frak{S}_n}\big| U_\sigma(a_\mu)^{-1}\big|\sum_{\nu\in D_0\setminus\{\bf 0\}}\prod_{i=1}^n |q|^{N(n-i+1)(\nu_i-\nu_{i-1})}
\\
\qquad {} \le\left(\frac{C |\frak{S}_n\lambda|}{|m_\lambda(a_\mu)\Phi(a_\mu)\Delta(a_\mu)|} \sum_{\sigma\in \frak{S}_n}\big| U_\sigma(a_\mu)^{-1}\big|\right) \left(-1+ \prod_{i=1}^n \frac{1}{1-|q|^{N(n-i+1)}}\right),
\end{gather*}
which vanishes if $N\to \infty$. This implies \eqref{eq:asympt <m, a>}.
\end{proof}

\section[The Lagrange interpolation polynomials of type $A$]{The Lagrange interpolation polynomials of type $\boldsymbol{A}$}\label{Section4}

In this section we introduce a class of interpolation polynomials of type $A$ which correspond to the elliptic Lagrange interpolation functions discussed in Section~\ref{Section2}. These polynomials play essential roles in constructing the $q$-difference equations satisfied by the Jackson integrals in the succeeding sections. Here we enumerate some properties of the interpolation polynomials without proof (see Appendix~\ref{SectionB} for their detailed proofs).

Let $\mathsf{H}_{s,n}^z$ be the $\mathbb{C}$-linear subspace of $\mathbb{C}[z_1,\ldots,z_n]$ consisting of all $\frak{S}_n$-invariant polyno\-mials~$f(z)$ such that $\deg_{z_i}f(z)\le s$ for $i=1,\ldots,n$:
\begin{gather}\label{eq:Hsn-z}
\mathsf{H}_{s,n}^z=\big\{f(z)\in \mathbb{C}[z_1,\ldots,z_n]^{\frak{S}_n}\,|\, \deg_{z_i}f(z)\le s \ \text{for} \ i=1,\ldots,n\big\}.
\end{gather}
Note that $\dim_\mathbb{C}\mathsf{H}_{s,n}^z={n+s\choose n}$. For $\mu=(\mu_0,\mu_1,\ldots,\mu_s)\in Z_{s+1,n}$ and $x=(x_1,\ldots,x_s)\in (\mathbb{C}^*)^s$ we denote by $x^{\mu_0}_{\mu_1,\ldots,\mu_s}$ the point in~$(\mathbb{C}^*)^n$ defined as
\begin{gather}\label{eq:zeta-mu}
x^{\mu_0}_{\mu_1,\ldots,\mu_s}=\big( \underbrace{z_1,z_2,\ldots,z_{\mu_0}\vphantom{\Big|}}_{\mu_0}, \underbrace{x_1,x_1 t,\ldots,x_1 t^{\mu_1-1}\vphantom{\Big|}}_{\mu_1},
\ldots, \underbrace{x_s,x_s t,\ldots,x_s t^{\mu_s-1}\vphantom{\Big|}}_{\mu_s} \big) \in (\mathbb{C}^*)^n,
\end{gather}
leaving the first $\mu_0$ variables unspecialized.

\begin{Theorem}\label{thm:InterpolationP}
For generic $x=(x_1,\ldots,x_s)\in (\mathbb{C}^*)^s$ there exists a unique $\mathbb{C}$-basis
\begin{gather}\label{eq:InterpolationP}
\big\{\mathsf{E}_{\lambda_1,\ldots,\lambda_s}^{\lambda_0}(x;z)\,|\,(\lambda_0,\lambda_1,\ldots,\lambda_s)\in Z_{s+1,n}\big\}
\end{gather}
of $\mathsf{H}_{s,n}^z$ satisfying the following conditions:
\begin{itemize}\itemsep=0pt
\item[{\rm (0)}]
$\mathsf{E}^{\lambda_0}_{\lambda_1,\ldots,\lambda_s}\big(x;x^{\lambda_0}_{\lambda_1,\ldots,\lambda_s}\big) =\prod_{i=1}^s \prod_{j=1}^{\lambda_0}\big(z_j-x_i t^{\lambda_i}\big)$,
\item[{\rm (1)}] $\mathsf{E}^{\lambda_0}_{\lambda_1,\ldots,\lambda_s}\big(x;x^{\lambda_0}_{\mu_1,\ldots,\mu_s}\big)=0$ if $(\lambda_1,\ldots,\lambda_s)\ne (\mu_1,\ldots,\mu_s)$,
\item[{\rm (2)}] $\mathsf{E}^{\lambda_0}_{\lambda_1,\ldots,\lambda_s}\big(x;x^{\mu_0}_{\mu_1,\ldots,\mu_s}\big)=0$ if $0\le \mu_0 <\lambda_0$.
\end{itemize}
\end{Theorem}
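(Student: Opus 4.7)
The plan is to establish existence and uniqueness separately, using a dimension count combined with a triangular evaluation argument, and then to construct the $\mathsf{E}^{\lambda_0}_{\lambda_1,\ldots,\lambda_s}(x;z)$ explicitly by a dual-Cauchy-kernel approach parallel to Section~\ref{Section2}. First I would note that $\dim_{\mathbb{C}}\mathsf{H}_{s,n}^{z}=\binom{n+s}{n}$, witnessed by the monomial symmetric basis $\{m_\lambda(z)\mid \lambda\in B_{s+1,n}\}$, and that this matches $|Z_{s+1,n}|=\binom{n+s}{n}$. So the family in~\eqref{eq:InterpolationP} is a candidate for a basis purely by cardinality.

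For uniqueness (and linear independence), I would equip $Z_{s+1,n}$ with the order that places $\mu$ with larger $\mu_0$ first, breaking ties lexicographically on $(\mu_1,\ldots,\mu_s)$. Conditions~(0)--(2) then read as triangularity of the evaluation array $\bigl(\mathsf{E}^{\lambda_0}_{\lambda_1,\ldots,\lambda_s}(x;x^{\mu_0}_{\mu_1,\ldots,\mu_s})\bigr)_{\lambda,\mu\in Z_{s+1,n}}$, whose diagonal entries are the polynomials $\prod_{i=1}^{s}\prod_{j=1}^{\lambda_0}(z_j-x_it^{\lambda_i})$ (nonzero for generic~$x$). Suppose $\sum_\lambda c_\lambda\,\mathsf{E}^{\lambda_0}_{\lambda_1,\ldots,\lambda_s}(x;z)=0$. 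Specializing $z=x^{\mu_0}_{\mu_1,\ldots,\mu_s}$ with $\mu_0$ taken as large as possible and invoking~(2) kills the terms with $\lambda_0<\mu_0$; among those with $\lambda_0=\mu_0$, condition~(1) kills all but one, leaving $c_\mu$ times a nonzero polynomial in the free variables $z_1,\ldots,z_{\mu_0}$, so $c_\mu=0$. Descending in $\mu_0$ we obtain $c_\lambda\equiv 0$; this both proves linear independence (hence $\{\mathsf{E}\}$ is a basis of $\mathsf{H}_{s,n}^{z}$ by the dimension count) and, by the same argument applied to differences, shows that any two bases satisfying~(0)--(2) coincide.

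For existence I would follow the pattern of Section~\ref{Section2.1} with a polynomial dual Cauchy kernel. The candidate is $\mathsf{\Psi}(z;w)=\prod_{i=1}^{n}\prod_{j=1}^{s}(z_i-w_j)$, which belongs to $\mathsf{H}_{s,n}^{z}$ as a function of $z$ for every fixed $w\in\mathbb{C}^{s}$. Expanding in the to-be-constructed basis,
\begin{gather*}
\mathsf{\Psi}(z;w)=\sum_{\mu\in Z_{s+1,n}}\mathsf{E}^{\mu_0}_{\mu_1,\ldots,\mu_s}(x;z)\,\mathsf{F}^{\mu_0}_{\mu_1,\ldots,\mu_s}(x;w),
\end{gather*}
specializing $w$ to a suitable family of reference configurations $\eta_\nu(x)$ adapted to the points $x^{\nu_0}_{\nu_1,\ldots,\nu_s}$, and showing by a computation analogous to Lemma~\ref{lem:triangular-F} that the resulting matrix $\bigl(\mathsf{F}^{\mu}(x;\eta_\nu(x))\bigr)_{\mu,\nu}$ is triangular with nonvanishing diagonal for generic $x$. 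Inverting this matrix produces an explicit formula $\mathsf{E}^{\lambda}(x;z)=\sum_\nu \mathsf{\Psi}(z;\eta_\nu(x))\,G_{\nu\lambda}(x)$ which automatically lies in $\mathsf{H}_{s,n}^{z}$ and satisfies~(0)--(2) by construction of~$\eta_\nu$.

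The principal obstacle is the non-standard form of condition~(0): the diagonal value is not a scalar but the nontrivial polynomial $\prod_{i=1}^{s}\prod_{j=1}^{\lambda_0}(z_j-x_it^{\lambda_i})$ in the free variables $z_1,\ldots,z_{\lambda_0}$. Choosing the reference configurations $\eta_\nu(x)$ and the normalization of $\mathsf{F}^{\mu}(x;w)$ so that this precise polynomial appears as the diagonal entry (and not merely up to a nonzero scalar) is the delicate part of the construction. I would expect to handle it by a careful inductive choice of $\eta_\nu(x)$ stratified by the value of $\nu_0$, mirroring how the free variables $z_1,\ldots,z_{\nu_0}$ enter the point $x^{\nu_0}_{\nu_1,\ldots,\nu_s}$, and leave the verification of the exact diagonal value to the detailed computation deferred to Appendix~\ref{SectionB}.
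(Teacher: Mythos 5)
Your overall strategy---the dimension count $\dim_{\mathbb C}\mathsf H^z_{s,n}={n+s\choose n}=|Z_{s+1,n}|$, a triangular evaluation argument for uniqueness, and a polynomial dual Cauchy kernel $\mathsf\Psi(z;w)=\prod_{i=1}^n\prod_{j=1}^s(z_i-w_j)$ for existence---is exactly the route the paper takes in Appendix~\ref{SectionB}. But as written there are two concrete problems. The first is that your uniqueness induction runs in the wrong direction: condition~(2) asserts $\mathsf E^{\lambda_0}_{\lambda_1,\ldots,\lambda_s}(x;x^{\mu_0}_{\mu_1,\ldots,\mu_s})=0$ when $\mu_0<\lambda_0$, so evaluation at $x^{\mu_0}_{\mu_1,\ldots,\mu_s}$ kills the terms with $\lambda_0>\mu_0$, not those with $\lambda_0<\mu_0$ as you claim. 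Starting from the largest $\mu_0$ and descending therefore leaves the uncontrolled terms with $\lambda_0<\mu_0$ in play and the argument stalls at the first step. The fix is immediate---start at $\mu_0=0$, where no such terms exist, and ascend in $\mu_0$---but the argument as stated does not go through.

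The second, more serious issue is that the existence half is not actually carried out: you set up the expansion $\mathsf\Psi(z;w)=\sum_\mu \mathsf E^{\mu_0}_{\mu_1,\ldots,\mu_s}(x;z)\,\mathsf F^{\mu_0}_{\mu_1,\ldots,\mu_s}(x;w)$ and then explicitly defer the verification of (0)--(2), in particular the exact diagonal value $\prod_{i=1}^s\prod_{j=1}^{\lambda_0}(z_j-x_it^{\lambda_i})$, to an unspecified further computation; yet that verification is the entire content of the theorem. Moreover the device you propose for it---reference configurations $\eta_\nu(x)$ on a hypersurface---belongs to the elliptic setting of Section~\ref{Section2}, where quasi-periodicity forces $w_s$ to be determined by $w_1,\ldots,w_{s-1}$; no such constraint exists in the polynomial case, and no stratified choice of $\eta_\nu$ is needed. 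The paper instead takes $\mathsf F_\mu(x;w)=\prod_{i,j}\prod_{k=1}^{\mu_i}(x_it^{k-1}-w_j)$ indexed by $\mu\in\mathbb N^s$ with $|\mu|\le n$, evaluates at $w=xt^\nu$ to get triangularity, and---this is the key idea your sketch is missing---exploits the multiplicativity $\mathsf F_\mu(x;w)\mathsf F_\nu(xt^\mu;w)=\mathsf F_{\mu+\nu}(x;w)$ by computing $\mathsf\Psi(x^{n-|\mu|}_\mu;w)$ in two ways. Comparing coefficients yields the self-similarity $\mathsf E_\lambda(x;x^{n-|\mu|}_\mu)=\mathsf E_{\lambda-\mu}(xt^\mu;z_1,\ldots,z_{n-|\lambda|})$ (interpreted as $0$ when $\lambda-\mu\notin\mathbb N^s$), which delivers (1) and (2) at once and reduces~(0) to the evaluation $\mathsf E_{\mathbf 0}(x;z)=\mathsf\Psi(z;x)$. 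Without this identity, or something equivalent, your construction produces a basis of $\mathsf H^z_{s,n}$ but does not establish the interpolation properties.
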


\begin{proof}See Theorem \ref{thm:tri-E} in Appendix \ref{SectionB}.
\end{proof}

We call $\mathsf{E}_{\lambda_1,\ldots,\lambda_s}^{\lambda_0}(x;z)$, $ \lambda\in Z_{s+1,n}$, the {\it Lagrange interpolation polynomials of type~$A$}.

\begin{remark*}
The polynomial $\mathsf{E}^{\lambda_0}_{\lambda_1,\ldots,\lambda_s}(x;z)$ may be expanded as
\begin{gather*}
\mathsf{E}^{\lambda_0}_{\lambda_1,\ldots,\lambda_s}(x;z) =C^{\lambda_0}_{\lambda_1,\ldots,\lambda_s}m_{((s-1)^{n-\lambda_0}s^{\lambda_0})}(z)+\mbox{lower order terms},
\end{gather*}
where the leading term $m_{((s-1)^{n-\lambda_0}s^{\lambda_0})}(z)$ is the monomial symmetric polynomial of type $((s-1)^{n-\lambda_0}s^{\lambda_0}) =(\underbrace{s,\ldots,s}_{\lambda_0}, \underbrace{s-1,\ldots,s-1}_{n-\lambda_0})\in P_+$ and the remaining terms are of lower order with respect to the lexicographic order of~$P_+$. The coefficient $C^{\lambda_0}_{\lambda_1,\ldots,\lambda_s}$ of the leading term is given by
\begin{gather*}
C^{\lambda_0}_{\lambda_1,\ldots,\lambda_s}= \frac{(t;t)_{n-\lambda_0}}{\prod\limits_{i=1}^s (t;t)_{\lambda_i}}\prod_{i=1}^s
\prod_{\substack{1\le j\le s\\ j\ne i}}\prod_{k=1}^{\lambda_i}\frac{1}{x_it^{k-1}-x_jt^{\lambda_j}},
\end{gather*}
where $(a;t)_\nu=(1-a)(1-ta)\cdots \big(1-t^{\nu-1}a\big)$. The above result will be proved in Lemma~\ref{lem:c of l.t.} in Appendix~\ref{SectionB}.
\end{remark*}

\begin{Example}If $s=1$, then the polynomials $\mathsf{E}^{\lambda_0}_{\lambda_1}(x_1;z)$ are explicitly given by
\begin{gather*}
\mathsf{E}^{r}_{n-r}(x_1;z)=\sum_{1\le i_1<\cdots <i_r\le n} \prod_{k=1}^r \big(z_{i_k}-x_1t^{i_k-k}\big) \qquad \text{for} \quad r=0,1,\ldots,n.
\end{gather*}
In the limit as $x_1\to 0$, $\mathsf{E}^{r}_{n-r}(x_1;z)$ gives the $r$th elementary symmetric function:
\begin{gather*}
\mathsf{E}^{r}_{n-r}(x_1;z)\big|_{x_1\to 0}=\sum_{1\le i_1<\cdots <i_r\le n} z_{i_1}\cdots z_{i_r} \qquad \text{for} \quad r=0,1,\ldots,n.
\end{gather*}
\end{Example}

\begin{Example} For $(\lambda_1,\ldots,\lambda_s)\in Z_{s,n}$, $\mathsf{E}_{\lambda_1,\ldots,\lambda_s}^{0}(x;z)$ is expressed explicitly as
\begin{gather*}
\mathsf{E}_{\lambda_1,\ldots,\lambda_s}^{0}(x;z)=
\sum_{\substack{K_1\sqcup\cdots\sqcup K_s \\ =\{1,2,\ldots,n\}} }\prod_{i=1}^s\prod_{k\in K_i}\Bigg[\prod_{\substack{1\le j\le s\\ j\ne i}}\frac{z_{k}-x_jt^{\lambda_j^{(k-1)}}}
{x_it^{\lambda_i^{(k-1)}}-x_jt^{\lambda_j^{(k-1)}}}\Bigg],
\end{gather*}
where $\lambda_i^{(k)}=|K_i\cap\{1,2,\ldots,k\}|$, and the summation is taken over all partitions $K_1\sqcup\cdots\sqcup K_s=\{1,2,\ldots,n\}$ such that $|K_i|=\lambda_i$, $i=1,2,\ldots,s$. (See Example~\ref{ex:B-E-explicit-1-2} in Appendix~\ref{SectionB}.)
\end{Example}

For $x=(x_1,\ldots,x_s)\in (\mathbb{C}^*)^s$ we use the symbol $x_{\widehat{1}}=(x_2,\ldots,x_s)\in (\mathbb{C}^*)^{s-1}$.
\begin{Lemma} For $\lambda=(\lambda_1,\ldots,\lambda_s)\in Z_{s,n}$, the polynomial $\mathsf{E}^{0}_{\lambda_1,\ldots,\lambda_s}(x;z)$ expands in terms of $\mathsf{E}^{\mu_1}_{\mu_2,\ldots,\mu_s}(x_{\widehat{1}};z)$, $\mu\succeq \lambda$, i.e.,
\begin{gather}\label{eq:E0-expansion1}
\mathsf{E}^{0}_{\lambda_1,\ldots,\lambda_s}(x;z)=l_{\lambda\lambda} \mathsf{E}^{\lambda_1}_{\lambda_2,\ldots,\lambda_s}(x_{\widehat{1}};z)+\sum_{\substack{\mu\in Z_{s,n}\\ \lambda\prec\mu}}l_{\mu\lambda}
\mathsf{E}^{\mu_1}_{\mu_2,\ldots,\mu_s}(x_{\widehat{1}};z).
\end{gather}
In particular, the coefficient $l_{\lambda\lambda}$ of the lowest order term can be written as
\begin{gather}\label{eq:E0-expansion2}
l_{\lambda\lambda} =\prod_{i=1}^{\lambda_1}\prod_{j=2}^s \frac{1}{x_1t^{i-1}-x_jt^{\lambda_j}}.
\end{gather}
\end{Lemma}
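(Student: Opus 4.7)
The plan is to reduce the lemma to linear algebra by combining the interpolation properties of Theorem~\ref{thm:InterpolationP} for $\mathsf{E}^{0}_\lambda(x;\cdot)$ with those of the reduced basis attached to $x_{\widehat{1}}=(x_2,\ldots,x_s)\in(\mathbb{C}^*)^{s-1}$. First I would verify that $\mathsf{E}^{0}_{\lambda_1,\ldots,\lambda_s}(x;z)$ actually lies in $\mathsf{H}_{s-1,n}^z$: the leading term description recorded after Theorem~\ref{thm:InterpolationP} shows that its top monomial symmetric function is $m_{((s-1)^n)}(z)$, and every strictly lower monomial (in the lex order on $P_+$) corresponds to a partition whose largest part is at most $s-1$, so every $z_i$ occurs to degree at most $s-1$. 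Applying Theorem~\ref{thm:InterpolationP} to $x_{\widehat{1}}$, the family $\{\mathsf{E}^{\mu_1}_{\mu_2,\ldots,\mu_s}(x_{\widehat{1}};z)\mid\mu\in Z_{s,n}\}$ is a $\mathbb{C}$-basis of $\mathsf{H}_{s-1,n}^z$, so the expansion \eqref{eq:E0-expansion1} holds for some scalars $l_{\mu\lambda}$.

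The central step is to evaluate both sides at the fully specialized reference points $z=x_\nu$, $\nu\in Z_{s,n}$. The crucial observation is that $x_\nu$ is obtained from the partially symbolic point $x^{\nu_1}_{\widehat{1},\nu_2,\ldots,\nu_s}$ by substituting $z_j=x_1 t^{j-1}$ for $j=1,\ldots,\nu_1$. Writing $V_{\mu\nu}:=\mathsf{E}^{\mu_1}_{\mu_2,\ldots,\mu_s}(x_{\widehat{1}};x_\nu)$, the three interpolation properties of Theorem~\ref{thm:InterpolationP} for the $x_{\widehat{1}}$-basis translate directly into numerical vanishings: property~(2) gives $V_{\mu\nu}=0$ whenever $\nu_1<\mu_1$; property~(1) gives $V_{\mu\nu}=0$ whenever $\nu_1=\mu_1$ but $(\nu_2,\ldots,\nu_s)\ne(\mu_2,\ldots,\mu_s)$; and property~(0) specialized at $z_j=x_1 t^{j-1}$ gives the diagonal value
\[
V_{\mu\mu}=\prod_{i=2}^{s}\prod_{j=1}^{\mu_1}\bigl(x_1 t^{j-1}-x_i t^{\mu_i}\bigr),
\]
which is nonzero for generic $x$. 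Hence $V=(V_{\mu\nu})_{\mu,\nu\in Z_{s,n}}$ is upper triangular with respect to the lexicographic order on $Z_{s,n}$ with nonzero diagonal.

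On the other hand, properties~(0) and~(1) applied directly to $\mathsf{E}^{0}_\lambda(x;\cdot)$ yield $\mathsf{E}^{0}_\lambda(x;x_\nu)=\delta_{\lambda\nu}$. Substituting into the evaluated form of the expansion produces the matrix identity $L^{T}V=I$ with $L=(l_{\mu\lambda})$, so $L^{T}=V^{-1}$ is again upper triangular, equivalently $L$ is lower triangular. This forces $l_{\mu\lambda}=0$ unless $\mu\succeq\lambda$, which is the triangularity asserted in \eqref{eq:E0-expansion1}. Comparing the diagonals of $L^T$ and $V^{-1}$ then gives $l_{\lambda\lambda}=1/V_{\lambda\lambda}=\prod_{i=1}^{\lambda_1}\prod_{j=2}^{s}1/(x_1 t^{i-1}-x_j t^{\lambda_j})$, which is \eqref{eq:E0-expansion2}.

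The principal conceptual point, and the only one requiring genuine care, is the faithful translation of the symbolic interpolation properties of Theorem~\ref{thm:InterpolationP} (which are polynomial identities at points with free variables) into concrete vanishings of the numerical matrix $V$; once the identification $x_\nu=x^{\nu_1}_{\widehat{1},\nu_2,\ldots,\nu_s}\big|_{z_j=x_1 t^{j-1}}$ is recognised, the remainder of the argument is elementary triangular linear algebra.
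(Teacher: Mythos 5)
Your proposal is correct and follows essentially the same route as the paper: expand $\mathsf{E}^{0}_{\lambda}(x;z)$ in the basis $\{\mathsf{E}^{\mu_1}_{\mu_2,\ldots,\mu_s}(x_{\widehat{1}};z)\}$ of $\mathsf{H}^z_{s-1,n}$, evaluate at the reference points $x_\nu$, and use properties (0)--(2) of Theorem~\ref{thm:InterpolationP} to show the evaluation matrix is triangular with the stated diagonal entries. The only difference is cosmetic: you conclude by inverting the upper-triangular matrix $V$ (via $L^{T}V=I$), whereas the paper runs an explicit induction over $\nu\prec\lambda$ starting from the minimal element; the computations of $V_{\mu\nu}$ and of $l_{\lambda\lambda}=1/V_{\lambda\lambda}$ are identical.
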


\begin{proof}Since $\mathsf{E}^{0}_{\lambda_1,\ldots,\lambda_s}(x;z)\in \mathsf{H}_{s-1,n}^z$, $\mathsf{E}^{0}_{\lambda_1,\ldots,\lambda_s}(x;z)$ expands in terms of $\mathsf{E}^{\mu_1}_{\mu_2,\ldots,\mu_s}(x_{\widehat{1}};z)$, $\mu\in Z_{s,n}$;
\begin{gather}\label{eq:E0-expansion3}
\mathsf{E}^{0}_{\lambda_1,\ldots,\lambda_s}(x;z)=\sum_{\mu\in Z_{s,n}} l_{\mu\lambda} \mathsf{E}^{\mu_1}_{\mu_2,\ldots,\mu_s}(x_{\widehat{1}};z).
\end{gather}
From the conditions (0), (1) and (2) of Theorem \ref{thm:InterpolationP}, we have
\begin{gather}\label{eq:E0-expansion4}
\mathsf{E}^{0}_{\lambda_1,\ldots,\lambda_s}\big(x;x^{0}_{\nu_1,\ldots,\nu_s}\big)=\delta_{\lambda\nu}.
\end{gather}
On the other hand, from the conditions (1) and (2) of Theorem \ref{thm:InterpolationP}, for $\mu,\nu\in Z_{s,n}$, if (1) $\mu_1=\nu_1$ and $(\mu_2,\ldots,\mu_s)\ne(\nu_2,\ldots,\nu_s)$, or~(2) $\mu_1>\nu_1$, then
\begin{gather}\label{eq:E0-expansion5}
\mathsf{E}^{\mu_1}_{\mu_2,\ldots,\mu_s}\big(x_{\widehat{1}};x^{0}_{\nu_1,\ldots,\nu_s}\big) =\mathsf{E}^{\mu_1}_{\mu_2,\ldots,\mu_s}\big(x_{\widehat{1}};x^{\nu_1}_{\nu_2,\ldots,\nu_s}\big)
\big|_{z_i=x_1t^{i-1}\, (i=1,\ldots,\nu_1)}=0
\end{gather}
Moreover, from the condition (0) of Theorem \ref{thm:InterpolationP}, we have
\begin{gather}
\label{eq:E0-expansion6}
\mathsf{E}^{\nu_1}_{\nu_2,\ldots,\nu_s}\big(x_{\widehat{1}};x^{0}_{\nu_1,\ldots,\nu_s}\big) =\prod_{i=2}^s\prod_{j=1}^{\nu_1}\big(x_1t^{j-1}-x_it^{\nu_i}\big)\ne 0,
\end{gather}
for generic $x$. We now suppose that $\nu\prec\lambda$. Applying (\ref{eq:E0-expansion4}), (\ref{eq:E0-expansion5}) and (\ref{eq:E0-expansion6}) to (\ref{eq:E0-expansion3}), we have
\begin{gather*}
0 =l_{\nu\lambda} \mathsf{E}^{\nu_1}_{\nu_2,\ldots,\nu_s}\big(x_{\widehat{1}};x^{0}_{\nu_1,\ldots,\nu_s}\big)+\sum_{\substack{\mu\in Z_{s,n}\\ \mu_1<\nu_1}}
l_{\mu\lambda}\mathsf{E}^{\mu_1}_{\mu_2,\ldots,\mu_s}\big(x_{\widehat{1}};x^{0}_{\nu_1,\ldots,\nu_s}\big) \qquad \text{for} \quad \nu\prec\lambda.
\end{gather*}
Starting from the minimal element $\nu=(0,\ldots,0,n)$ of the lexicographic order of $\nu$, and solving these simultaneous equations for $\{l_{\nu\lambda}\,|\,\nu\prec\lambda\}$ inductively we have $l_{\nu\lambda}=0$ for all $\nu\prec\lambda$. Therefore we have the expression~(\ref{eq:E0-expansion1}). From~(\ref{eq:E0-expansion4}) and~(\ref{eq:E0-expansion6}) the coefficient
\begin{gather*}
l_{\lambda\lambda}=\mathsf{E}^{0}_{\lambda_1,\ldots,\lambda_s}\big(x;x^{0}_{\lambda_1, \ldots,\lambda_s}\big)/\mathsf{E}^{\lambda_1}_{\lambda_2,\ldots,\lambda_s}\big(x_{\widehat{1}};x^{0}_{\lambda_1,\ldots,\lambda_s}\big)
\end{gather*}
coincides with (\ref{eq:E0-expansion2}).
\end{proof}

\begin{Corollary}\label{cor:E=EL} We have
\begin{gather}\label{eq:E=EL}
\big(\mathsf{E}^{0}_{\lambda_1,\ldots,\lambda_s}(x;z)\big)_{\lambda\in Z_{s,n}}= \big(\mathsf{E}^{\lambda_1}_{\lambda_2,\ldots,\lambda_s}(x_{\widehat{1}};z)\big)_{\lambda\in Z_{s,n}}L,
\end{gather}
where $L=(l_{\mu\nu})$ is a lower triangular matrix of size ${s+n-1\choose n}$.
\end{Corollary}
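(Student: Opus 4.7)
\medskip

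The plan is to derive the matrix identity \eqref{eq:E=EL} as a direct re-packaging of equation \eqref{eq:E0-expansion1} from the preceding lemma. The key observation is that \eqref{eq:E0-expansion1} is precisely a system of $|Z_{s,n}|$ linear equations, one for each $\lambda\in Z_{s,n}$, expressing each $\mathsf{E}^{0}_{\lambda_1,\ldots,\lambda_s}(x;z)$ as a linear combination of the functions $\mathsf{E}^{\mu_1}_{\mu_2,\ldots,\mu_s}(x_{\widehat{1}};z)$ with coefficients $l_{\mu\lambda}$, the sum extending only over $\mu\succeq\lambda$.

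First I would fix once and for all a lexicographic enumeration of $Z_{s,n}$ (say in increasing order) and arrange both row vectors of functions in \eqref{eq:E=EL} accordingly. Reading off the $\lambda$-th entry of the right-hand side of \eqref{eq:E=EL}, matrix multiplication gives
\[
\sum_{\mu\in Z_{s,n}}\mathsf{E}^{\mu_1}_{\mu_2,\ldots,\mu_s}(x_{\widehat{1}};z)\,l_{\mu\lambda},
\]
which is exactly the right-hand side of \eqref{eq:E0-expansion1}. Hence \eqref{eq:E=EL} holds as a matrix identity, provided $L=(l_{\mu\nu})_{\mu,\nu\in Z_{s,n}}$ is interpreted as the square matrix of size $\binom{s+n-1}{n}=|Z_{s,n}|$ whose $(\mu,\nu)$-entry is the coefficient $l_{\mu\nu}$ from the expansion of $\mathsf{E}^{0}_{\nu_1,\ldots,\nu_s}(x;z)$.

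Second I would verify lower triangularity. The lemma establishes that the summation in \eqref{eq:E0-expansion1} is restricted to $\mu\succeq\lambda$, i.e.\ $l_{\mu\lambda}=0$ whenever $\mu\prec\lambda$. With rows indexed by $\mu$ and columns by $\lambda$ in the chosen lexicographic order, the nonvanishing entries satisfy row index $\succeq$ column index, which is the condition for $L$ to be lower triangular; the diagonal entries $l_{\lambda\lambda}$ are explicit and nonzero for generic $x$ by \eqref{eq:E0-expansion2}. There is essentially no obstacle here: the whole content of the corollary is already contained in \eqref{eq:E0-expansion1}, so the only task is bookkeeping of indices and of the direction (lower vs.\ upper) of the triangle, which is pinned down by the convention that the rows of $L$ are indexed by the expansion basis index $\mu$ and the columns by the expanded element index $\lambda$.
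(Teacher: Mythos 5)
Your proposal is correct and matches the paper's (implicit) reasoning: the corollary is stated without a separate proof precisely because it is the matrix repackaging of \eqref{eq:E0-expansion1}, with the vanishing $l_{\mu\lambda}=0$ for $\mu\prec\lambda$ giving lower triangularity under the lexicographic arrangement. Your bookkeeping of rows versus columns and of the direction of the triangle is accurate.
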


\begin{Lemma}\label{lem:EE=E**}For $\lambda=(\lambda_1,\lambda_2,\ldots,\lambda_s)\in Z_{s,n}$, we have
\begin{gather*}
\mathsf{E}^{n}_0(x_1;z) \mathsf{E}^{\lambda_1}_{\lambda_2,\ldots,\lambda_s}(x_2,\dots,x_s;z)=\mathsf{E}^{\lambda_1}_{0,\lambda_2,\ldots,\lambda_s}
(x_1,x_2,\dots,x_s;z)\prod_{i=2}^s\prod_{k=1}^{\lambda_i}\big(x_it^{k-1}-x_1\big).
\end{gather*}
\end{Lemma}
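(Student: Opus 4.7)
The plan is to identify the left-hand side $F(z) := \mathsf{E}^n_0(x_1;z)\,\mathsf{E}^{\lambda_1}_{\lambda_2,\ldots,\lambda_s}(x_{\widehat{1}};z)$ with $C\cdot \mathsf{E}^{\lambda_1}_{0,\lambda_2,\ldots,\lambda_s}(x;z)$, where $C := \prod_{i=2}^s\prod_{k=1}^{\lambda_i}(x_i t^{k-1}-x_1)$, by verifying that $F$ obeys the defining interpolation properties of Theorem~\ref{thm:InterpolationP} for the target polynomial. Membership $F\in\mathsf{H}_{s,n}^z$ is immediate: using the explicit form $\mathsf{E}^n_0(x_1;z)=\prod_{k=1}^n(z_k-x_1)$ (degree one in each $z_k$) and $\mathsf{E}^{\lambda_1}_{\lambda_2,\ldots,\lambda_s}(x_{\widehat{1}};z)\in\mathsf{H}_{s-1,n}^z$ (degree at most $s-1$ in each $z_k$), the product is $\mathfrak{S}_n$-invariant with degree at most $s$ in each variable.

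The key observation is that whenever a reference point $x^{\mu_0}_{\mu_1,\ldots,\mu_s}$ has $\mu_1\ge 1$, the factor $\prod_{k=1}^n(z_k-x_1)$ contains the vanishing term $x_1 t^0 - x_1 = 0$, so $F$ vanishes at all such points. When $\mu_1 = 0$, the point $x^{\mu_0}_{0,\mu_2,\ldots,\mu_s}$ coincides with $(x_{\widehat{1}})^{\mu_0}_{\mu_2,\ldots,\mu_s}$, so that
\[
F\bigl(x^{\mu_0}_{0,\mu_2,\ldots,\mu_s}\bigr)=\prod_{j=1}^{\mu_0}(z_j-x_1)\cdot\prod_{i=2}^s\prod_{k=1}^{\mu_i}(x_i t^{k-1}-x_1)\cdot \mathsf{E}^{\lambda_1}_{\lambda_2,\ldots,\lambda_s}\bigl(x_{\widehat{1}};(x_{\widehat{1}})^{\mu_0}_{\mu_2,\ldots,\mu_s}\bigr).
\]
Applying properties (0), (1), (2) of Theorem~\ref{thm:InterpolationP} to the last factor forces $F$ to vanish unless $(\mu_0,\mu_2,\ldots,\mu_s)=(\lambda_1,\lambda_2,\ldots,\lambda_s)$; at this single non-vanishing point, $F$ evaluates to $C\cdot\prod_{i=1}^s\prod_{j=1}^{\lambda_1}(z_j - x_i t^{\mu_i})$ with $\mu_1=0$ and $\mu_i=\lambda_i$ for $i\ge 2$, which is exactly the value prescribed by condition~(0) for $C\cdot\mathsf{E}^{\lambda_1}_{0,\lambda_2,\ldots,\lambda_s}(x;z)$, while the vanishing at all other reference points with $\mu_0\le\lambda_1$ matches conditions~(1), (2).

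To finish, expand $F$ in the basis $\{\mathsf{E}^{\nu_0}_{\nu_1,\ldots,\nu_s}(x;z)\}_{\nu\in Z_{s+1,n}}$ and argue by upward induction on $\nu_0$: the triangular vanishing pattern from conditions~(1), (2) shows that the coefficients with $\nu_0\le\lambda_1$ are uniquely determined by the reference-point values, so they coincide with those of $C\cdot\mathsf{E}^{\lambda_1}_{0,\lambda_2,\ldots,\lambda_s}$. To rule out contributions with $\nu_0>\lambda_1$, I will invoke a degree-profile argument: the leading term of any $\mathsf{E}^{\nu_0}_\nu$ with $\nu_0>\lambda_1$ is $C^{\nu_0}_\nu\,m_{((s-1)^{n-\nu_0}s^{\nu_0})}$, whose monomials contain $\nu_0>\lambda_1$ indices $i$ with $z_i^s$, yet every monomial appearing in $F$ has at most $\lambda_1$ such factors. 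Indeed, producing $z_i^s$ in $F$ requires $\mathsf{E}^{\lambda_1}_{\lambda_2,\ldots,\lambda_s}(x_{\widehat{1}};z)$ to contribute $z_i^{s-1}$, and its leading term $C^{\lambda_1}_{\lambda_2,\ldots,\lambda_s}\,m_{((s-2)^{n-\lambda_1}(s-1)^{\lambda_1})}$ permits at most $\lambda_1$ simultaneous such contributions. Combining these observations yields $F = C\cdot\mathsf{E}^{\lambda_1}_{0,\lambda_2,\ldots,\lambda_s}(x;z)$; as a consistency check, the leading-coefficient identity $C^{\lambda_1}_{\lambda_2,\ldots,\lambda_s} = C\cdot C^{\lambda_1}_{0,\lambda_2,\ldots,\lambda_s}$ follows directly from the explicit formula for $C^{\lambda_0}_{\lambda_1,\ldots,\lambda_s}$ in the remark after Theorem~\ref{thm:InterpolationP}. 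The main obstacle is this last degree-profile step, since conditions (0)--(2) alone do not uniquely pin down individual basis elements without the additional degree control afforded by the multiplicative form of $F$.
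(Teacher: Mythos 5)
Your strategy (show that $F:=\mathsf{E}^n_0(x_1;z)\,\mathsf{E}^{\lambda_1}_{\lambda_2,\ldots,\lambda_s}(x_{\widehat{1}};z)$ satisfies the interpolation conditions of the target and then identify coefficients in the basis of Theorem~\ref{thm:InterpolationP}) is different from the paper's, which simply specializes $w_s=x_1$ in the dual Cauchy expansion $\mathsf{\Psi}(z;w)=\sum_\mu\mathsf{E}_\mu(x;z)\mathsf{F}_\mu(x;w)$ and compares coefficients of the linearly independent $\mathsf{F}_{(\mu_2,\ldots,\mu_s)}(x_{\widehat{1}};w_{\widehat{s}})$. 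Your verification of conditions (0)--(2) for $F$ at the reference points with $\mu_0\le\lambda_1$ is correct, and you correctly identify that these conditions alone leave the coefficients of $\mathsf{E}^{\nu_0}_{\nu}$ with $\nu_0>\lambda_1$ undetermined. The problem is the step you use to kill those coefficients.

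The degree-profile argument as stated is insufficient. All of the $\binom{s+n-\nu_0-1}{n-\nu_0}$ basis elements $\mathsf{E}^{\nu_0}_{\nu_1,\ldots,\nu_s}$ with a \emph{fixed} $\nu_0$ share the same leading monomial symmetric function $m_{(s^{\nu_0}(s-1)^{n-\nu_0})}$, only with different scalar coefficients $C^{\nu_0}_{\nu}$. Hence comparing the coefficient of $m_{(s^{N}(s-1)^{n-N})}$ in $g=F-C\,\mathsf{E}^{\lambda_1}_{0,\lambda_2,\ldots,\lambda_s}(x;z)$, where $N$ is the largest occurring $\nu_0$, only yields the single linear relation $\sum_{\nu_0=N}c_\nu C^{N}_{\nu}=0$; it does not exclude a nonzero combination whose leading parts cancel while lower-order terms (which may still contain monomials with $N$ exponents equal to $s$, e.g.\ $m_{(s^{N}(s-1)^{n-N-1}(s-2))}$, or arbitrarily few of them) survive. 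The statement you need is that no nonzero element of $\operatorname{span}\{\mathsf{E}^{\nu_0}_{\nu}:\nu_0>\lambda_1\}$ has all monomials with at most $\lambda_1$ exponents equal to $s$; this is true but requires more than leading terms. One way to close it: by the product formula \eqref{eq:B-E-explicit-1}, the coefficient of $z_1^s\cdots z_{N}^s$ in $\mathsf{E}^{N}_{\nu}(x;z_1,\ldots,z_n)$, viewed as a polynomial in $z_{N+1},\ldots,z_n$, is exactly $\mathsf{E}^{0}_{\nu}(x;z_{N+1},\ldots,z_n)$ (only the $\mathsf{E}_{\epsilon_0}$ factors have degree $s$, which forces $i_1=\cdots=i_N=0$), and these are linearly independent; this does force $c_\nu=0$ for $\nu_0=N$ and the induction descends. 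Without such an argument the proof is incomplete. Alternatively, the paper's own one-line route via $\mathsf{F}_\mu(x;w)\big|_{w_s=x_1}=0$ for $\mu_1>0$ avoids the uniqueness issue entirely.
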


\begin{proof}See Lemma \ref{lem:B-E=EE} in Appendix \ref{SectionB}.
\end{proof}

We also use the symbol $\mathsf{E}_{\lambda_1,\ldots,\lambda_s}^{\lambda_0,(n)}(x;z)$ and $x^{\lambda_0,(n)}_{\lambda_1,\ldots,\lambda_s}$ instead of $\mathsf{E}_{\lambda_1,\ldots,\lambda_s}^{\lambda_0}(x;z)$ and
$x^{\lambda_0}_{\lambda_1,\ldots,\lambda_s}$, respectively, to specify the number $n$ of $z$ variables.

\begin{Lemma}\label{lem:vanishing}Suppose that the variables $z_1,\ldots,z_{\mu_0}$ in $x^{\mu_0,(n)}_{\mu_1,\ldots,\mu_s}$ satisfy
\begin{gather}\label{eq>>>}
|z_1|\gg |z_2|\gg \cdots\gg |z_{\mu_0}|\gg 0,
\end{gather}
which means $z_1/z_2\to \infty, z_2/z_3\to \infty, \ldots, z_{\mu_0-1}/z_{\mu_0}\to \infty$ and $z_{\mu_0}\to \infty$. Then, asymptotically, we have
\begin{gather}\label{eq:E-vanishing2}
\frac{\mathsf{E}^{\lambda_0,(n)}_{\lambda_1,\ldots,\lambda_s}(x;z)\Delta^{\!(n)}(z)}{z_1^{s+n-1}z_2^{s+n-2}\cdots z_{\mu_0}^{s+n-\mu_0}}\bigg|_{z=x^{\mu_0,(n)}_{\mu_1,\ldots,\mu_s}}\sim
\delta_{\lambda\mu}\Delta^{\!(n-\lambda_0)}\big(x^{0,(n-\lambda_0)}_{\lambda_1,\ldots,\lambda_s}\big),
\end{gather}
where $\delta_{\lambda\mu}$ is the Kronecker delta for $\lambda,\mu\in Z_{s+1,n}$ and $\Delta^{\!(k)}(z_1,\ldots,z_k)$ denotes the difference product $\Delta(z)$ of $k$ variables.
\end{Lemma}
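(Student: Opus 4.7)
The plan is to split the left-hand side into the product of two asymptotic factors, one from the Vandermonde $\Delta^{(n)}(z)$ and one from the polynomial $\mathsf{E}^{\lambda_0,(n)}_{\lambda_1,\ldots,\lambda_s}(x;z)$, and then verify the combined limit case by case. First I would compute the asymptotics of the Vandermonde under the specialization $z=x^{\mu_0,(n)}_{\mu_1,\ldots,\mu_s}$ directly from $\Delta^{(n)}(z)=\prod_{1\le i<j\le n}(z_i-z_j)$. The ``specialized--specialized'' factors $\mu_0<i<j\le n$ combine into $\Delta^{(n-\mu_0)}\bigl(x^{0,(n-\mu_0)}_{\mu_1,\ldots,\mu_s}\bigr)$; each remaining factor $(z_i-z_j)$ with $i\le\mu_0$ and $i<j$ is dominated by $z_i$ in the regime \eqref{eq>>>}, contributing $n-i$ copies of $z_i$, so
\[
\Delta^{(n)}(z)\big|_{z=x^{\mu_0,(n)}_{\mu_1,\ldots,\mu_s}}\ \sim\ z_1^{n-1}z_2^{n-2}\cdots z_{\mu_0}^{n-\mu_0}\,\Delta^{(n-\mu_0)}\bigl(x^{0,(n-\mu_0)}_{\mu_1,\ldots,\mu_s}\bigr).
\]
After dividing by $z_1^{s+n-1}\cdots z_{\mu_0}^{s+n-\mu_0}$, the claim reduces to showing that the ratio $\mathsf{E}^{\lambda_0}_{\lambda_1,\ldots,\lambda_s}(x;z)\big|_{z=x^{\mu_0,(n)}_{\mu_1,\ldots,\mu_s}}/(z_1^{s}\cdots z_{\mu_0}^{s})$ tends to $\delta_{\lambda\mu}$, with the Vandermondes cancelling in the diagonal case $\lambda=\mu$.

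Next I would dispatch three of the four cases directly from Theorem~\ref{thm:InterpolationP}. If $\mu_0<\lambda_0$, condition~(2) gives $\mathsf{E}^{\lambda_0}_{\lambda_1,\ldots,\lambda_s}(x;x^{\mu_0}_{\mu_1,\ldots,\mu_s})\equiv 0$ identically in the free variables $z_1,\ldots,z_{\mu_0}$, and the limit is $0=\delta_{\lambda\mu}$. If $\mu_0=\lambda_0$ but $(\mu_1,\ldots,\mu_s)\ne(\lambda_1,\ldots,\lambda_s)$, condition~(1) gives the same identical vanishing. If $\mu=\lambda$, condition~(0) evaluates the restriction as $\prod_{i=1}^{s}\prod_{j=1}^{\lambda_0}(z_j-x_it^{\lambda_i})$, whose dominant monomial in the regime \eqref{eq>>>} is $z_1^{s}\cdots z_{\lambda_0}^{s}$ with coefficient~$1$, producing the limit $1$ and hence the right-hand side $\Delta^{(n-\lambda_0)}\bigl(x^{0,(n-\lambda_0)}_{\lambda_1,\ldots,\lambda_s}\bigr)$.

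The main obstacle is the remaining case $\mu_0>\lambda_0$, in which no interpolation vanishing of Theorem~\ref{thm:InterpolationP} applies directly; one must genuinely show that the limit vanishes. Here the plan is to use the monomial symmetric function expansion from the remark after Theorem~\ref{thm:InterpolationP},
\[
\mathsf{E}^{\lambda_0}_{\lambda_1,\ldots,\lambda_s}(x;z)=\sum_{\kappa\preceq\kappa_0}c_\kappa\,m_\kappa(z),\qquad \kappa_0=\bigl(s^{\lambda_0},(s-1)^{n-\lambda_0}\bigr),
\]
with $\preceq$ the lexicographic order on $P_+$. Because $\deg_{z_i}\mathsf{E}\le s$, the limit of $\mathsf{E}/(z_1^{s}\cdots z_{\mu_0}^{s})$ under \eqref{eq>>>} equals the coefficient of $z_1^{s}\cdots z_{\mu_0}^{s}$ in the specialized polynomial (regarded as an element of $\mathbb{C}[z_{\mu_0+1},\ldots,z_n]$). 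For some $m_\kappa$ to contribute to such a monomial, $\kappa$ must contain at least $\mu_0$ parts equal to~$s$; but $\mu_0>\lambda_0$ then forces $\kappa_{\lambda_0+1}=s>s-1=(\kappa_0)_{\lambda_0+1}$, hence $\kappa\succ\kappa_0$, contradicting $\kappa\preceq\kappa_0$. Therefore the relevant coefficient vanishes identically, the limit is~$0$, matching $\delta_{\lambda\mu}=0$. Combining the four cases with the Vandermonde asymptotic yields~\eqref{eq:E-vanishing2}.
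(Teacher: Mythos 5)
Your proof is correct and follows essentially the same route as the paper: the cases $\lambda_0>\mu_0$; $\lambda_0=\mu_0$ with $(\lambda_1,\ldots,\lambda_s)\ne(\mu_1,\ldots,\mu_s)$; and $\lambda=\mu$ are read off from conditions (2), (1), (0) of Theorem~\ref{thm:InterpolationP}, and the remaining case $\lambda_0<\mu_0$ is settled by a degree estimate. Your handling of that last case --- factoring out the Vandermonde asymptotics and showing that the coefficient of $z_1^{s}\cdots z_{\mu_0}^{s}$ vanishes via the lexicographic bound on the monomial expansion --- is a more explicit justification of the paper's one-line assertion that the degree of $\mathsf{E}\cdot\Delta$ is lower than that of the denominator.
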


\begin{proof} From Theorem \ref{thm:InterpolationP}, if (1) $\lambda_0=\mu_0$ and $(\lambda_1,\ldots,\lambda_s)\ne (\mu_1,\ldots,\mu_s)$, or if (2) $\lambda_0>\mu_0$, then the left-hand side of (\ref{eq:E-vanishing2}) is exactly equal to~0. On the other hand, if $\lambda_0<\mu_0$, then the degree of $\mathsf{E}^{\lambda_0,(n)}_{\lambda_1,\ldots,\lambda_s}\big(x;x^{\mu_0,(n)}_{\mu_1,\ldots,\mu_s}\big) \Delta^{\!(n)}\big(x^{\mu_0,(n)}_{\mu_1,\ldots,\mu_s}\big)$ is lower than $z_1^{s+n-1}z_2^{s+n-2}\cdots z_{\mu_0}^{s+n-\mu_0}$, so that the left-hand side of~(\ref{eq:E-vanishing2}) is estimated as $0$ under the condition~(\ref{eq>>>}). If $\lambda_0=\mu_0$ and $(\lambda_1,\ldots,\lambda_s)=(\mu_1,\ldots,\mu_s)$, from the condition (0) of Theorem~\ref{thm:InterpolationP}, (\ref{eq:Delta}) and~(\ref{eq:zeta-mu}), we have
\begin{gather*}
\mathsf{E}^{\lambda_0,(n)}_{\lambda_1,\ldots,\lambda_s}\big(x;x^{\lambda_0,(n)}_{\lambda_1,\ldots,\lambda_s}\big) \Delta^{\!(n)}\big(x^{\lambda_0,(n)}_{\lambda_1,\ldots,\lambda_s}\big)\\
\qquad{}= \Delta^{\!(n-\lambda_0)}\big(x^{0,(n-\lambda_0)}_{\lambda_1,\ldots,\lambda_s}\big)\Delta^{\!(\lambda_0)}(z_1,\ldots,z_{\lambda_0})
\prod_{i=1}^s\prod_{j=1}^{\lambda_0}\prod_{k_i=0}^{\lambda_i}\big(z_j-x_it^{k_i}\big),
\end{gather*}
which implies (\ref{eq:E-vanishing2}).
\end{proof}

\section[Difference equations with respect to $\alpha$]{Difference equations with respect to $\boldsymbol{\alpha}$}\label{Section5}

The aim of this section is to give a proof of the following proposition.
\begin{Proposition}\label{prop:q-difference}
For $\lambda=(\lambda_1,\lambda_2,\ldots,\lambda_{s})\in Z_{s,n}$ and $z=(z_1,\ldots,z_n)\in (\mathbb{C}^*)^n$, we set
\begin{gather*}
e_\lambda(z)=\mathsf{E}^{\lambda_1}_{\lambda_2,\ldots,\lambda_s}(a_{\widehat{1}};z)=\mathsf{E}^{\lambda_1}_{\lambda_2,\ldots,\lambda_{s}}(a_2,\ldots,a_s;z)
\end{gather*}
using the notation of \eqref{eq:InterpolationP}. Let $T_{\alpha}$ be the shift operator $\alpha\to\alpha+1$, i.e., $T_{\alpha}f(\alpha)=f(\alpha+1)$. Then the row vector $\big(\langle e_\lambda,z\rangle\big)_{\!\lambda\in Z_{s,n}}$ of Jackson integrals satisfies the difference system in the form
\begin{gather}\label{eq:q-diff01}
T_{\alpha}\big(\langle e_\lambda,z\rangle\big)_{\!\lambda\in Z_{s,n}} =\big(\langle e_\lambda,z\rangle\big)_{\!\lambda\in Z_{s,n}}LU,
\end{gather}
where $L=(l_{\lambda\mu})_{\lambda,\mu\in Z_{s,n}}$ and $U=(u_{\lambda\mu})_{\lambda,\mu\in Z_{s,n}}$ are lower and upper triangular matrices of size ${s+n-1\choose n}$ with respect to the lexicographic ordering $\preceq$, respectively. The diagonal entries of~$L$ and~$U$ are given by
\begin{gather}
l_{\lambda\lambda} =(-1)^{(s-1)\lambda_1}t^{(\lambda_1-n)\lambda_1}(a_2a_3\cdots a_s)^{-\lambda_1},\nonumber\\ 
u_{\lambda\lambda}=(-1)^{(s+1)\lambda_1}\prod_{k=1}^{\lambda_1}\frac{\big(1-q^\alpha t^{k-1}\big)t^{n-k}\prod\limits_{i=1}^sa_i}{1-q^\alpha t^{2n-2-\lambda_1+k}\prod\limits_{i=1}^s a_ib_i}\times \prod_{i=2}^s\prod_{k=1}^{\lambda_i}\big(a_it^{k-1}\big),\label{eq:diag-U}
\end{gather}
and the determinant of the coefficient matrix is
\begin{gather}\label{eq:detLU}
\det LU=\prod_{\lambda\in Z_{s,n}}l_{\lambda\lambda}u_{\lambda\lambda}=\prod_{k=1}^n\left[\frac{\big(1-q^{\alpha}t^{n-k}\big)t^{k-1}\prod\limits_{i=1}^s a_i}{1-q^{\alpha}t^{n+k-2}\prod\limits_{i=1}^{s} a_ib_i}\right]^{s+k-2\choose k-1}.
\end{gather}
\end{Proposition}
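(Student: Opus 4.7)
The first step is to reduce the $\alpha$-shift to a modification of the integrand. Since $\Phi(z)$ contains the factor $(z_1z_2\cdots z_n)^\alpha$ and is otherwise independent of $\alpha$, we have $T_\alpha\Phi(z) = (z_1z_2\cdots z_n)\Phi(z)$, giving
\begin{gather*}
T_\alpha\langle e_\lambda, z\rangle = \langle z_1z_2\cdots z_n\,e_\lambda(z),\, z\rangle.
\end{gather*}
The integrand $z_1\cdots z_n\,e_\lambda(z)$ now lies in $\mathsf{H}_{s,n}^z$, one degree higher in each variable than $e_\lambda$.

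Next I would apply Lemma~\ref{lem:EE=E**} in the limit $x_1\to 0$. Reading condition~(0) of Theorem~\ref{thm:InterpolationP} at $\lambda_0=n$, $\lambda_1=0$ gives $\mathsf{E}^n_0(x_1;z)\big|_{x_1=0} = z_1\cdots z_n$, so the lemma specialises to
\begin{gather*}
z_1\cdots z_n\,e_\lambda(z) = \mathsf{E}^{\lambda_1}_{0,\lambda_2,\ldots,\lambda_s}(0,a_2,\ldots,a_s;z)\prod_{i=2}^s a_i^{\lambda_i}t^{\binom{\lambda_i}{2}}.
\end{gather*}
In parallel, Corollary~\ref{cor:E=EL} in the same $x_1\to 0$ limit yields a lower-triangular transition $\mathsf{E}^0_\nu(0,a_{\widehat{1}};z) = \sum_{\mu\succeq\nu}l_{\mu\nu}\,e_\mu(z)$, whose diagonal $l_{\nu\nu}=(-1)^{(s-1)\nu_1}t^{(\nu_1-n)\nu_1}(a_2\cdots a_s)^{-\nu_1}$ agrees with the $L$ of the proposition. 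Taking Jackson integrals of this transition gives $\langle \mathsf{E}^0_\nu(0,a_{\widehat{1}};\cdot),z\rangle = \sum_\mu l_{\mu\nu}\langle e_\mu,z\rangle$, so the proposition reduces to producing an upper-triangular $U=(u_{\nu\lambda})$ with the diagonal of~\eqref{eq:diag-U} satisfying
\begin{gather*}
\prod_{i=2}^s a_i^{\lambda_i}t^{\binom{\lambda_i}{2}}\,\langle \mathsf{E}^{\lambda_1}_{0,\lambda_2,\ldots,\lambda_s}(0,a_{\widehat{1}};\cdot),z\rangle = \sum_{\nu\in Z_{s,n}} u_{\nu\lambda}\,\langle \mathsf{E}^0_\nu(0,a_{\widehat{1}};\cdot),z\rangle;
\end{gather*}
the identity $T_\alpha\langle e_\lambda,z\rangle = \sum_\mu \langle e_\mu,z\rangle(LU)_{\mu\lambda}$ follows by composing the two transitions.

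To pin down $U$, the plan is to evaluate both sides at the truncation points $z=a_\mu$ for $\mu\in Z_{s,n}$ and use the large-$\alpha$ asymptotic $T_\alpha^N\langle e_\mu,a_\nu\rangle\sim(1-q)^n e_\mu(a_\nu)\Delta(a_\nu)T_\alpha^N\Phi(a_\nu)$ from Lemma~\ref{lem:<m, a>}. The vanishing of $e_\mu(a_\nu)$ unless $\nu=\mu$ or $\nu_1>\mu_1$ (conditions~(1)--(2) of Theorem~\ref{thm:InterpolationP}) makes the matrix $(\langle e_\mu,a_\nu\rangle)_{\nu,\mu}$ asymptotically lower-triangular with nonzero diagonal, which forces $U$ to be upper-triangular in lex order. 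The diagonal entry $u_{\lambda\lambda}$ is then extracted by combining $T_\alpha\Phi(a_\lambda)/\Phi(a_\lambda) = \prod_{i=1}^s a_i^{\lambda_i}t^{\binom{\lambda_i}{2}}$ with the explicit value $e_\lambda(a_\lambda)=\prod_{i=2}^s\prod_{j=1}^{\lambda_1}(a_1t^{j-1}-a_it^{\lambda_i})$ and the $L$-diagonal; the remaining factors $(1-q^\alpha t^{k-1})$ and $(1-q^\alpha t^{2n-2-\lambda_1+k}\prod_i a_ib_i)$ in~\eqref{eq:diag-U} arise from the finite $\alpha$-dependent corrections tracked by the quasi-periodicity of $\Phi$.

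Finally, the determinant~\eqref{eq:detLU} follows from $\det(LU)=\prod_\lambda l_{\lambda\lambda}u_{\lambda\lambda}$ by regrouping the product over $\lambda\in Z_{s,n}$ according to the value of $\lambda_1$ and applying a standard binomial identity to collect the exponents into $\binom{s+k-2}{k-1}$. The hard part will be the precise identification of $U$ and its diagonal: this requires a Jackson-integral calculation beyond the polynomial identities used above, and is exactly where the paper invokes the auxiliary Lemma~\ref{lem:Anabla}, whose detailed verification is deferred to Appendix~\ref{SectionA}.
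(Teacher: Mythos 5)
Your reduction steps agree with the paper's: the identity $T_\alpha\Phi(z)=z_1\cdots z_n\Phi(z)$, the factorization of $z_1\cdots z_n\,e_\lambda(z)$ via Lemma~\ref{lem:EE=E**} in the limit $x_1\to 0$, and the lower-triangular transition $L$ from Corollary~\ref{cor:E=EL} are exactly how the paper sets things up. The gap is in how you propose to produce $U$. First, the very existence of a $z$-independent matrix expanding $\langle \tilde e_{(\lambda_1,0,\lambda_2,\ldots,\lambda_s)},z\rangle$ in the $\langle \tilde e_{(0,\mu)},z\rangle$ is not automatic: the integrand on the left has degree $s$ in each $z_i$ while the targets have degree $s-1$, so this is a nontrivial reduction statement about the Jackson integrals, not a polynomial identity; you assert it rather than prove it. Second, and more seriously, your method for identifying $U$ --- evaluating at the truncation points $a_\mu$ and using the $\alpha\to+\infty$ asymptotics of Lemma~\ref{lem:<m, a>} --- can only determine the limit of the matrix entries as $q^\alpha\to 0$. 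The diagonal \eqref{eq:diag-U} contains the factors $\big(1-q^\alpha t^{k-1}\big)$ and $\big(1-q^\alpha t^{2n-2-\lambda_1+k}\prod_i a_ib_i\big)$, which tend to $1$ in that limit and are therefore invisible to the asymptotic computation; the phrase ``finite $\alpha$-dependent corrections tracked by the quasi-periodicity of $\Phi$'' does not supply them, since the quasi-periodicity of $\Phi$ is in the $z$-variables and says nothing about the $\alpha$-dependence of connection coefficients. Likewise, asymptotic lower-triangularity of $\big(\langle e_\mu,a_\nu\rangle\big)$ does not force an exact $LU$ factorization with the stated diagonals at finite $\alpha$ (generically every invertible matrix admits some $LU$ factorization, so the content lies entirely in the explicit diagonals).

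The paper obtains both the existence of the expansion and the exact entries from a genuinely different mechanism: the $q$-difference de Rham relations $\int_0^{\xi\infty}\Phi(z)\,\mathcal{A}\nabla_1\varphi(z)\,\varpi_q=0$ of Lemma~\ref{lem:00nabla=0}, combined with the explicit skew-symmetrization identity of Lemma~\ref{lem:Anabla}, yield the $(s+1)$-term ``fundamental relations'' \eqref{eq:recursion-tilde e} among the $\langle\tilde e_\lambda,z\rangle$ with closed-form rational coefficients in $q^\alpha$; iterating these (Lemma~\ref{lem:E=EU}) builds the upper-triangular $\tilde U$ and its diagonal. You name Lemma~\ref{lem:Anabla} at the end, but your argument never uses it, and without it (or an equivalent contiguity relation) the exact form of $u_{\lambda\lambda}$, and hence \eqref{eq:detLU}, is not established.
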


\begin{Corollary}\label{cor:q-difference}The difference equation for $J =\det\big(\langle s_\lambda, a_\mu \rangle\big)_{\!\mu\in Z \atop \!\lambda\in B}$ with respect to $\alpha$ is given by
\begin{gather}\label{eq:q-diff-det}
T_{\alpha} J =J\prod_{k=1}^n\left[\frac{\big(1-q^{\alpha}t^{n-k}\big)t^{k-1}\prod\limits_{i=1}^s a_i}{1-q^{\alpha}t^{n+k-2}\prod\limits_{i=1}^{s} a_ib_i}\right]^{s+k-2\choose k-1}.
\end{gather}
\end{Corollary}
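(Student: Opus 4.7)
The plan is to turn the row-vector difference equation (\ref{eq:q-diff01}) of Proposition~\ref{prop:q-difference} into a matrix difference equation by specializing $z$ to the reference points $a_\mu$, and then to pass from the basis $\{e_\lambda\}$ to the Schur basis $\{s_\nu\}$ via an $\alpha$-independent change of basis. To begin, I would fix $\mu\in Z_{s,n}$ and set $z=a_\mu$ in (\ref{eq:q-diff01}). Since the coefficient matrix $LU$ does not depend on $z$, assembling the resulting $|Z_{s,n}|$ row identities (one for each $\mu$) into a single matrix equation yields
\begin{gather*}
T_\alpha M = M\cdot LU,\qquad M=\big(\langle e_\lambda,a_\mu\rangle\big)_{\mu\in Z_{s,n},\,\lambda\in Z_{s,n}},
\end{gather*}
and taking determinants gives $T_\alpha\det M=\det M\cdot\det(LU)$, where $\det(LU)$ is the explicit product already computed in (\ref{eq:detLU}), which is exactly the factor appearing on the right-hand side of (\ref{eq:q-diff-det}).

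The remaining task is to identify $\det M$ with $J$ up to a nonzero constant in $\alpha$. Each polynomial $e_\lambda(z)=\mathsf{E}^{\lambda_1}_{\lambda_2,\ldots,\lambda_s}(a_{\widehat{1}};z)$ constructed via Theorem~\ref{thm:InterpolationP} (applied with $s-1$ in place of $s$) is $\mathfrak{S}_n$-invariant with $\deg_{z_i}e_\lambda\le s-1$, so $e_\lambda$ lies in the space $\mathsf{H}^z_{s-1,n}$ of~(\ref{eq:Hsn-z}), and the family $\{e_\lambda:\lambda\in Z_{s,n}\}$ is a basis of that space. The Schur functions $\{s_\nu:\nu\in B_{s,n}\}$ are a second basis of the very same space, and since $|Z_{s,n}|=|B_{s,n}|={s+n-1\choose n}$, there exists an invertible matrix $C=(c_{\lambda\nu})$ with entries depending only on $a_2,\ldots,a_s$ and $t$ (in particular not on $\alpha$ or the $b_i$) such that $e_\lambda=\sum_{\nu\in B_{s,n}}c_{\lambda\nu}\,s_\nu$. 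Applying the Jackson integral $\langle\,\cdot\,,a_\mu\rangle$, whose convergence on polynomials in $\mathsf{H}^z_{s-1,n}$ is guaranteed by Lemma~\ref{lem:<m, a>}, factorizes $M=N\cdot C^{T}$ with $N=(\langle s_\nu,a_\mu\rangle)_{\mu,\nu}$, so that $\det M=J\cdot\det C$.

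Combining the two steps, the constant $\det C$ is invariant under $T_\alpha$ and cancels from both sides, producing $T_\alpha J=J\cdot\det(LU)$, which is precisely (\ref{eq:q-diff-det}). The only step that is not a tautology is verifying the $\alpha$-independence of $C$; this is automatic because neither the interpolation polynomials produced by Theorem~\ref{thm:InterpolationP} nor the Schur functions $s_\nu$ involve $\alpha$. Thus no real obstacle arises, and the corollary reduces to a clean determinant extraction from the explicit triangular system already established in Proposition~\ref{prop:q-difference}.
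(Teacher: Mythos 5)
Your argument is correct and is essentially the paper's own proof of Corollary \ref{cor:q-difference}: both rest on Proposition \ref{prop:q-difference} together with the observation that $\{e_\lambda\}_{\lambda\in Z_{s,n}}$ and $\{s_\nu\}_{\nu\in B_{s,n}}$ are two bases of $\mathsf{H}^z_{s-1,n}$ related by an $\alpha$-independent invertible matrix, so that only $\det(LU)$ from (\ref{eq:detLU}) survives. The sole (cosmetic) difference is order of operations — the paper conjugates the coefficient matrix to $PLUP^{-1}$ before taking determinants, while you take determinants first and cancel the constant $\det C$ — and both routes are valid.
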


\begin{proof}Both of the sets $\{\mathsf{E}^{\lambda_1}_{\lambda_2,\ldots,\lambda_s}(a_{\widehat{1}};z)\,|\, \lambda\in Z_{s,n}\}$ and $\{s_\mu(z)\,|\, \mu\in B_{s,n}\}$ are bases of the $\mathbb{C}$-linear space $\mathsf{H}_{s-1,n}^z$ specified by~(\ref{thm:InterpolationP}). Thus $\mathsf{E}^{\lambda_1}_{\lambda_2,\ldots,\lambda_s}(a_{\widehat{1}};z)$ can be expanded in terms of $s_\mu(z)$, $\mu\in B_{s,n}$, so that
\begin{gather*}
\big(\mathsf{E}^{\lambda_1}_{\lambda_2,\ldots,\lambda_s}(a_{\widehat{1}};z) \big)_{\!\lambda\in Z_{s,n}}=\big(s_\mu(z)\big)_{\!\mu\in B_{s,n}}P
\end{gather*}
with an invertible matrix $P=P(a_{\widehat{1}})$, and hence
\begin{gather*}
\big(\langle e_\lambda,z\rangle\big)_{\!\lambda\in Z_{s,n}} =\big(\langle s_\mu,z\rangle\big)_{\!\mu\in B_{s,n}}P.
\end{gather*}
Then, the difference system (\ref{eq:q-diff01}) is transformed into
\begin{gather*}
T_{\alpha}\big(\langle s_\mu,z\rangle\big)_{\!\mu\in B_{s,n}}=\big(\langle s_\mu,z\rangle\big)_{\!\mu\in B_{s,n}}PLUP^{-1}.
\end{gather*}
This implies that
\begin{gather*}
T_{\alpha} J =J \det LU= J\prod_{k=1}^n\left[\frac{\big(1-q^{\alpha}t^{n-k}\big)t^{k-1}\prod\limits_{i=1}^s a_i}{1-q^{\alpha}t^{n+k-2} \prod\limits_{i=1}^{s} a_ib_i}\right]^{s+k-2\choose k-1}
\end{gather*}
by (\ref{eq:detLU}).
\end{proof}

In the remaining part of this section we complete the proof of Proposition~\ref{prop:q-difference}. We first explain a fundamental idea for deriving difference equations. Let $\Phi(z)$ be the function defined by~(\ref{eq:Phi}) as before. For each $i=1,\ldots,n$ we introduce the operator $\nabla_{\!i}$ by setting
\begin{gather}\label{eq:def-nabla}
(\nabla_{\!i}\varphi)(z)=\varphi(z)-\frac{T_{q,z_i}\Phi(z)}{\Phi(z)}T_{q,z_i}\varphi(z)
\end{gather}
for a function $\varphi(z)$, where $T_{q,z_i}$ stands for the $q$-shift operator $z_i\to qz_i$, i.e., $T_{q,z_i}f(\ldots,z_i,\ldots)=f(\ldots,qz_i,\ldots)$. We remark that the ratio $T_{q,z_i}\Phi(z)/\Phi(z)$ is expressed as
\begin{gather}\label{eq:b-function}
\frac{T_{q,z_i}\Phi(z)}{\Phi(z)}=\frac{F_i(z)}{T_{q,z_i}G_i(z)},
\end{gather}
where
\begin{gather}
F_i(z)=q^\alpha\prod_{m=1}^{s}(1-b_mz_i)\prod_{\substack{1\le j\le n\\ j\ne i}}(z_j-tz_i),\nonumber\\
G_i(z)=\prod_{m=1}^{s}\big(1-a_m^{-1}z_i\big)\prod_{\substack{1\le j\le n\\ j\ne i}}\big(z_j-t^{-1}z_i\big).\label{eq:Fj}
\end{gather}

\begin{Lemma}[key lemma]\label{lem:00nabla=0} For a point $\xi\in ({\mathbb C}^*)^n$ and a meromorphic function $\varphi(z)$ on $({\mathbb C}^*)^n$, if the Jackson integral
\begin{gather*}
\int_0^{\xi\infty}\varphi(z)\Phi(z)\varpi_q
\end{gather*}
converges, then
\begin{gather}\label{eq:00nabla=0}
\int_0^{\xi\infty}\Phi(z)\nabla_{\!i}\varphi(z)\varpi_q=0 \qquad \text{for} \quad i=1,\ldots,n.
\end{gather}
Moreover, if the Jackson integrals
\begin{gather*}
\int_0^{\sigma\xi\infty}\varphi(z)\Phi(z)\varpi_q
\end{gather*}
converge for all $\sigma\in \frak{S}_n$, then
\begin{gather}\label{eq:00A}
\int_0^{\xi\infty}\Phi(z){\cal A}\nabla_{\!i}\varphi(z)\varpi_q=0 \qquad \text{for} \quad i=1,\ldots,n,
\end{gather}
where ${\cal A}$ indicates the skew-symmetrization defined in \eqref{eq:00Af}.
\end{Lemma}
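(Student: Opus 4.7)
The plan is to exhibit $\Phi(z)\,\nabla_{\!i}\varphi(z)$ as a total $q$-difference in $z_i$ and invoke a telescoping argument on the $q$-lattice. From the definition (\ref{eq:def-nabla}), a direct calculation gives
\begin{gather*}
\Phi(z)\,(\nabla_{\!i}\varphi)(z) = \Phi(z)\varphi(z) - T_{q,z_i}\Phi(z)\cdot T_{q,z_i}\varphi(z) = (1-T_{q,z_i})[\Phi(z)\varphi(z)],
\end{gather*}
so the Jackson sum becomes
\begin{gather*}
\int_0^{\xi\infty}\Phi\,\nabla_{\!i}\varphi\,\varpi_q = (1-q)^n\sum_{\nu\in\mathbb{Z}^n}\big[(\Phi\varphi)(\xi q^\nu) - (\Phi\varphi)(\xi q^{\nu+e_i})\big],
\end{gather*}
where $e_i$ denotes the $i$th standard basis vector of $\mathbb{Z}^n$. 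The map $\nu\mapsto\nu+e_i$ is a bijection of $\mathbb{Z}^n$, and the convergence assumption is to be read as absolute convergence (the interpretation tacitly in force in Lemma~\ref{lem:converge01}); hence the two bilateral sums agree and their difference is zero, which proves (\ref{eq:00nabla=0}).

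For the skew-symmetrized identity (\ref{eq:00A}), the strategy is to apply (\ref{eq:00nabla=0}) at each permuted base point $\sigma\xi$ for $\sigma\in\mathfrak{S}_n$ and then reorganise. Reparametrising the summation index by $\nu=\sigma\mu$ yields $\sigma\xi\,q^{\sigma\mu}=\sigma(\xi q^\mu)$; combined with the quasi-symmetry $\sigma^{-1}\Phi=U_{\sigma^{-1}}\Phi$ from (\ref{eq:Usigma-01}) and the $q$-periodicity of $U_{\sigma^{-1}}$, this gives
\begin{gather*}
\int_0^{\sigma\xi\infty}\Phi\,\nabla_{\!i}\varphi\,\varpi_q = U_{\sigma^{-1}}(\xi)\int_0^{\xi\infty}\Phi(z)\,(\sigma^{-1}\nabla_{\!i}\varphi)(z)\,\varpi_q.
\end{gather*}
The left-hand side vanishes by (\ref{eq:00nabla=0}) applied at $\sigma\xi$ (whose convergence hypothesis is precisely what is being assumed here), and $U_{\sigma^{-1}}(\xi)$ is generically nonzero, so $\int_0^{\xi\infty}\Phi\cdot(\tau\nabla_{\!i}\varphi)\,\varpi_q=0$ for every $\tau\in\mathfrak{S}_n$. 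Summing these with signs $\operatorname{sgn}\tau$ collects them into $\int_0^{\xi\infty}\Phi\,\mathcal{A}\nabla_{\!i}\varphi\,\varpi_q=0$, yielding (\ref{eq:00A}).

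The sole delicate point is justifying the reordering of the bilateral sum on $\mathbb{Z}^n$. Absolute convergence is sufficient (Fubini), and it holds automatically in the intended applications---for instance when $\varphi$ is a monomial symmetric function in the region (\ref{eq:condition:converge01}) treated by Lemma~\ref{lem:converge01}---so no extra work is needed in the present setting. Under a weaker, conditional convergence, one would have to telescope first in the $\nu_i$-direction and then sum over the remaining lattice coordinates, but such a refinement is not required here.
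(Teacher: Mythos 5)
Your argument is correct and is essentially the paper's own: the identity $\Phi\,\nabla_{\!i}\varphi=(1-T_{q,z_i})[\Phi\varphi]$ reduces \eqref{eq:00nabla=0} to the $q$-shift invariance of the bilateral Jackson sum, and \eqref{eq:00A} is obtained from \eqref{eq:00nabla=0} at the permuted base points via the quasi-symmetry \eqref{eq:Usigma-01}, exactly as the paper indicates. The only caveat, which you already flag, is that cancelling the $q$-periodic factor $U_{\sigma^{-1}}(\xi)$ requires it to be nonzero, which is guaranteed by the genericity condition \eqref{eq:condition:generic} in all applications.
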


The statement \eqref{eq:00nabla=0} is equivalent to the $q$-shift invariance of the Jackson integral
\begin{gather*}
\int_0^{\xi\infty}\Phi(z)\varphi(z)\varpi_q=\int_0^{\xi\infty}T_{q,z_i}\big(\Phi(z)\varphi(z)\big)\varpi_q,
\end{gather*}
and \eqref{eq:00A} follows from the quasi-symmetry \eqref{eq:Usigma-01} of $\Phi(z)$.

For $\lambda=(\lambda_0,\lambda_1,\ldots,\lambda_s)\in Z_{s+1,n}$ we define
\begin{gather}\label{eq:tilde-e}
\tilde{e}_{\lambda}(a_{\widehat{1}};z)=\mathsf{E}^{\lambda_0}_{\lambda_1,\ldots,\lambda_s}(\varepsilon,a_2,\dots,a_s;z)\big|_{\varepsilon\to 0}.
\end{gather}

\begin{Lemma}\label{lem:Anabla}For $\lambda=(\lambda_0,\lambda_1,\ldots,\lambda_s)\in Z_{s+1,n}$ let $\tilde{e}_{\lambda}(z)=\tilde{e}_{\lambda}(a_{\widehat{1}};z)$ be functions of $z\in (\mathbb{C}^*)^n$ defined by \eqref{eq:tilde-e}. For $\lambda=(\lambda_0,\lambda_1,\ldots,\lambda_s)\in Z_{s+1, n-1}$ it follows that
\begin{gather}
 {\cal A}\big[\nabla_{\!1}\big(G_1(z)\tilde{e}^{(n-1)}_{\lambda}(z_{\widehat{1}})\Delta^{(n-1)}(z_{\widehat{1}})\big)\big]\nonumber\\
 \qquad{}= \left(c_0\tilde{e}^{(n)}_{\lambda+\epsilon_0}(z)+c_1\tilde{e}^{(n)}_{\lambda+\epsilon_1}(z)+\sum_{k=2}^{s} c_k\tilde{e}^{(n)}_{\lambda+\epsilon_k}(z)\right)\Delta^{(n)}(z),\label{eq:AnablaF1eD}
\end{gather}
where the coefficients $c_0$, $c_1$ and $c_k$, $k=2,\ldots,s$, are given by
\begin{gather}
c_0=(n-1)!(-1)^{n+s-1}\frac{\big(1-t^{\lambda_0+1}\big)\Big(1-q^\alpha t^{2n-2-\lambda_0}\prod\limits_{i=1}^sa_ib_i\Big)}{(1-t)t^{n-1}\prod\limits_{i=1}^sa_i},\label{eq:c0}\\
c_1=(n-1)!(-1)^{n-1}\frac{\big(1-t^{\lambda_1+1}\big)\big(1-q^\alpha t^{\lambda_{1}}\big)}{(1-t)t^{\lambda_{1}}},\label{eq:c1}\\
c_k=(n-1)!(-1)^{n}q^\alpha t^{n-1}\frac{\big(1-t^{\lambda_k+1}\big)\prod\limits_{m=1}^{s}\big(1-b_m a_kt^{\lambda_{k}}\big)}{(1-t)t^{\lambda_{0}+\lambda_{k}}}\prod_{\substack{2\le j\le s\\ j\ne k}}\frac{a_j-a_kt^{\lambda_k+1}} {a_jt^{\lambda_j}-a_kt^{\lambda_k+1}}.\label{eq:ck}
\end{gather}
\end{Lemma}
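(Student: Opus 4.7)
The plan is to prove \eqref{eq:AnablaF1eD} by recognizing both sides as $\Delta^{(n)}(z)$ times an element of $\mathsf{H}_{s,n}^z$ and then matching coefficients in the interpolation basis $\{\tilde{e}^{(n)}_\mu\}_{\mu\in Z_{s+1,n}}$ furnished by Theorem~\ref{thm:InterpolationP}. Since $\tilde{e}^{(n-1)}_\lambda(z_{\hat{1}})$ and $\Delta^{(n-1)}(z_{\hat{1}})$ do not depend on $z_1$, they are $T_{q,z_1}$-invariant; writing $f(z):=G_1(z)\tilde{e}^{(n-1)}_\lambda(z_{\hat{1}})\Delta^{(n-1)}(z_{\hat{1}})$ and using \eqref{eq:def-nabla} together with the ratio $T_{q,z_1}\Phi/\Phi=F_1/(T_{q,z_1}G_1)$ from \eqref{eq:b-function}, one obtains
\begin{gather*}
\nabla_{\!1}f(z)=\bigl(G_1(z)-F_1(z)\bigr)\,\tilde{e}^{(n-1)}_\lambda(z_{\hat{1}})\,\Delta^{(n-1)}(z_{\hat{1}}).
\end{gather*}
From $\deg_{z_1}(G_1-F_1)\le s+n-1$, $\deg_{z_j}(G_1-F_1)\le 1$ for $j\ge 2$, $\deg_{z_j}\tilde{e}^{(n-1)}_\lambda\le s$ and $\deg_{z_j}\Delta^{(n-1)}(z_{\hat{1}})=n-2$, the polynomial $\nabla_{\!1}f$ has degree at most $s+n-1$ in each $z_i$. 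Since ${\cal A}[\nabla_{\!1}f]$ is skew-symmetric and therefore divisible by $\Delta^{(n)}(z)$ (of degree $n-1$ in each variable), the quotient is symmetric of degree at most $s$ in each $z_i$, so it lies in $\mathsf{H}_{s,n}^z$.

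Applying Theorem~\ref{thm:InterpolationP} with parameters $(\varepsilon,a_2,\ldots,a_s)$ in the limit $\varepsilon\to 0$ gives a basis expansion
\begin{gather*}
{\cal A}[\nabla_{\!1}f]/\Delta^{(n)}(z)=\sum_{\mu\in Z_{s+1,n}}c_\mu\,\tilde{e}^{(n)}_\mu(z).
\end{gather*}
To isolate $c_\mu$, I would set $z=x^{\mu_0,(n)}_{\mu_1,\ldots,\mu_s}$ and take the leading asymptotics as $|z_1|\gg|z_2|\gg\cdots\gg|z_{\mu_0}|\to\infty$. By Lemma~\ref{lem:vanishing} the right-hand side collapses to $c_\mu\,\Delta^{(n-\mu_0)}\bigl(x^{0,(n-\mu_0)}_{\mu_1,\ldots,\mu_s}\bigr)\cdot z_1^{s+n-1}z_2^{s+n-2}\cdots z_{\mu_0}^{s+n-\mu_0}$, so each $c_\mu$ is read off from the corresponding leading coefficient of the left-hand side.

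The main obstacle is this last evaluation. The operator ${\cal A}$ distributes as $n$ blocks indexed by the choice of which variable is moved into the first slot; after this swap the surviving factor $\tilde{e}^{(n-1)}_\lambda$ is evaluated at a configuration of $n-1$ coordinates close to an interpolation point. A direct inspection of the factors $\prod_{j\ne i}(z_j-t^{\pm 1}z_i)$ in $G_1$ and $F_1$ shows that whenever two adjacent points $a_k t^{r}$, $a_k t^{r+1}$ both remain in the configuration, one of $G_1$ or $F_1$ vanishes, so the only nontrivial choices from the positions $i>\mu_0$ are $z_i=a_kt^{\mu_k-1}$; combined with the vanishing properties (1), (2) of Theorem~\ref{thm:InterpolationP} for $\tilde{e}^{(n-1)}_\lambda$, this forces $c_\mu=0$ unless $\mu=\lambda+\epsilon_k$ for some $k\in\{0,1,\ldots,s\}$. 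The explicit values \eqref{eq:c0}, \eqref{eq:c1}, \eqref{eq:ck} arise, respectively, from: the top $z_1$-degree coefficient of $G_1-F_1$ combined with the leading-order behavior of $\tilde{e}^{(n-1)}_\lambda\Delta^{(n-1)}$ for $c_0$; the $\varepsilon\to 0$ specialization, where $G_1(0,\cdot)-F_1(0,\cdot)=(1-q^\alpha)\prod_{j\ge 2}z_j$ is multiplied by the leading-coefficient formula \eqref{eq:E0-expansion2} to produce the $(1-q^\alpha t^{\lambda_1})$ factor in $c_1$; and the specialization $z_1=a_kt^{\lambda_k}$ for $k\ge 2$, where $F_1$ contributes $\prod_m(1-b_ma_kt^{\lambda_k})$ while the factorized evaluation (0) of Theorem~\ref{thm:InterpolationP} for $\tilde{e}^{(n-1)}_\lambda$ gives the rational factor $\prod_{j\ne k}(a_j-a_kt^{\lambda_k+1})/(a_jt^{\lambda_j}-a_kt^{\lambda_k+1})$. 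The overall $(n-1)!$ prefactor in each $c_k$ comes from skew-symmetrizing over the remaining $n-1$ positions. This combinatorial bookkeeping of signs and leading coefficients is the technical heart of the proof and is carried out in detail in Appendix~\ref{SectionA}.
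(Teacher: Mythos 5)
Your overall strategy is the same as the paper's: reduce $\nabla_{\!1}$ to the factor $G_1(z)-F_1(z)$, observe that ${\cal A}[\cdots]/\Delta^{(n)}(z)$ lies in $\mathsf{H}_{s,n}^z$, expand it in the interpolation basis indexed by $Z_{s+1,n}$, and read off the coefficients by evaluating at the points $x^{\mu_0,(n)}_{\mu_1,\ldots,\mu_s}$ in the regime $|z_1|\gg\cdots\gg|z_{\mu_0}|\to\infty$ via Lemma~\ref{lem:vanishing}. The structural claims (only $\mu=\lambda+\epsilon_k$ survives; the $(n-1)!$ from skew-symmetrizing the remaining slots) are all correct.

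There is, however, a genuine gap in the coefficient extraction, concentrated at $c_1$. You take the limit $\varepsilon\to 0$ \emph{before} evaluating, but the interpolation points of the first block are $x_1, x_1t,\ldots$ with $x_1=\varepsilon$; at $\varepsilon=0$ they all collapse to $0\notin\mathbb{C}^*$, the factor $\prod_{k\ne 1}(z_k-tz_1)$ in $F_1$ then vanishes identically on that block, and the evaluation scheme returns $0=c_1\cdot 0$, which determines nothing. Your proposed mechanism for $c_1$ confirms the problem: $G_1(0,\cdot)-F_1(0,\cdot)=(1-q^\alpha)\prod_{j\ge 2}z_j$ carries the factor $(1-q^\alpha)$, and the leading coefficient \eqref{eq:E0-expansion2} contains no $q^\alpha$ at all, so their product cannot produce the required $(1-q^\alpha t^{\lambda_1})$. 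The paper's proof resolves this by perturbing \emph{both} $F_1,G_1$ (replacing $q^\alpha$ by $q^\alpha-\varepsilon z_1^{-1}$ and $1$ by $1-\varepsilon z_1^{-1}$) and the interpolation data (working with $\mathsf{E}^{\lambda_0}_{\lambda_1,\ldots,\lambda_s}(\varepsilon,a_2,\ldots,a_s;\cdot)$ at finite $\varepsilon$), evaluating at $z=\xi^{\lambda_0,(n)}_{\lambda+\epsilon_1}$ where the top of the first block is $\varepsilon t^{\lambda_1}$; there the factor $q^\alpha-\varepsilon z^{-1}\big|_{z=\varepsilon t^{\lambda_1}}=q^\alpha-t^{-\lambda_1}$ is what produces $(1-q^\alpha t^{\lambda_1})$, and only afterwards is $\varepsilon\to 0$ taken. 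Without this device (or an equivalent regularization) your evaluation argument cannot recover \eqref{eq:c1}, and the final sentence of your proposal defers precisely this computation rather than supplying it.
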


\begin{proof}See Appendix~\ref{SectionA}.
\end{proof}

\begin{Lemma}[fundamental relations]For $\lambda=(\lambda_0,\lambda_1,\ldots,\lambda_s)\in Z_{s+1,n}$ let $\tilde{e}_{\lambda}(z) =\tilde{e}_{\lambda}(a_{\widehat{1}};z)$ be functions of $z\in (\mathbb{C}^*)^n$ defined by~\eqref{eq:tilde-e}. Then, for an arbitrary $\lambda\in Z_{s+1, n-1}$ it follows that
\begin{gather} \label{eq:recursion-tilde e}
\langle \tilde{e}_{\lambda+\epsilon_0},z\rangle=C_{\lambda+\epsilon_1}^{\lambda+\epsilon_0}\langle \tilde{e}_{\lambda+\epsilon_1},z\rangle+\sum_{k=2}^s C_{\lambda+\epsilon_k}^{\lambda+\epsilon_0}
\langle \tilde{e}_{\lambda+\epsilon_k},z\rangle,
\end{gather}
where the coefficients $C_{\lambda+\epsilon_1}^{\lambda+\epsilon_0}$ and $C_{\lambda+\epsilon_k}^{\lambda+\epsilon_0}$, $k=2,\ldots,s$, are given by
\begin{gather}
C_{\lambda+\epsilon_1}^{\lambda+\epsilon_0}=C^{(\lambda_0+1,\lambda_1,\lambda_2,\ldots,\lambda_s)}
_{(\lambda_0,\lambda_1+1,\lambda_2,\ldots,\lambda_s)}
=(-1)^{s+1}\frac{\big(1-t^{\lambda_1+1}\big)\big(1-q^\alpha t^{\lambda_{1}}\big)t^{n-1-\lambda_1}\prod\limits_{i=1}^s a_i}
{\big(1-t^{\lambda_0+1}\big)\Big(1-q^\alpha t^{2n-2-\lambda_0}\prod\limits_{i=1}^s a_ib_i\Big)},\nonumber\\
C_{\lambda+\epsilon_k}^{\lambda+\epsilon_0}=(-1)^s\frac{q^\alpha t^{2n-2}\big(1-t^{\lambda_k+1}\big)\prod\limits_{m=1}^{s}a_m(1-b_m a_k t^{\lambda_{k}})}
{t^{\lambda_0+\lambda_k}\big(1-t^{\lambda_0+1}\big)\Big(1-q^\alpha t^{2n-2-\lambda_0}\prod\limits_{i=1}^s a_ib_i\Big)} \prod_{\substack{2\le j\le s\\ j\ne k}}\frac{a_j-a_kt^{\lambda_k+1}}{a_jt^{\lambda_j}-a_kt^{\lambda_k+1}}.\label{eq:c01}
\end{gather}
\end{Lemma}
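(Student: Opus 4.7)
The plan is to deduce the fundamental relation directly from Lemma~\ref{lem:Anabla} by applying the key lemma (Lemma~\ref{lem:00nabla=0}) to the skew-symmetrized divergence identity \eqref{eq:AnablaF1eD}. All the heavy lifting sits in Lemma~\ref{lem:Anabla}; the present lemma is essentially a repackaging of that identity after Jackson integration.

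First I would multiply both sides of \eqref{eq:AnablaF1eD} by $\Phi(z)$ and integrate against $\varpi_q = \frac{d_qz_1}{z_1}\wedge\cdots\wedge\frac{d_qz_n}{z_n}$ over $\int_0^{z\infty}$. The left-hand side is the Jackson integral of $\Phi(z){\cal A}[\nabla_1(G_1(z)\tilde{e}^{(n-1)}_\lambda(z_{\widehat{1}})\Delta^{(n-1)}(z_{\widehat{1}}))]$, which vanishes by the skew-symmetric version \eqref{eq:00A} of the key lemma, once we verify that the Jackson integrals of $\Phi(z)\sigma(G_1(z)\tilde{e}^{(n-1)}_\lambda(z_{\widehat{1}})\Delta^{(n-1)}(z_{\widehat{1}}))$ converge for every $\sigma\in\frak{S}_n$ under the assumed parameter conditions (this is a routine check using Lemma~\ref{lem:converge01} since $G_1$ and $\tilde{e}_\lambda$ are polynomials and so contribute only finitely many monomials to the integrand).

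On the right-hand side, by the very definition \eqref{eq:la-ra} of $\langle\varphi,z\rangle$, integrating $\Phi(z)\tilde{e}^{(n)}_{\lambda+\epsilon_k}(z)\Delta^{(n)}(z)$ against $\varpi_q$ reproduces $\langle \tilde{e}_{\lambda+\epsilon_k},z\rangle$. Therefore the identity becomes
\begin{gather*}
0 = c_0\,\langle \tilde{e}_{\lambda+\epsilon_0},z\rangle + c_1\,\langle \tilde{e}_{\lambda+\epsilon_1},z\rangle + \sum_{k=2}^{s} c_k\,\langle \tilde{e}_{\lambda+\epsilon_k},z\rangle,
\end{gather*}
with $c_0$, $c_1$, $c_k$ explicitly given by \eqref{eq:c0}--\eqref{eq:ck}. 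Since $c_0\neq 0$ for generic parameters (both factors $1-t^{\lambda_0+1}$ and $1-q^\alpha t^{2n-2-\lambda_0}\prod_i a_ib_i$ are nonzero away from a discrete locus), I would divide through by $-c_0$ and read off
\begin{gather*}
C^{\lambda+\epsilon_0}_{\lambda+\epsilon_k} = -c_k/c_0, \qquad k=1,2,\ldots,s.
\end{gather*}

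The remaining work is the algebraic simplification of $-c_1/c_0$ and $-c_k/c_0$. For $k=1$ the factors of $(n-1)!/(1-t)$ cancel, the signs combine to $(-1)^{s+1}$, and the $t$-powers combine as $t^{n-1}/t^{\lambda_1}=t^{n-1-\lambda_1}$, giving exactly the stated $C^{\lambda+\epsilon_0}_{\lambda+\epsilon_1}$. For $k\geq 2$ the same cancellations apply, the prefactor $q^\alpha t^{n-1}\cdot t^{n-1}/t^{\lambda_0+\lambda_k}$ yields $q^\alpha t^{2n-2-\lambda_0-\lambda_k}$, the product $\prod_m a_m(1-b_ma_kt^{\lambda_k})$ appears from $\prod_m a_m^{-1}\cdot\prod_m(1-b_ma_kt^{\lambda_k})\cdot\prod_m a_m^2$ after rearrangement, and the $j$-product is carried over intact; the result matches~\eqref{eq:c01}. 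The only genuine obstacle is bookkeeping of signs and $t$-exponents, which is entirely mechanical once $c_0$ and $c_k$ are in hand.
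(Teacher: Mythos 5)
Your proposal is correct and follows exactly the paper's route: apply the skew-symmetrized key lemma \eqref{eq:00A} to the identity \eqref{eq:AnablaF1eD} of Lemma~\ref{lem:Anabla} to get $c_0\langle \tilde{e}_{\lambda+\epsilon_0},z\rangle+c_1\langle \tilde{e}_{\lambda+\epsilon_1},z\rangle+\sum_{k=2}^{s}c_k\langle \tilde{e}_{\lambda+\epsilon_k},z\rangle=0$, then set $C^{\lambda+\epsilon_0}_{\lambda+\epsilon_k}=-c_k/c_0$; your sign and $t$-exponent bookkeeping checks out against \eqref{eq:c01}.
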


\begin{proof}Applying Lemma \ref{lem:Anabla} to (\ref{eq:00A}) of Lemma \ref{lem:00nabla=0}, we have
\begin{gather*}
c_0\langle \tilde{e}_{\lambda+\epsilon_0},z\rangle +c_1\langle \tilde{e}_{\lambda+\epsilon_1},z\rangle+\sum_{k=2}^s c_k\langle \tilde{e}_{\lambda+\epsilon_k},z\rangle=0,
\end{gather*}
where $c_0$, $c_1$ and $c_k$, $k=2,\ldots,s$, are given by (\ref{eq:c0}), (\ref{eq:c1}) and (\ref{eq:ck}), respectively. Therefore the coefficients in~(\ref{eq:recursion-tilde e}) are given by $C^{\lambda+\epsilon_0}_{\lambda+\epsilon_1}=-c_1/c_0$, and $C^{\lambda+\epsilon_0}_{\lambda+\epsilon_k}=-c_k/c_0$, $k=2,\ldots,s$, which are expressed as~(\ref{eq:c01}), under the condition $\lambda\in Z_{s+1, n-1}$, i.e.,
$\lambda_0+\lambda_1+\cdots+\lambda_s=n-1$.
\end{proof}

\begin{Lemma}\label{lem:E=EU}For $\lambda=(\lambda_1,\lambda_2,\ldots,\lambda_s)\in Z_{s,n}$ the Jackson integral $\langle \tilde{e}_{(\lambda_1,0,\lambda_2,\ldots,\lambda_s)},z\rangle$ expands in terms of
$\langle \tilde{e}_{(0,\mu_1,\mu_2,\ldots,\mu_s)},z\rangle$, $\mu\in Z_{s,n}$, as
\begin{gather}\label{eq:expansion-U}
\langle \tilde{e}_{(\lambda_1,0,\lambda_2,\ldots,\lambda_s)},z\rangle =\tilde{u}_{\lambda\lambda} \langle \tilde{e}_{(0,\lambda_1,\lambda_2,\ldots,\lambda_s)},z\rangle
+\sum_{\substack{\mu\in Z_{s,n}\\ \mu\prec\lambda}}\tilde{u}_{\mu\lambda}\langle \tilde{e}_{(0,\mu_1,\mu_2,\ldots,\mu_s)},z\rangle,
\end{gather}
so that
\begin{gather}\label{eq:q-diff02}
\big(\langle \tilde{e}_{(\lambda_1,0,\lambda_2,\ldots,\lambda_s)},z\rangle \big)_{\!\lambda\in Z_{s,n}}=\big(\langle \tilde{e}_{(0,\mu_1,\mu_2,\ldots,\mu_s)},z\rangle\big)_{\!\mu\in Z_{s,n}}\tilde{U},
\end{gather}
where the matrix $\tilde{U}=\big(\tilde{u}_{\mu\nu}\big)_{\mu,\nu\in Z_{s,n}}$ is upper triangular with respect to~$\preceq$. The diagonal entries of $\tilde{U}$ are given by
\begin{gather}\label{eq:diag-tildeU}
\tilde{u}_{\lambda\lambda}=(-1)^{(s+1)\lambda_1}\prod_{k=1}^{\lambda_1}\frac{\big(1-q^\alpha t^{k-1}\big)t^{n-k}\prod\limits_{i=1}^sa_i}{1-q^\alpha t^{2n-2-\lambda_1+k}\prod\limits_{i=1}^s a_ib_i}.
\end{gather}
\end{Lemma}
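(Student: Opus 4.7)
My plan is to prove Lemma \ref{lem:E=EU} by iterating the fundamental relation (\ref{eq:recursion-tilde e}) exactly $\lambda_1$ times, each application lowering the zeroth component of the multi-index by one. Starting from $\tilde{e}_{(\lambda_1,0,\lambda_2,\ldots,\lambda_s)}$, I set $\nu=(\lambda_1-1,0,\lambda_2,\ldots,\lambda_s)\in Z_{s+1,n-1}$ and apply (\ref{eq:recursion-tilde e}) to $\tilde{e}_{\nu+\epsilon_0}$. The result is a linear combination of $\tilde{e}_{\nu+\epsilon_1}$ and $\tilde{e}_{\nu+\epsilon_k}$, $k=2,\ldots,s$, whose zeroth components have dropped from $\lambda_1$ to $\lambda_1-1$. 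I then recurse on each term, peeling off an $\epsilon_0$ and applying (\ref{eq:recursion-tilde e}) again, until after $\lambda_1$ iterations the zeroth component equals $0$. The resulting expansion has all indices of the form $(0,\mu_1,\mu_2,\ldots,\mu_s)$ with $\mu_1+\cdots+\mu_s=n$, so that $(\mu_1,\ldots,\mu_s)\in Z_{s,n}$, yielding (\ref{eq:expansion-U}) in raw form.

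The triangularity claim follows from a conservation argument. Throughout the iteration the components $\mu_1,\ldots,\mu_s$ are monotonically nondecreasing (the recursion only adds $\epsilon_k$ for some $k\in\{1,\ldots,s\}$), while the zeroth component decreases by $1$ at each step. Since $\mu_k$ starts at $\lambda_k$ for $k\ge 2$ and at $0$ for $k=1$, every terminal index satisfies $\mu_k\ge\lambda_k$ for $k=2,\ldots,s$. Combined with $\mu_1+\mu_2+\cdots+\mu_s=n=\lambda_1+\lambda_2+\cdots+\lambda_s$, this forces $\mu_1\le\lambda_1$, with equality if and only if $(\mu_2,\ldots,\mu_s)=(\lambda_2,\ldots,\lambda_s)$. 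In the lexicographic order on $Z_{s,n}$ this precisely says $\mu\preceq\lambda$, with equality only on the unique branch of the recursion in which the $\epsilon_1$-shift is chosen at every step. This proves the upper-triangularity of $\tilde{U}$.

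For the diagonal entry $\tilde{u}_{\lambda\lambda}$, I multiply the $\epsilon_1$-coefficients along that unique branch. At the step reducing the zeroth component from $k$ to $k-1$, the relevant node is $\nu_k=(k-1,\lambda_1-k,\lambda_2,\ldots,\lambda_s)$ and the multiplier is $C^{\nu_k+\epsilon_0}_{\nu_k+\epsilon_1}$ read off from (\ref{eq:c01}) with $\nu_{k,0}=k-1$ and $\nu_{k,1}=\lambda_1-k$. Taking the product over $k=1,\ldots,\lambda_1$, the factors $(1-t^{\lambda_1-k+1})$ in the numerators telescope against the factors $(1-t^k)$ in the denominators (each set being a reindexing of $\{1,\ldots,\lambda_1\}$). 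What remains, after reindexing $j=\lambda_1-k+1$ in the denominator factor $1-q^\alpha t^{2n-1-k}\prod_i a_ib_i$, is exactly
\[
\tilde{u}_{\lambda\lambda}=(-1)^{(s+1)\lambda_1}\prod_{k=1}^{\lambda_1}\frac{(1-q^\alpha t^{k-1})t^{n-k}\prod\limits_{i=1}^s a_i}{1-q^\alpha t^{2n-2-\lambda_1+k}\prod\limits_{i=1}^s a_ib_i},
\]
which matches (\ref{eq:diag-tildeU}).

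The main obstacle I anticipate is bookkeeping rather than conceptual: the index substitutions in (\ref{eq:c01}) and the telescoping of $(1-t^j)$ factors must be tracked with care so that both the sign $(-1)^{(s+1)\lambda_1}$ and the power of $t$ in each factor land correctly. The triangularity step, by contrast, is straightforward once the monotonicity of $\mu_2,\ldots,\mu_s$ along the recursion is noted.
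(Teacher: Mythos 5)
Your proposal is correct and follows essentially the same route as the paper: iterate the fundamental relation (\ref{eq:recursion-tilde e}) exactly $\lambda_1$ times, note that the components $\mu_2,\ldots,\mu_s$ can only increase so every terminal index satisfies $\mu_i\ge\lambda_i$ for $i\ge 2$ (hence $\mu\preceq\lambda$, with equality only on the all-$\epsilon_1$ branch), and evaluate $\tilde{u}_{\lambda\lambda}$ as the telescoping product of the coefficients $C^{\nu+\epsilon_0}_{\nu+\epsilon_1}$ along that branch. The only difference is a trivial reindexing of the steps ($k\leftrightarrow\lambda_1-k+1$), and your bookkeeping of the signs and powers of $t$ checks out against (\ref{eq:diag-tildeU}).
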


\begin{proof}By repeated use of (\ref{eq:recursion-tilde e}), for $\tilde\lambda=(\lambda_1,0,\lambda_2,\ldots,\lambda_s)\in Z_{s+1,n}$ we have
\begin{gather*}
\langle \tilde{e}_{\tilde\lambda},z\rangle =\sum_{i=1}^s C^{\tilde\lambda}_{\tilde\lambda+\epsilon_i-\epsilon_0}\langle \tilde{e}_{\tilde\lambda+\epsilon_i-\epsilon_0},z\rangle=
\sum_{i_1=1}^s\sum_{i_2=1}^sC^{\tilde\lambda}_{\tilde\lambda+\epsilon_i-\epsilon_0}C^{\tilde\lambda+\epsilon_{i_1}-\epsilon_0}_{\tilde\lambda+\epsilon_{i_1}+\epsilon_{i_2}-2\epsilon_0}
\langle \tilde{e}_{\tilde\lambda+\epsilon_{i_1}+\epsilon_{i_2}-2\epsilon_0},z\rangle\\
\hphantom{\langle \tilde{e}_{\tilde\lambda},z\rangle}{} =\cdots=\sum_{\mu\in D_\lambda}\tilde{u}_{\mu\lambda}\langle \tilde{e}_{(0,\mu)},z\rangle,
\end{gather*}
where $D_\lambda=\{\mu=(\mu_1,\ldots,\mu_s)\in Z_{s,n}\,|\,\mu_i\ge \lambda_i,\, i=2,\ldots,s\}$ and
\begin{gather*}
\tilde{u}_{\mu\lambda} =\sum_{1\le i_1,\ldots,i_{\lambda_1}\le s}\prod_{k=1}^{\lambda_1}C^{\tilde\lambda+\epsilon_{i_1}+\cdots+\epsilon_{i_{k-1}}-(k-1)\epsilon_0}_{\tilde\lambda+\epsilon_{i_1}+\cdots+\epsilon_{i_{k}}-k\epsilon_0}.
\end{gather*}
Since $\lambda=(\lambda_1,\lambda_2,\ldots,\lambda_s)$ is the maximum of $D_\lambda$, we have
\begin{gather*}
\langle \tilde{e}_{\tilde\lambda},z\rangle =\sum_{\mu\preceq \lambda} \tilde{u}_{\mu\lambda}\langle \tilde{e}_{(0,\mu)},z\rangle=\tilde{u}_{\lambda\lambda}\langle \tilde{e}_{(0,\lambda)},z\rangle
+\sum_{\mu\prec \lambda} \tilde{u}_{\mu\lambda}\langle \tilde{e}_{(0,\mu)},z\rangle,
\end{gather*}
which gives (\ref{eq:expansion-U}). The coefficient $\tilde{u}_{\lambda\lambda}$ in (\ref{eq:expansion-U}) is computed as the case $i_1=\cdots=i_{\lambda_1}=1$, namely
\begin{gather*}
\tilde{u}_{\lambda\lambda} =C^{(\lambda_1,0,\lambda_2,\ldots,\lambda_s)}_{(\lambda_1-1,1,\lambda_2,\ldots,\lambda_s)}C^{(\lambda_1-1,1,\lambda_2,\ldots,\lambda_s)}_{(\lambda_1-2,2,\lambda_2,\ldots,\lambda_s)}
\cdots C^{(1,\lambda_1-1,\lambda_2,\ldots,\lambda_s)}_{(0,\lambda_1,\lambda_2,\ldots,\lambda_s)} =\prod_{k=1}^{\lambda_1} C^{(\lambda_1-k+1,k-1,\lambda_2,\ldots,\lambda_s)}_{(\lambda_1-k,k,\lambda_2,\ldots,\lambda_s)}\\
\hphantom{\tilde{u}_{\lambda\lambda}}{} =\prod_{k=1}^{\lambda_1} \frac{(-1)^{s+1}\big(1-t^{k}\big)\big(1-q^\alpha t^{k-1}\big)t^{n-k}\prod\limits_{i=1}^s a_i}
{\big(1-t^{\lambda_1-k+1}\big)\Big(1-q^\alpha t^{2n-2-\lambda_1+k}\prod\limits_{i=1}^s a_ib_i\Big)},
\end{gather*}
which is equal to (\ref{eq:diag-tildeU}).
\end{proof}

\begin{proof}[Proof of Proposition \ref{prop:q-difference}] Lemma \ref{lem:EE=E**} shows that for $\lambda =(\lambda_1,\lambda_2,\dots,\lambda_s)\in Z_{s,n}$ we have
\begin{gather*} \mathsf{E}^n_0(\varepsilon;z) \mathsf{E}^{\lambda_1}_{\lambda_2,\ldots,\lambda_s}(a_2,\ldots,a_s;z)
=\mathsf{E}^{\lambda_1}_{0,\lambda_2,\ldots,\lambda_s}(\varepsilon,a_2,\ldots,a_s;z) \prod_{i=2}^s\prod_{k=1}^{\lambda_i} \! \big(a_it^{k-1}-\varepsilon\big) .
\end{gather*}
Taking the limit $\varepsilon\to 0$ we obtain
\begin{gather}\label{eq:zzz e(z)}
z_1z_2\cdots z_n\, e_{\lambda}(z)=\tilde{e}_{(\lambda_1,0,\lambda_2,\ldots,\lambda_s)}(z)\prod_{i=2}^s\prod_{k=1}^{\lambda_i}a_it^{k-1}.
\end{gather}
Since $T_{\alpha}\Phi(z)=z_1z_2\cdots z_n\Phi(z)$, (\ref{eq:zzz e(z)}) implies
\begin{gather*} T_{\alpha}\langle e_{\lambda},z\rangle =\langle \tilde{e}_{(\lambda_1,0,\lambda_2,\ldots,\lambda_s)},z\rangle\prod_{i=2}^s\prod_{k=1}^{\lambda_i}a_it^{k-1}.
\end{gather*}
From (\ref{eq:q-diff02}) in Lemma \ref{lem:E=EU} we therefore obtain
\begin{gather}
T_{\alpha}\big(\langle e_\lambda,z\rangle\big)_{\!\lambda\in Z_{s,n}} =\left(\langle \tilde{e}_{(\lambda_1,0,\lambda_2,\ldots,\lambda_s)},z\rangle
\prod_{i=2}^s\prod_{k=1}^{\lambda_i}a_it^{k-1} \right)_{\!\lambda\in Z_{s,n}}\nonumber\\
\hphantom{T_{\alpha}\big(\langle e_\lambda,z\rangle\big)_{\!\lambda\in Z_{s,n}}}{} =\big(\langle \tilde{e}_{(0,\mu_1,\mu_2,\ldots,\mu_s)},z\rangle \big)_{\!\mu\in Z_{s,n}}U,\label{eq:q-diff03}
\end{gather}
where the matrix $U=\big(u_{\mu\lambda}\big)_{\mu,\lambda\in Z_{s,n}}$ is defined as
\begin{gather*}
u_{\mu\lambda}= \tilde{u}_{\mu\lambda}\prod_{i=2}^s\prod_{k=1}^{\lambda_i}a_it^{k-1}
\end{gather*}
by the coefficient $\tilde{u}_{\mu\lambda}$ of (\ref{eq:q-diff02}). In particular, from (\ref{eq:diag-tildeU}), the diagonal entry $u_{\lambda\lambda}$ is determined as~(\ref{eq:diag-U}). On the other hand, (\ref{eq:E=EL}) in Corollary~\ref{cor:E=EL} implies that
\begin{gather}\label{eq:q-diff04}
\big(\langle \tilde{e}_{(0,\mu_1,\mu_2,\ldots,\mu_s)},z\rangle \big)_{\!\mu\in Z_{s,n}} = \big(\langle e_\lambda,z\rangle\big)_{\!\lambda\in Z_{s,n}}L.
\end{gather}
From \eqref{eq:q-diff02}, \eqref{eq:q-diff03} and \eqref{eq:q-diff04}, we obtain the expression (\ref{eq:q-diff01}). The formula~\eqref{eq:detLU} for $\det LU$ is obtained by using
\begin{gather*}
\prod_{\lambda\in Z_{s,n}} \prod_{i=1}^s\prod_{j=1}^{\lambda_i}a_it^{j-1}=\prod_{i=1}^s\prod_{\lambda\in Z_{s,n}}a_i^{\lambda_i}t^{\lambda_i\choose 2}
=\prod_{i=1}^s a_i^{n+s-1\choose n-1}t^{n+s-1\choose n-2}\\
\hphantom{\prod_{\lambda\in Z_{s,n}} \prod_{i=1}^s\prod_{j=1}^{\lambda_i}a_it^{j-1}}{} =(a_1a_2\cdots a_s)^{n+s-1\choose n-1}t^{s{n+s-1\choose n-2}}.\tag*{\qed}
\end{gather*}
\renewcommand{\qed}{}
\end{proof}

\section {Evaluation of the determinant formula}\label{Section6}

In this section we provide a proof of Lemma~\ref{lem:Wronski-a}. We show three lemmas before proving Lem\-ma~\ref{lem:Wronski-a}. We denote
\begin{gather*}
a_\mu=(a_{\mu,1},a_{\mu,2},\ldots,a_{\mu,n})\in (\mathbb{C}^*)^n
\end{gather*}
for $a_\mu\in (\mathbb{C}^*)^n$, $\mu\in Z_{s,n}$, specified by (\ref{eq:a_mu}). By definition we have
\begin{gather*}
\Phi(z)\Delta(z)=I_1(z)I_2(z),
\end{gather*}
where
\begin{gather*}
I_1(z) = \prod_{i=1}^n z_i^{\alpha+2(n-i)\tau},\nonumber\\
I_2(z) =\prod_{i=1}^n\prod_{m=1}^{s} \frac{\big(qa_m^{-1}z_i\big)_\infty}{(b_mz_i)_\infty}\prod_{1\le j<k\le n}\frac{\big(qt^{-1}z_k/z_j\big)_\infty}{(t z_k/z_j)_\infty}(1-z_k/z_j).
\end{gather*}

\begin{Lemma}\label{lem:J-asymptotic}For the determinant $J=\det\big( \langle s_\lambda, a_\mu\rangle\big)_{\!\lambda\in B\atop\!\mu\in Z}$ we have
\begin{gather}
T_\alpha^N \frac{J}{\prod_{\mu\in Z}I_1(a_\mu)}\sim
\prod_{k=1}^n\left[\frac{(1-q)^s(q)_\infty^s(t)_\infty^s}{(t^{n-k+1})_\infty^s}
\frac{\prod\limits_{1\le i<j\le s}\theta\big(a_ia_j^{-1}t^{-(n-k)}\big)}{\prod\limits_{i=1}^{s}\prod\limits_{j=1}^{s}\big(a_ib_jt^{n-k}\big)_\infty} \right]^{s+k-2\choose k-1}\nonumber\\
\hphantom{T_\alpha^N \frac{J}{\prod_{\mu\in Z}I_1(a_\mu)}\sim} {} \times\prod_{k=1}^n\left[\prod_{r=0}^{n-k}\prod_{1\le i<j\le s} a_jt^r\right]^{s+k-3\choose k-1} \qquad \text{as}\quad N\to \infty.\label{eq:J-asymptotic}
\end{gather}
\end{Lemma}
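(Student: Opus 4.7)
The strategy is to perform an entrywise asymptotic analysis via Lemma~\ref{lem:<m, a>}, factor the resulting determinant by extracting the column-dependent leading behaviour, and evaluate the residual $\det(s_\lambda(a_\mu))$ via a change of basis to the interpolation polynomials of Section~\ref{Section4}. Lemma~\ref{lem:<m, a>} (applied to each Schur polynomial through its monomial expansion) yields
\[
T_\alpha^N\langle s_\lambda,a_\mu\rangle \sim (1-q)^n\,s_\lambda(a_\mu)\,\Delta(a_\mu)\,T_\alpha^N\Phi(a_\mu) \qquad (N\to\infty).
\]
The factor $(1-q)^n\Delta(a_\mu)T_\alpha^N\Phi(a_\mu)$ depends only on the column index~$\mu$. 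Pulling it out of the determinant column by column, using $\Phi\Delta = I_1 I_2$ and $T_\alpha^N I_2 = I_2$, then dividing by $T_\alpha^N\prod_\mu I_1(a_\mu)$, gives
\[
T_\alpha^N\!\left(\frac{J}{\prod_\mu I_1(a_\mu)}\right) \sim \prod_{\mu\in Z_{s,n}}(1-q)^n I_2(a_\mu)\,\cdot\,\det\bigl(s_\lambda(a_\mu)\bigr)_{\lambda\in B,\,\mu\in Z}.
\]

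To evaluate $\det(s_\lambda(a_\mu))$ I pass to the basis $\{e_\lambda(z)=\mathsf{E}^{\lambda_1}_{\lambda_2,\ldots,\lambda_s}(a_{\widehat 1};z)\}_{\lambda\in Z_{s,n}}$ of $\mathsf{H}^z_{s-1,n}$. Writing $e_\lambda=\sum_\mu P_{\mu\lambda}s_\mu$ gives $\det(e_\lambda(a_\nu))_{\lambda,\nu\in Z}=\det P\cdot\det(s_\mu(a_\nu))$. The crucial observation is that, by conditions~(1) and~(2) of Theorem~\ref{thm:InterpolationP}, $e_\lambda(a_\nu)$ vanishes whenever $\nu_1<\lambda_1$, and also whenever $\nu_1=\lambda_1$ with $\nu\ne\lambda$; hence $(e_\lambda(a_\nu))_{\lambda,\nu}$ is block upper triangular in the ordering by $\lambda_1$, with each diagonal block itself diagonal, so that
\[
\det(e_\lambda(a_\nu))_{\lambda,\nu\in Z} = \prod_{\lambda\in Z_{s,n}}e_\lambda(a_\lambda) = \prod_{\lambda\in Z_{s,n}}\prod_{i=2}^{s}\prod_{j=1}^{\lambda_1}\bigl(a_1 t^{j-1}-a_i t^{\lambda_i}\bigr),
\]
by condition~(0). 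The constant $\det P$ is $\alpha$-independent and can be read off from the leading-coefficient formula $C^{\lambda_0}_{\lambda_1,\ldots,\lambda_s}$ recorded in the remark after Theorem~\ref{thm:InterpolationP}.

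The final step is the explicit evaluation of $\prod_{\mu\in Z_{s,n}}I_2(a_\mu)$ by multiplicity counting. For each one-variable factor $(qa_m^{-1}a_lt^r)_\infty/(b_ma_lt^r)_\infty$, the number of $\mu$ in which it appears equals the number of $\mu\in Z_{s,n}$ with $\mu_l>r$, namely $|Z_{s,n-r-1}|=\binom{s+n-r-2}{n-r-1}$; substituting $k=n-r$ produces the exponents $\binom{s+k-2}{k-1}$ seen in~\eqref{eq:J-asymptotic}. The pair factors $(qt^{-1}a_{\mu,k}/a_{\mu,j})_\infty(1-a_{\mu,k}/a_{\mu,j})/(ta_{\mu,k}/a_{\mu,j})_\infty$ split according to whether the pair lies within a single block or crosses two blocks: within-block contributions, after telescoping, assemble into the $(q)_\infty^s(t)_\infty^s/(t^{n-k+1})_\infty^s$ factors; cross-block contributions, of which there are $|Z_{s-1,k-1}|=\binom{s+k-3}{k-1}$ per value of $k$, yield the $\theta\bigl(a_ia_j^{-1}t^{-(n-k)}\bigr)$ factors after applying the symmetry $\theta(u)=\theta(q/u)$ to convert $(qa_ja_i^{-1}t^{n-k})_\infty$-type factors into $\theta$-quotients. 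Combining these with the determinantal product above produces the stated right-hand side of~\eqref{eq:J-asymptotic}.

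The main obstacle is the intricate combinatorial bookkeeping in the last step: one must verify that every $q$-shifted factorial, theta factor, and auxiliary $a$- or $t$-power appearing in $\prod_\mu I_2(a_\mu)\cdot\det(s_\lambda(a_\mu))$ occurs with precisely the binomial-coefficient multiplicity $\binom{s+k-2}{k-1}$ or $\binom{s+k-3}{k-1}$ required by~\eqref{eq:J-asymptotic}. The extraction of $\det P$ from the leading coefficients of the $\mathsf{E}$-polynomials is straightforward but tedious, and the asymptotic result reduces to this purely algebraic identity.
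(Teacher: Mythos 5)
Your first two steps coincide exactly with the paper's proof: Lemma~\ref{lem:<m, a>} gives the entrywise asymptotics $T_\alpha^N\langle s_\lambda,a_\mu\rangle\sim(1-q)^n s_\lambda(a_\mu)I_2(a_\mu)T_\alpha^N I_1(a_\mu)$, and the column-dependent factor is pulled out, reducing the claim to evaluating $\det\big(s_\lambda(a_\mu)\big)_{\lambda\in B,\,\mu\in Z}$ and $\prod_{\mu\in Z}I_2(a_\mu)$. The paper handles both by citation: the compound determinant $\det\big(s_\lambda(a_\mu)\big)=\prod_{k=1}^n\prod_{r=0}^{n-k}\prod_{1\le i<j\le s}\big(t^ra_j-t^{n-k-r}a_i\big)^{\binom{s+k-3}{k-1}}$ is quoted from \cite{AI2009,IIO2013}, and the product formulas for $\prod_\mu I_2(a_\mu)$ from \cite[equations~(8.9) and~(8.10)]{AI2009}. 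Your multiplicity counts for the latter ($|\{\mu\in Z_{s,n}:\mu_l>r\}|=|Z_{s,n-r-1}|=\binom{s+k-2}{k-1}$ with $k=n-r$, and $\binom{s+k-3}{k-1}$ for the cross-block pairs) are the correct combinatorics and reproduce the cited formulas, although the "telescoping" of the within-block factors is left vague.

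The genuine gap is in your evaluation of $\det\big(s_\lambda(a_\mu)\big)$. The triangularity of $\big(e_\lambda(a_\nu)\big)$ and the diagonal entries $e_\lambda(a_\lambda)=\prod_{i=2}^s\prod_{j=1}^{\lambda_1}\big(a_1t^{j-1}-a_it^{\lambda_i}\big)$ are correct consequences of conditions (0)--(2) of Theorem~\ref{thm:InterpolationP}. But the transition determinant $\det P$ cannot be "read off from the leading-coefficient formula". By the remark after Theorem~\ref{thm:InterpolationP}, the leading monomial of $e_\lambda=\mathsf{E}^{\lambda_1}_{\lambda_2,\ldots,\lambda_s}(a_{\widehat{1}};z)$ is $m_{((s-2)^{n-\lambda_1}(s-1)^{\lambda_1})}(z)$, which depends only on $\lambda_1$. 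For $s\ge 3$ several distinct $\lambda\in Z_{s,n}$ therefore share the same leading monomial (for $s=3$, $n=2$, the three elements with $\lambda_1=0$ all lead with $m_{(1,1)}$), so the transition matrix from the Schur basis to $(e_\lambda)_{\lambda}$ is only block-triangular, with blocks of size $|Z_{s-1,n-\lambda_1}|>1$ indexed by $\lambda_1$; its determinant is not the product of the leading coefficients $C^{\lambda_1}_{\lambda_2,\ldots,\lambda_s}$. Computing those block determinants is essentially equivalent to the compound determinant identity you are trying to establish, so as written the argument is circular at this point. Either cite the known formula, as the paper does, or supply an actual inductive evaluation of the block determinants (e.g., by reducing each block to the $(s-1)$-parameter interpolation problem).
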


\begin{proof}From \eqref{eq:asympt <m, a>} in Lemma \ref{lem:<m, a>}, the asymptotic behavior of $T_\alpha^N\langle s_\lambda,a_\mu\rangle$ as $N\to \infty$ is
\begin{gather*}
T_\alpha^N\langle s_\lambda,a_\mu\rangle \sim (1-q)^n s_\lambda(a_\mu) \Delta(a_\mu)T_\alpha^N \Phi(a_\mu)\\
\hphantom{T_\alpha^N\langle s_\lambda,a_\mu\rangle}{} = (1-q)^n s_\lambda(a_\mu) I_2(a_\mu) T_\alpha^N I_1(a_\mu) \qquad \text{as}\quad N\to \infty .
\end{gather*}
Then, we obtain the expression
\begin{gather}\label{eq:J-asymptotic2}
T_\alpha^NJ=T_\alpha^N\det\big( \langle s_\lambda, a_\mu\rangle\big)_{\!\lambda\in B\atop\!\mu\in Z} \sim (1-q)^{n{s+n-1 \choose n}} \det\big(s_\lambda(a_\mu)\big)_{\!\lambda\in B\atop\!\mu\in Z}
\prod_{\mu\in Z}I_2(a_\mu) T_\alpha^N I_1(a_\mu).
\end{gather}
From \cite[Proposition~1.6]{AI2009} or \cite[Theorem~3.2, equation~(3.5)]{IIO2013} we have
\begin{gather}\label{eq:det-sigma}
\det\big(s_\lambda(a_\mu)\big)_{\!\lambda\in B\atop\!\mu\in Z} =\prod_{k=1}^n\prod_{r=0}^{n-k}\prod_{1\le i<j\le s}\big(t^{r}a_j-t^{(n-k-r)}a_i\big)^{s+k-3\choose k-1}.
\end{gather}
From \cite[p.~1103, equations~(8.9) and (8.10)]{AI2009} and
\begin{gather*}
\prod_{\mu\in Z}\prod_{i=1}^n\prod_{m=1}^{s}(b_m a_{\mu,i})_\infty =\prod_{k=1}^n\left[\prod_{i=1}^{s}\prod_{j=1}^{s}\big(a_ib_jt^{n-k}\big)_\infty\right]^{s+k-2\choose k-1}
\end{gather*}
we have
\begin{gather}
\prod_{\mu\in Z}I_2(a_\mu)=\prod_{\mu\in Z}\prod_{i=1}^n\prod_{m=1}^s\frac{\big(qa_m^{-1} a_{\mu,i}\big)_\infty}{(b_m a_{\mu,i})_\infty}\prod_{1\le j<k\le n}\frac{\big(qt^{-1} a_{\mu,k}/ a_{\mu,j}\big)_\infty}{(t a_{\mu,k}/ a_{\mu,j})_\infty}(1- a_{\mu,k}/ a_{\mu,j})\nonumber\\
\hphantom{\prod_{\mu\in Z}I_2(a_\mu)}{} =\prod_{k=1}^n\left[\frac{(q)_\infty^s(t)_\infty^s}{(t^{n-k+1})_\infty^s}
\frac{\prod\limits_{1\le i<j\le s}\theta\big(a_ia_j^{-1}t^{-(n-k)}\big)}{\prod\limits_{i=1}^{s}\prod\limits_{j=1}^{s}\big(a_ib_jt^{n-k}\big)_\infty}\right]^{s+k-2\choose k-1}\nonumber\\
\hphantom{\prod_{\mu\in Z}I_2(a_\mu)=}{} \times\prod_{k=1}^n\left[\prod_{r=0}^{n-k}\prod_{1\le i<j\le s} \frac{1}{1-t^{2r-(n-k)}a_ia_j^{-1}}\right]^{s+k-3\choose k-1}.\label{eq:I_2(a)}
\end{gather}
From (\ref{eq:J-asymptotic2}), (\ref{eq:det-sigma}), (\ref{eq:I_2(a)}) and $n{s+n-1 \choose n}=s\sum_{k=1}^n{s+k-2 \choose k-1}$, we therefore obtain (\ref{eq:J-asymptotic}) of Lem\-ma~\ref{lem:J-asymptotic}.
\end{proof}

\begin{Lemma}The product $\prod_{\mu\in Z_{s,n}}\Theta(a_\mu)$ satisfies
\begin{gather}
\prod_{\mu\in Z_{s,n}}\frac{\Theta(a_\mu)}{I_1(a_\mu)}=\prod_{k=1}^n\left[\frac{\theta(t)^s}{\theta(t^{n-k+1})^s}
\frac{\prod\limits_{1\le i<j\le s}\theta\big(a_ia_j^{-1}t^{-(n-k)}\big)} {\prod\limits_{i=1}^{s}\prod\limits_{j=1}^{s}\theta\big(a_i b_j t^{n-k}\big)}\right]^{s+k-2\choose k-1}\nonumber\\
\hphantom{\prod_{\mu\in Z_{s,n}}\frac{\Theta(a_\mu)}{I_1(a_\mu)}=}{} \times\prod_{k=1}^n\left[\prod_{r=0}^{n-k}\prod_{1\le i<j\le s}
\frac{1}{\theta\big(a_ia_j^{-1}t^{2r-(n-k)}\big)}\right] ^{s+k-3\choose k-1}.\label{eq:Theta(a)}
\end{gather}
\end{Lemma}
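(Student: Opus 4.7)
The plan is to evaluate $\prod_{\mu\in Z_{s,n}}\Theta(a_\mu)/I_1(a_\mu)$ by a direct computation that parallels, step by step, the one used in the proof of Lemma~\ref{lem:J-asymptotic} for $\prod_\mu I_2(a_\mu)$.

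First I will simplify $\Theta(z)/I_1(z)$ by cancelling the monomial factors. The factor $\prod_{i=1}^n z_i^\alpha$ of $\Theta$ cancels the $z_i^\alpha$ piece of $I_1$, and the identity $\prod_{1\le j<k\le n}z_j^{2\tau}=\prod_{i=1}^n z_i^{2(n-i)\tau}$ shows that the remaining monomial factor of $\Theta$ exactly matches the $z_i^{2(n-i)\tau}$ piece of $I_1$. Hence
\[
\frac{\Theta(z)}{I_1(z)}=\prod_{i=1}^n\prod_{m=1}^s\frac{1}{\theta(b_mz_i)}\prod_{1\le j<k\le n}\frac{\theta(z_j/z_k)}{\theta(tz_j/z_k)}.
\]

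Next I will take the product over $\mu\in Z_{s,n}$ after substituting $z=a_\mu$. The first factor is handled by a multiplicity count: $\theta(b_m a_i t^r)$ appears in $\prod_{l=1}^n\theta(b_m a_{\mu,l})$ once for each $\mu\in Z_{s,n}$ with $\mu_i\ge r+1$, and the number of such $\mu$ is $|Z_{s,n-r-1}|=\binom{s+n-r-2}{n-r-1}$. Relabelling $k=n-r$ yields
\[
\prod_{\mu\in Z}\prod_{i,m}\theta(b_m a_{\mu,i})=\prod_{k=1}^n\Bigl[\prod_{i,j=1}^s\theta\bigl(a_ib_jt^{n-k}\bigr)\Bigr]^{\binom{s+k-2}{k-1}},
\]
which accounts for the $\theta(a_ib_jt^{n-k})$-denominator in \eqref{eq:Theta(a)}. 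For the remaining product $\prod_{j<k}\theta(z_j/z_k)/\theta(tz_j/z_k)$ evaluated at $a_\mu$, I will separate pairs $(j,k)$ into same-block pairs (both $z_j,z_k\in\{a_it^r\}$ for some common $i$) and cross-block pairs (lying in distinct blocks $i_1<i_2$). Within a block $i$ of size $N=\mu_i$ the same-block contribution telescopes to a product of $\theta(t^m)^{\pm 1}$; summing the resulting exponents over $\mu$ by the hockey-stick identity $\sum_{k=1}^n\binom{s+k-2}{k-1}=\binom{s+n-1}{n-1}$ produces precisely the factor $\prod_{k=1}^n[\theta(t)^s/\theta(t^{n-k+1})^s]^{\binom{s+k-2}{k-1}}$ of the right-hand side. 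For each pair $i_1<i_2$ the cross-block contribution is an inner product over $(r_1,r_2)\in[0,\mu_{i_1}-1]\times[0,\mu_{i_2}-1]$ that telescopes in both indices; taking $\prod_\mu$ of it and counting the multiplicity of each $\theta\bigl(a_ia_j^{-1}t^m\bigr)$ produces the remaining $\theta(a_ia_j^{-1}t^{-(n-k)})$ numerator and the factor $\prod_{k=1}^n[\prod_{r=0}^{n-k}\prod_{i<j}\theta(a_ia_j^{-1}t^{2r-(n-k)})^{-1}]^{\binom{s+k-3}{k-1}}$.

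The main technical obstacle will be this last cross-block bookkeeping. For each pair $i_1<i_2$ and each $m\in\mathbb{Z}$ one has to evaluate $\sum_{\mu\in Z_{s,n}}\bigl[N_\mu(m)-N_\mu(m-1)\bigr]$, where $N_\mu(m)=\#\{(r_1,r_2)\colon r_2-r_1=m,\ 0\le r_1\le\mu_{i_1}-1,\ 0\le r_2\le\mu_{i_2}-1\}$. This is exactly the count that supplies the $\binom{s+k-3}{k-1}$ exponent in the formula for $\prod_\mu I_2(a_\mu)$ proved in Lemma~\ref{lem:J-asymptotic} via \cite[equations~(8.9)--(8.10)]{AI2009}; replacing each $(\cdot)_\infty$ by $\theta(\cdot)$ (and discarding the $(q)_\infty$-factors, which have no $\theta$-counterpart) reproduces the right-hand side of \eqref{eq:Theta(a)}.
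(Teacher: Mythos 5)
Your proposal is correct and follows essentially the same route as the paper: the paper's proof is exactly the direct verification you describe, reducing the claim to the two product identities (the multiplicity count giving $\prod_{k}\bigl[\prod_{i,j}\theta(a_ib_jt^{n-k})\bigr]^{\binom{s+k-2}{k-1}}$ and the same-block/cross-block count for the $\theta$-quotients), with the latter cited to \cite[equation~(8.10)]{AI2009} just as you do. One caution: your telescoping of the same-block factors only works if the theta quotient in $\Theta$ is read as $\theta(z_k/z_j)/\theta(tz_k/z_j)$ (the orientation actually used in the paper's proofs of Lemma~\ref{lem:lla-rra} and of this lemma), since the orientation literally written in \eqref{eq:lla-rra} would place $\theta(1)=0$ in the denominator at $z=a_\mu$ for adjacent entries of the same block.
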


\begin{proof}By the definition (\ref{eq:lla-rra}) of $\Theta(z)$, this is directly confirmed from
\begin{gather*}
\prod_{\mu\in Z}\prod_{i=1}^n\prod_{m=1}^{s}\theta(b_m a_{\mu,i}) =\prod_{k=1}^n\left[\prod_{i=1}^{s}\prod_{j=1}^{s}\theta\big(a_ib_jt^{n-k}\big)\right]^{s+k-2\choose k-1}
\end{gather*}
and
\begin{gather*}
\prod_{\mu\in Z}\prod_{1\le j<k\le n}\frac{\theta( a_{\mu,k}/ a_{\mu,j})}{\theta(t a_{\mu,k}/ a_{\mu,j})}=\prod_{k=1}^n \left[\frac{\theta(t)^s}{\theta(t^{n-k+1})^s}
\prod_{1\le i<j\le s}\theta\big(a_ia_j^{-1}t^{-(n-k)}\big) \right]^{s+k-2\choose k-1}\\
\hphantom{\prod_{\mu\in Z}\prod_{1\le j<k\le n}\frac{\theta( a_{\mu,k}/ a_{\mu,j})}{\theta(t a_{\mu,k}/ a_{\mu,j})}=}{} \times\prod_{k=1}^n\left[\prod_{r=0}^{n-k}\prod_{1\le i<j\le s}
\frac{1}{\theta\big(t^{2r-(n-k)}a_ia_j^{-1}\big)}\right]^{s+k-3\choose k-1},
\end{gather*}
which is given in \cite[p.~1103, equation~(8.10)]{AI2009}.
\end{proof}

We denote by ${\cal C}$ the right-hand side of (\ref{eq:Wronski-a1}). Then we have the following:
\begin{Lemma}\label{lem:C-asymptotic}We have
\begin{gather*}
 \mathcal{C}\prod_{\mu\in Z_{s,n}}\frac{\Theta(a_\mu)}{I_1(a_\mu)} =\prod_{k=1}^n\left[\prod_{r=0}^{n-k}\prod_{1\le i<j\le s}a_jt^r\right]^{s+k-3\choose k-1}\\
\qquad{} \times \prod_{k=1}^n\left[
\frac{(1-q)^s(q)_\infty^s (t)_\infty^s}{(t^{n-k+1})_\infty^s}
\frac{\Big(q^\alpha t^{n+k-2}\prod\limits_{i=1}^s a_ib_i\Big)_\infty}{(q^\alpha t^{n-k})_\infty}
\frac{\prod\limits_{1\le i<j\le s}\theta\big(a_ia_j^{-1}t^{-(n-k)}\big)}{\prod\limits_{i=1}^{s}\prod\limits_{j=1}^{s}(a_i b_j t^{n-k})_\infty} \right]^{s+k-2\choose k-1}.
\end{gather*}
\end{Lemma}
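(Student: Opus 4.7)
The plan is to carry out a direct factor-by-factor computation: I multiply the formula for $\mathcal{C}$ (the right-hand side of \eqref{eq:Wronski-a1}) with the formula \eqref{eq:Theta(a)} for $\prod_{\mu\in Z_{s,n}}\Theta(a_\mu)/I_1(a_\mu)$, and verify that after simplification the result matches the claimed expression. Since both sides are already arranged as products over $k=1,\ldots,n$ of blocks carrying the two exponents $\binom{s+k-2}{k-1}$ and $\binom{s+k-3}{k-1}$, it suffices to check the identity inside each bracket separately for each $k$.

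For the $\binom{s+k-2}{k-1}$-bracket I apply the theta factorisation $\theta(u)=(u)_\infty(qu^{-1})_\infty$ three times. Using $\theta(t)=(t)_\infty(qt^{-1})_\infty$ and $\theta(t^{n-k+1})=(t^{n-k+1})_\infty(qt^{-(n-k+1)})_\infty$ collapses the scalar combination
\[
\frac{(qt^{-(n-k+1)})_\infty^s\,\theta(t)^s}{(qt^{-1})_\infty^s\,\theta(t^{n-k+1})^s}=\frac{(t)_\infty^s}{(t^{n-k+1})_\infty^s}.
\]
Writing $\theta(a_ib_jt^{n-k})=(a_ib_jt^{n-k})_\infty(qa_i^{-1}b_j^{-1}t^{-(n-k)})_\infty$ collapses the $(a,b)$-piece
\[
\frac{\prod_{i,j}(qa_i^{-1}b_j^{-1}t^{-(n-k)})_\infty}{\prod_{i,j}\theta(a_ib_jt^{n-k})}=\frac{1}{\prod_{i,j}(a_ib_jt^{n-k})_\infty}.
\]
The remaining factors $(1-q)^s$, the $\bigl(q^\alpha t^{n+k-2}\prod_i a_ib_i\bigr)_\infty/(q^\alpha t^{n-k})_\infty$ from $\mathcal{C}$, and the $\prod_{1\le i<j\le s}\theta(a_ia_j^{-1}t^{-(n-k)})$ from \eqref{eq:Theta(a)} transfer unchanged, producing precisely the bracket asserted in the lemma.

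For the $\binom{s+k-3}{k-1}$-bracket the only point requiring care is the cancellation of $\theta$-factors. Substituting $r\mapsto n-k-r$ in the product coming from $\mathcal{C}$ yields the identity
\[
\prod_{r=0}^{n-k}\prod_{1\le i<j\le s}\theta\bigl(a_ia_j^{-1}t^{n-k-2r}\bigr)=\prod_{r=0}^{n-k}\prod_{1\le i<j\le s}\theta\bigl(a_ia_j^{-1}t^{2r-(n-k)}\bigr),
\]
since the index set $\{0,1,\ldots,n-k\}$ is invariant under the involution $r\leftrightarrow n-k-r$. This exactly cancels the reciprocal product from \eqref{eq:Theta(a)}, leaving only $\prod_{r=0}^{n-k}\prod_{1\le i<j\le s}a_jt^r$, as required.

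The argument is essentially bookkeeping: no new summation, asymptotic, or analytic input is required beyond the basic theta factorisation $\theta(u)=(u)_\infty(qu^{-1})_\infty$. The only step demanding a moment's thought is the involution $r\leftrightarrow n-k-r$ that produces the $\theta$-cancellation in the second bracket, and this is harmless because the summation range is symmetric. I therefore do not expect any genuine obstacle.
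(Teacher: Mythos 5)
Your computation is correct and is exactly the verification the paper intends (its proof of this lemma is the one-line remark that the result is "immediately confirmed" from \eqref{eq:Theta(a)} and the explicit form of $\mathcal{C}$); you have simply supplied the omitted bookkeeping. Both the three applications of $\theta(u)=(u)_\infty(qu^{-1})_\infty$ in the $\binom{s+k-2}{k-1}$-bracket and the reindexing $r\mapsto n-k-r$ in the $\binom{s+k-3}{k-1}$-bracket check out.
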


\begin{proof} From (\ref{eq:Theta(a)}) and the explicit form of $\mathcal{C}$, we can immediately confirm the result.
\end{proof}

We now prove Lemma \ref{lem:Wronski-a}.

\begin{proof}[Proof of Lemma \ref{lem:Wronski-a}] Define the determinant $\mathcal{J}=\det\big(\langle\!\langle s_\lambda, a_\mu\rangle\!\rangle\big)_{\!\lambda\in B\atop\!\mu\in Z}$. Then we have
\begin{gather*}
\mathcal{J} =\det\big(\langle\!\langle s_\lambda, a_\mu \rangle\!\rangle\big)_{\!\lambda\in B\atop\!\mu\in Z} =\det\big(\langle s_\lambda, a_\mu \rangle\big)_{\!\lambda\in B\atop\!\mu\in Z}
\Big/\prod_{\mu\in Z_{s,n}}\Theta(a_\mu) =J\Big/\prod_{\mu\in Z_{s,n}}\Theta(a_\mu).
\end{gather*}
Since we have
\begin{gather*}
\frac{T_{\alpha}\Big(\prod\limits_{\mu\in Z_{s,n}}\Theta(a_\mu)\Big)} {\prod\limits_{\mu\in Z_{s,n}}\Theta(a_\mu)} =\prod_{\mu\in Z_{s,n}} a_{\mu,1} a_{\mu,2}\cdots a_{\mu,n}
=(a_1a_2\cdots a_s)^{s+n-1\choose n-1}t^{s{s+n-1\choose n-2}}\\
\hphantom{\frac{T_{\alpha}\Big(\prod\limits_{\mu\in Z_{s,n}}\Theta(a_\mu)\Big)} {\prod\limits_{\mu\in Z_{s,n}}\Theta(a_\mu)}}{} =\prod_{k=1}^n\left[t^{k-1}\prod_{i=1}^s a_i \right]^{s+k-2\choose k-1},
\end{gather*}
which is proved in \cite[p.~1097, Lemma~7.3]{AI2009}, and (\ref{eq:q-diff-det}) for $J$, we obtain
\begin{gather*} T_{\alpha}\mathcal{J}
=\mathcal{J}\prod_{k=1}^n\left[\frac{1-q^\alpha t^{n-k}}{1-q^\alpha t^{n+k-2}\prod\limits_{i=1}^{s} a_ib_i}\right]^{s+k-2\choose k-1}.
\end{gather*}
On the other hand it is easily confirmed that $\mathcal{C}$ satisfies the same equation as above, i.e.,
\begin{gather*}
T_{\alpha}\mathcal{C}=\mathcal{C}\prod_{k=1}^n\left[\frac{1-q^\alpha t^{n-k}}{1-q^\alpha t^{n+k-2}\prod\limits_{i=1}^{s} a_ib_i}\right]^{s+k-2\choose k-1}.\end{gather*}
This means that the ratio ${\mathcal{J}}/{\mathcal{C}}$ is invariant under the shift $T_\alpha$ with respect to $\alpha\to \alpha+1$. Thus~${\mathcal{J}}/{\mathcal{C}}$ is also invariant under the shift $T_\alpha^N$, $N=1,2,\ldots$. From Lemmas~\ref{lem:J-asymptotic} and~\ref{lem:C-asymptotic} we therefore obtain
\begin{gather*}
\frac{\mathcal{J}}{\mathcal{C}}=\lim_{N\to +\infty}T_\alpha^N\frac{\mathcal{J}}{\mathcal{C}} =\lim_{N\to +\infty}
T_\alpha^N\left(\frac{J}{\prod\limits_{\mu\in Z}I_1(a_\mu)}\right) T_\alpha^N\left(\mathcal{C}\prod_{\mu\in Z_{s,n}}\frac{\Theta(a_\mu)}{I_1(a_\mu)}\right)^{-1}=1,
\end{gather*}
which is the claim of Lemma \ref{lem:Wronski-a}.
\end{proof}

This also completes the proof of our determinant formula of Theorem \ref{thm:Wronski}.

\appendix

\section{Proof of Lemma \ref{lem:Anabla}}\label{SectionA}

In this section we give a proof of Lemma \ref{lem:Anabla}. From (\ref{eq:def-nabla}), (\ref{eq:b-function}) and (\ref{eq:Fj}) we have
\begin{gather*}
\nabla_{\!1}\big( G_1(z)\tilde{e}^{(n-1)}_{\lambda}(z_{\widehat{1}})\Delta^{(n-1)}(z_{\widehat{1}}) \big)=\big(G_1(z)-F_1(z)\big) \tilde{e}^{(n-1)}_{\lambda}(z_{\widehat{1}})\Delta^{(n-1)}(z_{\widehat{1}}).
\end{gather*}
In order to show \eqref{eq:AnablaF1eD} of Lemma \ref{lem:Anabla} we prove that
\begin{gather}
{\cal A}\big[\big(G_1(z)-F_1(z)\big) \tilde{e}^{(n-1)}_{\lambda}(z_{\widehat{1}}) \Delta^{(n-1)}(z_{\widehat{1}})\big]\nonumber\\
\qquad={} \left(c_0\tilde{e}^{(n)}_{\lambda+\epsilon_0}(z) +c_1\tilde{e}^{(n)}_{\lambda+\epsilon_1}(z)+\sum_{k=2}^{s} c_k\tilde{e}^{(n)}_{\lambda+\epsilon_k}(z)\right)\Delta^{(n)}(z). \label{eq:Anabla01}
\end{gather}
For this purpose we slightly perturb $F_j(z)$ and $G_j(z)$ with a parameter $\varepsilon$ and define
\begin{gather}
F_j^\varepsilon(z)=\big(q^{\alpha}-\varepsilon z_j^{-1}\big)\prod_{m=1}^{s}(1-b_mz_j)\prod_{\substack{1\le k\le n\atop k\ne j}}(z_k-tz_j),\label{eq:Fje}\\
G_j^\varepsilon(z)=\big(1-\varepsilon z_j^{-1}\big)\prod_{m=1}^{s}\big(1-a_m^{-1}z_j\big)\prod_{\substack{1\le k\le n\atop k\ne j}}\big(z_k-t^{-1}z_j\big),\label{eq:Gje}
\end{gather}
which satisfy $F_j^\varepsilon(z)\to F_j(z)$ and $G_j^\varepsilon(z)\to G_j(z)$ when $\varepsilon \to 0$, respectively. For $\lambda_0\in \{0,1,\ldots,$ $n-1\}$ and $\lambda=(\lambda_1,\ldots,\lambda_s)\in Z_{s, n-\lambda_0-1}$ we prove the equation
\begin{gather}
 {\cal A}\big[\big(G_1^{\varepsilon}(z)-F_1^{\varepsilon}(z)\big) \mathsf{E}^{\lambda_0,(n-1)}_{\lambda}(z_{\widehat{1}})\Delta^{(n-1)}(z_{\widehat{1}})\big]\nonumber\\
\qquad{}= \left(c_0^\varepsilon \mathsf{E}^{\lambda_0+1,(n)}_{\lambda}(z) +c_1^\varepsilon \mathsf{E}^{\lambda_0,(n)}_{\lambda+\epsilon_1}(z)+\sum_{k=2}^{s} c_k^\varepsilon\, \mathsf{E}^{\lambda_0,(n)}_{\lambda+\epsilon_k}(z)\right) \Delta^{(n)}(z),\label{eq:AnablaG1ED}
\end{gather}
where $\mathsf{E}^{\lambda_0,(n-1)}_{\lambda}(z_{\widehat{1}}) =\mathsf{E}^{\lambda_0,(n-1)}_{\lambda_1,\ldots,\lambda_s}(\varepsilon,a_2,\ldots,a_s;z_{\widehat{1}})$ and $c_0^\varepsilon$, $c_1^\varepsilon$ and $c_k^\varepsilon$, $k=2,\ldots,s$, are
\begin{gather}
c_0^\varepsilon =(n-1)! (-1)^{n+s-1}\frac{\big(1-t^{\lambda_0+1}\big)\Big(1-q^\alpha t^{2n-2-\lambda_0}\prod\limits_{i=1}^sa_ib_i\Big)} {(1-t)t^{n-1}\prod\limits_{i=1}^sa_i}, \label{eq:c0e}\\
c_1^\varepsilon =(n-1)! (-1)^{n+1}\frac{\big(1-t^{\lambda_1+1}\big)\big(1-q^\alpha t^{\lambda_{1}}\big) \prod\limits_{m=1}^{s}\big(1-\varepsilon b_m t^{\lambda_{1}}\big)}{(1-t)t^{\lambda_1}}
\prod_{j=2}^{s} \frac{t^{\lambda_j}\big(a_j-\varepsilon t^{\lambda_1+1}\big)} {a_jt^{\lambda_j}-\varepsilon t^{\lambda_1+1}},\label{eq:c1e}\\
c_k^\varepsilon =(n-1)!(-1)^{n+1}\frac{\big(\varepsilon-q^\alpha a_kt^{\lambda_{k}}\big)
\big(\varepsilon -a_kt^{\lambda_k+1}\big)t^{\lambda_1}\big(1-t^{\lambda_k+1}\big)\prod\limits_{m=1}^{s}\big(1-b_m a_kt^{\lambda_{k}}\big)}{\big(\varepsilon t^{\lambda_1}-a_kt^{\lambda_k+1}\big)(1-t)a_kt^{\lambda_{k}}}\nonumber\\
\hphantom{c_k^\varepsilon =}{} \times \prod_{\substack{2\le j\le s\atop j\ne k}}\frac{t^{\lambda_j}\big(a_j-a_kt^{\lambda_k+1}\big)}{a_jt^{\lambda_j}-a_kt^{\lambda_k+1}}.\label{eq:cke}
\end{gather}
Since (\ref{eq:Anabla01}) is obtained from (\ref{eq:AnablaG1ED}) if $\varepsilon \to 0$, it suffices to prove (\ref{eq:AnablaG1ED}) instead of (\ref{eq:Anabla01}). We give a proof of (\ref{eq:AnablaG1ED}) below. We denote
\begin{gather}\label{eq:nablaG1ED}
\phi(z)=\big(G_1^{\varepsilon}(z)-F_1^{\varepsilon}(z)\big) \mathsf{E}^{\lambda_0,(n-1)}_{\lambda}(z_{\widehat{1}})\Delta^{(n-1)}(z_{\widehat{1}}).
\end{gather}
Taking account of the degree of the polynomial $\phi(z)$, the polynomial
\begin{gather*}
 {\cal A}\big(z_1^{s+n-1}\times (z_2\cdots z_{\lambda_0+1})^s(z_{\lambda_0+2}\cdots z_n)^{s-1}\times z_2^{n-2}z_3^{n-3}\cdots z_{n-1}\big)\\
\qquad{} ={\cal A}\big((z_1\cdots z_{\lambda_0+1})^s(z_{\lambda_0+2}\cdots z_n)^{s-1} \times z_1^{n-1}z_2^{n-2}\cdots z_{n-1}\big)
\end{gather*}
is the term of highest degree in the skew-symmetrization ${\cal A}\phi(z)$, which is thus expanded as
\begin{gather}
{\cal A}\phi(z)=\sum_{\substack{0\le \mu_0\le \lambda_0+1\\[1.5pt] (\mu_1,\ldots,\mu_s)\in Z_{s,n-\mu_0}}}
c_\mu^\varepsilon \mathsf{E}^{\mu_0,(n)}_{\mu_1,\ldots,\mu_s}(z) \Delta^{(n)}(z),\label{eq:Aphi-1}
\end{gather}
where $c_\mu^\varepsilon=c_{(\mu_0,\mu_1,\ldots,\mu_s)}^\varepsilon$ are some constants. For any $\delta\in \mathbb{R}$ we set
\begin{gather*}
\xi^{\mu_0,(n)}_{\mu_1,\ldots,\mu_s} =\big(\underbrace{\delta^{\mu_0},\delta^{\mu_0-1},\ldots,\delta\vphantom{\Big|}}_{\mu_0},
\underbrace{\varepsilon,\varepsilon t,\ldots,\varepsilon t^{\mu_1-1}\vphantom{\Big|}}_{\mu_1},
\underbrace{a_2,a_2 t,\ldots,a_2 t^{\mu_1-1}\vphantom{\Big|}}_{\mu_2}, \ldots,\underbrace{a_s,a_s t,\ldots,a_s t^{\mu_s-1}\vphantom{\Big|}}_{\mu_s}\big)\\
\hphantom{\xi^{\mu_0,(n)}_{\mu_1,\ldots,\mu_s}}{}\in (\mathbb{C}^*)^n,
\end{gather*}
which is a special case of $x^{\mu_0,(n)}_{\mu_1,\ldots,\mu_s}$ specified by (\ref{eq:zeta-mu}) with the setting $z_i=\delta^{\mu_0-i+1}$, $i=1,\ldots,\mu_0$, and $(x_1,x_2,\ldots,x_s)=(\varepsilon,a_2,\ldots,a_s)$.
Then, from Lemma~\ref{lem:vanishing}, $c_\mu^\varepsilon$ in~(\ref{eq:Aphi-1}) is written as
\begin{gather}
\lim_{\delta\to \infty}\frac{{\cal A}\phi(z)} {z_1^{s+n-1}z_2^{s+n-2}\cdots z_{\mu_0}^{s+n-\mu_0}}\bigg|_{z=\xi^{\mu_0,(n)}_{\mu_1,\ldots,\mu_s}} =c_\mu^\varepsilon\lim_{\delta\to \infty}
\frac{\mathsf{E}^{\mu_0,(n)}_{\mu_1,\ldots,\mu_s}(z)\Delta^{\!(n)}(z)} {z_1^{s+n-1}z_2^{s+n-2}\cdots z_{\mu_0}^{s+n-\mu_0}}\bigg|_{z=\xi^{\mu_0,(n)}_{\mu_1,\ldots,\mu_s}}\nonumber\\
\hphantom{\lim_{\delta\to \infty}\frac{{\cal A}\phi(z)} {z_1^{s+n-1}z_2^{s+n-2}\cdots z_{\mu_0}^{s+n-\mu_0}}\bigg|_{z=\xi^{\mu_0,(n)}_{\mu_1,\ldots,\mu_s}}}{} =c_\mu^\varepsilon
\Delta^{\!(n-\mu_0)}(\xi^{0,(n-\mu_0)}_{\mu_1,\ldots,\mu_s}).\label{eq:Aphi-2}
\end{gather}
On the other hand, from the explicit form (\ref{eq:nablaG1ED}) of $\phi(z)$, the left-hand side of (\ref{eq:AnablaG1ED}) is written as
\begin{gather}\label{eq:Aphi-3}
{\cal A}\phi(z)=(n-1)!\sum_{j=1}^n (-1)^{j}\big(F_j^\varepsilon(z)-G_j^\varepsilon(z)\big) \mathsf{E}^{\lambda_0,(n-1)}_{\lambda}(z_{\widehat{j}})\Delta^{\!(n-1)}(z_{\widehat{j}}),
\end{gather}
where $F_j^\varepsilon(z)$ and $G_j^\varepsilon(z)$ are given by (\ref{eq:Fje}) and (\ref{eq:Gje}), respectively. By definition $F_j^\varepsilon(z)$ and~$G_j^\varepsilon(z)$ satisfy the vanishing properties
\begin{gather}
G_j^\varepsilon\big(\xi^{\mu_0,(n)}_{\mu_1,\ldots,\mu_s}\big)=0\qquad \mbox{if}\quad\mu_0< j\le n,\nonumber\\
F_j^\varepsilon\big(\xi^{\mu_0,(n)}_{\mu_1,\ldots,\mu_s}\big)=0 \qquad \mbox{if}\quad \mu_0< j \quad\mbox{and}\quad j\not\in \{\mu_0+\mu_1+\cdots+\mu_k \,|\, k=1,\ldots,s\},\label{eq:Gj=Fj=0}
\end{gather}
and the evaluations
\begin{gather}
\lim_{\delta\to \infty} \frac{F_{\mu_0+\mu_1}^\varepsilon(z)}{z_1\cdots z_{\mu_0}}\bigg|_{z=\xi^{\mu_0,(n)}_{\mu_1,\ldots,\mu_s}}
=\frac{\big(q^\alpha t^{\mu_1-1} -1\big)\prod\limits_{m=1}^{s}\big(1-b_m \varepsilon t^{\mu_1-1}\big)} {t^{\mu_1-1}\big(\varepsilon t^{\mu_1-1}-\varepsilon t^{\mu_1}\big)}
\prod_{i=1}^{\mu_1}\big(\varepsilon t^{i-1}-\varepsilon t^{\mu_1}\big) \nonumber\\
\hphantom{\lim_{\delta\to \infty} \frac{F_{\mu_0+\mu_1}^\varepsilon(z)}{z_1\cdots z_{\mu_0}}\bigg|_{z=\xi^{\mu_0,(n)}_{\mu_1,\ldots,\mu_s}}=}{} \times
\prod_{i=2}^s\prod_{j=1}^{\mu_i}\big(a_it^{j-1}-\varepsilon t^{\mu_1}\big),\label{eq:limFmu-1}\\
\lim_{\delta\to \infty}\!\frac{F_{\mu_0+\mu_1+\cdots+\mu_k}^\varepsilon(z)}{z_1\cdots z_{\mu_0}}\bigg|_{z=\xi^{\mu_0,(n)}_{\mu_1,\ldots,\mu_s}}=\frac{\big(q^\alpha a_kt^{\mu_k-1} -\varepsilon\big)
\!\prod\limits_{m=1}^{s}\!\big(1-b_m a_kt^{\mu_k-1}\big)}{a_kt^{\mu_k-1}\big(a_kt^{\mu_k-1}-a_kt^{\mu_k}\big)}\prod_{i=1}^{\mu_1}\big(\varepsilon t^{i-1}-a_kt^{\mu_{k}}\big)\nonumber\\
\hphantom{\lim_{\delta\to \infty}\!\frac{F_{\mu_0+\mu_1+\cdots+\mu_k}^\varepsilon(z)}{z_1\cdots z_{\mu_0}}\bigg|_{z=\xi^{\mu_0,(n)}_{\mu_1,\ldots,\mu_s}}=}{}
\times\prod_{i=2}^s\prod_{j=1}^{\mu_i}\big(a_it^{j-1}-a_kt^{\mu_{k}}\big)\qquad \mbox{for}\quad k=2,\ldots,s.\label{eq:limFmu-k}
\end{gather}
Notice that, from Lemma \ref{lem:vanishing}, $\mathsf{E}^{\lambda_0,(n-1)}_{\lambda}(z_{\widehat{j}})\Delta^{(n-1)}(z_{\widehat{j}})$, $1\le j\le \mu_0$, satisfies the following va\-nishing property; if $1\le j\le \mu_0$, then
\begin{gather}
\lim_{\delta\to \infty}\frac{\mathsf{E}^{\lambda_0,(n-1)}_{\lambda}(z_{\widehat{j}})\Delta^{(n-1)}(z_{\widehat{j}})}
{z_1^{s+n-2}z_2^{s+n-3}\cdots z_{j-1}^{s+n-j}z_{j+1}^{s+n-j-1}\cdots z_{\mu_0}^{s+n-\mu_0}}
\bigg|_{z=\xi^{\mu_0,(n)}_{\mu}}\nonumber\\
 \qquad{} = \begin{cases}
\Delta^{(n-\lambda_0-1)}\big(\xi^{0,(n-\lambda_0-1)}_{\lambda}\big) &\mbox{if $\mu_0=\lambda_0+1$ and $\mu=\lambda\in Z_{s,n-\lambda_0-1}$},\vspace{1mm}\\
0&\mbox{otherwise}.
\end{cases} \label{eq:E=0-1}
\end{gather}
In the same manner, $\mathsf{E}^{\lambda_0,(n-1)}_{\lambda}(z_{\widehat{j}})\Delta^{(n-1)}(z_{\widehat{j}})$, $j\in \{\mu_0+\mu_1+\cdots+\mu_k\,|\, k=1,\ldots,s\}$, satisfies the vanishing property
\begin{gather}
\lim_{\delta\to \infty}\frac{\mathsf{E}^{\lambda_0,(n-1)}_{\lambda}(z_{\widehat{\mu_0+\mu_1+\cdots+\mu_k}})\Delta^{\!(n-1)}(z_{\widehat{\mu_0+\mu_1+\cdots+\mu_k}})}{z_1^{s+n-2}z_2^{s+n-3}\cdots z_{\mu_0}^{s+n-\mu_0-1}} \bigg|_{z=\xi^{\mu_0,(n)}_{\mu}}\nonumber\\
\qquad{} =
\begin{cases}
\Delta^{(n-\lambda_0-1)}\big(\xi^{0,(n-\lambda_0-1)}_{\lambda}\big) & \mbox{if $\mu_0=\lambda_0$ and $\mu=\lambda+\varepsilon_k\in Z_{s,n-\lambda_0}$}, \\
0&\mbox{otherwise}.\label{eq:E=0-2}
\end{cases}
\end{gather}
If $z=\xi^{\mu_0,(n)}_{\mu_1,\ldots,\mu_s}$ satisfies $\xi^{\mu_0,(n)}_{\mu_1,\ldots,\mu_s}\ne \xi^{\lambda_0+1,(n)}_{\lambda}$ and $\xi^{\mu_0,(n)}_{\mu_1,\ldots,\mu_s}\ne \xi^{\lambda_0,(n)}_{\lambda+\varepsilon_k}$, then, from (\ref{eq:Gj=Fj=0}), (\ref{eq:E=0-1}) and~(\ref{eq:E=0-2}) the equation~(\ref{eq:Aphi-3}) implies
\begin{gather}\label{eq:Aphi-4}
\lim_{x\to \infty}\frac{{\cal A}\phi(z)}{z_1^{s+n-1}z_2^{s+n-2}\cdots z_{\mu_0}^{s+n-\mu_0}}\bigg|_{z=\xi^{\mu_0,(n)}_{\mu_1,\ldots,\mu_s}}=0.
\end{gather}
Comparing (\ref{eq:Aphi-2}) with (\ref{eq:Aphi-4}), we obtain $c_\mu^\varepsilon=0$ if $\mu\ne (\lambda_0,\ldots,\lambda_k+1,\ldots,\lambda_s)$, $k=0,\ldots,s$, which means that ${\cal A}\phi(z)$ is expressed as in~(\ref{eq:AnablaG1ED}).

We now determine the coefficients $c_k^\varepsilon$, $k=0,\ldots,s$, in the expression (\ref{eq:AnablaG1ED}).

First we evaluate $c_1^\varepsilon$. From (\ref{eq:limFmu-1}) we have
\begin{gather*}
\lim_{\delta\to \infty} \frac{F_{\lambda_0+\lambda_1+1}^\varepsilon(z)}{z_1\cdots z_{\lambda_0}}\bigg|_{z=\xi^{\lambda_0,(n)}_{\lambda+\epsilon_1}}
=\frac{\big(q^\alpha t^{\lambda_1} -1\big)\prod\limits_{m=1}^{s}\big(1-b_m \varepsilon t^{\lambda_1}\big)}{t^{\lambda_1}\big(\varepsilon t^{\lambda_1}-\varepsilon t^{\lambda_1+1}\big)}\prod_{i=1}^{\lambda_1+1}\big(\varepsilon t^{i-1}-\varepsilon t^{\lambda_1+1}\big) \\
\hphantom{\lim_{\delta\to \infty} \frac{F_{\lambda_0+\lambda_1+1}^\varepsilon(z)}{z_1\cdots z_{\lambda_0}}\bigg|_{z=\xi^{\lambda_0,(n)}_{\lambda+\epsilon_1}}=}{} \times
\prod_{i=2}^s\prod_{j=1}^{\lambda_i}\big(a_it^{j-1}-\varepsilon t^{\lambda_1+1}\big).
\end{gather*}
Then
\begin{gather}
 \lim_{\delta\to \infty}\frac{{\cal A}\phi(z)}{z_1^{s+n-1}z_2^{s+n-2}\cdots z_{\lambda_0}^{s+n-\lambda_0}}\bigg|_{z=\xi^{\lambda_0,(n)}_{\lambda+\epsilon_1}}\nonumber\\
=(n-1)!(-1)^{\lambda_0+\lambda_1+1}\lim_{\delta\to \infty}\! \left(\frac{F_{\lambda_0+\lambda_1+1}(z)}{z_1\cdots z_{\lambda_0}}\frac{\mathsf{E}^{\lambda_0,(n-1)}_{\lambda}(z_{\widehat{\lambda_0+\lambda_1+1}})
\Delta^{(n-1)}(z_{\widehat{\lambda_0+\lambda_1+1}})}{z_1^{s+n-2}z_2^{s+n-3}\cdots z_{\lambda_0}^{s+n-\lambda_0-1}}\right)\bigg|_{z=\xi^{\lambda_0,(n)}_{\lambda+\epsilon_1}}\nonumber\\
=(n-1)!(-1)^{\lambda_0+\lambda_1+1}\Bigg[\frac{\big(q^\alpha t^{\lambda_1} -1\big)\prod\limits_{m=1}^{s}\big(1-b_m \varepsilon t^{\lambda_1}\big)}{t^{\lambda_1}\big(\varepsilon t^{\lambda_1}-\varepsilon t^{\lambda_1+1}\big)}\prod_{i=1}^{\lambda_1+1}\big(\varepsilon t^{i-1}-\varepsilon t^{\lambda_1+1}\big)\nonumber\\
\quad{} \times \prod_{i=2}^s\prod_{j=1}^{\lambda_i}\big(a_it^{j-1}-\varepsilon t^{\lambda_1+1}\big)\Bigg]\Delta^{(n-\lambda_0-1)}\big(\xi^{0,(n-\lambda_0-1)}_{\lambda}\big).\label{eq:Aphi-5***}
\end{gather}
Comparing (\ref{eq:Aphi-2}) with (\ref{eq:Aphi-5***}), the coefficients $c_1^\varepsilon=c_{(\lambda_0,\lambda_1+1,\lambda_2,\ldots,\lambda_s)}^\varepsilon$ is given by
\begin{gather*}
c_1^\varepsilon =(n-1)!(-1)^{\lambda_0+\lambda_1+1}
\Bigg[ \frac{\big(q^\alpha t^{\lambda_1} -1\big)\prod\limits_{m=1}^{s}\big(1-b_m \varepsilon t^{\lambda_1}\big)}
{t^{\lambda_1}\big(\varepsilon t^{\lambda_1}-\varepsilon t^{\lambda_1+1}\big)}
\prod_{i=1}^{\lambda_1+1}\big(\varepsilon t^{i-1}-\varepsilon t^{\lambda_1+1}\big)\\
\hphantom{c_1^\varepsilon =}{} \times\prod_{i=2}^s\prod_{j=1}^{\lambda_i}\big(a_it^{j-1}-\varepsilon t^{\lambda_1+1}\big)\Bigg] \frac{\Delta^{(n-\lambda_0-1)}\big(\xi^{0,(n-\lambda_0-1)}_{\lambda}\big)}{\Delta^{(n-\lambda_0)}\big(\xi^{0,(n-\lambda_0)}_{\lambda+\epsilon_1}\big)},
\end{gather*}
where
\begin{gather*}
\frac{\Delta^{(n-\lambda_0-1)}\big(\xi^{0,(n-\lambda_0-1)}_{\lambda}\big)} {\Delta^{(n-\lambda_0)}\big(\xi^{0,(n-\lambda_0)}_{\lambda+\epsilon_1}\big)}
=(-1)^{\lambda_{2}+\lambda_{3}+\cdots+\lambda_s} \prod_{j=1}^{\lambda_1}\frac{1}{\varepsilon t^{j-1}-\varepsilon t^{\lambda_{1}}}\prod_{i=2}^s\prod_{j=1}^{\lambda_i}\frac{1}{a_it^{j-1}-\varepsilon t^{\lambda_{1}}}.
\end{gather*}
Therefore $c_1^\varepsilon$ in (\ref{eq:AnablaG1ED}) is equal to (\ref{eq:c1e}).

Next we evaluate $c_k^\varepsilon$, $k=2,\ldots,s$, in the expression (\ref{eq:AnablaG1ED}). From (\ref{eq:limFmu-k}) we have
\begin{gather}
\lim_{\delta\to \infty}\frac{F_{\lambda_0+\lambda_1+\cdots+\lambda_k+1}^\varepsilon(z)} {z_1\cdots z_{\lambda_0}}\bigg|_{z=\xi^{\lambda_0,(n)}_{\lambda+\epsilon_k}}
=\frac{\big(q^\alpha a_kt^{\lambda_{k}} -\varepsilon\big)\prod\limits_{m=1}^{s}\big(1-b_m a_kt^{\lambda_{k}}\big)}{a_kt^{\lambda_{k}}}
\prod_{i=1}^{\lambda_1}\big(\varepsilon t^{i-1}-a_kt^{\lambda_{k}+1}\big) \nonumber\\
\hphantom{\lim_{\delta\to \infty}\frac{F_{\lambda_0+\lambda_1+\cdots+\lambda_k+1}^\varepsilon(z)} {z_1\cdots z_{\lambda_0}}\bigg|_{z=\xi^{\lambda_0,(n)}_{\lambda+\epsilon_k}}=}{} \times\prod_{i=2}^s\prod_{j=1}^{\lambda_i}(a_it^{j-1}-a_kt^{\lambda_{k}+1}).\label{eq:limFlambda=}
\end{gather}
Using (\ref{eq:Aphi-3}), (\ref{eq:Gj=Fj=0}), (\ref{eq:E=0-2}) and (\ref{eq:limFlambda=}) we obtain
\begin{gather}
\lim_{\delta\to \infty}\frac{{\cal A}\phi(z)}{z_1^{s+n-1}z_2^{s+n-2}\cdots z_{\lambda_0}^{s+n-\lambda_0}}\bigg|_{z=\xi^{\lambda_0,(n)}_{\lambda+\epsilon_k}}=(n-1)!(-1)^{\lambda_0+\cdots+\lambda_k+1}\nonumber\\
\qquad{} \times\lim_{\delta\to \infty}\left(\frac{F_{\lambda_0+\cdots+\lambda_k+1}^\varepsilon(z)}{z_1\cdots z_{\lambda_0}}
\frac{\mathsf{E}^{\lambda_0,(n-1)}_{\lambda}(z_{\widehat{\lambda_0+\cdots+\lambda_k+1}})
\Delta^{(n-1)}(z_{\widehat{\lambda_0+\cdots+\lambda_k+1}})}{z_1^{s+n-2}z_2^{s+n-3}\cdots z_{\lambda_0}^{s+n-\lambda_0-1}}\right) \bigg|_{z=\xi^{\lambda_0,(n)}_{\lambda+\epsilon_k}}\nonumber\\
\quad{} =(n-1)!(-1)^{\lambda_0+\cdots+\lambda_k+1}
\Bigg[\frac{\big(q^\alpha a_kt^{\lambda_{k}} -\varepsilon\big)\prod\limits_{m=1}^{s}\big(1-b_m a_kt^{\lambda_{k}}\big)}{a_kt^{\lambda_{k}}} \prod_{i=1}^{\lambda_1}\big(\varepsilon t^{i-1}-a_kt^{\lambda_{k}+1}\big)
\nonumber\\
\qquad{} \times\prod_{i=2}^s\prod_{j=1}^{\lambda_i}\big(a_it^{j-1}-a_kt^{\lambda_{k}+1}\big) \Bigg] \Delta^{(n-\lambda_0-1)}\big(\xi^{0,(n-\lambda_0-1)}_{\lambda}\big).\label{eq:Aphi-5}
\end{gather}
Comparing (\ref{eq:Aphi-2}) with (\ref{eq:Aphi-5}), the coefficients $c_k^\varepsilon=c_{(\lambda_0,\ldots,\lambda_k+1,\ldots,\lambda_s)}^\varepsilon$, $k=2,\ldots,s$, are given by
\begin{gather*}
c_k^\varepsilon =(n-1)!(-1)^{\lambda_0+\cdots+\lambda_k+1} \frac{\big(q^\alpha a_kt^{\lambda_{k}} -\varepsilon\big)\prod\limits_{m=1}^{s}\big(1-b_m a_kt^{\lambda_{k}}\big)}{a_kt^{\lambda_{k}}}
\prod_{i=1}^{\lambda_1}\big(\varepsilon t^{i-1}-a_kt^{\lambda_{k}+1}\big)\nonumber\\
\hphantom{c_k^\varepsilon =}{} \times\prod_{i=2}^s\prod_{j=1}^{\lambda_i}\big(a_it^{j-1}-a_kt^{\lambda_{k}+1}\big) \times\frac{\Delta^{(n-\lambda_0-1)}\big(\xi^{0,(n-\lambda_0-1)}_{\lambda}\big)}{
\Delta^{(n-\lambda_0)}\big(\xi^{0,(n-\lambda_0)}_{\lambda+\epsilon_k}\big)},
\end{gather*}
where{\samepage
\begin{gather*}
\frac{\Delta^{(n-\lambda_0-1)}\big(\zeta^{0,(n-\lambda_0-1)}_{\lambda}\big)} {\Delta^{(n-\lambda_0)}\big(\zeta^{0,(n-\lambda_0)}_{\lambda+\epsilon_k}\big)}=(-1)^{\lambda_{k+1}+\lambda_{k+2}+\cdots+\lambda_s}\prod_{j=1}^{\lambda_1}\frac{1}{\varepsilon t^{j-1}-a_kt^{\lambda_{k}}}\prod_{i=2}^s\prod_{j=1}^{\lambda_i} \frac{1}{a_it^{j-1}-a_kt^{\lambda_{k}}}.
\end{gather*}
Therefore $c_k^\varepsilon$ in (\ref{eq:AnablaG1ED}) is in accordance with (\ref{eq:cke}).}

Lastly we evaluate $c_0^\varepsilon$ in the expression (\ref{eq:AnablaG1ED}). From (\ref{eq:Fje}) and (\ref{eq:Gje}), we have
\begin{gather*}
\lim_{\delta\to \infty}\frac{F_j^\varepsilon(z)-G_j^\varepsilon(z)}{z_1z_2\cdots z_{j-1}z_j^{s+n-j}}
\bigg|_{z=\xi^{\lambda_0+1,(n)}_{\lambda}}\! =(-1)^{n+s-j}\big(q^\alpha b_1b_2\cdots b_s t^{n-j}-a_1^{-1}a_2^{-1}\cdots a_{s}^{-1}t^{-(n-j)}\big)
\end{gather*}
if $1\le j\le \lambda_0+1$. Using (\ref{eq:Aphi-3}), (\ref{eq:Gj=Fj=0}), (\ref{eq:E=0-1}) and the above equation, we have
\begin{gather}
 \lim_{\delta\to \infty}\frac{{\cal A}\phi(z)}{z_1^{s+n-1}z_2^{s+n-2}\cdots z_{\lambda_0+1}^{s+n-\lambda_0-1}}\bigg|_{z=\xi^{\lambda_0+1,(n)}_{\lambda}}\nonumber\\
\quad{} =(n-1)!\sum_{j=1}^{\lambda_0+1}(-1)^{j}\lim_{\delta\to \infty}\bigg(\frac{F_j^\varepsilon(z)-G_j^\varepsilon(z)}{z_1z_2\cdots z_{j-1}z_j^{s+n-j}}\nonumber\\
 \qquad{} \times
\frac{\mathsf{E}^{\lambda_0,(n-1)}_{\lambda}(z_{\widehat{j}})\Delta^{(n-1)}(z_{\widehat{j}})}
{z_1^{s+n-2}z_2^{s+n-3}\cdots z_{j-1}^{s+n-j}z_{j+1}^{s+n-j-1}\cdots z_{\lambda_0+1}^{s+n-\lambda_0-1}}\bigg) \bigg|_{z=\xi^{\lambda_0+1,(n)}_{\lambda}} \nonumber\\
\quad{} =(n-1)!(-1)^{n+s}\sum_{j=1}^{\lambda_0+1} \big(q^\alpha b_1b_2\cdots b_s t^{n-j}-a_1^{-1}a_2^{-1}\cdots a_{s}^{-1}t^{-(n-j)}\big)\nonumber\\
 \qquad{} \times \Delta^{(n-\lambda_0-1)}\big(\xi^{0,(n-\lambda_0-1)}_{\lambda}\big)\nonumber\\
\quad{}=
(n-1)!(-1)^{n+s-1}\frac{\big(1-q^\alpha a_1a_2\cdots a_{s}b_1b_2\cdots b_{s} t^{2n-2-\lambda_0}\big)\big(1-t^{\lambda_0+1}\big)}{a_1a_2\cdots a_{s}t^{n-1}(1-t)}\nonumber\\
\qquad{} \times\Delta^{(n-\lambda_0-1)}\big(\xi^{0,(n-\lambda_0-1)}_{\lambda}\big).\label{eq:Aphi-6}
\end{gather}
Comparing (\ref{eq:Aphi-2}) with (\ref{eq:Aphi-6}), we therefore obtain the explicit expression of $c_0^\varepsilon=c_{(\lambda_0+1,\lambda_1,\ldots,\lambda_s)}^\varepsilon$ as given by (\ref{eq:c0}).

\section[Proofs for the Lagrange interpolation polynomials of type $A$]{Proofs for the Lagrange interpolation polynomials of type $\boldsymbol{A}$}\label{SectionB}

In this section we provide proofs for the propositions in Section \ref{Section4}.

\subsection{Construction of the interpolation polynomials}
Let $\mathsf{H}_{s,n}^z$ be the $\mathbb{C}$-linear space specified by (\ref{eq:Hsn-z}). For $x=(x_1,\ldots,x_s)\in (\mathbb{C}^*)^s$ and $\mu=(\mu_1,\ldots,\mu_s)\in \mathbb{N}^s$ with
\begin{gather*}
|\mu|=\mu_1+\cdots+\mu_s\le n,
\end{gather*}
we set
\begin{gather}\label{eq:x_mu-m}
x_\mu^{n-|\mu|} =\big(z_1,z_2,\ldots,z_{n-|\mu|}, \underbrace{x_1,x_1t,\ldots,x_1t^{\mu_1-1}\vphantom{\Big|}}_{\mu_1},\ldots,\underbrace{x_s,x_st,\ldots,x_st^{\mu_s-1}\vphantom{\Big|}}_{\mu_s}\big)
\in (\mathbb{C}^*)^{n},
\end{gather}
leaving the first $n-|\mu|$ variables unspecialized.

\begin{Theorem}\label{thm:tri-E}For generic $x=(x_1,\ldots,x_s)\in (\mathbb{C}^*)^s$ there exists a unique $\mathbb{C}$-basis $\{\mathsf{E}_\mu(x;z)\,|\,\mu\in \mathbb{N}^s, \, |\mu|\le n\}$ of the $\mathbb{C}$-linear space $\mathsf{H}_{s,n}^z$ satisfying
\begin{itemize}\itemsep=0pt
\item[{\rm (1)}] $|\lambda|= |\mu| \ \Longrightarrow \ \mathsf{E}_{\lambda}\big(x;x_{\mu} ^{n-|\mu|}\big)= \delta_{\lambda\mu}\prod_{i=1}^{n-|\lambda|}\prod_{j=1}^s \big(z_i-x_jt^{\lambda_j}\big)$,
\item[{\rm (2)}] $|\lambda|<|\mu| \ \Longrightarrow \ \mathsf{E}_{\lambda}\big(x;x_{\mu}^{n-|\mu|}\big)=0$.
\end{itemize}
\end{Theorem}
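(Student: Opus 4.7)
The plan is to establish the theorem through a filtration argument that simultaneously constructs the interpolation polynomials and verifies they form a basis. First I would verify the combinatorial identity $|\{\mu\in\mathbb{N}^s : |\mu|\le n\}|=\binom{n+s}{n}=\dim_\mathbb{C}\mathsf{H}^z_{s,n}$, a standard stars-and-bars count matching the cardinality of the proposed basis.

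Next I would introduce the filtration $V_k=\{f\in\mathsf{H}^z_{s,n} : f(x_\mu^{n-|\mu|})=0\text{ for all }\mu\text{ with }|\mu|>k\}$ for $k=-1,0,1,\ldots,n$, yielding $0=V_{-1}\subset V_0\subset\cdots\subset V_n=\mathsf{H}^z_{s,n}$. The crux is a divisibility lemma: for $f\in V_k$ and any $\mu$ with $|\mu|=k$, the restriction $g(z_1,\ldots,z_{n-k}):=f(x_\mu^{n-k})$ is divisible by $\prod_{i=1}^{n-k}\prod_{j=1}^{s}(z_i-x_jt^{\mu_j})$ in $\mathbb{C}[z_1,\ldots,z_{n-k}]$. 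To prove this, observe that substituting $z_{n-k}=x_jt^{\mu_j}$ into $g$ yields, after reordering the arguments of $f$ using its $\mathfrak{S}_n$-symmetry, the value $f(x_{\mu+\epsilon_j}^{n-k-1})$, which vanishes because $|\mu+\epsilon_j|=k+1>k$. Hence $g$ is divisible by $\prod_{j=1}^s(z_{n-k}-x_jt^{\mu_j})$ in $z_{n-k}$, and the $\mathfrak{S}_{n-k}$-symmetry of $g$ upgrades this to the full product. Since $\deg_{z_i}g\le s$ exactly matches the degree of $\prod_j(z_i-x_jt^{\mu_j})$, the quotient must be a scalar $c_\mu(f)\in\mathbb{C}$, so $g=c_\mu(f)\prod_{i,j}(z_i-x_jt^{\mu_j})$.

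The resulting linear map $\Phi_k:V_k/V_{k-1}\to\mathbb{C}^{\binom{k+s-1}{s-1}}$, $f\mapsto(c_\mu(f))_{|\mu|=k}$, is injective by the very definition of $V_{k-1}$. Hence $\dim V_k-\dim V_{k-1}\le\binom{k+s-1}{s-1}$, and summing over $k$ together with $V_n=\mathsf{H}^z_{s,n}$ and the hockey-stick identity $\sum_{k=0}^{n}\binom{k+s-1}{s-1}=\binom{n+s}{n}$ forces equality at every step. Therefore each $\Phi_k$ is an isomorphism, and for each $\lambda$ with $|\lambda|=k$ there is a unique element of $V_k/V_{k-1}$ with $c_\mu=\delta_{\lambda\mu}$ on the level set $|\mu|=k$. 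I would then fix a canonical representative $\mathsf{E}_\lambda\in V_k$ by an inductive procedure proceeding from $k=0$ upward, which also verifies that the resulting collection spans $\mathsf{H}^z_{s,n}$ and hence is a basis. An explicit realization can be produced using a polynomial dual Cauchy kernel $\Psi(z;w)=\prod_{i=1}^n\prod_{j=1}^s(z_i-w_j)$, specializing $w$ to appropriate reference points attached to the $x_\mu^{n-|\mu|}$ and inverting a triangular matrix, in direct analogy with the elliptic construction in Section~\ref{Section2}.

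The main obstacle is the divisibility lemma above: the symmetry trick that reduces vanishing at $z_{n-k}=x_jt^{\mu_j}$ to vanishing at a higher-level reference point is the heart of the argument, and the degree constraint $\deg_{z_i}f\le s$ built into $\mathsf{H}^z_{s,n}$ is essential for forcing the quotient to be a scalar rather than a nonconstant polynomial. Once these ingredients are in place, the dimension count and the inductive construction of the basis elements are essentially formal.
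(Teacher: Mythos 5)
Your argument is correct and takes a genuinely different route from the paper. The paper's Appendix~\ref{SectionB} proceeds constructively: it introduces the dual family $\mathsf{F}_\mu(x;w)=\prod_{i,j}\prod_{k=1}^{\mu_i}(x_it^{k-1}-w_j)$, shows by triangularity that $\{\mathsf{F}_\mu\}_{|\mu|\le n}$ is a basis of $\mathsf{H}_{n,s}^w$, defines $\mathsf{E}_\mu(x;z)$ as the coefficients in the expansion $\mathsf{\Psi}(z;w)=\sum_\mu\mathsf{E}_\mu(x;z)\mathsf{F}_\mu(x;w)$ of the polynomial dual Cauchy kernel, and deduces properties (1) and (2) from the identity $\mathsf{E}_\lambda\big(x;x_\mu^{n-|\mu|}\big)=\mathsf{E}_{\lambda-\mu}\big(xt^\mu;z_1,\dots,z_{n-|\lambda|}\big)$, itself a consequence of the multiplicativity $\mathsf{F}_\mu(x;w)\mathsf{F}_\nu(xt^\mu;w)=\mathsf{F}_{\mu+\nu}(x;w)$. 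Your filtration-plus-divisibility argument replaces all of this by a soft dimension count. Its key observation --- that specializing $z_{n-k}\mapsto x_jt^{\mu_j}$ in $x_\mu^{n-k}$ produces, after a permutation of coordinates, the higher reference point $x_{\mu+\epsilon_j}^{n-k-1}$ --- is exactly right, and together with $\deg_{z_i}\le s$ it forces the quotient by $\prod_{i,j}\big(z_i-x_jt^{\mu_j}\big)$ to be a scalar (for generic $x$, so that the $s$ values $x_jt^{\mu_j}$ are distinct). The maps $\Phi_k$ are then injective, and the hockey-stick identity forces them to be bijective. What your route gives up is the explicit machinery (the recursion \eqref{eq:B-recursion}, the product formulas, the leading coefficients of Lemma~\ref{lem:c of l.t.}) that the paper extracts from the kernel expansion and needs later; what it buys is a shorter, purely linear-algebraic existence proof.

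One caveat, which applies equally to the paper's own formulation: conditions (1) and (2) constrain $\mathsf{E}_\lambda$ only at reference points with $|\mu|\ge|\lambda|$, i.e., they determine only the coset of $\mathsf{E}_\lambda$ in your $V_{|\lambda|}/V_{|\lambda|-1}$. Since $V_{|\lambda|-1}\ne 0$ for $|\lambda|\ge 1$ (for instance $\mathsf{E}_{\epsilon_1}+\mathsf{E}_{\mathbf{0}}$ satisfies (1) and (2) just as well as $\mathsf{E}_{\epsilon_1}$, because $\mathsf{E}_{\mathbf{0}}$ vanishes at every reference point with $|\mu|\ge 1$), the basis is not literally unique; the canonical choice is singled out by the stronger identity $\mathsf{E}_\lambda\big(x;x_\mu^{n-|\mu|}\big)=\mathsf{E}_{\lambda-\mu}(xt^\mu;\cdot)$ valid for \emph{all} $\mu$, which the paper's construction yields automatically and which your ``inductive choice of representative'' would need to impose explicitly. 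This is an imprecision in the theorem statement rather than a defect of your argument.
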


\begin{remark*} The functions $\mathsf{E}_\mu(x,z)$ for $\mu=(\mu_1,\ldots,\mu_s)\in \mathbb{N}^s$, where $|\mu|\le n$, in the above theorem are the same as $\mathsf{E}^{\mu_0}_{\mu_1,\ldots,\mu_s}(x,z)$, $(\mu_0,\mu_1,\ldots,\mu_s)\in Z_{s+1,n}$, in Theorem~\ref{thm:InterpolationP}, because the map $(\mu_0,\mu_1,\ldots,\mu_s)\mapsto (\mu_1,\ldots,\mu_s)$ defines a bijection $Z_{s+1,n}\to \{\mu\in \mathbb{N}^s\,|\, |\mu|\le n\}$. Throughout this section, for simplicity we use the notation $\mathsf{E}_\mu(x,z)$ $(\mu\in \mathbb{N}^s,\, |\mu|\le n)$ instead of $\mathsf{E}^{\mu_0}_{\mu_1,\ldots,\mu_s}(x,z)$, $\mu\in Z_{s+1,n}$.
\end{remark*}

We provide a proof of this theorem in this subsection.

\begin{Example}The case $n=1$. For $x=(x_1,\ldots,x_s)$ the symbols $x_\mu^{n-|\mu|}$ of the case $n=1$ are written as
\begin{gather*}
x_{\bf 0}^1=z\in \mathbb{C}^*\qquad\mbox{and}\qquad x_{\epsilon_i}^0=x_i\in \mathbb{C}^* \qquad \text{for}\quad i=1,\ldots,s,
\end{gather*}
where ${\bf 0}=(0,\ldots,0)\in \mathbb{N}^s$ and $\epsilon_i=(0,\ldots,0,\overset{\text{\tiny$i$}}{\stackrel{\text{\tiny$\smile$}}{1}},0,\ldots,0)\in \mathbb{N}^s$. Then it immediately follows that the functions
\begin{gather}\label{eq:B-E(x;z)-n=1}
\mathsf{E}_{\bf 0}(x;z)=\prod_{i=1}^s(z-x_i) \qquad\mbox{and}\qquad \mathsf{E}_{\epsilon_i}(x;z) =\prod_{\substack{1\le j\le s\\ j\ne i}}\frac{z-x_j}{x_i-x_j}
\end{gather}
satisfy the conditions of Theorem \ref{thm:tri-E}.
\end{Example}

In the same way as (\ref{eq:Hsn-z}) we define
\begin{gather*}
\mathsf{H}_{n,s}^w=\big\{f(w)\in \mathbb{C}[w_1,\ldots,w_s]^{\frak{S}_s}\,|\,\deg_{w_i}f(w)\le n \ \text{for} \ i=1,\ldots,s\big\},
\end{gather*}
which satisfies $\dim_\mathbb{C}\mathsf{H}_{n,s}^w={n+s\choose s}$. For $x=(x_1,\ldots,x_s)\in (\mathbb{C}^*)^s$, $w=(w_1,\ldots,w_s)\in (\mathbb{C}^*)^s$ and $\mu=(\mu_1,\ldots,\mu_s)\in \mathbb{N}^s$ satisfying $|\mu|\le n$, we set
\begin{gather}\label{eq:B-F(x;w)}
\mathsf{F}_\mu(x;w)=\prod_{i=1}^s\prod_{j=1}^s\prod_{k=1}^{\mu_i}\big(x_it^{k-1}-w_j\big),
\end{gather}
whose degree with respect to $w$ is $|\mu|$, where $|\mu|\le n$. By definition we have $\mathsf{F}_\mu(x;w)\in \mathsf{H}_{n,s}^w$. For example, we have
\begin{gather} \label{eq:B-F(x;w)-n=1}
\mathsf{F}_{\bf 0}(x;w)=1\qquad\mbox{and}\qquad \mathsf{F}_{\epsilon_i}(x;w)=\prod_{j=1}^s (x_i-w_j).
\end{gather}
By definition we have the relation
\begin{gather}\label{eq:B-FF=F}
\mathsf{F}_\mu(x;w)\mathsf{F}_\nu\big(xt^{\mu};w\big)=\mathsf{F}_{\mu+\nu}(x;w),
\end{gather}
where $xt^\mu$ is given by \eqref{eq:xtmu}. This relation is the fundamental tool for studying $\mathsf{E}_\lambda(x;z)$ in the succeeding arguments.

\begin{Lemma}\label{lem:B-triangular-F}For each $\mu,\nu\in \mathbb{N}^s$, $\mathsf{F}_\mu(x;xt^\nu)=0$ unless $\mu_i\le \nu_i$ for $i=1,\ldots,s$. In particular, $\mathsf{F}_\mu(x;xt^\nu)=0$ for $\mu\succ \nu$. Moreover, if $x\in (\mathbb{C}^*)^s$ is generic, then $\mathsf{F}_\nu(x;xt^\nu)\ne 0$ for all $\nu\in\mathbb{N}^s$.
\end{Lemma}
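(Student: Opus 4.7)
The plan is to substitute $w = xt^\nu$ directly into the definition \eqref{eq:B-F(x;w)} and analyze the resulting product term by term. Writing out
\begin{gather*}
\mathsf{F}_\mu(x;xt^\nu) = \prod_{i=1}^s \prod_{j=1}^s \prod_{k=1}^{\mu_i} \big(x_i t^{k-1} - x_j t^{\nu_j}\big),
\end{gather*}
the key observation is to isolate the diagonal contribution $i = j$, where each such factor takes the form $x_i\big(t^{k-1} - t^{\nu_i}\big)$. If $\mu_i > \nu_i$ for some $i$, then the index $k = \nu_i + 1$ lies in the range $1 \le k \le \mu_i$, and the corresponding diagonal factor vanishes. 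This immediately gives the first assertion that $\mathsf{F}_\mu(x;xt^\nu) = 0$ unless $\mu_i \le \nu_i$ for all $i$.

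The lexicographic consequence then follows with no further work: if $\mu \succ \nu$ with respect to $\preceq$, then by definition there is an index $k$ with $\mu_k > \nu_k$ (and earlier coordinates agreeing), so the criterion just established forces $\mathsf{F}_\mu(x;xt^\nu) = 0$.

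For the non-vanishing of $\mathsf{F}_\nu(x;xt^\nu)$ at generic $x$, I would separate the remaining factors into two groups. The diagonal factors $x_i\big(t^{k-1} - t^{\nu_i}\big)$ with $1 \le k \le \nu_i$ never vanish, since $k-1$ runs over $\{0,1,\ldots,\nu_i-1\}$, which excludes $\nu_i$ (recall that $t = q^\tau$ with $0 < |q| < 1$ and $\tau$ generic, so $t$ is not a root of unity). The off-diagonal factors $x_i t^{k-1} - x_j t^{\nu_j}$ with $i \ne j$ are each nonzero on a Zariski-open dense subset of $(\mathbb{C}^*)^s$; since only finitely many such factors occur, imposing the genericity condition $x_i t^a \ne x_j t^b$ for the relevant integer exponents guarantees that the full product is nonzero.

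There is no serious obstacle here: the statement is essentially a direct inspection of the product defining $\mathsf{F}_\mu$, and the only bookkeeping required is to track which diagonal factor forces vanishing when $\mu_i > \nu_i$. The work is parallel to the proof of Lemma~\ref{lem:triangular-F} for the elliptic analogue $F_\mu(x;\eta_\nu(x))$, with the polynomial factors $u - v$ replacing the theta factors $e(u,v)$.
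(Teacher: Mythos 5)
Your argument is correct and follows essentially the same route as the paper: direct inspection of the product shows the diagonal factor with $j=i$, $k=\nu_i+1$ vanishes when $\mu_i>\nu_i$, and for $\mu=\nu$ the diagonal factors collect into $x_i^{\nu_i}t^{\binom{\nu_i}{2}}(t;t)_{\nu_i}$ while the off-diagonal factors are nonzero for generic $x$. No gaps.
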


This lemma implies that the matrix $\mathsf{F}=\big(\mathsf{F}_\mu(x;xt^\nu)\big)_{\!\mu,\nu\in\mathbb{N}^s\atop\!\! |\mu|,|\nu|\le n}$ is upper triangular, and also invertible if $x\in\mathbb{C}^{s}$ is generic.

\begin{proof} By definition, for $\mu,\nu\in \mathbb{N}^s$ we have $\mathsf{F}_\mu(x;xt^\nu)=\prod_{i=1}^s \prod_{j=1}^s \prod_{k=1}^{\mu_i} \big(x_it^{k-1}-x_jt^{\nu_j}\big)$. Thus if there exists $i\in \{1,\ldots,s\}$ such that $\nu_i <\mu_i$, then $\mathsf{F}_\mu(x;xt^\nu)=0$. If $\nu\prec\mu$, then $\nu_i<\mu_i$ for some $i\in \{1,2,\ldots,s\}$ by definition, and hence we obtain $\mathsf{F}_\mu(x;xt^\nu)=0$ if $\nu\prec\mu$. Moreover, from~\eqref{eq:B-F(x;w)}, we obtain
\begin{gather*}
\mathsf{F}_\nu(x;xt^\nu)=\prod_{i=1}^s x_i^{\nu_i}t^{\nu_i\choose 2}(t;t)_{\nu_i}\prod_{\substack{1\le j\le s\\ j\ne i}}\prod_{k=1}^{\nu_i}\big(x_it^{k-1}-x_jt^{\nu_j}\big)\ne0,
\end{gather*}
if we impose an appropriate genericity condition on $x\in (\mathbb{C}^*)^s$.
\end{proof}

\begin{Lemma}[duality] \label{lem:B-duality} For $z=(z_1,\ldots,z_n)\in (\mathbb{C}^*)^{n}$, $w=(w_1,\ldots,w_s)\in (\mathbb{C}^*)^{s}$ let $\mathsf{\Psi}(z;w)$ be the function defined by
\begin{gather}\label{eq:B-def-Psi}
\mathsf{\Psi}(z;w)= \prod_{i=1}^n \prod_{j=1}^s (z_i-w_j).
\end{gather}
Then for $x=(x_1,\ldots,x_s)\in (\mathbb{C}^*)^{s}$ the function $\mathsf{\Psi}(z;w)$ expands in terms of $\mathsf{F}_\mu(x;w)$, $|\mu|\le n$, as
\begin{gather}\label{eq:B-duality}
\mathsf{\Psi}(z;w)= \sum_{\substack{\mu\in \mathbb{N}^s\\[1pt] |\mu|\le n}}\mathsf{E}_{\mu}(x;z)\mathsf{F}_\mu(x;w),
\end{gather}
where $\mathsf{E}_{\mu}(x;z)$, $|\mu|\le n$, are the coefficients independent of $w$, and satisfy $\mathsf{E}_{\mu}(x;z)\in \mathsf{H}_{s,n}^z$, $|\mu|\le n$, as functions of~$z$.
\end{Lemma}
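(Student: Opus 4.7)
The plan is to deduce the expansion from Lemma~\ref{lem:B-triangular-F} by using $\{\mathsf{F}_\mu(x;w)\}_{|\mu|\le n}$ as a $\mathbb{C}$-basis of $\mathsf{H}_{n,s}^w$ and then inverting the resulting upper triangular evaluation matrix.

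First I would verify that, as a function of $w$ with $z$ held fixed, the kernel $\mathsf{\Psi}(z;w)=\prod_{i=1}^n\prod_{j=1}^s(z_i-w_j)$ is symmetric in $w_1,\ldots,w_s$ and of degree exactly $n$ in each $w_j$, so that $\mathsf{\Psi}(z;\,\cdot\,)\in \mathsf{H}_{n,s}^w$. Next, since the index set $\{\mu\in\mathbb{N}^s\mid |\mu|\le n\}$ has cardinality $\binom{n+s}{s}=\dim_\mathbb{C}\mathsf{H}_{n,s}^w$, and since Lemma~\ref{lem:B-triangular-F} states that the evaluation matrix $\big(\mathsf{F}_\mu(x;xt^\nu)\big)_{\mu,\nu}$ is upper triangular with nonvanishing diagonal for generic $x$, the family $\{\mathsf{F}_\mu(x;w)\}_{|\mu|\le n}$ is linearly independent in $\mathsf{H}_{n,s}^w$ and therefore forms a $\mathbb{C}$-basis of $\mathsf{H}_{n,s}^w$ for generic $x$.

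This basis property immediately yields a unique expansion $\mathsf{\Psi}(z;w)=\sum_{|\mu|\le n}\mathsf{E}_\mu(x;z)\,\mathsf{F}_\mu(x;w)$, with coefficients $\mathsf{E}_\mu(x;z)$ depending only on $z$ (and on $x$, not $w$). To verify that $\mathsf{E}_\mu(x;z)\in \mathsf{H}_{s,n}^z$, I would specialize $w\to xt^\nu$ for each $\nu\in\mathbb{N}^s$ with $|\nu|\le n$ to obtain the finite linear system
\begin{equation*}
\mathsf{\Psi}(z;xt^\nu)=\sum_{|\mu|\le n}\mathsf{E}_\mu(x;z)\,\mathsf{F}_\mu(x;xt^\nu),
\end{equation*}
and invert its upper triangular coefficient matrix (Lemma~\ref{lem:B-triangular-F}), expressing each $\mathsf{E}_\mu(x;z)$ as a $\mathbb{C}$-linear combination, with $x$-dependent scalar coefficients, of the specializations $\mathsf{\Psi}(z;xt^\nu)=\prod_{i=1}^n\prod_{j=1}^s(z_i-x_jt^{\nu_j})$. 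Each such specialization is manifestly symmetric in $z_1,\ldots,z_n$ and of degree $s$ in each $z_i$, hence lies in $\mathsf{H}_{s,n}^z$; the same therefore holds for $\mathsf{E}_\mu(x;z)$.

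The argument is the linear-algebra analogue of the inductive proof of the elliptic duality identity~\eqref{eq:duality} in Section~\ref{Section2}, with the dual Cauchy kernel replaced by its polynomial counterpart $\mathsf{\Psi}(z;w)$. The only substantive input beyond the dimension count is Lemma~\ref{lem:B-triangular-F}, so there is no real obstacle; the point to be slightly careful about is that the expansion is valid only for generic $x\in (\mathbb{C}^*)^s$, since the triangular inversion requires all the diagonal entries $\mathsf{F}_\nu(x;xt^\nu)$ to be nonzero. For non-generic $x$ one would then recover the expansion by polynomial continuation in $x$.
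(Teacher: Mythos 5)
Your proposal is correct and follows essentially the same route as the paper's proof: use Lemma~\ref{lem:B-triangular-F} plus the dimension count to see that $\{\mathsf{F}_\mu(x;w)\}_{|\mu|\le n}$ is a basis of $\mathsf{H}_{n,s}^w$, expand $\mathsf{\Psi}(z;\,\cdot\,)\in\mathsf{H}_{n,s}^w$ in it, and then specialize $w=xt^\nu$ and invert the upper triangular matrix to write each $\mathsf{E}_\mu(x;z)$ as a combination of the $\mathsf{\Psi}(z;xt^\nu)\in\mathsf{H}_{s,n}^z$. Your remark about genericity of $x$ is a reasonable extra precaution but does not change the argument.
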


\begin{Example} The case $n=1$. From (\ref{eq:B-E(x;z)-n=1}) and (\ref{eq:B-F(x;w)-n=1}), the identity (\ref{eq:B-duality}) follows from
\begin{gather}
\prod_{j=1}^s (z-w_j) =\prod_{i=1}^s(z-x_i) +\sum_{i=1}^s\left(\prod_{j=1}^s (x_i-w_j)\right)\prod_{\substack{1\le j\le s\\ j\ne i}}\frac{z-x_j}{x_i-x_j}\nonumber\\
\hphantom{\prod_{j=1}^s (z-w_j)}{} =\mathsf{E}_{\bf 0}(x;z)\mathsf{F}_{\bf 0}(x;w) +\sum_{i=1}^s\mathsf{E}_{\epsilon_i}(x;z)\mathsf{F}_{\epsilon_i}(x;w).\label{eq:B-duality-n=1}
\end{gather}
\end{Example}

\begin{proof}From Lemma \ref{lem:B-triangular-F} the set $\{\mathsf{F}_\mu(x;w)\,|\, \mu\in \mathbb{N}^s, |\mu|\le n\}\subset \mathsf{H}_{n,s}^w$ is linearly independent and $|\{\mathsf{F}_\mu(x;w)\,|\, \mu\in \mathbb{N}^s, |\mu|\le n\}|={n+s\choose n}$. This indicates that $\{\mathsf{F}_\mu(x;w)\,|\, \mu\in \mathbb{N}^s, |\mu|\le n\}$ is a~basis of the $\mathbb{C}$ linear space $\mathsf{H}_{n,s}^w$. Since $\mathsf{\Psi}(z;w)\in \mathsf{H}_{n,s}^w$, the function $\mathsf{\Psi}(z;w)$ of $w$ is expanded in terms of $\mathsf{F}_\mu(x;w)$, $\mu\in \mathbb{N}^s$, $|\mu|\le n$, as the form~(\ref{eq:B-duality}).

Next we prove $\mathsf{E}_{\lambda}(x;z)\in \mathsf{H}_{s,n}^z$. From (\ref{eq:B-duality}) we have
\begin{gather}\label{eq:B-duality-special}
\mathsf{\Psi}(z;xt^\nu)=\sum_{\substack{\mu\in \mathbb{N}^s\\[1pt] |\mu|\le n}}\mathsf{E}_{\mu}(x;z)\mathsf{F}_\mu\big(x;xt^\nu\big).
\end{gather}
We denote by $\mathsf{G}=\big(\mathsf{G}_{\mu\nu}(x)\big)_{\!\mu,\nu\in\mathbb{N}^s\atop\! |\mu|,|\nu|\le n}$ the inverse matrix of $\mathsf{F}=\big(\mathsf{F}_\mu(x;xt^\nu)\big)_{\!\mu,\nu\in\mathbb{N}^s\atop\! |\mu|,|\nu|\le n}$. Then, by Lemma~\ref{lem:B-triangular-F}, (\ref{eq:B-duality-special}) can be rewritten as
\begin{gather*}
\mathsf{E}_{\lambda}(x;z)=\sum_{\substack{\nu\in \mathbb{N}^s\\[1pt] |\nu|\le n}}\mathsf{\Psi}(z;xt^\nu)\mathsf{G}_{\nu\lambda}(x),\qquad \text{where} \quad |\lambda|\le n.
\end{gather*}
Since $\mathsf{\Psi}(z;xt^\nu)\in \mathsf{H}_{s,n}^z$, we obtain $\mathsf{E}_{\lambda}(x;z)\in \mathsf{H}_{s,n}^z$.
\end{proof}

\begin{Lemma}\label{lem:B-E_0(x;z)}For $z=(z_1,\ldots,z_n)\in (\mathbb{C}^*)^{n}$, the explicit form of $\mathsf{E}_{\bf 0}(x;z)$ is given by
\begin{gather}\label{eq:B-E_0(x;z)}
\mathsf{E}_{\bf 0}(x;z)=\prod_{i=1}^n\prod_{j=1}^s(z_i-x_j).
\end{gather}
\end{Lemma}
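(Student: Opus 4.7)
The plan is to deduce this lemma immediately from the duality expansion in Lemma~\ref{lem:B-duality} by specializing the auxiliary variable $w$. Concretely, I would set $w = x = (x_1,\ldots,x_s)$ in the identity
\begin{gather*}
\mathsf{\Psi}(z;w) = \sum_{\substack{\mu\in \mathbb{N}^s\\ |\mu|\le n}} \mathsf{E}_\mu(x;z)\mathsf{F}_\mu(x;w),
\end{gather*}
which corresponds to the choice $w = xt^{\nu}$ with $\nu = \mathbf{0}$.

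The key observation is that the triangularity statement of Lemma~\ref{lem:B-triangular-F} is especially sharp at $\nu = \mathbf{0}$: the vanishing condition $\mathsf{F}_\mu(x;xt^\nu)=0$ unless $\mu_i\le \nu_i$ for all $i$ forces $\mu_i\le 0$ for every $i$, hence $\mu = \mathbf{0}$. Therefore, of all the terms on the right-hand side of the expansion, only the $\mu=\mathbf{0}$ term survives at $w=x$. Using $\mathsf{F}_{\mathbf{0}}(x;w) = 1$ (empty product) together with the definition \eqref{eq:B-def-Psi}, this single surviving term yields
\begin{gather*}
\mathsf{E}_{\mathbf{0}}(x;z) = \mathsf{E}_{\mathbf{0}}(x;z)\mathsf{F}_{\mathbf{0}}(x;x) = \mathsf{\Psi}(z;x) = \prod_{i=1}^n \prod_{j=1}^s (z_i - x_j),
\end{gather*}
which is exactly \eqref{eq:B-E_0(x;z)}.

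There is essentially no obstacle here; the work has already been done in Lemma~\ref{lem:B-duality} (construction of the $\mathsf{E}_\mu$ via expansion of the kernel $\mathsf{\Psi}$) and Lemma~\ref{lem:B-triangular-F} (triangular vanishing of the $\mathsf{F}_\mu$ on the grid $xt^\nu$). The only point worth checking explicitly is that $\mathbf{0}\in\mathbb{N}^s$ is the unique multi-index compatible with the inequalities $\mu_i\le 0$, so no genericity condition on $x$ is needed for this particular evaluation (unlike for the $\nu\ne \mathbf{0}$ specializations that one would use to compute the other $\mathsf{E}_\mu$).
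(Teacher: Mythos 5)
Your proposal is correct and follows essentially the same route as the paper: the paper also specializes the duality expansion at $w=x$ and notes that $\mathsf{F}_\mu(x;x)=\delta_{\mu\mathbf{0}}$ (which is exactly the $\nu=\mathbf{0}$ case of the triangularity you invoke), leaving only the $\mu=\mathbf{0}$ term, so $\mathsf{E}_{\mathbf{0}}(x;z)=\mathsf{\Psi}(z;x)$. Your added remark that no genericity on $x$ is needed for this particular specialization is accurate but not essential.
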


\begin{proof}From the expansion (\ref{eq:B-duality}) of the case $w=x$, we have
\begin{gather*}
\mathsf{\Psi}(z;x)=\sum_{\substack{\mu\in \mathbb{N}^s\\[1pt] |\mu|\le n}}\mathsf{E}_{\mu}(x;z)\mathsf{F}_\mu(x;x).
\end{gather*}
Since we have $\mathsf{F}_\mu(x;x)=\delta_{\mu{\bf 0}}$ by definition, we obtain $\mathsf{E}_{\bf 0}(x;z)=\mathsf{\Psi}(z;x)$, which coincides with~(\ref{eq:B-E_0(x;z)}).
\end{proof}

\begin{Lemma}\label{lem:tri-E2} For $z=(z_1,\ldots,z_n)\in (\mathbb{C}^*)^{n}$, the functions $\mathsf{E}_\mu(x;z)$, $|\mu|\le n$, defined by the relation \eqref{eq:B-duality} satisfy the following:
\begin{itemize}\itemsep=0pt
\item[{\rm (1)}] $|\lambda|= |\mu| \ \Longrightarrow\ \mathsf{E}_{\lambda}\big(x;x_{\mu}^{n-|\mu|}\big)= \delta_{\lambda\mu}\prod_{i=1}^{n-|\lambda|}\prod_{j=1}^s \big(z_i-x_jt^{\lambda_j}\big)$,
\item[{\rm (2)}] $|\lambda|<|\mu| \ \Longrightarrow\ \mathsf{E}_{\lambda}\big(x;x_{\mu}^{n-|\mu|}\big)=0$.
\end{itemize}
As a consequence of the above, the set $\{\mathsf{E}_\mu(x;z)\,|\,|\mu|\le n\}\subset \mathsf{H}_{s,n}^z$ is linearly independent, i.e., $\{\mathsf{E}_\mu(x;z)\,|\,|\mu|\le n\}$ is a basis of $\mathsf{H}_{s,n}^z$.
\end{Lemma}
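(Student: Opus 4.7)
The plan is to exploit the fact that the kernel $\mathsf{\Psi}(z;w)$ factorizes whenever coordinates of $z$ are specialized, combined with the product formula \eqref{eq:B-FF=F} for $\mathsf{F}$, and then extract the assertions by matching coefficients on both sides of the duality \eqref{eq:B-duality}.

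First I would specialize \eqref{eq:B-duality} at $z = x_\mu^{n-|\mu|}$. From the definition \eqref{eq:B-def-Psi} of $\mathsf{\Psi}$, the $|\mu|$ specialized coordinates contribute exactly the factor $\mathsf{F}_\mu(x;w)$, so that
\begin{gather*}
\mathsf{\Psi}\bigl(x_\mu^{n-|\mu|};w\bigr) = \mathsf{F}_\mu(x;w)\,\mathsf{\Psi}(z';w), \qquad z' = (z_1,\ldots,z_{n-|\mu|}).
\end{gather*}
Applying duality \eqref{eq:B-duality} in $n-|\mu|$ variables with parameter $xt^\mu$ in place of $x$, I would expand $\mathsf{\Psi}(z';w)$ as $\sum_{|\nu|\le n-|\mu|}\mathsf{E}^{(n-|\mu|)}_\nu(xt^\mu;z')\mathsf{F}_\nu(xt^\mu;w)$, and then use \eqref{eq:B-FF=F} to rewrite $\mathsf{F}_\mu(x;w)\mathsf{F}_\nu(xt^\mu;w) = \mathsf{F}_{\mu+\nu}(x;w)$. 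The left-hand side of the evaluated duality therefore reads $\sum_{|\nu|\le n-|\mu|}\mathsf{E}^{(n-|\mu|)}_\nu(xt^\mu;z')\mathsf{F}_{\mu+\nu}(x;w)$, while the right-hand side reads $\sum_{|\lambda|\le n}\mathsf{E}^{(n)}_\lambda(x;x_\mu^{n-|\mu|})\mathsf{F}_\lambda(x;w)$.

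The key step is to observe that $\{\mathsf{F}_\lambda(x;w) : |\lambda|\le n\}$ is linearly independent as a family in $w$, which follows from Lemma \ref{lem:B-triangular-F} since the matrix $(\mathsf{F}_\mu(x;xt^\nu))_{\mu,\nu}$ is upper triangular with nonzero diagonal for generic $x$. Matching coefficients of $\mathsf{F}_\lambda(x;w)$ yields
\begin{gather*}
\mathsf{E}^{(n)}_\lambda\bigl(x;x_\mu^{n-|\mu|}\bigr) = \begin{cases} \mathsf{E}^{(n-|\mu|)}_{\lambda-\mu}(xt^\mu;z') & \text{if } \lambda-\mu \in \mathbb{N}^s,\\ 0 & \text{otherwise}.\end{cases}
\end{gather*}
If $|\lambda|<|\mu|$, then $\lambda-\mu\notin\mathbb{N}^s$ for trivial reasons, giving (2). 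If $|\lambda|=|\mu|$ with $\lambda\ne\mu$, coordinate sums agree but $\lambda-\mu$ still has a negative entry, so again the value is zero. If $\lambda=\mu$, the right-hand side equals $\mathsf{E}^{(n-|\mu|)}_{\mathbf{0}}(xt^\mu;z')$, which by Lemma \ref{lem:B-E_0(x;z)} is exactly $\prod_{i=1}^{n-|\mu|}\prod_{j=1}^{s}(z_i - x_jt^{\mu_j})$, establishing (1).

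Finally, to deduce the basis statement, I would proceed by descending induction on $|\mu|$ against a hypothetical linear relation $\sum_{|\mu|\le n} c_\mu \mathsf{E}_\mu(x;z)=0$. Evaluating at $z = x_\mu^{n-|\mu|}$ for $|\mu| = n$ first, property (2) eliminates all terms with $|\lambda|<n$ and property (1) reduces the relation to $c_\mu = 0$; descending through $|\mu| = n-1, n-2, \ldots, 0$, at each stage property (2) kills the lower strata, property (1) isolates $c_\mu$ times the nonvanishing polynomial $\prod_{i=1}^{n-|\mu|}\prod_{j=1}^{s}(z_i - x_jt^{\mu_j})$ in the free variables $z_1,\ldots,z_{n-|\mu|}$, and vanishing of this product forces $c_\mu = 0$. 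Since $|\{\mu\in\mathbb{N}^s : |\mu|\le n\}| = \binom{n+s}{n} = \dim_{\mathbb{C}}\mathsf{H}^z_{s,n}$, linear independence upgrades to the basis conclusion. The only genuinely delicate point is the coefficient comparison, which rests entirely on Lemma \ref{lem:B-triangular-F}; the rest is bookkeeping.
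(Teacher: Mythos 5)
Your proposal is correct and follows essentially the same route as the paper's proof: specialize the duality \eqref{eq:B-duality} at $z=x_\mu^{n-|\mu|}$, factor out $\mathsf{F}_\mu(x;w)$, re-expand via \eqref{eq:B-FF=F}, and compare coefficients in the basis $\{\mathsf{F}_\lambda(x;w)\}$ to obtain $\mathsf{E}_\lambda\bigl(x;x_\mu^{n-|\mu|}\bigr)=\mathsf{E}_{\lambda-\mu}(xt^\mu;z')$ (zero when $\lambda-\mu\notin\mathbb{N}^s$), from which (1), (2) and the basis statement follow. The descending-induction argument you give for linear independence is a correct spelling-out of what the paper leaves as an immediate consequence.
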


\begin{proof}
We calculate $\mathsf{\Psi}(z;w)$ of the case $z=x_{\mu}^{n-|\mu|}$ in two ways. Using the identity (\ref{eq:B-duality}) partially for $\mathsf{\Psi}\big(x_{\mu}^{n-|\mu|};w\big)$, we have the expansion
\begin{gather}\label{eq:B-duality-1}
\mathsf{\Psi}(x_{\mu}^{n-|\mu|};w)=\mathsf{F}_\mu(x;w)\prod_{i=1}^{n-|\mu|}\prod_{j=1}^s(z_i-w_j)
=\mathsf{F}_\mu(x;w)\sum_{\substack{\nu\in \mathbb{N}^{s}\\[1pt] |\nu|\le n-|\mu|}}\mathsf{E}_\nu(xt^\mu;z')\mathsf{F}_\nu(xt^\mu;w),
\end{gather}
where $z'=(z_1,\ldots,z_{n-|\mu|})\in (\mathbb{C}^*)^{n-|\mu|}$. By property (\ref{eq:B-FF=F}) of $\mathsf{F}_\mu(x;w)$, the expression~(\ref{eq:B-duality-1}) may be rewritten as
\begin{gather}
\mathsf{\Psi}(x_{\mu}^{n-|\mu|};w)=\sum_{\substack{\nu\in \mathbb{N}^{s}\\[1pt] |\nu+\mu|\le n}}\mathsf{E}_\nu(xt^\mu;z')\mathsf{F}_{\nu+\mu}(x;w)=\sum_{\substack{\lambda-\mu\in\mathbb{N}^{s}\\[1pt] |\lambda|\le n}}
\mathsf{E}_{\lambda-\mu}(xt^\mu;z')\mathsf{F}_{\lambda}(x;w)\nonumber\\
\hphantom{\mathsf{\Psi}(x_{\mu}^{n-|\mu|};w)}{} =\sum_{\substack{\lambda\in\mathbb{N}^{s}\\[1pt] |\lambda|\le n}}\mathsf{E}_{\lambda-\mu}(xt^\mu;z')\mathsf{F}_{\lambda}(x;w),\label{eq:B-duality-2}
\end{gather}
where the coefficient $\mathsf{E}_{\lambda-\mu}(xt^\mu;z')$ in the last line is regarded as 0 if $\lambda-\mu\not\in\mathbb{N}^{s}$. On the other hand, from (\ref{eq:B-duality}) with $z=x_{\mu}^{n-|\mu|}$ we have
\begin{gather}\label{eq:B-duality-3}
\mathsf{\Psi}(x_{\mu}^{n-|\mu|};w)=\sum_{\substack{\lambda\in\mathbb{N}^{s}\\[1pt] |\lambda|\le n} }\mathsf{E}_{\lambda}\big(x;x_{\mu}^{n-|\mu|}\big)\mathsf{F}_\lambda(x;w).
\end{gather}
Since $\mathsf{F}_\lambda(x;w)$ is a basis of $\mathsf{H}_{n,s}^w$, equating coefficients of $\mathsf{F}_\lambda(x;w)$ on (\ref{eq:B-duality-2}) and (\ref{eq:B-duality-3}), we have
\begin{gather}\label{eq:B-E(x;xtmu)}
\mathsf{E}_{\lambda}\big(x;x_{\mu}^{n-|\mu|}\big)=\mathsf{E}_{\lambda-\mu}\big(xt^\mu;z_1,\ldots,z_{n-|\lambda|}\big),
\end{gather}
where the right-hand side of (\ref{eq:B-E(x;xtmu)}) vanishes if $\lambda-\mu\not\in\mathbb{N}^{s}$.

Now we prove the vanishing properties (1), (2) in Lemma \ref{lem:tri-E2}. (1)~We first consider the case $|\lambda|=|\mu|$ and $\lambda\ne \mu$. Then we have $\lambda-\mu\not\in \mathbb{N}^s$. From (\ref{eq:B-E(x;xtmu)}) we therefore obtain $\mathsf{E}_{\lambda}\big(x;x_{\mu}^{n-|\mu|}\big)=0$. Next we suppose the case $|\lambda|=|\mu|$ and $\lambda=\mu$. Because of Lemma~\ref{lem:B-E_0(x;z)}, from~(\ref{eq:B-E(x;xtmu)}) we obtain $\mathsf{E}_{\lambda}\big(x;x_{\lambda}^{n-|\lambda|}\big)=\mathsf{E}_{\bf 0}\big(xt^\lambda;z'\big)=\prod_{i=1}^{n-|\lambda|}\prod_{j=1}^s\big(z_i-x_jt^{\lambda_j}\big)$. (2)~Suppose the case $|\lambda|<|\mu|$. Then we have $\lambda-\mu\not\in \mathbb{N}^s$. From (\ref{eq:B-E(x;xtmu)}) we obtain $\mathsf{E}_{\lambda}\big(x;x_{\mu}^{n-|\mu|}\big)=0$.
\end{proof}

\subsection[Explicit expression for $\mathsf{E}_{\lambda}(x;z)$]{Explicit expression for $\boldsymbol{\mathsf{E}_{\lambda}(x;z)}$}

\begin{Lemma}\label{lem:B-E-explicit-1}For $z\in (\mathbb{C}^*)^{n}$, $x\in (\mathbb{C}^*)^{s}$, the functions $\mathsf{E}_\lambda(x;z)$ for $\lambda\in \mathbb{N}^s$, where $|\lambda|\le n$, may be expressed as
\begin{gather}
\mathsf{E}_\lambda(x;z)= \sum_{\substack{{\substack{(i_1,\ldots,i_n) \in \{0,1,\ldots,s\}^n}}\\[1pt] \epsilon_{i_1}+\cdots+\epsilon_{i_n}=\lambda}}
\mathsf{E}_{\epsilon_{i_1}}(x;z_1)\mathsf{E}_{\epsilon_{i_2}}\big(xt^{\epsilon_{i_1}};z_2\big) \mathsf{E}_{\epsilon_{i_3}}\big(xt^{\epsilon_{i_1}+\epsilon_{i_2}};z_3\big)\cdots\nonumber\\
\hphantom{\mathsf{E}_\lambda(x;z)= \sum_{\substack{{\substack{(i_1,\ldots,i_n) \in \{0,1,\ldots,s\}^n,}}\\[1pt] \epsilon_{i_1}+\cdots+\epsilon_{i_n}=\lambda}}}{}\times \mathsf{E}_{\epsilon_{i_r}}\big(xt^{\epsilon_{i_1}+\cdots+\epsilon_{i_{n-1}}};z_n\big),\label{eq:B-E-explicit-1}
\end{gather}
where the symbol $\epsilon_0$ denotes $\epsilon_0={\bf 0}\in \mathbb{N}^s$ and $\mathsf{E}_{\epsilon_{i_k}}(x;z_k)$, $k=1,\ldots,n$, are given by \eqref{eq:B-E(x;z)-n=1}. In other words, if $|\lambda|=r$, $r=0,1,\ldots,n$, then $\mathsf{E}_\lambda(x;z)$ are rewritten as
\begin{gather*}
\mathsf{E}_\lambda(x;z) =\sum_{1\le i_1<\cdots <i_r\le n} \sum_{\substack{{(j_1,\ldots,j_r)\in \{1,\ldots,s\}^r}\\ \epsilon_{j_1}+\cdots+\epsilon_{j_r}=\lambda}}
\bigg[\mathsf{E}_{\bf 0}(x;z_1)\cdots \mathsf{E}_{\bf 0}(x;z_{i_1-1})\mathsf{E}_{\epsilon_{j_1}}(x;z_{i_1})\\
\hphantom{\mathsf{E}_\lambda(x;z) =}{} \times \mathsf{E}_{\bf 0}\big(xt^{\epsilon_{j_1}};z_{i_1+1}\big)\cdots \mathsf{E}_{\bf 0}\big(xt^{\epsilon_{j_1}};z_{i_2-1}\big)\mathsf{E}_{\epsilon_{j_2}}\big(xt^{\epsilon_{j_1}};z_{i_2}\big)\cdots\\
\hphantom{\mathsf{E}_\lambda(x;z) =}{} \times \mathsf{E}_{\bf 0}\big(xt^{\epsilon_{j_1}+\cdots+\epsilon_{j_{r-1}}};z_{i_{r-1}+1}\big)\cdots
\mathsf{E}_{\bf 0}\big(xt^{\epsilon_{j_1}+\cdots+\epsilon_{j_{r-1}}};z_{i_r-1}\big)\mathsf{E}_{\epsilon_{j_r}}\big(xt^{\epsilon_{j_1}+\cdots+\epsilon_{j_{r-1}}};z_{i_r}\big)\\
\hphantom{\mathsf{E}_\lambda(x;z) =}{} \times \mathsf{E}_{\bf 0}\big(xt^{\epsilon_{j_1}+\cdots+\epsilon_{j_{r}}};z_{i_{r}+1}\big)\cdots \mathsf{E}_{\bf 0}\big(xt^{\epsilon_{j_1}+\cdots+\epsilon_{j_{r}}};z_n\big)\bigg].
\end{gather*}
In particular, the leading term of $\mathsf{E}_\lambda(x;z)$ with $|\lambda|=r$ as a symmetric polynomial in $z$ is equal to $m_{((s-1)^rs^{n-r})}(z)$ up to a constant.
\end{Lemma}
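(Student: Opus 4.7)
The plan is to imitate the strategy used for the elliptic interpolation functions in Section~\ref{Section2}: first establish a one-step recursion in the number of variables by means of the dual Cauchy kernel and the multiplicativity \eqref{eq:B-FF=F} of $\mathsf{F}_\mu$, and then iterate.

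Let $\mathsf{E}_\lambda^{(n)}(x;z_1,\ldots,z_n)$ denote the coefficients defined implicitly by \eqref{eq:B-duality}. Writing $\mathsf{\Psi}(z;w)=\prod_{i=1}^{n-1}\prod_{j=1}^s(z_i-w_j)\cdot\prod_{j=1}^s(z_n-w_j)$ and applying \eqref{eq:B-duality} to the first factor with parameter $x$ and to the second factor with parameter $xt^\mu$, I obtain
\begin{gather*}
\mathsf{\Psi}(z;w)=\sum_{\substack{\mu\in\mathbb{N}^s\\ |\mu|\le n-1}}\sum_{\substack{\nu\in\mathbb{N}^s\\ |\nu|\le 1}}\mathsf{E}_\mu^{(n-1)}(x;z_1,\ldots,z_{n-1})\mathsf{E}_\nu^{(1)}(xt^\mu;z_n)\mathsf{F}_\mu(x;w)\mathsf{F}_\nu(xt^\mu;w).
\end{gather*}
By \eqref{eq:B-FF=F}, $\mathsf{F}_\mu(x;w)\mathsf{F}_\nu(xt^\mu;w)=\mathsf{F}_{\mu+\nu}(x;w)$. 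Comparing with the direct expansion $\mathsf{\Psi}(z;w)=\sum_{|\lambda|\le n}\mathsf{E}_\lambda^{(n)}(x;z)\mathsf{F}_\lambda(x;w)$ and invoking the linear independence of $\{\mathsf{F}_\lambda(x;w)\}$ (Lemma~\ref{lem:B-triangular-F}), I conclude the one-step recursion
\begin{gather*}
\mathsf{E}_\lambda^{(n)}(x;z_1,\ldots,z_n)=\sum_{\substack{\nu\in\mathbb{N}^s\\ |\nu|\le 1}}\mathsf{E}_{\lambda-\nu}^{(n-1)}(x;z_1,\ldots,z_{n-1})\mathsf{E}_\nu^{(1)}(xt^{\lambda-\nu};z_n),
\end{gather*}
with the convention that $\mathsf{E}_{\lambda-\nu}^{(n-1)}=0$ unless $\lambda-\nu\in\mathbb{N}^s$. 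This is the polynomial analogue of \eqref{eq:E-recurrence2}.

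Iterating this recursion on $n$, starting from the base case $n=1$ where $\mathsf{E}_{\epsilon_i}^{(1)}(x;z_1)$ is defined by \eqref{eq:B-E(x;z)-n=1} (and $\epsilon_0=\mathbf{0}$ accounts for the $\mathsf{E}_{\mathbf 0}$ factor), produces exactly the summation \eqref{eq:B-E-explicit-1} indexed by sequences $(i_1,\ldots,i_n)\in\{0,1,\ldots,s\}^n$ with $\epsilon_{i_1}+\cdots+\epsilon_{i_n}=\lambda$. No nontrivial combinatorial identity is required; the only bookkeeping is that, at step $k$, the shift parameter becomes $xt^{\epsilon_{i_1}+\cdots+\epsilon_{i_{k-1}}}$, which matches the telescoping of \eqref{eq:B-FF=F}.

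Finally, to identify the leading term, observe from \eqref{eq:B-E(x;z)-n=1} that $\deg_{z_k}\mathsf{E}_{\mathbf 0}(\,\cdot\,;z_k)=s$ while $\deg_{z_k}\mathsf{E}_{\epsilon_i}(\,\cdot\,;z_k)=s-1$ for $i\ne 0$. A term in \eqref{eq:B-E-explicit-1} with $|\lambda|=r$ contains exactly $r$ factors of the latter type (those indexed by $i_k\ne 0$) and $n-r$ factors of the former type, so in the corresponding monomial, exactly $r$ of the variables $z_k$ appear to the power $s-1$ and the remaining $n-r$ variables appear to the power $s$. Since $\mathsf{E}_\lambda(x;z)\in \mathsf{H}_{s,n}^z$ is $\mathfrak{S}_n$-symmetric, collecting these contributions yields the monomial symmetric polynomial $m_{((s-1)^r s^{n-r})}(z)$ (with some nonzero coefficient, computed explicitly in the remark following Theorem~\ref{thm:InterpolationP}). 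The one nontrivial point will be to verify that this coefficient is indeed nonzero for generic $x$, which amounts to showing that the $r$ inner sums over $(j_1,\ldots,j_r)$ never produce a total cancellation; this I expect to be the main (still routine) obstacle, and it can be settled by a direct generic argument or by specializing to the leading coefficient formula of Lemma~\ref{lem:c of l.t.}.
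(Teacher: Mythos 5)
Your proposal is correct and follows essentially the same route as the paper: both rest on expanding the dual Cauchy kernel $\mathsf{\Psi}(z;w)$ via the one-variable identity \eqref{eq:B-duality-n=1}, telescoping the $\mathsf{F}$-factors with \eqref{eq:B-FF=F}, and comparing coefficients in the basis $\{\mathsf{F}_\lambda(x;w)\}$; the paper expands all $n$ single-variable factors at once while you peel off one variable and iterate (which is exactly the recursion \eqref{eq:B-recursion} recorded in the paper's remark), a purely cosmetic difference. Your treatment of the leading term, including deferring the nonvanishing of the coefficient to Lemma~\ref{lem:c of l.t.}, also matches the paper.
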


Before proving Lemma \ref{lem:B-E-explicit-1}, we present several special cases below.

\begin{Example} If $|\lambda|=0$, i.e., $\lambda={\bf 0}$, then $\mathsf{E}_{\bf 0}(x;z)=\prod_{i=1}^n\mathsf{E}_{\bf 0}(x;z_i)=\prod_{i=1}^n\prod_{j=1}^s(z_i-x_j)$, which we already saw in~(\ref{eq:B-E_0(x;z)}).
\end{Example}

\begin{Example}\label{ex:B-E-explicit-1-2} If $|\lambda|=n$, then
\begin{gather*}
\mathsf{E}_\lambda(x;z)=\!\!\sum_{\substack{{(i_1,\ldots,i_n)\in \{1,\ldots,s\}^n}\\ \epsilon_{i_1}+\cdots+\epsilon_{i_n}=\lambda}}\!\!\!\!
\mathsf{E}_{\epsilon_{i_1}}(x;z_1)\mathsf{E}_{\epsilon_{i_2}}(xt^{\epsilon_{i_1}};z_2)\mathsf{E}_{\epsilon_{i_3}}(xt^{\epsilon_{i_1}+\epsilon_{i_2}};z_3)\cdots
\mathsf{E}_{\epsilon_{i_r}}(xt^{\epsilon_{i_1}+\cdots+\epsilon_{i_{n-1}}};z_n)\\
\hphantom{\mathsf{E}_\lambda(x;z)}{} =\sum_{\substack{K_1\sqcup\cdots\sqcup K_s \\ =\{1,2,\ldots,n\}}}\prod_{i=1}^s\prod_{k\in K_i}\prod_{\substack{1\le j\le s\\ j\ne i}}\frac{z_{k}-x_jt^{\lambda_j^{(k-1)}}}{x_it^{\lambda_i^{(k-1)}}-x_jt^{\lambda_j^{(k-1)}}},
\end{gather*}
where $\lambda_i^{(k)}=|K_i\cap\{1,2,\ldots,k\}|$, and the summation is taken over all partitions $K_1\sqcup\cdots\sqcup K_s=\{1,2,\ldots,n\}$ such that $|K_i|=\lambda_i$, $i=1,2,\ldots,s$.
\end{Example}

\begin{Example}[shifted symmetric polynomials] Let $s=1$. Then we have
\begin{gather*}
\mathsf{E}_{n-r}(x_1;z)=\sum_{1\le i_1<\cdots <i_r\le n} \prod_{k=1}^r \big(z_{i_k}-x_1t^{i_k-k}\big) \qquad \text{for} \quad r=0,1,\ldots,n,
\end{gather*}
which coincide with {\em the Knop--Sahi shifted symmetric polynomials} attached to partitions of a~single column, see \cite[p.~476, Proposition~3.1]{KS96}. See also \cite[Appendix]{IF15} for an application of these polynomials to the $q$-Selberg integral.
\end{Example}

\begin{proof}[Proof of Lemma \ref{lem:B-E-explicit-1}] Since we have $\prod_{j=1}^s (z_1-w_j)=\sum_{i=0}^s \mathsf{E}_{\epsilon_i}(x;z_1)\mathsf{F}_{\epsilon_i}(x;w)$ from~(\ref{eq:B-duality-n=1}), the function $\mathsf{\Psi}(z;w)$ may be expanded as
\begin{gather}
\mathsf{\Psi}(z;w) =\mathsf{\Psi}(z_1;w)\mathsf{\Psi}(z_2;w)\cdots \mathsf{\Psi}(z_n;w) \nonumber\\
\hphantom{\mathsf{\Psi}(z;w)}{} = \sum_{i_1=0}^s\mathsf{E}_{\epsilon_{i_1}}(x;z_1)\mathsf{F}_{\epsilon_{i_1}}(x;w)\sum_{i_2=0}^s \mathsf{E}_{\epsilon_{i_2}}\big(xt^{\epsilon_{i_1}};z_2\big)\mathsf{F}_{\epsilon_{i_2}}\big(xt^{\epsilon_{i_1}};w\big)\cdots
\nonumber\\
\hphantom{\mathsf{\Psi}(z;w)=}{} \times \sum_{i_n=0}^s\mathsf{E}_{\epsilon_{i_n}}\big(xt^{\epsilon_{i_1}+\cdots+\epsilon_{i_{n-1}}};z_n\big)
\mathsf{F}_{\epsilon_{i_n}}\big(xt^{\epsilon_{i_1}+\cdots+\epsilon_{i_{n-1}}};w\big).\label{eq:B-duality-0}
\end{gather}
From (\ref{eq:B-FF=F}), if $\lambda=\epsilon_{i_1}+\cdots+\epsilon_{i_n}$, we have
\begin{gather*}
\mathsf{F}_{\epsilon_{i_1}}(x;w)\mathsf{F}_{\epsilon_{i_2}}\big(xt^{\epsilon_{i_1}};w\big)\cdots \mathsf{F}_{\epsilon_{i_n}}\big(xt^{\epsilon_{i_1}+\cdots+\epsilon_{i_{n-1}}};w\big)=\mathsf{F}_{\lambda}(x;w).
\end{gather*}
Then, comparing the coefficient of $\mathsf{F}_{\lambda}(x;w)$ in (\ref{eq:B-duality-0}) with those in (\ref{eq:B-duality}), we obtain (\ref{eq:B-E-explicit-1}).
\end{proof}

\begin{remark*}Similar to (\ref{eq:B-duality-0}), we have the recursion formula
\begin{gather}\label{eq:B-recursion}
\mathsf{E}_\lambda(x;z_1,\ldots,z_n)=\sum_{k=1}^s\mathsf{E}_{\lambda-\epsilon_k}(x;z_1,\ldots,z_{n-1})\mathsf{E}_{\epsilon_k}\big(xt^{\lambda-\epsilon_k};z_1\big)
\end{gather}
from the identity (\ref{eq:B-FF=F}) of $\mathsf{F}_{\lambda}(x;w)$.
\end{remark*}

\begin{Lemma}\label{lem:c of l.t.}For $z\in (\mathbb{C}^*)^{n}$, $x\in (\mathbb{C}^*)^{s}$, the functions $\mathsf{E}_\lambda(x;z)$ for $\lambda\in \mathbb{N}^s$, where $|\lambda|\le n$, can be expanded as
\begin{gather}\label{eq:B-leading term}
\mathsf{E}_\lambda(x;z)=C_\lambda(x) m_{(s^{n-|\lambda|}(s-1)^{|\lambda|})}(z)+\cdots,
\end{gather}
where the coefficient $C_{\lambda}(x)$ of the leading term is given by
\begin{gather}\label{eq:c of l.t.}
C_{\lambda}(x)=\frac{(t;t)_{|\lambda|}}{\prod_{i=1}^s (t;t)_{\lambda_i}}\prod_{i=1}^s\prod_{\substack{1\le j\le s\\ j\ne i}}\prod_{k=1}^{\lambda_i}\frac{1}{x_it^{k-1}-x_jt^{\lambda_j}}.
\end{gather}
\end{Lemma}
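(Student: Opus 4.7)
My plan is to prove the formula for $C_\lambda(x)$ by induction on $|\lambda|$, deriving a recursion for $C_\lambda(x)$ from the recursion (\ref{eq:B-recursion}) for $\mathsf{E}_\lambda(x;z)$ and then verifying the claimed product satisfies it. The base case $\lambda=\mathbf{0}$ is immediate from Lemma~\ref{lem:B-E_0(x;z)}, which shows $\mathsf{E}_{\mathbf{0}}(x;z)=\prod_{i,j}(z_i-x_j)$ with leading term $m_{(s^n)}(z)$ of coefficient $1$, in agreement with the formula.

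The key observation is that $C_\lambda(x)$ equals the coefficient of the single monomial $z_1^s\cdots z_{n-r}^s z_{n-r+1}^{s-1}\cdots z_n^{s-1}$ (with $r=|\lambda|$), since this exponent multiset occurs only in $m_{(s^{n-r}(s-1)^r)}(z)$ among monomial symmetric polynomials with $\deg_{z_i}\le s$. In the recursion (\ref{eq:B-recursion}), only $\mathsf{E}_{\epsilon_k}(xt^{\lambda-\epsilon_k};z_n)$ depends on $z_n$, and its $z_n^{s-1}$-coefficient is $\prod_{j\neq k}(x_k t^{\lambda_k-1}-x_jt^{\lambda_j})^{-1}$. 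The factor $\mathsf{E}_{\lambda-\epsilon_k}(x;z_1,\ldots,z_{n-1})$ contributes the remaining monomial $z_1^s\cdots z_{n-r}^s z_{n-r+1}^{s-1}\cdots z_{n-1}^{s-1}$ only through its own leading term, whose coefficient is $C_{\lambda-\epsilon_k}(x)$ by the inductive hypothesis. This yields
\begin{gather*}
C_\lambda(x)=\sum_{k:\,\lambda_k\ge 1}C_{\lambda-\epsilon_k}(x)\prod_{j\neq k}\big(x_k t^{\lambda_k-1}-x_jt^{\lambda_j}\big)^{-1}.
\end{gather*}
Substituting the claimed formula $\tilde C_\lambda(x)$ and simplifying the ratio $\tilde C_{\lambda-\epsilon_k}(x)/\tilde C_\lambda(x)$, all $q$-Pochhammer factors collapse to $(1-t^{\lambda_k})/(1-t^{|\lambda|})$ times a product that cancels the factor $\prod_{j\neq k}(x_k t^{\lambda_k-1}-x_jt^{\lambda_j})^{-1}$ on the right-hand side. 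After the substitution $y_i=x_it^{\lambda_i}$ and a short telescoping computation, the verification reduces to the algebraic identity
\begin{gather*}
\sum_{k=1}^s (1-t^{\lambda_k})\prod_{i\neq k}\frac{y_i-y_kt^{\lambda_i}}{y_i-y_k}=1-t^{|\lambda|}.
\end{gather*}

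To establish this identity I would apply partial fractions to the rational function $\tilde R(u)=\prod_{i=1}^s (y_i-ut^{\lambda_i})/(y_i-u)$. It has only simple poles at $u=y_k$ with residue $-y_k(1-t^{\lambda_k})\prod_{i\neq k}(y_i-y_kt^{\lambda_i})/(y_i-y_k)$, it tends to $t^{|\lambda|}$ as $u\to\infty$, and it equals $1$ at $u=0$. Comparing $\tilde R(0)$ with the partial-fraction expansion $\tilde R(u)=t^{|\lambda|}+\sum_k\operatorname{Res}_{u=y_k}\tilde R/(u-y_k)$ gives the identity on the nose. The main obstacle is the bookkeeping in extracting the single-monomial recursion for $C_\lambda(x)$ from (\ref{eq:B-recursion}), but once set up correctly the residue step is a one-line calculation.
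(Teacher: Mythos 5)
Your proof follows essentially the same route as the paper: induction on $|\lambda|$ via the recursion (\ref{eq:B-recursion}), whose leading-coefficient extraction yields exactly the recursion $C_\lambda(x)=\sum_k C_{\lambda-\epsilon_k}(x)\,C_{\epsilon_k}^{(1)}\big(xt^{\lambda-\epsilon_k}\big)$ used in the paper, and the verification reduces to the identity (\ref{eq:B-identity-01}), which after the telescoping $\prod_{l=1}^{\lambda_i}\frac{x_it^{l-1}-x_kt^{\lambda_k}}{x_it^{l-1}-x_kt^{\lambda_k-1}}=\frac{y_i-y_kt^{\lambda_i}}{y_i-y_k}$ (with $y_i=x_it^{\lambda_i}$) is precisely the identity you state. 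The only genuine difference is that the paper cites Milne's Lemma~1.51 for this identity, whereas you prove it directly and correctly by partial fractions of $\prod_{i=1}^s\big(y_i-ut^{\lambda_i}\big)/(y_i-u)$ evaluated at $u=0$, which makes the argument self-contained.
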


\begin{proof}Since we have (\ref{eq:B-leading term}) immediately from Lemma~\ref{lem:B-E-explicit-1}, we prove (\ref{eq:c of l.t.}) by induction on $r=|\lambda|$. We denote by $C_\lambda^{(r)}(x)$ the coefficient $C_{\lambda}(x)$ in~(\ref{eq:B-leading term}). When $r=1$, we immediately find
\begin{gather*}
C_{\epsilon_i}^{(1)}(x)=\prod_{\substack{1\le j\le s\\ j\ne i}}\frac{1}{x_i-x_j} \qquad \text{for} \quad i=1,\ldots,s
\end{gather*}
from the explicit expression (\ref{eq:B-E(x;z)-n=1}) of $\mathsf{E}_{\epsilon_i}(x;z)$. From the recursion formula~(\ref{eq:B-recursion}) we have the relation
\begin{gather}\label{eq:B-relation C}
C_\lambda^{(r)}(x) =\sum_{k=1}^sC_{\lambda-\epsilon_k}^{(r-1)}(x) C_{\epsilon_k}^{(1)}\big(xt^{\lambda-\epsilon_k}\big).
\end{gather}
Since (\ref{eq:c of l.t.}) holds for $r=1$, it suffices to show that the right-hand side of (\ref{eq:c of l.t.}) satisfies the same recurrence formula as~(\ref{eq:B-relation C}). One can directly verify that the corresponding
formula for the right-hand side of~(\ref{eq:c of l.t.}) reduces to the identity
\begin{gather}\label{eq:B-identity-01}
1=\sum_{k=1}^s\frac{1-t^{\lambda_k}}{1-t^r}\prod_{\substack{1\le i\le s\\ i\ne k}}\prod_{l=1}^{\lambda_i}\frac{x_it^{l-1}-x_kt^{\lambda_k}}{x_it^{l-1}-x_kt^{\lambda_k-1}},
\end{gather}
which is equivalent to the identity \cite[p.~46, Lemma 1.51]{Mil85} with $(x_i,y_i)\to \big(t^{\lambda_i}, x_it^{\lambda_i}\big)$.
\end{proof}

\begin{remark*}The identity \eqref{eq:B-identity-01} is very well known and has played a very important role in the theory of multiple basic hypergeometric series. In the paper~\cite{Ros04}, Rosengren extended this identity to its elliptic form and applied it to his multiple elliptic hypergeometric series. It is really remarkable that, according to his paper, this identity in elliptic form already appeared in Tannery and Molk's book \cite[p.~34]{TM} (which was published in 1898) and also in Whittaker and Watson \cite[p.~451]{WW}. See his very detailed discussion about this identity in \cite{Ros04}.
\end{remark*}

\begin{Lemma}\label{lem:B-E=EE}For $z=(z_1,\ldots,z_n)$ and $x=(x_1,\ldots,x_s)$ we have
\begin{gather}\label{eq:B-E=EE}
\mathsf{E}_{(0,\mu_2,\ldots,\mu_s)}(x;z)= \frac{\mathsf{E}_0(x_1;z)\mathsf{E}_{(\mu_2,\ldots,\mu_s)}(x_{\widehat{1}};z)}{\prod\limits_{i=2}^s\prod\limits_{k=1}^{\mu_i}\big(x_it^{k-1}-x_1\big)},
\end{gather}
where $x_{\widehat{1}}=(x_2,\ldots,x_s)\in (\mathbb{C}^*)^{s-1}$.
\end{Lemma}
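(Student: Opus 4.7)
The plan is to invoke the uniqueness statement of Theorem~\ref{thm:tri-E}: denote by $g(z)$ the right-hand side of~\eqref{eq:B-E=EE}, and show that $g(z)\in\mathsf{H}_{s,n}^z$ and satisfies the two interpolation conditions of Theorem~\ref{thm:tri-E} for the index $\lambda=(0,\mu_2,\ldots,\mu_s)$. Because those conditions uniquely determine a member of $\mathsf{H}_{s,n}^z$, this forces $g(z)=\mathsf{E}_{(0,\mu_2,\ldots,\mu_s)}(x;z)$.

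First I would verify membership in $\mathsf{H}_{s,n}^z$. Applying Lemma~\ref{lem:B-E_0(x;z)} to the one-parameter case gives $\mathsf{E}_0(x_1;z)=\prod_{i=1}^n(z_i-x_1)$, a symmetric polynomial of degree $1$ in each $z_i$. The factor $\mathsf{E}_{(\mu_2,\ldots,\mu_s)}(x_{\widehat 1};z)$ lies in $\mathsf{H}_{s-1,n}^z$, so it is symmetric and of degree at most $s-1$ in each $z_i$. The denominator is a nonzero constant (for generic $x$). Hence $g(z)$ is symmetric and of degree at most $s$ in each variable, i.e., $g\in\mathsf{H}_{s,n}^z$.

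Next I would evaluate $g$ at the reference point $x_\nu^{n-|\nu|}$ of~\eqref{eq:x_mu-m} for arbitrary $\nu\in\mathbb{N}^s$ with $|\nu|\ge|\lambda|$, splitting according to whether $\nu_1=0$. If $\nu_1\ge 1$, then the specialized entries of $x_\nu^{n-|\nu|}$ contain $x_1$ itself, so $\mathsf{E}_0(x_1;x_\nu^{n-|\nu|})$ acquires the vanishing factor $x_1-x_1=0$; hence $g(x_\nu^{n-|\nu|})=0$. If $\nu_1=0$, the point $x_\nu^{n-|\nu|}$ contains no $x_1$-entries, and coincides with the reference point $(x_{\widehat 1})^{n-|\nu|}_{(\nu_2,\ldots,\nu_s)}$ for the $(s-1)$-parameter system. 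By Theorem~\ref{thm:tri-E} applied to $\mathsf{E}_{(\mu_2,\ldots,\mu_s)}(x_{\widehat 1};\cdot)$, the factor $\mathsf{E}_{(\mu_2,\ldots,\mu_s)}(x_{\widehat 1};x_\nu^{n-|\nu|})$ vanishes whenever $|\nu|>|\lambda|$ or $|\nu|=|\lambda|$ with $(\nu_2,\ldots,\nu_s)\neq(\mu_2,\ldots,\mu_s)$. Thus both vanishing requirements of Theorem~\ref{thm:tri-E} are met.

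Finally I would check the normalization in the surviving case $\nu=\lambda=(0,\mu_2,\ldots,\mu_s)$. There one computes
\begin{gather*}
\mathsf{E}_0(x_1;x_\lambda^{n-|\lambda|})
=\prod_{j=1}^{n-|\lambda|}(z_j-x_1)\,\prod_{i=2}^s\prod_{k=1}^{\mu_i}(x_it^{k-1}-x_1),
\end{gather*}
while Theorem~\ref{thm:tri-E}(1) for the smaller system gives
\begin{gather*}
\mathsf{E}_{(\mu_2,\ldots,\mu_s)}(x_{\widehat 1};x_\lambda^{n-|\lambda|})
=\prod_{j=1}^{n-|\lambda|}\prod_{i=2}^s(z_j-x_it^{\mu_i}).
\end{gather*}
The product over $i=2,\ldots,s$, $k=1,\ldots,\mu_i$ of $(x_it^{k-1}-x_1)$ cancels exactly with the denominator of $g$, leaving $g(x_\lambda^{n-|\lambda|})=\prod_{j=1}^{n-|\lambda|}\prod_{i=1}^s(z_j-x_it^{\lambda_i})$, as required. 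Uniqueness from Theorem~\ref{thm:tri-E} then yields~\eqref{eq:B-E=EE}. There is no genuine obstacle here; the only place requiring care is the reindexing $k\mapsto k-1$ that matches the factors produced by $\mathsf{E}_0(x_1;x_\lambda^{n-|\lambda|})$ with the denominator.
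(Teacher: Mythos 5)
Your verifications are all correct as far as they go: the right-hand side of \eqref{eq:B-E=EE} lies in $\mathsf{H}_{s,n}^z$, it vanishes at $x_\nu^{n-|\nu|}$ whenever $\nu_1\ge 1$ (factor $x_1-x_1$) and, when $\nu_1=0$, whenever Theorem~\ref{thm:tri-E} forces the second factor to vanish, and the normalization at $\nu=\lambda$ comes out right. The gap is in the concluding step. Conditions (1) and (2) of Theorem~\ref{thm:tri-E} do \emph{not} determine an individual $\mathsf{E}_\lambda$ uniquely inside $\mathsf{H}_{s,n}^z$ when $|\lambda|\ge 1$: they only constrain the values of $\mathsf{E}_\lambda$ at the reference points $x_\nu^{n-|\nu|}$ with $|\nu|\ge|\lambda|$, and for any $\rho$ with $|\rho|<|\lambda|$ the function $\mathsf{E}_\lambda+c\,\mathsf{E}_\rho$ satisfies exactly the same conditions, because condition (2) applied to $\mathsf{E}_\rho$ gives $\mathsf{E}_\rho\big(x;x_\nu^{n-|\nu|}\big)=0$ for all $|\nu|\ge|\lambda|>|\rho|$. (Already for $s=n=1$ and $\lambda=(1)$ the only condition imposed is $f(x_1)=1$, which $1+c(z-x_1)$ satisfies for every $c$.) What you have actually shown is that the two sides of \eqref{eq:B-E=EE} agree modulo $\operatorname{span}\{\mathsf{E}_\rho(x;z)\,:\,|\rho|<\mu_2+\cdots+\mu_s\}$. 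A degree count does not close the gap either, since all $\mathsf{E}_\rho$ with the same $|\rho|$ share the leading monomial symmetric function $m_{(s^{n-|\rho|}(s-1)^{|\rho|})}(z)$ and cancellations among them are possible.

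The paper sidesteps this entirely by working with the dual generating function: it substitutes $w_s=x_1$ into the expansion \eqref{eq:B-duality}. On one side, $\mathsf{F}_\mu(x;w)\big|_{w_s=x_1}$ vanishes unless $\mu_1=0$, in which case it factors as $\mathsf{F}_{(\mu_2,\ldots,\mu_s)}(x_{\widehat 1};w_{\widehat s})\prod_{i=2}^s\prod_{k=1}^{\mu_i}\big(x_it^{k-1}-x_1\big)$; on the other side, $\mathsf{\Psi}(z;w)\big|_{w_s=x_1}=\mathsf{E}_0(x_1;z)\,\mathsf{\Psi}(z;w_{\widehat s})$ is re-expanded in the basis $\{\mathsf{F}_{(\mu_2,\ldots,\mu_s)}(x_{\widehat 1};w_{\widehat s})\}$ of $\mathsf{H}^{w}_{n,s-1}$, and comparing coefficients yields \eqref{eq:B-E=EE} at once. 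To salvage your interpolation argument you would need the stronger vanishing property \eqref{eq:B-E(x;xtmu)}, which also constrains $\mathsf{E}_\lambda$ at points $x_\nu^{n-|\nu|}$ with $|\nu|<|\lambda|$ and $\lambda-\nu\notin\mathbb{N}^s$, and you would then have to verify those additional conditions for the right-hand side as well; that is no longer immediate.
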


\begin{proof}From \eqref{eq:B-duality} of Lemma \ref{lem:B-duality} with substitution $w_s=x_1$, we have
\begin{gather*}
\mathsf{\Psi}(z,w)\Big|_{w_s=x_1} =\sum_{\substack{\mu\in \mathbb{N}^s\\[1pt] |\mu|\le n}}\mathsf{E}_\mu(x;z)\mathsf{F}_\mu(x;w)\Big|_{w_s=x_1}.
\end{gather*}
Here, from the definition \eqref{eq:B-F(x;w)} of $\mathsf{F}_{\mu}(x;w)$, we have
\begin{gather*}
\mathsf{F}_\mu(x;w)\Big|_{w_s=x_1}=
\begin{cases}
0 & \text{if} \ \mu_1>0, \vspace{1mm}\\
\mathsf{F}_{(\mu_2,\ldots,\mu_s)}(x_{\widehat{1}};w_{\widehat{s}}) \prod\limits_{i=2}^s\prod\limits_{k=1}^{\mu_i}(x_it^{k-1}-x_1) &\text{if} \ \mu_1=0,
\end{cases}
\end{gather*}
where $w_{\widehat{s}}=(w_1,\ldots,w_{s-1})\in (\mathbb{C}^*)^{s-1}$. Thus we obtain
\begin{gather}
\mathsf{\Psi}(z,w)\Big|_{w_s=x_1}=\sum_{\substack{(\mu_2,\ldots,\mu_s)\in \mathbb{N}^{s-1}\\[1pt] \mu_2+\cdots+\mu_s\le n}}
\left[\mathsf{E}_{(0,\mu_2,\ldots,\mu_s)}(x;z)\prod_{i=2}^s\prod_{k=1}^{\mu_i}\big(x_it^{k-1}-x_1\big)\right]\nonumber\\
\hphantom{\mathsf{\Psi}(z,w)\Big|_{w_s=x_1}=}{} \times \mathsf{F}_{(\mu_2,\ldots,\mu_s)}(x_{\widehat{1}};w_{\widehat{s}}).\label{B-Psi(z,w)|-1}
\end{gather}
On the other hand, from \eqref{eq:B-def-Psi}, \eqref{eq:B-duality} and \eqref{eq:B-E_0(x;z)}, we also obtain
\begin{gather}
\mathsf{\Psi}(z,w)\Big|_{w_s=x_1} =\mathsf{\Psi}(z,w_{\widehat{s}})\prod_{i=1}^n(z_i-x_1)=\mathsf{E}_0(x_1;z)
\sum_{\mu\in \mathbb{N}^{s-1}\atop |\mu|\le n}\mathsf{E}_\mu(x_{\widehat{1}};z)\mathsf{F}_\mu(x_{\widehat{1}};w_{\widehat{s}})\nonumber\\
\hphantom{\mathsf{\Psi}(z,w)\Big|_{w_s=x_1}}{} =\sum_{\substack{(\mu_2,\ldots,\mu_s)\in \mathbb{N}^{s-1}\\[1pt] \mu_2+\cdots+\mu_s\le n}}
\big[\mathsf{E}_0(x_1;z)\mathsf{E}_{(\mu_2,\ldots,\mu_s)}(x_{\widehat{1}};z)\big]\mathsf{F}_{(\mu_2,\ldots,\mu_s)}(x_{\widehat{1}};w_{\widehat{s}}).\label{B-Psi(z,w)|-2}
\end{gather}
Comparing \eqref{B-Psi(z,w)|-1} with \eqref{B-Psi(z,w)|-2}, we therefore obtain \eqref{eq:B-E=EE}.
\end{proof}

\subsection*{Acknowledgements}
The authors would like to express their gratitude to the referees for providing them many useful suggestions. This work is supported by JSPS Kakenhi Grants (C)25400118, (B)15H03626 and (C)18K03339.

\pdfbookmark[1]{References}{ref}
\LastPageEnding

\end{document}